\setlist{listparindent=0pt,parsep=3pt}
\renewcommand{\d}{\mathop{}\!\mathrm{d}}
\newcommand{\Ortho}{\operatorname{O}}
\newcommand{\Aut}{\operatorname{Aut}}
\newcommand{\dual}[1]{\check{#1}}
\DeclareMathOperator{\Ad}{Ad} 
\newcommand{\R}{\mathbb{R}}
\newcommand{\Z}{\mathbb{Z}}
\newcommand{\N}{\mathbb{N}}
\renewcommand{\P}{\mathbb{P}}
\newcommand{\A}{\mathbb{A}}
\DeclareSymbolFont{script}{U}{eus}{m}{n}
\DeclareSymbolFontAlphabet{\mathscr}{script}
\DeclareMathSymbol{\EuWedge}{0}{script}{"5E}
\newcommand{\Wedge}{\EuWedge}
\newcommand{\fo}{\mathfrak{o}}
\newcommand{\tens}{\otimes}
\newcommand{\cL}{\mathcal{L}}
\newcommand{\restr}[1]{{}_{|#1}}
\newcommand{\st}{\mathrel{|}}
\newcommand{\half}{\tfrac12}
\newcommand{\QQ}{\mathcal{Q}}
\renewcommand{\q}{\mathfrak{q}}
\newcommand{\p}{\mathfrak{p}}
\renewcommand{\t}{\mathfrak{t}}
\newcommand{\spn}[1]{\langle#1\rangle}
\newcommand{\ijkl}{\ell kji}
\newcommand{\prl}{\mathrel{\Vert}}
\newcommand{\cZ}{\mathcal{Z}}
\renewcommand{\c}{\mathfrak{c}}
\newcommand{\abrack}[1]{[\mkern-3mu[#1]\mkern-3mu]}
\newcommand{\dom}{\Sigma}
\newcommand{\K}{Koenigs}
\newcommand{\KM}{\K--Moutard}
\numberwithin{equation}{section}
\theoremstyle{plain}
	\newtheorem{theorem}{Theorem}[section]
	\newtheorem{lemma}[theorem]{Lemma}
	\newtheorem{proposition}[theorem]{Proposition}
	\newtheorem{corollary}[theorem]{Corollary}
\theoremstyle{definition}
\newtheorem{definition}[theorem]{Definition}
\newtheorem{assumption}[theorem]{Assumption}
\newtheorem{notation}[theorem]{Notation}
\theoremstyle{remark}
\newtheorem{remark}[theorem]{Remark}
\newtheorem{remarks}[theorem]{Remarks}
\newtheorem{xmpl}[theorem]{Example}
\newtheorem{xmpls}[theorem]{Examples}
\newcommand{\TitleWithUrl}[1]{\IfEmptyBibField{doi}%
  {\IfEmptyBibField{url}{\textit{#1}}%
    {\IfEmptyBibField{eprint}{\href {\BibField{url}}{\textit{#1}}}{\textit{#1}}}%
    }%
  {\href {https://doi.org/\BibField{doi}}{\textit{#1}}}}
\renewcommand{\eprint}[1]{\IfEmptyBibField{url}{\url{#1}}%
  {\href {\BibField{url}}{#1}}}
\title[Discrete $\Omega$-nets and Guichard nets via
Koenigs nets]{Discrete $\Omega$-nets and Guichard nets\\via
  discrete Koenigs nets}
\author{F.E. Burstall}
\address{Department of Mathematical Sciences\\ University of Bath\\
  Bath BA2 7AY\\UK} \email{feb@maths.bath.ac.uk}
\author{J. Cho}
\address{%
  Institute of Discrete Mathematics and Geometry\\ 
  TU Wien\\
  Wiedner Hauptstrasse 8-10/104\\
  1040 Wien\\
  Austria}
\email{joseph.cho@tuwien.ac.at}
\author{U. Hertrich-Jeromin}
\address{%
  Institute of Discrete Mathematics and Geometry\\ 
  TU Wien\\
  Wiedner Hauptstrasse 8-10/104\\
  1040 Wien\\
  Austria}
 \email{udo.hertrich-jeromin@tuwien.ac.at}
\author{M. Pember}
\address{Department of Mathematical Sciences\\ University of Bath\\
  Bath BA2 7AY\\UK}
\email{mason.j.w.pember@bath.edu}
\author{W. Rossman}
\address{Department of Mathematics, Graduate School of Science\\
  Kobe University \\ 1-1 Rokkodaicho Nada-ku Kobe 657-8501\\ Japan}
 \email{wayne@math.kobe-u.ac.jp}
\subjclass[2020]{Primary: 53A70; Secondary: 53A10, 53A31}
\begin{document}

\begin{abstract}
  We provide a convincing discretisation of Demoulin's
  $\Omega$-surfaces along with their specialisations to
  Guichard and isothermic surfaces with no loss of
  integrable structure.
\end{abstract}
\maketitle

\section{Introduction}

\subsection{Background}
\label{sec:background}

Our topic begins with the work of Darboux, Bianchi,
Guichard and Demoulin in the early 20th century.  This
period saw rapid progress in surface geometry of a kind that
we now recognise as a manifestation of the close relation of
geometry to soliton theory.  In particular, three
surface classes, of increasing generality, were introduced:
\emph{isothermic surfaces} by Bour \cite{bour_theorie_1862}
in 1862, \emph{Guichard surfaces} by Guichard
\cite{guichard_sur_1900} in 1900 and, finally,
\emph{$\Omega$-surfaces} by Demoulin
\cite{demoulin_sur_1911-2} in 1911.

These classes have a common formulation in terms of duality.
For this, let $x:\dom\to\R^{3}$ be an isometric immersion
of a surface and recall that a \emph{Combescure
  transformation of $x$} is a second immersion\footnote{The
  immersion requirement will be relaxed below.}
$\dual{x}:\dom\to\R^3$ with parallel curvature directions to
those of $x$.  Let $\kappa_{1},\kappa_2$ be the principal
curvatures of $x$ and $\dual{\kappa}_{1},\dual{\kappa}_2$
those of $\dual{x}$.  Now $x$ is isothermic if and only if
it has a Combescure transform $\dual{x}$, the
\emph{Christoffel dual} \cite{christoffel_ueber_1867}, for
which
\begin{subequations}\label{eq:61}
  \begin{equation}
    \label{eq:1}
    \frac1{\kappa_1\dual{\kappa}_2}+\frac1{\kappa_2\dual{\kappa}_1}=0.
  \end{equation}
  Meanwhile, Guichard's original definition of his eponymous
  surfaces is that there should be a Combescure transform
  $\dual{x}$, the \emph{associate surface}, such that
  \begin{equation}
    \label{eq:6}
    \frac1{\kappa_1\dual{\kappa}_2}+\frac1{\kappa_2\dual{\kappa}_1}=c\neq0,
  \end{equation}
  for some constant $c$.

  Finally, one of us \cite[Theorem~5.1]{pember_lie_nodate} observed that
  $\Omega$-surfaces may be similarly characterised in terms
  of two Combescure transforms $\dual{x}$ and $\dual{n}$,
  where the latter has principal curvatures
  $\ell_1,\ell_{2}$, for which
  \begin{equation}
    \label{eq:7}
    \frac1{\kappa_1\dual{\kappa}_2}+\frac1{\kappa_2\dual{\kappa}_1}
    =\frac1{\ell_1}+\frac1{\ell_2}.
  \end{equation}
  Observe that when $\dual{n}=n$, the Gauss map of $x$,
  \eqref{eq:7} reduces to \eqref{eq:6} so that Guichard
  surfaces are $\Omega$ as Demoulin observed
  \cite{demoulin_sur_1911}. Again, when $\dual{n}$ is
  constant, \eqref{eq:7} reduces to \eqref{eq:61} and
  isothermic surfaces are seen to be $\Omega$-surfaces also,
  another result of Demoulin \cite{demoulin_sur_1911-2}.

  In each case, $x$ and $\dual{x}$ appear symmetrically so
  that $\dual{x}$ is isothermic, Guichard or $\Omega$ as $x$
  is.
\end{subequations}

These characterisations admit a reformulation which is very
amenable to discretisation.  The equations \eqref{eq:61} are
equivalent to:
\begin{subequations}
  \label{eq:62}
  \begin{gather}
    \label{eq:63}
    \d x\curlywedge\d\dual{x}=0\\
    \label{eq:64}
    \d x\curlywedge\d\dual{x}+\d n\curlywedge \d n=0\\
    \label{eq:65}
    \d x\curlywedge\d\dual{x}+\d n\curlywedge \d\dual{n}=0.
  \end{gather}
\end{subequations}
Here $\curlywedge$ is exterior product of $\R^3$-valued
$1$-forms using the wedge product of $\R^3$ to multiply
coefficients so that \eqref{eq:62} are equations on
$\Wedge^2\R^3$-valued $2$-forms.  As we shall see in
Section~\ref{sec:o-systems}, equations \eqref{eq:62} show that
$\Omega$-surfaces are $O$-surfaces in the sense of
Schief--Konopelchenko \cite{SchKon03}.

On the down-side, these Euclidean formulations obscure the
symmetry of the situation: both isothermic and Guichard
surfaces are conformally invariant while $\Omega$-surfaces
are Lie sphere invariant.  Moreover, the fundamental role
played by isothermic surfaces is not apparent.  Demoulin's
original approach does not have these defects but requires a
change of viewpoint to that of Lie sphere geometry.  Here,
the basic idea is to study a surface via the collection of
$2$-spheres tangent to that surface.  The oriented
$2$-spheres in $S^3$ are parametrised by a $4$-dimensional
real quadric $\QQ$ in such a way that two such are in
oriented contact exactly when the corresponding points in
$\QQ$ lie on a (projective) line in $\QQ$.  Thus each line
in $\QQ$ parametrises the $1$-parameter family of oriented
$2$-spheres through a fixed point $p\in S^{3}$ and tangent
to a fixed $2$-plane in $T_pS^{3}$.  Otherwise said, the
contact elements of $S^3$ are parametrised by the
$5$-dimensional space $\cZ$ of lines in $\QQ$. One now
studies surfaces in $S^3$ by replacing them with their
Legendre lifts (their collection of contact elements), viewed
as maps into $\cZ$.  See \cite{cecil_lie_2008} for more
details.  With this in hand, Demoulin originally defined an
$\Omega$-surface to be a surface in $S^3$ whose Legendre
lift contains an isothermic surface in $\QQ$, otherwise
said, the surface admits an enveloping sphere congruence
which is isothermic \emph{qua} map into $\QQ$.  This is a
manifestly Lie sphere geometric characterisation of
$\Omega$-surfaces.  From here one can show that a surface is
$\Omega$ exactly when its Legendre lift is Lie applicable
\cite{Musso_2006} which gives a second Lie sphere geometric
characterisation.  Additionally, Demoulin shows there is a
second isothermic enveloping sphere congruence harmonically
separated from the first by the curvature spheres so that
$\Omega$-surfaces have Legendre lifts spanned by isothermic
sphere congruences.

In this invariant formulation of isothermic and $\Omega$
surfaces, the duality of \eqref{eq:62} manifests itself in
the existence of closed Lie algebra-valued $1$-forms whose
existence characterises the surfaces in question
\cite{burstall_isothermic_2011,Cla12,Musso_2006}.  These
$1$-forms are gauge potentials for pencils of flat
connections that provide an efficient way into the
integrable systems approach to these surfaces.

This integrability of all three surface classes is evidenced
by their rich transformation theory as developed by Bianchi,
Calapso and Darboux
\cite{bianchi_ricerche_1905,Bia05a,Cal03,Dar99e} for
isothermic surfaces and Eisenhart
\cite{Eis14,Eisenhart1915-fc,Eisenhart1916-qq} for Guichard
and $\Omega$-surfaces. In fact, the transformations of
Guichard and $\Omega$ surfaces are induced by the Darboux
and Calapso transforms of their isothermic sphere
congruences \cite{burstall_polynomial_2018}.  Indeed, even
the duality of $\Omega$-surfaces is induced by the
Christoffel duality of the sphere congruences\footnote{This
  is how Demoulin arrived at the dual $\Omega$-surface
  \cite{demoulin_sur_1911-1}.}.

Having reached an understanding of $\Omega$-surfaces, a
basic question is how to identify isothermic and Guichard
surfaces among them.  In the Euclidean formulation, this is
clear: one requires either that $\dual{n}$ be constant or
that $\dual{n}=n$, respectively.  To get an invariant
picture that is compatible with the transformation theory,
one can exploit the $1$-parameter family of flat connections
alluded to above.  Requiring that these connections admit
families of parallel sections depending affine linearly on the
parameter (\emph{linear conserved quantities}) leads to
such a characterisation of isothermic, Guichard and, indeed,
subclasses of isothermic surfaces
\cite{burstall_polynomial_2018}.

\subsection{Manifesto}
\label{sec:manifesto}

We have discussed the smooth theory of $\Omega$-surfaces and
their sub-classes at such length because the aim of this
paper is to provide a discrete theory that almost exactly
replicates the smooth one.  In particular, we shall define
discrete Guichard and $\Omega$-nets (extending the
well-known theory of isothermic nets along the way) as well
as discrete applicable Legendre maps so that:
\begin{compactitem}
\item Discrete isothermic, Guichard and $\Omega$-nets are
  characterised by \eqref{eq:62} and so have the same
  duality as the smooth case.  For this, we will need to
  develop a discrete exterior calculus.
\item $\Omega$-nets are discrete $O$-surfaces in the sense
  of Schief \cite{Sch03}.
\item Both isothermic nets and applicable Legendre maps are
  defined in terms of a closed Lie algebra valued $1$-form.
\item Applicable Legendre maps are generically spanned by
  isothermic sphere congruences.
\item A net is $\Omega$ if and only if it is enveloped by an
  isothermic sphere congruence so that its Legendre lift is
  applicable.
\item Isothermic and Guichard nets are characterised by the
  existence of linear conserved quantities just as in the
  smooth case \cite{burstall_polynomial_2018} as are
  $L$-isothermic and $L$-Guichard nets.
\item The transformation theory of isothermic nets induces a
  transformation theory of $\Omega$-nets, restricting to one
  of Guichard nets and the other subclasses.
\end{compactitem}

This match of smooth and discrete extends to fine detail.
For example, the radii of the isothermic spheres congruences
enveloping a Guichard net are reciprocal to those enveloping
the associate net: a result of Demoulin
\cite{demoulin_sur_1911-1} in the smooth case.  Again, a
classical formula of Eisenhart \cite{Eis14} relates
distances between corresponding points of a Darboux pair of
Guichard surfaces and their associates: this result holds
equally for Guichard nets and, in fact, we extend it to
$\Omega$-nets.

\subsection{Road map}
\label{sec:roadmap}

Let us briefly sketch the contents of the paper.

Section~\ref{sec:preliminaries} is mostly preparatory in
nature but has some novelty in that we develop a discrete
exterior calculus that we use extensively in the sequel.
Thus we define discrete exterior forms, their exterior
derivative and exterior product and show that these satisfy
the anti-commutativity and Leibniz rules familiar from the
smooth case.  Versions of discrete exterior calculus are
well-known in the finite elements literature
\cite{Hir03,DesHirLeoMar05,MR2488210,HydMan04} but the
technology is used more rarely in discrete differential
geometry.

Much of our paper is concerned with isothermic nets in the
Lie quadric which is a $4$-dimensional quadric with a
signature $(3,1)$ conformal structure.  The indefiniteness
of the conformal structure means much of the well-known
theory of isothermic nets in the definite case either needs
adjustment (the family of flat connections) or is not
available (cross-ratio factorising functions).  These
defects do not apply to the characterisation of
Bobenko--Suris
\cite{bobenko_discrete_2008,bobenko_discrete_2009} of
isothermic nets as \K\ nets (a purely projective notion)
taking values in a quadric.  We therefore start, in
Section~\ref{sec:k-nets-applicability}, by studying \K\ nets in a
projective space $\P(V)$.  We find a new invariant
characterisation of \K\ nets in terms of a closed
$\Wedge^{2}V$-valued $1$-form $\eta$ and then define
applicable line congruences similarly.  The main result is
that, in the presence of mild regularity assumptions, a line
congruence is applicable exactly when it is spanned by a
\KM\ pair of \K\ nets.

In Section~\ref{sec:isothermic-nets}, we apply this theory
to isothermic nets in an arbitrary non-singular quadric.  We
find that the $1$-form $\eta$ is exactly what is needed to
extend the key points of the theory to the indefinite case.
In particular, we use $\eta$ to construct the Christoffel
transform and the family of flat connections and, from them,
reach the transformation theory.  A distinguishing
characteristic of the indefinite case is the extension of
Darboux transforms to include the case of infinite spectral
parameter: it is precisely Darboux pairs of this kind that
span applicable line congruences whose lines lie in the
quadric.

Such line congruences are called \emph{applicable Legendre
  maps} and are the topic of section
\ref{sec:appl-legendre-maps}.  We find that the
transformation theory of the isothermic sphere congruences
induces one on the applicable Legendre maps they span
independently of choices.  In particular, we find a duality
of such line congruences.  Once more, the closed $1$-form
$\eta$ plays an essential role.

The remainder of the paper applies the preceding theory to
$\Omega$-nets (in Section~\ref{sec:discr-texorpdfstr-su})
and Guichard nets (in Section~\ref{sec:guichard-nets-via}).
The duality of applicable Legendre maps induces the duality
of $\Omega$-nets.  We prove that our notion of $\Omega$-net
coincides with that of \cite{burstall_discrete_2018} and
that Guichard nets, defined by $n=\dual{n}$, are also
characterised by polynomial conserved quantities.  We also
test the robustness of our discretisation by proving
discrete analogues of the classical results of Demoulin and
Eisenhart alluded to above.

Finally we make contact in Section~\ref{sec:o-systems-omega}
with Schief's theory of discrete $O$-surfaces for which our
exterior calculus furnishes an efficient methodology.  We
show that our notion of Guichard net coincides with that of
Schief and that $\Omega$-nets are $O$-surfaces.

\subsection{Acknowledgements}
\label{sec:acknowledgements}

The first author would like to thank David Calderbank for many
illuminating conversations on topics related to this work.
The third author thanks Wolfgang Schief for interesting and
pleasant conversations, in which an alternative viewpoint on
$\Omega$-surfaces as $O$-surfaces was independently developed.

Furthermore, we gratefully acknowledge financial support of the
project through the following research grants:
JSPS Grant-in-Aid
 for JSPS Fellows 19J10679,
 for scientific research
  (C) 15K04845, 20K03585, and
  (S) 17H06127 (P.I.: M.-H. Saito);
the JSPS/FWF Joint Project I3809-N32 ``Geometric shape generation'';
the FWF research project P28427-N35 ``Non-rigidity and symmetry breaking'';
and the MIUR grant ``Dipartimenti di Eccellenza'' 20182022,
 CUP: E11G18000350001, DISMA, Politecnico di Torino.

 Finally it is a pleasure to thank the referee for their
 careful reading and helpful comments.


\section{Preliminaries}
\label{sec:preliminaries}
Our approach to discrete differential geometry is to follow
the smooth situation as much as possible.  We will therefore
require both a discrete exterior calculus of forms and a
discrete theory of bundles and connections.  We begin by
sketching both of these.

First some notation: our domain will be a subset $\dom$ of
$\Z^{N}$.  The latter is organised into vertices, edges,
quadrilaterals (faces) and, more generally, $k$-cubes for
$0\leq k\leq n$.  We say that a $k$-cube belongs to $\Sigma$
if all of its $2^k$ vertices lie in $\dom$.

\subsection{Discrete exterior calculus}

\subsubsection{Discrete \texorpdfstring{$k$}{k}-forms}
\label{sec:discr-texorpdfkk-for}

\begin{definition}[$k$-form]
  A discrete $k$-form $\omega$ on $\dom$ is a real-valued
  function on the oriented $k$-cubes of $\dom$ with values
  that change sign when the orientation reverses:
  $\omega(-C)=-\omega(C)$.

  We denote the vector space of $k$-forms on $\dom$ by
  $\Omega^k_{\dom}$.
\end{definition}
Let us spell out what this means for the cases $k\leq 2$
which are of principal interest to us.
\begin{compactenum}
\item An orientation of a point is a sign so a $0$-form
  $f$ on $\dom$ satisfies $f(-p)=-f(p)$, for $p\in\dom$, and
  so amounts to a function $f:\dom\to\R$.
\item View an oriented edge as an ordered pair $ji$ of
  vertices, read as the edge from $i$ to $j$.  Now a
  $1$-form $\alpha$ on $\dom$ is a function on such pairs in
  $\dom$ with $\alpha_{ij} = - \alpha_{ji}$ (c.f.\
  \cite[Definition 2.23]{bobenko_discrete_2008}).
\item View an oriented quadrilateral as a cyclic ordering
  $\ijkl$ of its adjacent vertices so that a $2$-form
  $\omega$ on $\dom$ is a function of such cyclicly ordered
  quadrilaterals of $\dom$ with
  $\omega_{\ijkl}=-\omega_{ijk\ell}$.
\end{compactenum}

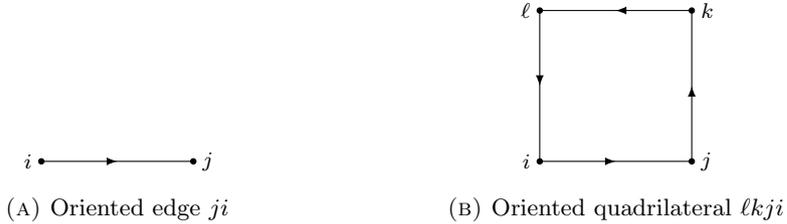
\begin{figure}[ht]
    \begin{subfigure}[t]{.4\linewidth}\centering
      \begin{tikzpicture}[x=1.0cm,y=1.0cm,decoration={
          markings, mark=at position 0.5 with
          {\arrow{latex}}},font=\footnotesize]
        \draw[black,postaction={decorate}] (0,0)--(2,0);
        \draw[fill] (0,0) node[left] {$i$}
        circle[radius=1pt]; \draw[fill] (2,0) node[right]
        {$j$} circle[radius=1pt];
      \end{tikzpicture}
      \caption{Oriented edge $ji$}
  \end{subfigure}
\hspace{.1\linewidth}
\begin{subfigure}[t]{.4\linewidth}\centering
  \begin{tikzpicture}[x=1.0cm,y=1.0cm,font=\footnotesize,decoration={
      markings, mark=at position 0.5 with {\arrow{latex}}}]
    \draw[black,postaction={decorate}] (0,0) --(2,0);
    \draw[black,postaction={decorate}] (2,0) --(2,2);
    \draw[black,postaction={decorate}] (2,2) --(0,2);
    \draw[black,postaction={decorate}] (0,2) --(0,0);
    \draw[fill] (0,0) node[left] {$i$} circle[radius=1pt];
    \draw[fill] (2,0) node[right] {$j$} circle[radius=1pt];
    \draw[fill] (0,2) node[left] {$\ell$}circle[radius=1pt];
    \draw[fill] (2,2) node[right] {$k$} circle[radius=1pt];
  \end{tikzpicture}
  \caption{Oriented quadrilateral $\ijkl$}
\end{subfigure}
  \caption{Edges and quadrilaterals: the labels are read
    from right to left.}
  \label{fig:1}
\end{figure}

\subsubsection{Exterior derivative}
\label{sec:exterior-derivative}

Just as in the smooth case, we have an exterior derivative
and exterior product which together satisfy Leibniz rule.
For our purposes, we only need this structure on
$\Omega^k_{\dom}$, for $k\leq 2$ so we restrict attention to
this case below.

\begin{definition}[Exterior derivative]\label{th:46}
  The \emph{exterior derivative}
  $\d:\Omega^k_{\dom}\to\Omega^{k+1}_{\dom}$, $k=0,1$ is defined by
  \begin{compactenum}
  \item $\d f_{ji}=f_j-f_i$, for $f\in\Omega^{0}_{\dom}$.
  \item
    $\d\alpha_{\ijkl}=\alpha_{i\ell}+\alpha_{\ell k}+\alpha_{kj}+\alpha_{ji}$, for $\alpha\in\Omega^1_{\dom}$.
  \end{compactenum}
\end{definition}

An easy computation gives:
\begin{proposition}
  \label{th:44}
  $\d\circ\d=0$.
\end{proposition}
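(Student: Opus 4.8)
The plan is to reduce everything to a one-line telescoping computation on a single quadrilateral. A $2$-form is determined by its values on oriented quadrilaterals, and $\d\circ\d$ need only be checked on $\Omega^0_\dom$ since no exterior derivative is defined out of $\Omega^2_\dom$ in our setting. So it suffices to fix $f\in\Omega^0_\dom$ and an oriented quadrilateral of $\dom$ with cyclic vertex ordering $\ijkl$, and to show that $\d(\d f)_{\ijkl}=0$.

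First I would simply unwind the definitions. Writing $\alpha:=\d f\in\Omega^1_\dom$, Definition~\ref{th:46}(1) gives $\alpha_{ab}=f_a-f_b$ for every edge $ab$ of $\dom$; in particular $\alpha$ really is a $1$-form, as $\alpha_{ab}=-\alpha_{ba}$. Then Definition~\ref{th:46}(2) yields
\[
  \d(\d f)_{\ijkl}=\alpha_{i\ell}+\alpha_{\ell k}+\alpha_{kj}+\alpha_{ji}
  =(f_i-f_\ell)+(f_\ell-f_k)+(f_k-f_j)+(f_j-f_i),
\]
and the right-hand side telescopes to $0$. Since the quadrilateral was arbitrary, $\d(\d f)=0$; and since $f$ was arbitrary, $\d\circ\d=0$, which is the whole assertion.

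There is essentially no obstacle here. The only point worth a remark is that the right-hand side of Definition~\ref{th:46}(2) is independent of the choice of base vertex in the cyclic ordering and changes sign under reversal of orientation, so that $\d\alpha$ is indeed a bona fide $2$-form; this is immediate from the cyclic symmetry of the four-term sum $\alpha_{i\ell}+\alpha_{\ell k}+\alpha_{kj}+\alpha_{ji}$ together with $\alpha_{ab}=-\alpha_{ba}$, and is in any case part of the content of Definition~\ref{th:46}. Hence the displayed computation is all that is needed.
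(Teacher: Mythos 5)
Your proposal is correct and is precisely the "easy computation" the paper leaves to the reader: unwind the two definitions on a single quadrilateral and watch the four-term sum telescope. Nothing further is needed, since $\d$ is only defined on $\Omega^0_{\dom}$ and $\Omega^1_{\dom}$ here, so $\d\circ\d$ need only be checked on $0$-forms, as you observe.
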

\begin{remark}
  The exterior derivative can be extended to $k$-forms,
  $k\geq 2$, retaining $\d\circ\d=0$, but we will have no
  need of such generality here.
\end{remark}

\subsubsection{Exterior algebra}

\begin{definition}[Exterior product]\label{th:47}
  The \emph{exterior product} $\wedge:\Omega^k_{\dom}\times
  \Omega^{\ell}_{\dom}\to\Omega_{\dom}^{k+\ell}$ is defined, for $k+\ell\leq 2$, as
  follows:
  \begin{compactitem}
  \item For $f,g\in\Omega^0_{\dom}$, $(f\wedge g)_i=f_ig_i$.
  \item For $f\in\Omega^0_{\dom}$,
    $\alpha\in\Omega^1_{\dom}$,
    $(f\wedge\alpha)_{ij}=(\alpha\wedge f)_{ij}=\half(f_i+f_j)\alpha_{ij}$.
  \item For $f\in\Omega^0_{\dom}$,
    $\omega\in\Omega^2_{\dom}$,
    $(f\wedge\omega)_{\ijkl}=(\omega\wedge f)_{\ijkl}=\tfrac14(f_i+f_j+f_k+f_{\ell})\omega_{\ijkl}$.
  \item For $\alpha,\beta\in\Omega^1_{\dom}$,
    \begin{equation*}
      (\alpha\wedge\beta)_{\ijkl}=\tfrac14\bigl(
      (\alpha_{ji}
  + \alpha_{k\ell}) (\beta_{\ell i} + \beta_{kj})-
(\alpha_{\ell i} + \alpha_{kj}) (\beta_{ji}
  + \beta_{k\ell})
      \bigr).
    \end{equation*}
  \end{compactitem}
\end{definition}

The key point of these definitions is that the exterior
product retains the skew-symmetry and Leibniz rules of the
smooth setting. Indeed, a routine computation gives:
\begin{proposition}
  \label{th:45}
  Let $\alpha\in\Omega^k_{\dom}$ and
  $\beta\in\Omega^{\ell}_{\dom}$.  Then,
  \begin{compactenum}
  \item for $k+\ell\leq 2$,
    $\alpha\wedge\beta=(-1)^{k\ell}\beta\wedge\alpha$;
  \item For $k+\ell\leq 1$,
    $\d(\alpha\wedge\beta)=\d\alpha\wedge\beta+(-1)^k\alpha\wedge\d\beta$.
  \end{compactenum}
\end{proposition}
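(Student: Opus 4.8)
The plan is to prove (1) and (2) by a short case-check on the pair of degrees $(k,\ell)$, using (1) to cut down the cases that need examining for (2). Nothing beyond Definitions~\ref{th:46} and~\ref{th:47} is required; the point of the averaging factors $\half$ and $\tfrac14$ in Definition~\ref{th:47} is precisely that they make this naive computation come out right.

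\emph{Part (1).} When $k\ell=0$ we have $(-1)^{k\ell}=1$ and the identity is immediate from the defining formulas, each of which is visibly symmetric in its two arguments ($f_ig_i=g_if_i$ for $0$-forms; $f\wedge\alpha$ and $f\wedge\omega$ by inspection). The only case with content is $k=\ell=1$. Write $a_1=\alpha_{ji}$, $a_2=\alpha_{kj}$, $a_3=\alpha_{\ell k}$, $a_4=\alpha_{i\ell}$ for the four values of $\alpha$ on the edges of the face $\ijkl$ taken with its cyclic orientation (so that $\d\alpha_{\ijkl}=a_1+a_2+a_3+a_4$), and similarly $b_1,\dots,b_4$ for $\beta$. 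Using $\alpha_{k\ell}=-\alpha_{\ell k}$, $\beta_{\ell i}=-\beta_{i\ell}$, etc., the defining formula rearranges to
\begin{equation*}
  (\alpha\wedge\beta)_{\ijkl}=\tfrac14\bigl((a_1-a_3)(b_2-b_4)-(a_2-a_4)(b_1-b_3)\bigr).
\end{equation*}
Interchanging $\alpha$ and $\beta$ negates the right-hand side, which is (1) in this case. (This form also makes transparent the well-definedness implicit in Definition~\ref{th:47}: the expression is invariant under a cyclic rotation of the vertices and changes sign under a reversal, since reversal sends $(a_1,a_2,a_3,a_4)\mapsto(-a_4,-a_3,-a_2,-a_1)$.)

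\emph{Part (2).} By (1) it suffices to treat $(k,\ell)=(0,0)$ and $(0,1)$. Indeed, for $(1,0)$ we have $\alpha\wedge f=f\wedge\alpha$, $\d\alpha\wedge f=f\wedge\d\alpha$ and $\alpha\wedge\d f=-\d f\wedge\alpha$ by (1), so the $(0,1)$ identity $\d(f\wedge\alpha)=\d f\wedge\alpha+f\wedge\d\alpha$ rewrites as $\d(\alpha\wedge f)=\d\alpha\wedge f-\alpha\wedge\d f$, which is (2) for $(1,0)$. For $(0,0)$, $\d(f\wedge g)_{ji}=f_jg_j-f_ig_i$, while
\begin{equation*}
  (\d f\wedge g)_{ji}+(f\wedge\d g)_{ji}=\half(g_i+g_j)(f_j-f_i)+\half(f_i+f_j)(g_j-g_i),
\end{equation*}
and the four mixed products cancel in pairs, leaving $f_jg_j-f_ig_i$.

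There remains the case $k=0$, $\ell=1$. Evaluate both sides on a face $\ijkl$ and put $\bar f:=\tfrac14(f_i+f_j+f_k+f_\ell)$. Then $(f\wedge\d\alpha)_{\ijkl}=\bar f\,\d\alpha_{\ijkl}=\bar f(a_1+a_2+a_3+a_4)$, while
\begin{equation*}
  \d(f\wedge\alpha)_{\ijkl}=\half(f_i+f_j)a_1+\half(f_j+f_k)a_2+\half(f_k+f_\ell)a_3+\half(f_\ell+f_i)a_4 ,
\end{equation*}
so the coefficient of $a_1$ in $\d(f\wedge\alpha)_{\ijkl}-(f\wedge\d\alpha)_{\ijkl}$ is $\half(f_i+f_j)-\bar f=\tfrac14(f_i+f_j-f_k-f_\ell)$, and cyclically for $a_2,a_3,a_4$. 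On the other hand, expanding $(\d f\wedge\alpha)_{\ijkl}$ by the $1$-form $\wedge$ $1$-form rule of Definition~\ref{th:47} (with $\d f_{ji}=f_j-f_i$, etc.) produces exactly these coefficients of $a_1,\dots,a_4$; comparing them gives $\d(f\wedge\alpha)=\d f\wedge\alpha+f\wedge\d\alpha$, and with it the whole of (2). The only genuine obstacle throughout is bookkeeping: in this last case one must carry the four edge-values of $\alpha$ and the four vertex-values of $f$ around the face and check that the coefficients match on the nose. Subtracting the face-mean $\bar f$ first, as above, isolates the relevant combinations of the $f_\bullet$ and reduces the verification to four elementary scalar identities.
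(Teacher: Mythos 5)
Your proof is correct and complete, and it is precisely the "routine computation" that the paper asserts and omits: a direct case-check against Definitions~\ref{th:46} and~\ref{th:47}, with the only nontrivial verifications being the antisymmetry of the $1$-form/$1$-form product and the Leibniz rule in the $(0,1)$ case, both of which you carry out correctly (your rewriting of $(\alpha\wedge\beta)_{\ijkl}$ as $\tfrac14\bigl((a_1-a_3)(b_2-b_4)-(a_2-a_4)(b_1-b_3)\bigr)$ and the coefficient match for $a_1,\dots,a_4$ both check out). The reduction of the $(1,0)$ case to $(0,1)$ via part (1) is a tidy touch, but there is no substantive difference from what the authors intend.
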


\begin{remark}
  One can extend the exterior product to arbitrary $k,\ell$
  in such a way that \cref{th:45} continues to hold.
  However, the product is not associative: we can have
  $f\wedge(g\wedge\alpha)\neq (f\wedge g)\wedge\alpha$ even
  for functions $f,g$ and a $1$-form $\alpha$.  It seems
  possible that $\d$ and $\wedge$ are the unary and binary
  operators for an $A_{\infty}$-algebra structure on
  $\Omega_{\dom}$: see \cite{MR2488210} for some evidence of
  this.
\end{remark}

\subsubsection{Vector-valued forms}
\label{sec:vector-valued-forms}

In what follows, we shall often have recourse to forms
taking values in a vector space $V$ rather than simply
$\R$.  We denote the space of $k$-forms on $\dom$ with values in $V$
by $\Omega^k_{\dom}(V)$.  Thus
$\Omega^k_{\dom}(V)=\Omega^k_{\dom}\tens_{\R}V$.

The exterior derivative
$\d:\Omega^k(V)\to\Omega^{k+1}(V)$ is defined just as in
\cref{th:46} but, for the exterior product, we need to be
able to multiply values which requires extra structure:
\begin{definition}[Exterior product of vector-valued forms]
  Let $V,W,U$ be vector spaces and $B:V\times W\to U$ a
  bilinear map.  Then there is a bilinear map
  \begin{align*}
    \Omega^k_{\dom}(V)\times\Omega_{\dom}^{\ell}(W)&\to\Omega_{\dom}^{k+\ell}(U)\\
    (\alpha,\beta)&\mapsto B(\alpha\wedge\beta)
  \end{align*}
  obtained by replacing all multiplications in the formulae
  of \cref{th:47} by applications of $B$.
\end{definition}
\begin{xmpl}
  For $\alpha\in\Omega_{\dom}^1(V)$,
  $\beta\in\Omega_{\dom}^1(W)$,
  $B(\alpha\wedge\beta)\in\Omega^2_{\dom}(U)$ is given by
  \begin{equation*}
    B(\alpha\wedge\beta)_{\ijkl}=\tfrac14\bigl(
B(\alpha_{ji}
  + \alpha_{k\ell},\beta_{\ell i} + \beta_{kj})-
B(\alpha_{\ell i} + \alpha_{kj},\beta_{ji}
  + \beta_{k\ell})      \bigr).
  \end{equation*}
\end{xmpl}

This product has Leibniz rule just as in \cref{th:45}(2)
and, when $V=W$, is graded-(anti)-commutative if $B$ is
(skew)-symmetric:
\begin{lemma}
  \label{th:48}
  Let $B:V\times V\to U$ be bilinear and
  $\alpha\in\Omega_{\dom}^k(V)$,
  $\beta\in\Omega_{\dom}^{\ell}(V)$ then:
  \begin{compactenum}[(a)]
  \item if $B$ is symmetric,
    $\alpha\wedge\beta=(-1)^{k\ell}\beta\wedge\alpha$;
  \item if $B$ is skew-symmetric,
    $\alpha\wedge\beta=(-1)^{k\ell+1}\beta\wedge\alpha$.
  \end{compactenum}
\end{lemma}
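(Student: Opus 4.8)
The plan is to reduce the vector-valued statement to the already-established real-valued one, \cref{th:45}(1), by expanding in a basis of $V$. First I would fix a basis $(e_a)$ of $V$ and write $\alpha=\sum_a\alpha^a\tens e_a$ and $\beta=\sum_b\beta^b\tens e_b$ with $\alpha^a\in\Omega^k_{\dom}$ and $\beta^b\in\Omega^{\ell}_{\dom}$ ordinary real-valued forms. The key observation is that, since each defining formula in \cref{th:47} is a fixed $\R$-linear combination of products of two form-values, forming $B(\alpha\wedge\beta)$ — that is, replacing each such product by the corresponding value of $B$ — and then using bilinearity of $B$ gives
\begin{equation*}
  B(\alpha\wedge\beta)=\sum_{a,b}(\alpha^a\wedge\beta^b)\,B(e_a,e_b),
\end{equation*}
with the entirely analogous expansion $B(\beta\wedge\alpha)=\sum_{a,b}(\beta^b\wedge\alpha^a)\,B(e_b,e_a)$ obtained by applying the same identity to the pair $(\beta,\alpha)$.

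Next I would apply \cref{th:45}(1) termwise to the scalar forms, using $\beta^b\wedge\alpha^a=(-1)^{k\ell}\alpha^a\wedge\beta^b$, to rewrite
\begin{equation*}
  B(\beta\wedge\alpha)=(-1)^{k\ell}\sum_{a,b}(\alpha^a\wedge\beta^b)\,B(e_b,e_a).
\end{equation*}
Now the hypothesis on $B$ enters: if $B$ is symmetric then $B(e_b,e_a)=B(e_a,e_b)$, so the right-hand side equals $(-1)^{k\ell}B(\alpha\wedge\beta)$; if $B$ is skew-symmetric then $B(e_b,e_a)=-B(e_a,e_b)$ and it equals $(-1)^{k\ell+1}B(\alpha\wedge\beta)$. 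Finally, multiplying through by $(-1)^{k\ell}$ (resp.\ $(-1)^{k\ell+1}$), which squares to $1$, turns these into the asserted identities (a) and (b).

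I do not expect a genuine obstacle here: the computation is bookkeeping and the sign analysis is immediate. The one point worth a sentence of care is the first displayed identity — that ``replace scalar multiplication by $B$'' really does commute with expanding coefficients in a basis of $V$. This follows at once from bilinearity of $B$, but it depends on the exterior product, and hence \cref{th:45}(1), being available, which is why the hypothesis $k+\ell\le 2$ is needed and inherited by the lemma.
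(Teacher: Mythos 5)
Your proof is correct. The paper itself offers no proof of \cref{th:48} (it is stated as an immediate consequence of the definitions, in the same ``routine computation'' spirit as \cref{th:45}), so there is nothing to diverge from; the intended argument is a direct case-by-case check of the formulas in \cref{th:47} with multiplication replaced by $B$, and your basis expansion $B(\alpha\wedge\beta)=\sum_{a,b}(\alpha^a\wedge\beta^b)\,B(e_a,e_b)$ is a clean, slightly more structured way of organising exactly that check, correctly reducing the sign bookkeeping to the scalar statement \cref{th:45}(1) plus the (skew)-symmetry of $B$. Your closing caveat is also right: the identity only makes sense for $k+\ell\le 2$, where the product is defined.
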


\begin{notation}\label{th:49}
  When $B:V\times V\to\Wedge^2V$ is exterior product
  $B(v,w)=v\wedge w$, we write $\alpha\curlywedge\beta$ for
  $B(\alpha\wedge\beta)$.

  In view of \cref{th:48}(b), we have
  $\alpha\curlywedge\beta=\beta\curlywedge\alpha$, for
  $\alpha,\beta\in\Omega^1_{\dom}(V)$.
\end{notation}

Our exterior calculus of vector valued forms gives a
convenient approach to the mixed area\footnote{In fact, the
  mixed area in \cite{bobenko_discrete_2008} is only defined
for polygons lying in translates of a fixed $2$-plane $U$
and so takes values in a fixed line $\Wedge^2U$ which is
identified with a scalar via a choice of area form.} discussed by
Bobenko--Suris \cite[\S4.5.2]{bobenko_discrete_2008}.  For
this, let $x:\dom\to V$ and contemplate an ordered
quadrilateral $\ijkl$ of $\dom$.  The area
$A(x,x)_{\ijkl}\in\Wedge^2V$ is given by
\begin{equation*}
  A(x,x)_{\ijkl}:=\half(x_i\wedge x_j+x_j\wedge x_k+x_k\wedge
  x_{\ell}+x_{\ell}\wedge x_i)=\half (x_i-x_k)\wedge(x_j-x_{\ell}).
\end{equation*}
However, we readily see that $x_i\wedge x_j=(x\curlywedge\d
x)_{ji}$ so the first equation reads
$A(x,x)=\half\d(x\curlywedge\d x)$ which is $\half\d
x\curlywedge \d x$ by the Leibniz rule. The mixed area is
obtained by polarising $A$ on the space of maps $y: \dom\to V$
edge-parallel to $x$ (that is, $\d y_{ji}\prl \d x_{ji}$ on all
edges $ji$) and we conclude:
\begin{lemma}
  \label{th:50}
  Let $x,y:\dom\to V$ be edge-parallel.  Then the mixed area
  of $x$ and $y$ is given by
  \begin{equation*}
    A(x,y)=\half \d x\curlywedge \d y.
  \end{equation*}
\end{lemma}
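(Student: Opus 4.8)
The strategy is to reduce the claimed identity $A(x,y) = \tfrac12\, \d x \curlywedge \d y$ to the unpolarised case $A(x,x) = \tfrac12\, \d x \curlywedge \d x$ established in the text immediately above, by exploiting bilinearity of both sides. First I would observe that the operation $(u,v)\mapsto \d u \curlywedge \d v$ is symmetric in $u$ and $v$: this follows directly from \cref{th:49}, since $\curlywedge$ is the product associated to the skew-symmetric pairing $V\times V\to\Wedge^2V$, so $\alpha\curlywedge\beta = \beta\curlywedge\alpha$ for $1$-forms $\alpha,\beta$, and in particular with $\alpha = \d u$, $\beta = \d v$. Likewise $A$ is symmetric as a function of two edge-parallel maps, being the polarisation of the quadratic form $x\mapsto A(x,x)$; the text defines the mixed area $A(x,y)$ in precisely this way.

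The second ingredient is that both sides behave correctly under the operations that build $x+y$ out of $x$ and $y$. Given edge-parallel maps $x,y:\dom\to V$, the sum $x+y$ is again edge-parallel to $x$ (indeed $\d(x+y)_{ji} = \d x_{ji} + \d y_{ji}$ lies in the same line since $\d y_{ji}\prl\d x_{ji}$). So I may apply the identity $A(z,z) = \tfrac12\,\d z\curlywedge\d z$ with $z = x$, $z = y$, and $z = x+y$. Expanding $A(x+y,x+y)$ by bilinearity of $A(\cdot,\cdot)$ — which holds since $A$ is a polarised quadratic form — gives $A(x,x) + 2A(x,y) + A(y,y)$, using symmetry of $A$. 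Expanding $\tfrac12\,\d(x+y)\curlywedge\d(x+y)$ by bilinearity of $\curlywedge$ (inherited from bilinearity of the wedge pairing $V\times V\to\Wedge^2V$, applied coefficientwise in the exterior-product formulae of \cref{th:47}) gives $\tfrac12\,\d x\curlywedge\d x + \d x\curlywedge\d y + \tfrac12\,\d y\curlywedge\d y$, again using the symmetry from \cref{th:49}. Subtracting the already-known identities $A(x,x) = \tfrac12\,\d x\curlywedge\d x$ and $A(y,y) = \tfrac12\,\d y\curlywedge\d y$ leaves $2A(x,y) = 2\,\d x\curlywedge\d y$, which is the assertion.

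The only genuinely delicate point is pinning down exactly what "$A(x,y)$ is the mixed area, the polarisation of $A(\cdot,\cdot)$" means for non-immersed or degenerate quadrilaterals, and checking that the polarisation is well-defined on the affine space of maps edge-parallel to a fixed $x$ — i.e.\ that $\tfrac12(A(x+y,x+y) - A(x,x) - A(y,y))$ does not depend on representatives in a way that breaks the argument. But here there is nothing to check beyond what is already implicit in the text: $A(x,x)$ is a concrete $\Wedge^2V$-valued function of the quadrilateral, $z\mapsto A(z,z)$ is manifestly quadratic in the vertex values, so its polarisation is the unique symmetric bilinear form $A(x,y) = \tfrac12(A(x+y,x+y)-A(x,x)-A(y,y))$, and the computation above is then purely formal. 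So I do not expect any real obstacle; the proof is a two-line polarisation argument once the symmetry of $\curlywedge$ on $1$-forms (\cref{th:49}) and the identity $A(x,x) = \tfrac12\,\d x\curlywedge\d x$ are in hand, and I would present it at that level of brevity.
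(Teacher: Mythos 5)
Your proposal is correct and is essentially the paper's own argument: the text establishes $A(x,x)=\tfrac12\,\d x\curlywedge\d x$ by the Leibniz-rule computation and then obtains the lemma by polarising over the affine space of maps edge-parallel to $x$, which is exactly the bilinearity-plus-symmetry argument you spell out. Your version merely makes explicit the symmetry of $\curlywedge$ on $1$-forms (\cref{th:49}) and the fact that $x+y$ remains edge-parallel, both of which are fine.
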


\subsection{Discrete gauge theory}
\label{sec:discr-gauge-theory}

The gauge theoretic approach to discrete differential
geometry is well-established, see, for example,
\cite{burstall_isothermic_2011,burstall_discrete_2014,burstall_discrete_2015}.
Here we simply recall the relevant definitions to set
notation:
\begin{definition}[Bundles and connections]
  A \emph{bundle} on $\dom$ is a surjection of a set $X$
  onto $\dom$ with fibres $X_i$, $i\in\dom$.

  A \emph{connection} $\Gamma$ on $X$ is an assignment of a
  bijection $\Gamma_{ji}:X_i\to X_j$ to each oriented edge
  $ji$ of $\dom$ such that $\Gamma_{ij}\Gamma_{ji}=1_{X_i}$
  on all edges.

  A \emph{gauge transformation} is a section $T$ of $\Aut(X)$
  the bundle of fibre automorphisms: thus a map $i\mapsto
  T_{i}\in\Aut(X_i)$.

  Gauge transformations act on connections by
  \begin{equation*}
    (T\cdot \Gamma)_{ji}=T_j\Gamma_{ji}T_i^{-1}.
  \end{equation*}

  A section $\sigma$ of $X$ is \emph{$\Gamma$-parallel} if, on each
  $ji$, we have
  \begin{equation*}
    \sigma_j=\Gamma_{ji}\sigma_{i}.
  \end{equation*}

  A connection $\Gamma$ is \emph{flat} if, on each
  quadrilateral $\ijkl$, we have
  \begin{equation*}
    \Gamma_{k\ell}\Gamma_{\ell i}=\Gamma_{kj}\Gamma_{ji},
  \end{equation*}
  a condition which is invariant under cyclic permutation of
  vertices.
  
  The action of gauge transformations permutes flat connections.
\end{definition}

\begin{remark}\label{th:54}
  In our applications, our bundles will have more structure:
  the fibres will be projective lines or vector spaces with
  an inner product and our connections and gauge
  transformations will preserve that structure.
\end{remark}

\subsection{Simply connected subsets of
  \texorpdfstring{$\Z^N$}{ZN}}
\label{sec:simply-conn-subs}

We want to integrate closed $1$-forms and flat connections
on $\dom$ which amounts to a topological requirement on
$\dom$.  Here are the ingredients:
\begin{definition}[Paths and homotopy]
  A \emph{path in $\dom$ from $p$ to $q$} is a
  sequence of vertices $(i_{n}\dots i_{0})$ in $\dom$ such
  that $i_0=p$, $i_n=q$ and each $i_{j+1}i_j$ is an edge in
  $\dom$.

  $\dom$ is \emph{connected} if there is a path between any
  two points of $\dom$.

  Two paths $\gamma,\hat{\gamma}$ in $\dom$ from $p$ to $q$
  are \emph{based homotopic}, $\gamma\simeq\hat{\gamma}$, if
  there is a sequence of paths $(\gamma_m)_{m=0}^l$ in
  $\dom$ from $p$ to $q$ with $\gamma_0=\gamma$,
  $\gamma_l=\hat{\gamma}$ and each $\gamma_m$ differs from
  $\gamma_{m+1}$ either:
  \begin{compactenum}[(a)]
  \item on a single edge by $(iji)$ (\cref{fig:iji}), or
  \item on a single quadrilateral $\ijkl$ by $(k\ell i)$
    versus $(kji)$ (\cref{fig:ijkl}).
  \end{compactenum}

  $\dom$ is \emph{simply connected} if it is connected and any
  two paths in $\dom$ with the same end-points are based
  homotopic.
\end{definition}
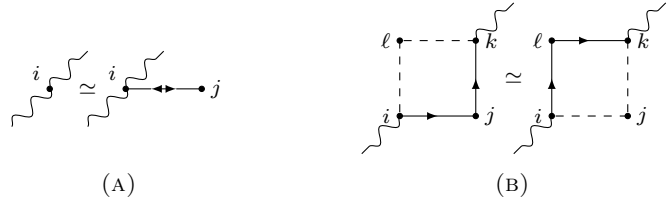
\begin{figure}[ht]
  \begin{subfigure}[b]{.4\linewidth}
    \centering
    \begin{tikzpicture}[x=.5cm,y=0.5cm,font=\footnotesize]
      \begin{scope}
        \draw[snake=snake] (-1,-1)--(1,1); \draw[fill] (0,0)
        node[above left] {$i$} circle[radius=1pt]; \draw
        (1,0) node {$\simeq$};
        \end{scope}
      \begin{scope}[xshift=1cm]
        \draw[black,] (0,0) --(.67,0);
        \draw[black,latex-latex] (.67,0)-- (1.33,0); \draw
        (1.33,0) -- (2,0); \draw[snake=snake]
        (-1,-1)--(1,1); \draw[fill] (0,0) node[above left]
        {$i$} circle[radius=1pt]; \draw[fill] (2,0)
        node[right] {$j$} circle[radius=1pt];
        \draw (0,-1.5) node{};
      \end{scope}
    \end{tikzpicture}
    \caption{}\label{fig:iji}
  \end{subfigure}
  \begin{subfigure}[b]{.4\linewidth}
    \centering
    \begin{tikzpicture}[x=.5cm,y=0.5cm,font=\footnotesize,decoration={
      markings, mark=at position 0.5 with {\arrow{latex}}}]
    \draw[black,postaction={decorate}] (0,0)--(2,0);
    
  \draw[black,postaction={decorate}] (2,0) --(2,2);
  \draw[dashed] (2,2) --(0,2);
  \draw[dashed] (0,2) --(0,0);
    \draw[fill] (0,0) node[left] {$i$} circle[radius=1pt];
    \draw[fill] (2,0) node[right] {$j$} circle[radius=1pt];
    \draw[fill] (0,2) node[left] {$\ell$}circle[radius=1pt];
    \draw[fill] (2,2) node[right] {$k$} circle[radius=1pt];
    \draw[snake=snake] (2,2)--(3,3);
    \draw[snake=snake](0,0)--(-1,-1);
    \draw (3,1) node {$\simeq$};
    \begin{scope}[xshift=2cm]
      \draw[black,postaction={decorate}] (0,0)--(0,2);
    
  \draw[black,postaction={decorate}] (0,2) --(2,2);
  \draw[dashed] (2,2) --(2,0);
  \draw[dashed] (2,0) --(0,0);
    \draw[fill] (0,0) node[left] {$i$} circle[radius=1pt];
    \draw[fill] (2,0) node[right] {$j$} circle[radius=1pt];
    \draw[fill] (0,2) node[left] {$\ell$}circle[radius=1pt];
    \draw[fill] (2,2) node[right] {$k$} circle[radius=1pt];
    \draw[snake=snake] (2,2)--(3,3);
    \draw[snake=snake](0,0)--(-1,-1);
    \end{scope}
  \end{tikzpicture}
  \caption{}\label{fig:ijkl}
  \end{subfigure}
  \caption{Elementary homotopies}
  \label{fig:2}
\end{figure}

\begin{xmpl}
  Given a quadrilateral $\ijkl$, we have
  $(ji)\simeq (jk\ell i)$.  Indeed,
  $(jk\ell i)\simeq (jkji)$ by (b) and then
  $(jkji)\simeq (ji)$ by (a).
\end{xmpl}

A pleasant exercise provides sufficient examples for our
needs:
\begin{proposition}
  \label{th:51}
\item[]
  \begin{compactenum}[(a)]
  \item $\Z^N$ is simply connected.
  \item If $\dom\subset\Z^N$ is simply connected so is
    $\dom\times\set{0,1}\subset\Z^{N+1}$.
  \end{compactenum}
\end{proposition}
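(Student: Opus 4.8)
The plan is to put every path into a normal form that depends only on its endpoints; simple connectedness is then immediate, and connectedness is clear in both cases.

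\textbf{Part (a).} Given a path $\gamma=(i_n\dots i_0)$ in $\Z^N$, each step $i_{j+1}-i_j$ equals $\pm e_k$ for a standard basis vector; call $k$ the \emph{direction} of that step. Two remarks drive the argument: two consecutive steps of distinct directions are two sides of a quadrilateral of $\Z^N$, so elementary homotopy~(b) interchanges their order; and a consecutive pair $+e_k,-e_k$ (in either order) is a backtrack, which elementary homotopy~(a) removes. I would run the obvious rewriting — while some such interchange or deletion applies, perform one — and observe that it terminates, since the pair $(\text{length},\ \#\text{direction-inversions})$ strictly decreases lexicographically at each step (a deletion drops the length; an interchange keeps the length but lowers the inversion count). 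In a terminal path the directions are non-decreasing along $\gamma$, so the steps split into consecutive blocks of directions $1,2,\dots,N$; each such block contains no cancelling pair and is therefore a monotone run $e_k^{m_k}$, and since no step outside the $k$-th block alters the $k$-th coordinate, $m_k=(q-p)_k$. Hence $\gamma$ is based homotopic to the fixed ``staircase'' path from $p$ to $q$ that runs monotonically through directions $1,\dots,N$ in turn, and any two paths with the same endpoints are based homotopic to the same staircase.

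\textbf{Part (b).} Write $\hat\dom=\dom\times\set{0,1}$ and call the new coordinate direction \emph{vertical}. First I would record that the quadrilaterals of $\hat\dom$ are precisely the \emph{horizontal} ones (a quadrilateral of $\dom$ at a fixed level) and the \emph{vertical} ones (a prism side $(i,0)(j,0)(j,1)(i,1)$ over an edge $ij$ of $\dom$), with an analogous statement for edges. Then I would assemble three facts. (i) Writing $\sigma^\epsilon$ for the lift of a path $\sigma$ in $\dom$ to level $\epsilon$, an elementary homotopy in $\dom$ lifts verbatim to $\hat\dom$ using a horizontal edge or quadrilateral, so $\sigma\simeq\sigma'$ in $\dom$ implies $\sigma^\epsilon\simeq(\sigma')^\epsilon$ in $\hat\dom$. (ii) Elementary homotopy~(b) on the vertical quadrilateral over an edge $ww'$ asserts that ``step up at $w$, then cross $ww'$ at the top level'' is homotopic to ``cross $ww'$ at the bottom level, then step up at $w'$''; chaining this along the edges of a path $\tau$ in $\dom$ slides a single vertical step from the front of $\tau^\epsilon$ to its back, flipping the level of the lift. (iii) Using (ii) together with deletion of vertical backtracks via~(a), an induction on the number of vertical steps shows that any path in $\hat\dom$ from $(p,\epsilon_p)$ to $(q,\epsilon_q)$ is based homotopic to $\sigma^{\epsilon_p}$ followed by the canonical vertical segment from $(q,\epsilon_p)$ to $(q,\epsilon_q)$ (empty or a single step), for some path $\sigma$ from $p$ to $q$ in $\dom$. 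Finally, given two paths in $\hat\dom$ with common endpoints, (iii) presents each as some $\sigma^{\epsilon_p}$, resp.\ $(\sigma')^{\epsilon_p}$, followed by the same vertical segment; simple connectedness of $\dom$ gives $\sigma\simeq\sigma'$, and (i) together with the fact that appending a fixed path respects based homotopy completes the proof.

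\textbf{Main obstacle.} The whole argument is combinatorial bookkeeping, with no conceptual difficulty; the one place requiring genuine care is the induction of step~(iii), where one must track exactly how pushing the last vertical step past the terminal horizontal arc flips that arc's level, so that what remains is a path with one fewer vertical step and is therefore a strictly smaller instance of the same claim. The subsidiary points — that every cell invoked by the lifting and sliding moves lies in $\hat\dom$, that the rewriting in~(a) is well-founded, and that interchanges never get blocked (a blocked configuration always exhibits a backtrack to delete) — are all routine given the description of the cells of $\hat\dom$ above.
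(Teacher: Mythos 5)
Your argument is correct and complete. The paper itself offers no proof of this proposition — it is explicitly left as ``a pleasant exercise'' — so there is nothing to compare against; your normal-form strategy is the natural way to discharge that exercise. In part (a) the two rewriting moves are exactly the paper's elementary homotopies: a consecutive pair $\pm e_k,\mp e_k$ is a sub-path of the form $(iji)$, removable by move (a), and an adjacent pair of steps in distinct directions spans a quadrilateral all of whose vertices lie in $\Z^N$, so move (b) interchanges them; your lexicographic measure makes the rewriting well-founded, and the terminal staircase is pinned down by the endpoints because only the $k$-th block moves the $k$-th coordinate and an alternating sign within a block would be a deletable backtrack. In part (b) the decisive observations are all in place: the cells of $\dom\times\set{0,1}$ are exactly the horizontal and vertical ones you describe (so the lifted and sliding homotopies never leave the domain), two vertical steps brought adjacent by sliding necessarily cancel since there are only two levels, and concatenation with a fixed terminal segment respects based homotopy because the elementary moves are local. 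The only stylistic caveat is that in (a) you should state explicitly that the permitted interchanges are the inversion-lowering ones, so that the measure genuinely decreases at every step; as written this is implicit in your termination argument but worth making precise.
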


We now have:
\begin{proposition}
  \label{th:52}
  Let $\dom$ be simply connected.  Then
  \begin{compactenum}[(a)]
  \item Closed $1$-forms are exact:
    $\alpha\in\Omega^1_{\dom}(V)$ has $\d\alpha=0$ if and only
    if $\alpha=\d f$, for some $f\in\Omega^0_{\dom}(V)$, unique up
    to addition of a constant.
  \item Flat connections are trivialisable: if $\Gamma$ is a
    flat connection on a bundle $X\to\dom$ and
    $p\in\dom$ is fixed, there are isomorphisms
    $T_i:X_i\to X_p$, $i\in\dom$, such that
    $\Gamma_{ji}=T_j^{-1}T_i$, on all edges $ji$ of $\dom$.

    Equivalently, there is a $\Gamma$-parallel section of
    $X$ through any point of $X_p$.
  \end{compactenum}
\end{proposition}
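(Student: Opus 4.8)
The plan is to prove both statements by discrete integration along paths, reducing the necessary path-independence to the two elementary homotopy moves via simple connectivity. Fix once and for all a basepoint $p\in\dom$; as a simply connected set $\dom$ is in particular connected, so every $q\in\dom$ is joined to $p$ by a path.

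For part~(a), the ``if'' direction is immediate from $\d\circ\d=0$ (\cref{th:44}). For ``only if'', given a closed $\alpha\in\Omega^1_\dom(V)$ and a path $\gamma=(i_n\dots i_0)$ from $p$ to $q$, I would set $f_\gamma(q):=\sum_{m=0}^{n-1}\alpha_{i_{m+1}i_m}\in V$ and show this is independent of $\gamma$. By simple connectivity it suffices to verify invariance under the two elementary moves: a type-(a) move inserts the term $\alpha_{ij}+\alpha_{ji}=0$, while a type-(b) move across a quadrilateral $\ijkl$ replaces the contribution $\alpha_{\ell i}+\alpha_{k\ell}$ of the sub-path $(k\ell i)$ by the contribution $\alpha_{ji}+\alpha_{kj}$ of $(kji)$, and these coincide precisely because $\d\alpha_{\ijkl}=\alpha_{i\ell}+\alpha_{\ell k}+\alpha_{kj}+\alpha_{ji}=0$. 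Thus $f:=f_\gamma$ is a well-defined $0$-form, and appending an edge $ji$ to a path ending at $i$ gives $\d f_{ji}=f_j-f_i=\alpha_{ji}$. Uniqueness follows since $\d g=0$ forces $g$ to be constant along every edge, hence constant on the connected set $\dom$.

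For part~(b), I would run the same argument with parallel transport in place of the path integral. For $\gamma=(i_n\dots i_0)$ from $p$ to $q$, set $T_\gamma:=\Gamma_{i_0i_1}\Gamma_{i_1i_2}\cdots\Gamma_{i_{n-1}i_n}\colon X_q\to X_p$, a bijection as a composite of bijections, with $T_\gamma=\mathrm{id}_{X_p}$ for the trivial path at $p$. A type-(a) homotopy move inserts the factor $\Gamma_{ij}\Gamma_{ji}=1_{X_i}$ and hence leaves $T_\gamma$ unchanged; a type-(b) move across $\ijkl$ replaces the two consecutive transports along $(k\ell i)$ by those along $(kji)$, and these agree exactly by the flatness relation $\Gamma_{k\ell}\Gamma_{\ell i}=\Gamma_{kj}\Gamma_{ji}$ (equivalently by its inverse $\Gamma_{i\ell}\Gamma_{\ell k}=\Gamma_{ij}\Gamma_{jk}$). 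Hence $T_\gamma=:T_q$ depends only on $q$, and appending an edge $ji$ to a path ending at $i$ gives $T_j=T_i\,\Gamma_{ij}$, that is $\Gamma_{ji}=T_j^{-1}T_i$ on every edge. Finally, a section $\sigma$ of $X$ is $\Gamma$-parallel if and only if $T_i\sigma_i$ is independent of $i$, which yields the asserted bijection between $\Gamma$-parallel sections and points $\xi\in X_p$ via $\sigma_i=T_i^{-1}\xi$ (using $T_p=\mathrm{id}_{X_p}$, so $\sigma_p=\xi$).

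I do not expect a genuine obstacle: the entire content is that the two elementary homotopies correspond precisely to the identities $\alpha_{ij}=-\alpha_{ji}$, $\d\alpha=0$ for~(a) and $\Gamma_{ij}\Gamma_{ji}=1$, flatness for~(b). The single point that needs care is the bookkeeping of orientation conventions in the type-(b) move, i.e.\ confirming that the contributions of $(k\ell i)$ and $(kji)$ really cancel against the cyclically labelled quadrilateral as set up in \cref{th:46}. One could alternatively deduce~(a) from~(b) by applying the latter to the affine line bundle $\dom\times V$ with connection $\Gamma_{ji}\colon v\mapsto v+\alpha_{ji}$, whose flatness condition is exactly $\d\alpha=0$, but the direct argument seems cleaner.
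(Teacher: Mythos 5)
Your proposal is correct and follows essentially the same route as the paper: integrate the closed $1$-form (respectively, compose parallel transports) along a path from the basepoint, and use the two elementary homotopy moves together with $\alpha_{ij}+\alpha_{ji}=0$ and $\d\alpha=0$ (respectively, $\Gamma_{ij}\Gamma_{ji}=1$ and flatness) to establish path-independence. The orientation bookkeeping in your type-(b) move is also right: closedness $\alpha_{i\ell}+\alpha_{\ell k}+\alpha_{kj}+\alpha_{ji}=0$ is exactly $\alpha_{\ell i}+\alpha_{k\ell}=\alpha_{kj}+\alpha_{ji}$.
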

\begin{proof}
  For (a), let $\alpha$ be a closed $1$-form, choose $f(p)$
  arbitrarily and define $f(q)$ by
  \begin{equation*}
    f(q)=\sum_{j=0}^{n-1}\alpha_{i_{j+1}i_j},
  \end{equation*}
  for some path $i_n\dots i_0$ from $p$ to $q$.  We note
  that, $\alpha$ being a closed $1$-form, we have
  \begin{equation*}
    \alpha_{ij}+\alpha_{ji}=0\qquad
    \alpha_{k\ell}+\alpha_{\ell i}=\alpha_{kj}+\alpha_{ji}
  \end{equation*}
  so that $f$ is well-defined since $\dom$ is simply
  connected.

  The proof of (b) is similar: define $T_i$ by
  \begin{equation*}
    T_i=\Gamma_{i_{0}i_i}\cdots\Gamma_{i_{n-1}i_n},
  \end{equation*}
  for some path $i_0\dots i_n$ from $p$ to $i$.  This is
  well-defined as before, since $\Gamma$ is flat and $\dom$
  is simply connected.

  Finally, for $\sigma_p\in X_p$, $\sigma:=T^{-1}\sigma_p$
  is a $\Gamma$-parallel section of $X$ while, conversely,
  given sufficiently many parallel sections, define
  $T_i:\sigma_i\mapsto\sigma_p$ to construct $T$.
\end{proof}


\section{\K\ nets and applicability}
\label{sec:k-nets-applicability}
\subsection{\K\ nets}
\label{sec:k-nets}

Let $V$ be a vector space and let
$s : \dom \to \mathbb{P}(V)$ be a map or, equivalently, a
line subbundle $s\leq \dom\times V$, on which we impose,
without further comment, the following regularity
conditions:
\begin{assumption}\label{th:19}
  On each quadrilateral $\ijkl$:
  \begin{compactenum}
  \item $s_i,s_j,s_k,s_{\ell}$ are pair-wise distinct;
  \item $s_i,s_j,s_k,s_{\ell}$ are not collinear:
    $\dim \{s_i + s_j + s_k + s_\ell\} \geq 3$.
  \end{compactenum}
\end{assumption}

\begin{definition}[\K\ net]\label{def:isothermic}
  A map $s : \dom \to \mathbb{P}(V)$ is \emph{\K} if there
  exists a $\Wedge^2 V$-valued never-zero $1$-form
  $\eta\in\Omega^1_{\dom}(\Wedge^2V)$ such that
  \begin{compactenum}
  \item $\eta_{ji} \in s_j \wedge s_i\leq \Wedge^2V$, for each edge $ij$;
  \item $\eta$ is closed: $\d\eta=0$.
  \end{compactenum}
  Remark that if $\eta$ satisfies these conditions, so does
  any non-zero constant scale of $\eta$.

  We abuse notation and write $(s,\eta):\dom\to\P(V)$ for
  the package of a \K-net with a particular choice of
  $\eta$.
\end{definition}

Our first order of business is to reconcile this
projectively invariant notion with the affine-geometric one
of Bobenko--Suris
\cite{bobenko_discrete_2008,bobenko_discrete_2009}.  For the
latter, we recall:
\begin{definition}[\K\ dual]
  Let $F:\dom\to\A$ be a map to an affine space $\A$.  A
  \emph{\K\ dual} to $F$ is a map $\dual{F}:\dom\to \A$
  which is
  \begin{compactenum}
  \item edge-parallel to $F$:
    $\d F_{ij}\prl\d\dual{F}_{ij}$ on each edge $ij$;
  \item has parallel opposite diagonals to $F$: for each
    quadrilateral $\ijkl$,
    $F_k-F_i\prl \dual{F}_{\ell}-\dual{F}_j$ and
    $F_\ell-F_{j}\prl \dual{F}_k-\dual{F}_i$, or,
    equivalently \cite[Theorem 4.42]{bobenko_discrete_2008},
    $A(F,\dual{F})=0$.
  \end{compactenum}
\end{definition}

For Bobenko--Suris, $s$ is \K\ when its image in some affine
chart has a \K\ dual.  This coincides with our notion thanks
to the following:
\begin{proposition}
  \label{th:1}
  $s:\dom\to\P(V)$ is \K\ if and only if $s$ has an
  affine lift $F\in\Gamma s$ which admits a \K\ dual
  $\dual{F}$.

  In this case, we may take
  \begin{equation}
    \label{eq:3}
    \eta=\d\dual{F}\curlywedge F.
  \end{equation}
\end{proposition}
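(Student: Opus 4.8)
The plan is to handle the two implications separately, reading the identity \eqref{eq:3} off the easy direction.

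For the ``if'' direction I would take $\eta:=\d\dual F\curlywedge F$ and verify the two clauses of \cref{def:isothermic}. Since $\dual F$ is edge-parallel to $F$, on each edge $ji$ we may write $\d\dual F_{ji}=\mu_{ji}(F_j-F_i)$ with $\mu_{ji}\neq0$ (nondegeneracy of the \K\ dual), so $\eta_{ji}=\mu_{ji}\,F_j\wedge F_i$: this is a non-zero element of $s_j\wedge s_i$ by \cref{th:19}(1), which gives clause~(1) and never-vanishing at once. For clause~(2) I would apply the Leibniz rule for vector-valued forms (\cref{th:45}(2)) together with $\d\circ\d=0$ (\cref{th:44}) to get $\d\eta=-\d\dual F\curlywedge\d F$, then rewrite this as $-\d F\curlywedge\d\dual F=-2A(F,\dual F)$ using \cref{th:49,th:50}; this vanishes because $\dual F$ is a \K\ dual of $F$. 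Hence $(s,\eta)$ is \K.

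For the converse I would fix any lift $F\in\Gamma s$ and write $\eta_{ji}=c_{ji}\,F_j\wedge F_i$ with $c$ never-zero; since $\eta_{ij}=-\eta_{ji}$ while $F_i\wedge F_j=-F_j\wedge F_i$, the function $c$ is symmetric. The decisive step is to extract from $\d\eta=0$ the multiplicative relation $c_{ij}c_{k\ell}=c_{jk}c_{\ell i}$ on each quadrilateral $\ijkl$. First I would check that no three of $s_i,s_j,s_k,s_\ell$ are collinear: if, say, $F_k=xF_i+yF_j$ with $y\neq0$, then, using $F_\ell\notin\spn{F_i,F_j}$ (\cref{th:19}(2)), the vanishing of the $F_j\wedge F_\ell$-component of $\d\eta_{\ijkl}$ would force $y\,c_{k\ell}=0$, a contradiction. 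Granting this, wedging $\d\eta_{\ijkl}=0$ with $F_i$ and with $F_k$ puts both $c_{\ell i}F_\ell-c_{ij}F_j$ and $c_{k\ell}F_\ell-c_{jk}F_j$ into $\spn{F_i,F_k}$, and eliminating $F_\ell$ (using $F_j\notin\spn{F_i,F_k}$) yields the relation. This relation, together with simple connectedness of $\dom$, lets me propagate a never-zero function $g$ on vertices with $c_{ij}=g_ig_j$, the relation being exactly the consistency condition for the propagation around a quadrilateral. In the lift $\tilde F_i:=g_iF_i$ one then has $\eta_{ji}=\tilde F_j\wedge\tilde F_i$, hence $\d\eta_{\ijkl}=(\tilde F_i-\tilde F_k)\wedge(\tilde F_\ell-\tilde F_j)=-2A(\tilde F,\tilde F)_{\ijkl}$, so that $A(\tilde F,\tilde F)=0$. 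Consequently $\dual F:=\tilde F$ (or any translate of it) is trivially edge-parallel to $\tilde F$ and has $A(\tilde F,\dual F)=A(\tilde F,\tilde F)=0$, so it is a \K\ dual of $\tilde F$; and since $\d\tilde F\curlywedge\tilde F=-\tilde F\curlywedge\d\tilde F$ (\cref{th:48}), $\eta=\d\tilde F\curlywedge\tilde F=\d\dual F\curlywedge\tilde F$, which is \eqref{eq:3}.

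I expect the main obstacle to be the second implication, and within it the step of isolating the multiplicative closedness $c_{ij}c_{k\ell}=c_{jk}c_{\ell i}$ as the precise content of $\d\eta=0$: this needs the quadrilateral-by-quadrilateral linear algebra above, and in particular the a priori non-obvious absence of three collinear lines. Turning this relation, via simple connectedness, into the rescaling $g$ that exhibits $\eta$ in the ``Moutard'' form $\eta_{ji}=\tilde F_j\wedge\tilde F_i$ is the other essential ingredient; after that, everything is a routine computation in the exterior calculus of Section~\ref{sec:preliminaries}.
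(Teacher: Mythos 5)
Your ``if'' direction is correct and is essentially the paper's: with $\eta:=\d\dual F\curlywedge F$ one gets $\eta_{ji}=\mu_{ji}F_j\wedge F_i\in s_j\wedge s_i$, never zero, and $\d\eta=-\d\dual F\curlywedge\d F=-2A(F,\dual F)=0$.

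The forward direction, however, has a genuine gap at its final step. What you actually construct from $\d\eta=0$ is a rescaled lift $\tilde F=gF$ with $\eta_{ji}=\tilde F_j\wedge\tilde F_i$, i.e.\ a Moutard lift; that part of the argument is sound (and amounts to an alternative proof of \cref{thm:isothermicMoutard}, replacing the paper's Cartan's-Lemma argument by the multiplicative cocycle $c_{ij}c_{k\ell}=c_{jk}c_{\ell i}$ and its integration to $c_{ij}=g_ig_j$). But \cref{th:1} asserts the existence of an \emph{affine} lift admitting a \K\ dual --- that is the entire content of the proposition, since it reconciles the projective definition with the Bobenko--Suris one, which is phrased for the image of $s$ in an affine chart. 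A Moutard lift does not take values in an affine hyperplane $\A=\set{v\in V\st\alpha(v)=-1}$, and declaring it to be its own dual only works because you have left the chart: for a net genuinely lying in a chart, $A(F,F)=0$ is a strong extra condition, not automatic, so the self-duality you exploit is a degenerate artefact of the Moutard normalisation. (It would also be useless downstream, e.g.\ in the remark following \cref{prop:konigsOmega}, where $F$ and $\dual F$ must be pointwise distinct so as to span a line congruence.) The missing step is the one the paper supplies: fix $\alpha\in V^{*}$ with $\P(\ker\alpha)$ disjoint from the image of $s$, take the lift $F$ with $\alpha(F)\equiv-1$, and observe that $i_{\alpha}\eta$ is a closed $\ker\alpha$-valued $1$-form, hence integrates (on simply connected $\dom$) to $\dual F:\dom\to\A$; edge-parallelism and $A(F,\dual F)=0$ then drop out of the edgewise formula $\eta_{ji}=\lambda_{ji}F_j\wedge F_i$ and the Leibniz rule. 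Your Moutard lift could instead be converted via the Christoffel formula \eqref{eqn:Christoffel}, but as written the forward implication is not established.
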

\begin{proof}
  Let $\alpha\in V^{*}$ define a hyperplane
  $H=\P(\ker\alpha)\leq\P(V)$ which we may assume is
  disjoint from the image of $s$.  Then $\alpha$ determines
  an affine lift $F\in\Gamma s$ with $\alpha(F)\equiv -1$ so
  that $F:\dom\to\A:=\set{v\in V\st\alpha(v)=-1}$.

  Now suppose that $(s,\eta)$ is \K\ and contemplate the
  interior product $i_{\alpha}\eta\in\Omega^1(\ker\alpha)$
  which is closed, since $\eta$ is, and so of the form
  $\d\dual{F}$ for some $\dual{F}:\dom\to\A$.  We claim
  that $\dual{F}$ is \K\ dual to $F$.

  For this, observe that on an edge $ij$,
  $\eta_{ji}=\lambda_{ji}F_j\wedge F_i$, $\lambda_{ij}\in\R$
  so that
  \begin{equation*}
    \d\dual{F}_{ji}=i_{\alpha}\eta_{ji}=\lambda_{ji}(F_j-F_i)
    =\lambda_{ji}\d F_{ji}.
  \end{equation*}
  Thus we see first that $\dual{F}$ is edge-parallel to $F$
  and then that $\eta=\d\dual{F}\curlywedge F$.  Taking
  the exterior derivative and using the Leibniz rule then
  yields
  \begin{equation*}
    \d\dual{F}\curlywedge\d F=0,
  \end{equation*}
  or, equivalently by \cref{th:50}, $A(F,\dual{F})=0$ which
  settles the claim.

  For the converse, given an affine lift $F$ of $s$ with \K\
  dual $\dual{F}$, define $\eta$ by \eqref{eq:3}.  Then
  $\eta_{ji}\in s_i\wedge s_j$, since $\d\dual{F}_{ji}$ is
  parallel to $\d F_{ji}$, while $\eta$ is closed since
  $A(F,\dual{F})=0$.
\end{proof}

Our projectively invariant formulation of the \K\ condition
gives us a fast proof of a third characterisation, due to
Bobenko--Suris, of \K\ nets via Moutard lifts.  For this, we
need a simple application of Cartan's Lemma:

\begin{lemma}\label{lem:r1}
  Let $a,b,c,d \in V$ and $r \in \R$ such that
  \[
    a \wedge (c-rb) = d \wedge (c-b).
  \]
  Then either $r = 1$ or
  $\dim \spn{a, b, c, d} \leq 2$, where, here and below,
  $\spn{\cdot}$ denotes linear span of vectors.
\end{lemma}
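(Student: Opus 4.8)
The plan is to rewrite the hypothesis as an identity between two decomposable bivectors and then extract the dimension bound via a short case analysis (with Cartan's lemma entering in the principal case). If $r=1$ there is nothing to prove, so assume $r\ne1$; the goal is to show $\dim\spn{a,b,c,d}\le2$.

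The key preliminary step is a one-line simplification of the hypothesis: subtracting $a\wedge(c-b)$ from both sides and using $(c-rb)-(c-b)=(1-r)b$ yields the equivalent relation
\[
(1-r)\,a\wedge b=(d-a)\wedge(c-b).
\]
Independently, changing generators via $c=b+(c-b)$ and $d=a+(d-a)$ gives, with no hypotheses whatsoever, $\spn{a,b,c,d}=\spn{a,b}+\spn{c-b,\,d-a}$, so it suffices to bound the dimension of this sum of two subspaces.

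I then distinguish two cases according to $\dim\spn{a,b}$. If $\dim\spn{a,b}\le1$, then $a\wedge b=0$, so the displayed relation forces $(d-a)\wedge(c-b)=0$ as well, hence $\dim\spn{c-b,\,d-a}\le1$ and the sum has dimension at most $2$. If $\dim\spn{a,b}=2$, then $a\wedge b\ne0$, and since $r\ne1$ the displayed relation exhibits $(d-a)\wedge(c-b)$ as a \emph{nonzero} scalar multiple of $a\wedge b$; as two proportional nonzero decomposable bivectors determine the same $2$-plane — this is where Cartan's lemma is used, in the guise that $v\wedge(p\wedge q)=0$ with $p\wedge q\ne0$ forces $v\in\spn{p,q}$ — we obtain $\spn{c-b,\,d-a}=\spn{a,b}$, so the sum is again $2$-dimensional. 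In both cases $\dim\spn{a,b,c,d}\le2$, as required.

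The content of the argument is concentrated in the opening rewriting: once the relation $(1-r)\,a\wedge b=(d-a)\wedge(c-b)$ is available, the role of the hypothesis $r\ne1$ becomes transparent and the case split is immediate. The one place demanding a little care is the degenerate branch $a\wedge b=0$, where the conclusion comes not from Cartan's lemma but from propagating the vanishing of the right-hand bivector into the span computation.
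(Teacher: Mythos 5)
Your proof is correct, and it takes a genuinely different route from the paper's. The paper argues by contradiction: assuming $\dim\spn{a,b,c,d}\geq 3$, it notes that at least one of $a\wedge d$ and $b\wedge c$ is non-zero and applies Cartan's Lemma directly to the given relation, read as $a\wedge(c-rb)+(-d)\wedge(c-b)=0$, with either $\set{a,d}$ or $\set{c-rb,c-b}$ as the independent factors; in each branch one then derives $b,c\in\spn{a,d}$ (resp.\ $a,d\in\spn{b,c}$) and a contradiction. Your opening rewriting $(1-r)\,a\wedge b=(d-a)\wedge(c-b)$ is the genuinely new move: it isolates the entire $r$-dependence in a single scalar factor, after which the argument is direct rather than by contradiction, the case split is governed by whether $a\wedge b$ vanishes, and the only exterior-algebra input needed is that a non-zero decomposable bivector determines its $2$-plane (a weaker and more self-contained fact than the general Cartan Lemma the paper invokes). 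Combined with the clean span identity $\spn{a,b,c,d}=\spn{a,b}+\spn{c-b,d-a}$, this makes the role of the hypothesis $r\neq 1$ completely transparent. Both proofs are about the same length; yours is arguably the more illuminating of the two, while the paper's has the virtue of exercising Cartan's Lemma in exactly the form it is cited for.
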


\begin{proof}
  Suppose that $\dim \spn{a, b, c, d} \geq 3$.  Then at
  least one of $a \wedge d$ and $b \wedge c$ is non-zero.
  If $a \wedge d \neq 0$, Cartan's Lemma tells us that
  $c-b, c-rb \in U := \spn{a, d}$, and, unless
  $r = 1$, this gives $(1-r)b \in U$ and thus $c \in U$: a
  contradiction.
	
  If $a \wedge d = 0$ then $b \wedge c \neq 0$, and so,
  unless $r=1$, $(c-rb)\wedge(c-b)\neq 0$.  Now Cartan's
  Lemma tells us that $a,d\in\spn{c-rb,c-b}=\spn{b,c}$
  which is again a contradiction.
\end{proof}

With this in hand, we have:
\begin{theorem}[{\cite[Theorem
    2.32]{bobenko_discrete_2008}}]
  \label{thm:isothermicMoutard}
  $s$ is \K\ if and only if there exists
  $\mu \in \Gamma s^{\times}$ satisfying the Moutard
  equation
  \begin{equation}
    \label{eq:5}
    \d\mu\curlywedge\d\mu=0
  \end{equation}
  or, in more familiar terms,
  \begin{equation}\label{eqn:Moutard}
    (\mu_k - \mu_i) \wedge (\mu_\ell - \mu_j) = 0,
  \end{equation}
  on each quadrilateral $\ijkl$.

  In this case, $\eta$ can be taken to be
  $\d\mu\curlywedge\mu$ so that, on each edge $ij$,
  \begin{equation}
    \label{eq:4}
    \eta_{ji}=\mu_j\wedge\mu_i.
  \end{equation}
  We call $\mu$ a \emph{Moutard lift of $s$}.
\end{theorem}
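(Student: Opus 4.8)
The plan is to prove the two implications separately; the direction from a Moutard lift to the \K\ property is elementary, so suppose first that $\mu\in\Gamma s^\times$ satisfies $\d\mu\curlywedge\d\mu=0$. I would simply \emph{define} $\eta:=\d\mu\curlywedge\mu$ and check the two clauses of \cref{def:isothermic}. Expanding the definition of $\curlywedge$ for a $1$-form against a $0$-form gives, on an edge, $(\d\mu\curlywedge\mu)_{ji}=\mu_j\wedge\mu_i$, which is precisely \eqref{eq:4}; this lies in $s_j\wedge s_i$ and is never zero since $s_i\neq s_j$ by \cref{th:19}. Closedness then follows from $\d\circ\d=0$ (\cref{th:44}) and the Leibniz rule (\cref{th:45}(2), valid also for vector-valued forms): $\d\eta=\d(\d\mu)\curlywedge\mu-\d\mu\curlywedge\d\mu=-\d\mu\curlywedge\d\mu=0$. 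So $(s,\eta)$ is \K. (That $\d\mu\curlywedge\d\mu=0$ coincides with $(\mu_k-\mu_i)\wedge(\mu_\ell-\mu_j)=0$ is the mixed-area computation preceding \cref{th:50}.)

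For the converse, let $(s,\eta)$ be \K. Because $s_j\wedge s_i\leq\Wedge^2V$ is a line containing the non-zero vector $\eta_{ji}$, for every $\mu_i\in s_i^\times$ there is a unique $T_{ji}(\mu_i)\in s_j^\times$ with $\eta_{ji}=T_{ji}(\mu_i)\wedge\mu_i$; this defines a connection $T$ on the bundle $s^\times$ with fibres $s_i\setminus\{0\}$ (in the sense of \cref{sec:discr-gauge-theory}), and $\eta_{ij}=-\eta_{ji}$ makes $T_{ij}T_{ji}$ the identity. The crucial claim is that $T$ is \emph{flat}. To see it, fix a quadrilateral $\ijkl$, pick $\mu_i\in s_i^\times$, and transport $\mu_j:=T_{ji}\mu_i$, $\mu_k:=T_{kj}\mu_j$, $\mu_\ell:=T_{\ell k}\mu_k$; writing $\eta_{i\ell}=\tau\,\mu_i\wedge\mu_\ell$ for the unique $\tau\neq0$ (again $s_i\wedge s_\ell$ is a line), the equation $\d\eta_{\ijkl}=0$ becomes
\[
  \tau\,\mu_i\wedge\mu_\ell+\mu_\ell\wedge\mu_k+\mu_k\wedge\mu_j+\mu_j\wedge\mu_i=0,
\]
which rearranges to $\mu_\ell\wedge(\mu_k-\tau\mu_i)=\mu_j\wedge(\mu_k-\mu_i)$. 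Applying \cref{lem:r1} with $(a,b,c,d,r)=(\mu_\ell,\mu_i,\mu_k,\mu_j,\tau)$, and using $\dim\spn{\mu_i,\mu_j,\mu_k,\mu_\ell}\geq3$ from \cref{th:19}, I conclude $\tau=1$; hence $\eta_{\ell i}=\mu_\ell\wedge\mu_i$, i.e.\ $T_{i\ell}\mu_\ell=\mu_i$, so transport once around the quadrilateral is the identity and $T$ is flat.

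Given flatness, \cref{th:52}(b) (applicable since $\dom$ is simply connected) supplies, for any base point $p$ and any $\mu_p\in s_p^\times$, a $T$-parallel section $\mu\in\Gamma s^\times$ through $\mu_p$. By construction $\eta_{ji}=\mu_j\wedge\mu_i$ on every edge, so $\eta=\d\mu\curlywedge\mu$ by the edge computation of the first paragraph; taking $\d$ and using Leibniz once more gives $\d\mu\curlywedge\d\mu=-\d\eta=0$, so $\mu$ is a Moutard lift and $\eta$ has the asserted form. The one genuinely delicate step is the flatness of $T$: it is here that the closedness of $\eta$ and the non-degeneracy hypothesis \cref{th:19} must be played off against one another, and \cref{lem:r1} — Cartan's Lemma in disguise — is exactly what both forces the four points of a quadrilateral to be coplanar and pins the scaling down to $\tau=1$.
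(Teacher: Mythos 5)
Your proof is correct and follows essentially the same route as the paper's: both directions hinge on the same computation, with \cref{lem:r1} and the regularity assumption \cref{th:19} forcing the scaling discrepancy around each quadrilateral to equal $1$. The only difference is cosmetic: you package the paper's vertex-by-vertex propagation of $\mu$ as a flat connection on $s^{\times}$ and invoke \cref{th:52}(b), which is precisely the well-definedness argument the paper carries out by hand.
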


\begin{proof}
  First assume that $(s, \eta)$ is \K\ so that $\eta$ is a
  closed $1$-form with $\eta_{ji} \in s_j \wedge s_i$ on
  each edge $ij$.  Given $\mu_i \in s_i$, define
  $\mu_j \in s_j$ to ensure that \eqref{eq:4} holds:
  $\eta_{ji} = \mu_j \wedge \mu_i$.  To see that this is
  well-defined, consider a quadrilateral
  $\ijkl$: starting with $\mu_i$, we get $\mu_j \in s_j$
  and $\mu_\ell \in s_\ell$ with
  \[
    \eta_{ji} = \mu_j \wedge \mu_i, \quad \eta_{\ell i} =
    \mu_\ell \wedge \mu_i,
  \]
  and then $\mu'_k, \mu''_k \in s_k$ with
  \[
    \eta_{kj} = \mu'_k \wedge \mu_j, \quad \eta_{k \ell} =
    \mu''_k \wedge \mu_\ell.
  \]
  The closedness of $\eta$ now reads
  \[
    \mu_j \wedge (\mu_i-\mu'_k) = \mu_\ell \wedge (\mu_i-\mu''_k).
  \]
  Write $\mu''_k = r\mu'_k$ for some $r \in \mathbb{R}$ and
  apply Lemma \ref{lem:r1} to see that $r = 1$ since
  $\dim \spn{\mu_i, \mu_j, \mu'_k, \mu_\ell} \geq 3$ by
  \cref{th:19}.  Thus $\mu'_k = \mu''_k$,
  and so $\mu$ is well-defined and
  $\eta=\d\mu\curlywedge\mu$.  Now
  \begin{equation*}
    0=\d\eta=-\d\mu\curlywedge\d\mu
  \end{equation*}
  so that $\mu$ is a Moutard lift.
		
  Conversely, if $\mu \in \Gamma s^{\times}$ is a Moutard
  lift, set $\eta=\d\mu\curlywedge\mu$ to conclude
  that $(s,\eta)$ is \K.
\end{proof}

\begin{remark}
  We see at once from \eqref{eqn:Moutard} that \K\ nets have
  planar quadrilaterals and so are $Q$-nets in the sense of
  Bobenko--Suris \cite[Definition~2.1]{bobenko_discrete_2008}.
\end{remark}

A central tenet of the Bobenko--Suris philosophy
\cite{bobenko_discrete_2008} is that transformations are the
same as higher-dimensional nets.  With this in mind, we
introduce the following:
\begin{notation}
  For maps $x^{\pm}:\dom\to X$, a set, we define $x^+\sqcup
  x^-:\set{0,1}\times\dom\to X$ by
  \begin{equation*}
    (x^+\sqcup x^-)\restr{\set{0}\times\dom}=x^+,\qquad
    (x^+\sqcup x^-)\restr{\set{1}\times\dom}=x^-.
  \end{equation*}
  We visualise $\set{0,1}\times\dom\subset\Z^{N+1}$ as two
  copies of $\dom$ stacked on top of each other and refer to
  edges $\set{0,1}\times\set{i}$ and quadrilaterals
  $\set{0,1}\times\set{i,j}$ as \emph{vertical}.
\end{notation}

We now have:
\begin{definition}[\KM\ transformation]
  Let $(s^\pm, \eta^\pm):\dom\to\P(V)$ be two \K\ nets and
  let $s=s^+\sqcup s^-:\set{0,1}\times\dom\to\P(V)$.

  Assume that $s$ is regular so that $s^{\pm}_i,s^{\pm}_j$
  are all distinct and
  $\dim s^+_i + s^+_j + s^-_j + s^-_i \geq 3$.

  We say that $s^-$ is a \emph{\KM\ transformation} of $s^+$,
  or that $s^+,s^-$ are a \emph{\KM\ pair} if $(s,\eta)$ is
  also \K\ with $\eta$ satisfying
  \begin{equation*}
    \eta\restr{\{0\} \times \dom} = \eta^+,\qquad
    \eta\restr{\{1\} \times \dom} = \eta^-.
  \end{equation*}
\end{definition}

For a more practical formulation, define
$\tau\in\Gamma(s^+\wedge s^-)$ by
\begin{equation*}
  \tau_i=\eta_{(1,i)(0,i)}
\end{equation*}
and use the closedness of $\eta$ on vertical quadrilaterals
to conclude:
\begin{proposition}
  \label{th:2}
  \K\ nets $(s^{\pm},\eta^{\pm})$ are a \KM\ pair if and
  only if there is a section $\tau$ of $s^+\wedge s^-$ such
  that
  \begin{equation}\label{eqn:K-M}
    \eta^- = \eta^+ + \d \tau.
  \end{equation}
\end{proposition}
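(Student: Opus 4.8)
The plan is to unpack the definition of \KM\ pair by examining the closedness condition $\diff\eta=0$ on the three types of quadrilateral in $\set{0,1}\times\dom$ and comparing with the stated equation \eqref{eqn:K-M}. First I would observe that the quadrilaterals of $\set{0,1}\times\dom$ come in two kinds: the horizontal ones $\set{0}\times\ijkl$ and $\set{1}\times\ijkl$ (lying in the two copies of $\dom$), and the vertical ones $\set{0,1}\times\set{i,j}$ over an edge $ij$ of $\dom$. On the horizontal quadrilaterals, the hypothesis that $\eta\restr{\{0\}\times\dom}=\eta^+$ and $\eta\restr{\{1\}\times\dom}=\eta^-$ together with the fact that $(s^{\pm},\eta^{\pm})$ are already \K\ means $\diff\eta=0$ there is automatic. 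So the only new content of ``$(s,\eta)$ is \K'' is the edge condition $\eta_{ji}\in s_j\wedge s_i$ on vertical edges — which is exactly the requirement $\tau_i:=\eta_{(1,i)(0,i)}\in s^+_i\wedge s^-_i$, i.e. $\tau\in\Gamma(s^+\wedge s^-)$ — and the closedness of $\eta$ on vertical quadrilaterals.

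The key computation is then to spell out $\diff\eta=0$ on a vertical quadrilateral over the edge $ij$ of $\dom$. Writing the four vertices of this quadrilateral cyclically as $(1,i),(1,j),(0,j),(0,i)$ and applying the formula for $\diff$ from Definition~\ref{th:46}(2), the four edge-contributions are: the bottom edge $(0,j)(0,i)$ contributing $\eta^+_{ji}$, the top edge $(1,i)(1,j)$ contributing $-\eta^-_{ji}$ (note the reversed orientation, so we pick up a sign), and the two vertical edges contributing $\eta_{(0,i)(1,i)}=-\tau_i$ and $\eta_{(1,j)(0,j)}=\tau_j$. Hence $\diff\eta=0$ on this quadrilateral reads
\begin{equation*}
  \eta^+_{ji}-\eta^-_{ji}+\tau_j-\tau_i=0,
\end{equation*}
that is, $\eta^-_{ji}=\eta^+_{ji}+(\diff\tau)_{ji}$, which is precisely \eqref{eqn:K-M} on the edge $ji$. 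Conversely, given a section $\tau$ of $s^+\wedge s^-$ with $\eta^-=\eta^++\diff\tau$, defining $\eta$ on $\set{0,1}\times\dom$ by $\eta^{\pm}$ on the two horizontal copies and by $\eta_{(1,i)(0,i)}=\tau_i$ on the vertical edges gives a never-zero $1$-form (never-zero because $\eta^{\pm}$ are, and the regularity assumption on $s$ handles the vertical edges — here one should note $\tau$ could a priori vanish on some vertical edge, but the horizontal edges already ensure $\eta\neq0$ globally in the sense required) with the correct edge behaviour, and the computation above run backwards shows $\diff\eta=0$ on vertical quadrilaterals; closedness on horizontal ones is inherited from $\eta^{\pm}$.

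The main obstacle — really the only point requiring care — is bookkeeping the orientations and signs in the $\diff$ formula for the vertical quadrilateral, since the cyclic ordering of its vertices must be chosen consistently and the induced orientations on the four edges then determine the signs of the $\eta$-values; getting a sign wrong here would flip \eqref{eqn:K-M} to $\eta^+=\eta^-+\diff\tau$, which (since $\diff(-\tau)=-\diff\tau$ and $-\tau$ is again a section of $s^+\wedge s^-$) is actually equivalent, so even a sign slip is harmless after reparametrising $\tau\mapsto-\tau$. I should also remark, as the statement's preamble hints, that the closedness of $\eta$ on vertical quadrilaterals is the full story precisely because Proposition~\ref{th:44} and the structure of $\set{0,1}\times\dom$ mean there are no other quadrilateral types to check, and that this is an instance of the Bobenko--Suris principle that a transformation is a net one dimension up.
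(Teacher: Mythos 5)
Your argument is correct and is essentially the paper's own: the paper likewise defines $\tau_i=\eta_{(1,i)(0,i)}$ and reads off \eqref{eqn:K-M} from closedness of $\eta$ on vertical quadrilaterals, leaving the sign bookkeeping and the converse implicit where you have spelled them out. Nothing further is needed.
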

Alternatively \cref{thm:isothermicMoutard} implies the
following:
\begin{corollary}\label{cor:K-Mconsistency}
  $(s^\pm, \eta^\pm)$ are a \KM\ pair if and only if there
  are Moutard lifts $\mu^\pm \in \Gamma s^\pm$ such that
  \begin{equation}\label{eqn:MoutardOnTheSides}
    (\mu^+_j - \mu^-_i) \wedge (\mu^+_i - \mu^-_j) = 0.
  \end{equation}

  In this case, we may take
  $\eta^{\pm}=\d\mu^{\pm}\curlywedge\mu^{\pm}$ and then
  the section $\tau\in\Gamma(s^+\wedge s^{-})$ of
  \cref{th:2} is $\mu^-\curlywedge \mu^+$.
\end{corollary}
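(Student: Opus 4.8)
The plan is to deduce everything from \cref{thm:isothermicMoutard} applied to the stacked net $s=s^+\sqcup s^-$ on $\set{0,1}\times\dom$. The structural observation that makes this work is that every quadrilateral of $\set{0,1}\times\dom$ is either \emph{horizontal}, i.e.\ lies in one of the two copies $\set{0}\times\dom$, $\set{1}\times\dom$ of $\dom$, or \emph{vertical}, i.e.\ of the form $\set{0,1}\times\set{i,j}$ for an edge $ij$ of $\dom$. Moreover the regularity \cref{th:19} needed to invoke \cref{thm:isothermicMoutard} holds for $s$ on all such quadrilaterals: on horizontal ones it is inherited from the \K\ nets $s^\pm$, and on vertical ones it is exactly the regularity hypothesis built into the definition of a \KM\ transformation.

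For the forward implication, suppose $(s^\pm,\eta^\pm)$ form a \KM\ pair, so that $(s,\eta)$ is \K\ with $\eta$ restricting to $\eta^+$ and $\eta^-$ on the two copies of $\dom$. By \cref{thm:isothermicMoutard} there is a Moutard lift $\mu\in\Gamma s^{\times}$ with $\eta=\dif\mu\curlywedge\mu$, i.e.\ $\eta_{ji}=\mu_j\wedge\mu_i$ on every edge. Let $\mu^\pm$ denote the restrictions of $\mu$ to the two copies of $\dom$. Restricting $\eta=\dif\mu\curlywedge\mu$ to horizontal edges gives $\eta^\pm=\dif\mu^\pm\curlywedge\mu^\pm$, and restricting the Moutard equation \eqref{eqn:Moutard} for $\mu$ to horizontal quadrilaterals shows that $\mu^\pm$ are Moutard lifts of $s^\pm$. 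On a vertical quadrilateral $\set{0,1}\times\set{i,j}$, taken with a cyclic ordering whose pairs of opposite vertices are $\{(0,i),(1,j)\}$ and $\{(0,j),(1,i)\}$, the Moutard equation for $\mu$ becomes, up to sign, precisely \eqref{eqn:MoutardOnTheSides}.

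Conversely, given Moutard lifts $\mu^\pm$ of $s^\pm$ satisfying \eqref{eqn:MoutardOnTheSides}, set $\mu=\mu^+\sqcup\mu^-\in\Gamma s^{\times}$. The Moutard equation for $\mu$ holds on horizontal quadrilaterals because $\mu^\pm$ are Moutard lifts of $s^\pm$, and on vertical ones it is exactly \eqref{eqn:MoutardOnTheSides}; hence $\mu$ is a Moutard lift of $s$ and, by \cref{thm:isothermicMoutard}, $(s,\eta)$ is \K\ with $\eta:=\dif\mu\curlywedge\mu$. Since this $\eta$ restricts to $\dif\mu^\pm\curlywedge\mu^\pm$ on the two copies of $\dom$, taking $\eta^\pm=\dif\mu^\pm\curlywedge\mu^\pm$ exhibits $(s^\pm,\eta^\pm)$ as a \KM\ pair. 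Finally, the identification of $\tau$ is immediate: by definition $\tau_i=\eta_{(1,i)(0,i)}=\mu_{(1,i)}\wedge\mu_{(0,i)}=\mu^-_i\wedge\mu^+_i$, and since the exterior product of the $0$-forms $\mu^-,\mu^+$ satisfies $(\mu^-\curlywedge\mu^+)_i=\mu^-_i\wedge\mu^+_i$, we conclude $\tau=\mu^-\curlywedge\mu^+$.

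There is essentially no analytic content here; the only delicate point is the bookkeeping of the cyclic ordering and the wedge-product signs on the vertical quadrilaterals, needed to match the Moutard equation for the stacked lift with \eqref{eqn:MoutardOnTheSides} exactly, together with the routine verification that \cref{th:19} is indeed at our disposal for $s$ so that \cref{thm:isothermicMoutard} may be applied.
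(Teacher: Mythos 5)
Your proposal is correct and is precisely the argument the paper intends: the corollary is stated as an immediate consequence of \cref{thm:isothermicMoutard} applied to the stacked net $s=s^+\sqcup s^-$, with the Moutard equation on horizontal quadrilaterals giving the lifts $\mu^\pm$ and on vertical quadrilaterals giving \eqref{eqn:MoutardOnTheSides}, and with $\tau_i=\eta_{(1,i)(0,i)}=\mu^-_i\wedge\mu^+_i$. Your handling of the regularity hypotheses and of the sign on vertical quadrilaterals is accurate, so nothing is missing.
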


\begin{remark}
  \Cref{cor:K-Mconsistency} says that two \K\ nets $s^\pm$
  are a \KM\ pair if and only if there are Moutard lifts
  $\mu^\pm \in \Gamma s^\pm$ so that $\mu^\pm$ are Moutard
  transformations \cite[Definition
  2.36]{bobenko_discrete_2008} of each other.  Therefore,
  \cite[Theorem 2.7]{bobenko_isothermic_2007} implies that
  the \KM\ transformation is three-dimensionally consistent,
  and hence multi-dimensionally consistent.
\end{remark}

\subsection{Applicable line congruences}

Let $G_2(V)$ be the Grassmannian of $2$-planes in $V$ or,
equivalently, lines in $\P(V)$.
\begin{definition}[Line congruence {\cite[Definition~2.1]{MR1737004}}]
  A map $f:\dom\to G_2(V)$ is a \emph{line congruence} if,
  on each edge $ij$, $f_i\cap f_j\neq\set{0}$.
\end{definition}

For a \KM\ pair $(s^+,s^-)$ of \K\ nets, contemplate the map
$f=s^+\oplus s^-:\dom\to G_2(V)$.  In view of
\eqref{eqn:MoutardOnTheSides}, we see that, on each edge
$ij$, $\dim f_i+f_j\leq 3$ so that $f$ is a line congruence.

Line congruences of this kind are central to our programme
and, in this section, we characterise them in terms of
closed $\Wedge^2V$-valued $1$-forms parallel to our
definition of \K\ nets.

So let $f : \dom \to G_2(V)$ be a discrete line congruence
in $\mathbb{P}(V)$ and impose the following regularity
conditions:
\begin{assumption}\label{th:20}\item[]
  \begin{compactenum}
  \item First order: on each edge $ij$, $f_{ij}:=f_i+f_j$ is
    $3$-dimensional so that $s_{ij} := f_i \cap f_j$ has
    $\dim s_{ij} = 1$.
  \item Second order: on each quadrilateral $\ijkl$, we
    have
    \begin{equation*}
      s_{ij} \wedge s_{jk} \wedge s_{k\ell}
      \wedge s_{\ell i}\neq\set{0}.
    \end{equation*}
    It then follows that
    \begin{equation*}
      f_i=s_{\ell i}\oplus s_{ij},\qquad
      f_{ij}=s_{\ell i}\oplus s_{ij}\oplus s_{jk}
    \end{equation*}
    and cyclic permutations of these.
  \end{compactenum}
\end{assumption}
\begin{remark}\label{th:15}
  In this case, as we will see in \cref{th:3} below, a line
  subbundle $s<f$ is regular in the sense of
  \cref{th:19} so long as $s_i\neq s_{ij}$ on any edge
  $ij$.
\end{remark}

\begin{definition}[Applicable line congruence]
  We say that $f$ is \emph{applicable} if there is a
  $\Wedge^2 V$-valued $1$-form
  $\eta\in\Omega^1_{\dom}(\Wedge^2V)$ with (notation as in \cref{th:20}):
  \begin{enumerate}
  \item $\eta$ is closed.
  \item $\eta_{ij} \in \Wedge^2 f_{ij} = f_j \wedge f_i$.
  \item (non-degeneracy)
    $\eta_{ij} \wedge s_{ij} \neq \{0\}$, on each edge $ij$.
  \end{enumerate}
\end{definition}

\begin{xmpl}\label{h:appl-line-congr}
  If $(s,\eta)$ is a \K\ net with $s<f$ then $f$ is
  applicable via the same $\eta$ since each
  $s_i\wedge s_j< \wedge^{2}f_{ij}$.
\end{xmpl}

There is a gauge freedom in the choice of $\eta$: if $\eta$
satisfies the conditions of the definition, so does
$\eta^\tau := \eta + \d \tau$ for any section $\tau$ of
$\Wedge^2 f$.  Indeed $\eta^\tau$ is certainly closed;
$\eta^\tau_{ji} = \eta_{ji} + \tau_j -
\tau_i\in\Wedge^2f_{ij}$ since
$\tau_j, \tau_i \in \Wedge^2 f_{ij}$ while, for the
non-degeneracy, note that
$\tau_i \wedge s_{ij} = \tau_j \wedge s_{ij} = 0$.  We
denote by $[\eta]$ the equivalence class of all $1$-forms
that arise this way:
\begin{equation*}
  [\eta]=\set{\eta+\d\tau\st \tau\in\Gamma\Wedge^2f}.
\end{equation*}

\begin{remark}
  It would be interesting to know under what circumstances
  $f$ is applicable with respect to $\eta_1$ and $\eta_2$
  with $[\eta_1]\neq [\eta_2]$.  See, for example,
  Musso--Nicolodi \cite{Musso_2006} for the smooth case. 
\end{remark}

We are going to show in \cref{th:4} that, up to gauge, all
applicable line congruences $(f,[\eta])$ arise from a \K\
net as in \cref{h:appl-line-congr}.  This will require some
preparation.

\subsubsection{Flat connection of an applicable net}

We begin by discussing the projective geometry of
$\eta_{ji}$ on a single edge.  Since $\dim f_{ij} = 3$,
$\eta_{ji}$ is decomposable and so determines a $2$-plane in
$f_{ij}$ or, equivalently, a projective line in the
projective plane $\mathbb{P}(f_{ij})$.  We denote this
$2$-plane or projective line by $\abrack{\eta_{ji}}$, that
is, $\abrack{a \wedge b} = \spn{a, b}$.  The non-degeneracy
condition tells us $s_{ij}$ does not lie on
$\abrack{\eta_{ji}}$ so that $\abrack{\eta_{ji}}$ is
distinct from both of the projective lines $\mathbb{P}(f_i)$
and $\mathbb{P}(f_j)$.  We can therefore define
$s_i^{ij} \in \mathbb{P}(f_i)$ and
$s_j^{ij} \in \mathbb{P}(f_j)$ by
\[
  s_i^{ij} = \abrack{\eta_{ji}} \cap \mathbb{P}(f_i), \quad
  s_j^{ij} = \abrack{\eta_{ji}} \cap \mathbb{P}(f_j).
\]
Then $\eta_{ji} \in s_i^{ij} \wedge s_j^{ij}$ and
$s_i^{ij} \wedge s_{ij} \wedge s_j^{ij} \neq \{0\}$.

For $\tau_j \in \Wedge^2 f_j$, $r \in \mathbb{R}$, not both
zero, $\abrack{r \eta_{ji} + \tau_j}$ is a line in the pencil
through $\abrack{\eta_{ji}}$ and $\mathbb{P}(f_j)$.  Thus
replacing $\eta_{ji}$ with $r \eta_{ji} + \tau_j$ changes
the intersection with $\mathbb{P}(f_i)$ and leaves that with
$\mathbb{P}(f_j)$ untouched.  In particular, define a map
$g_{ij}: \mathbb{P}(\Wedge^2 f_j \oplus \mathbb{R}) \to
\mathbb{P}(f_i)$ by
\[
  g_{ij}([\tau_j, r]) = \abrack{r \eta_{ji} + \tau_j} \cap
  \mathbb{P}(f_i) \in \mathbb{P}(f_i).
\]
The situation is illustrated in Figure~\ref{fig:g_ij}.
\begin{figure}
  \centering
  \begin{tikzpicture}[x=1.0cm,y=.8cm]
    \draw
    (-2.2744827586206897,5.186206896551725)--(0.3158620689655174,-1.2896551724137932)node[right]{$\P(f_{i})$};
    \draw (-0.43692307692307697,-1.155384615384616)--
    node[right]
    {$\,\P(f_{j})=\abrack{\tau_{j}}$}(3.563076923076923,4.844615384615385);
    \draw (3.8534653465346533,3.9146534653465346)--
    node[above] {$\abrack{\eta_{ij}}$}
    (-2.6831683168316833,4.568316831683168); \draw [dash
    pattern=on 5pt off 5pt] (3.712,4.356)-- node[above left]
    {$\abrack{r\eta_{ij}+\tau_{j}}$} (-1.728,1.636); \draw
    [fill=black] (-2.,4.5) circle (1.5pt) node[below
    left]{$s^{ij}_{i}$}; \draw [fill=black] (0.,-0.5) circle
    (1.5pt) node[left]{$s_{ij}$}; \draw [fill=black] (3.,4.)
    circle (1.5pt) node[below right]{$s^{ij}_{j}$}; \draw
    [fill=black] (-1.,2.) circle (1.5pt) node[above
    left]{$g_{ji}([\tau_j,r])\,\,$};
  \end{tikzpicture}
  \caption{}\label{fig:g_ij}
\end{figure}

From elementary projective geometry, we have:
\begin{lemma}\label{lem:gijisomorphism}
  $g_{ij}$ is an isomorphism of projective lines with
  $g_{ij}([\tau_j, 0]) = s_{ij}$.
\end{lemma}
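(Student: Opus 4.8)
The plan is to realise $g_{ij}$ as the projectivisation of an explicit linear isomorphism. First I would fix a linear functional $\alpha\in f_{ij}^{*}$ whose kernel on $f_{ij}$ is $f_{i}$ and contemplate the interior product $i_{\alpha}\colon\Wedge^{2}f_{ij}\to f_{ij}$. The elementary fact I would use is: if $\omega\in\Wedge^{2}f_{ij}$ is decomposable, nonzero, and $\abrack{\omega}\neq f_{i}$, then $i_{\alpha}\omega\neq0$ and spans the point $\abrack{\omega}\cap\mathbb{P}(f_{i})$. Indeed $i_{\alpha}\omega$ always lies in $\abrack{\omega}$, the identity $\alpha(i_{\alpha}\omega)=0$ puts it in $\ker\alpha=f_{i}$, and $i_{\alpha}\omega=0$ would force $\abrack{\omega}\subseteq f_{i}$ and hence $\abrack{\omega}=f_{i}$. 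Since $\dim f_{ij}=3$, every nonzero element of $\Wedge^{2}f_{ij}$ is automatically decomposable, so this applies to $\omega=r\eta_{ji}+\tau_{j}$ whenever that element is nonzero with span $\neq f_{i}$, giving $g_{ij}([\tau_{j},r])=[\,i_{\alpha}(r\eta_{ji}+\tau_{j})\,]=[\,r\,i_{\alpha}\eta_{ji}+i_{\alpha}\tau_{j}\,]$.

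The next step is to verify these two provisos for every $(\tau_{j},r)\neq(0,0)$, so that $g_{ij}$ is genuinely defined on all of $\mathbb{P}(\Wedge^{2}f_{j}\oplus\R)$. For $r\eta_{ji}+\tau_{j}\neq0$: if $\tau_{j}=0$ this is $r\eta_{ji}\neq0$, while if $\tau_{j}\neq0$ then $\eta_{ji}$ and $\tau_{j}$ represent the distinct $2$-planes $\abrack{\eta_{ji}}\neq f_{j}=\abrack{\tau_{j}}$ and so are linearly independent in $\Wedge^{2}f_{ij}$. For $\abrack{r\eta_{ji}+\tau_{j}}\neq f_{i}$: a spanning vector of $s_{j}^{ij}$ wedges to zero with both $\eta_{ji}$ (as it lies in $\abrack{\eta_{ji}}$) and $\tau_{j}$ (as it lies in $f_{j}$ and $\Wedge^{3}f_{j}=0$), whence $s_{j}^{ij}\subseteq\abrack{r\eta_{ji}+\tau_{j}}$; but $s_{j}^{ij}\not\subseteq f_{i}$, since otherwise $s_{j}^{ij}\subseteq f_{i}\cap f_{j}=s_{ij}$ would give $s_{ij}=s_{j}^{ij}\subseteq\abrack{\eta_{ji}}$, contradicting the non-degeneracy $\eta_{ji}\wedge s_{ij}\neq\set{0}$.

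Finally I would show that the linear map $L\colon\Wedge^{2}f_{j}\oplus\R\to f_{i}$, $L(\tau_{j},r)=i_{\alpha}\tau_{j}+r\,i_{\alpha}\eta_{ji}$, is an isomorphism, which identifies $g_{ij}$ with $\mathbb{P}(L)$ and so exhibits it as an isomorphism of projective lines. Source and target are both $2$-dimensional, so it is enough that the image of $L$ contains two distinct lines of $f_{i}$: the restriction $i_{\alpha}|_{\Wedge^{2}f_{j}}$ is nonzero (as $\abrack{\tau_{j}}=f_{j}\neq f_{i}$) with image $f_{i}\cap f_{j}=s_{ij}$, while $i_{\alpha}\eta_{ji}$ spans $s_{i}^{ij}=\abrack{\eta_{ji}}\cap\mathbb{P}(f_{i})$, and $s_{ij}\neq s_{i}^{ij}$ once more exactly because $s_{ij}\not\subseteq\abrack{\eta_{ji}}$. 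Hence $L$ is onto, so bijective, and $g_{ij}([\tau_{j},0])=[\,i_{\alpha}\tau_{j}\,]=s_{ij}$. The only genuine content is the repeated appeal to the non-degeneracy hypothesis — both to keep $g_{ij}$ everywhere defined and to separate $s_{ij}$ from $s_{i}^{ij}$ — and I foresee no real obstacle; everything else is bookkeeping with interior products in a $3$-dimensional space.
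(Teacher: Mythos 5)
Your proposal is correct and complete. The paper itself offers no proof of this lemma: it simply labels it a consequence of ``elementary projective geometry'', the implicit argument being that the lines $\abrack{r\eta_{ji}+\tau_j}$ form the pencil through the point $s_j^{ij}$ in the projective plane $\P(f_{ij})$, and that intersecting a pencil with a line not containing its base point is a perspectivity, hence a projective isomorphism. What you have done is realise that perspectivity as the projectivisation of an explicit linear map via the interior product $i_\alpha$ with $\ker\alpha\cap f_{ij}=f_i$, which is a clean and fully rigorous way to discharge the appeal to classical projective geometry. The two checks you isolate are exactly the content hiding behind the paper's one-line citation: that $(\tau_j,r)\mapsto r\eta_{ji}+\tau_j$ parametrises the pencil injectively (your linear independence of $\eta_{ji}$ and $\tau_j$, coming from $\abrack{\eta_{ji}}\neq f_j$), and that the base point $s_j^{ij}$ does not lie on $\P(f_i)$ (so every line of the pencil meets $\P(f_i)$ in exactly one point, and the two lines $s_{ij}$ and $s_i^{ij}$ in the image of $L$ are distinct) --- both ultimately reducing to the non-degeneracy hypothesis $\eta_{ji}\wedge s_{ij}\neq\set{0}$, as you note. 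So this is not a different strategy so much as a careful linearisation of the omitted one; its added value is the explicit isomorphism $L$, which also makes the evaluation $g_{ij}([\tau_j,0])=s_{ij}$ immediate.
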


Now for the main construction of this section: write
$\dom = \dom_b \sqcup \dom_w$ in such a way that each edge
has one vertex in $\dom_b$ and one in $\dom_{w}$ and define
two bundles of projective lines $X^b, X^w$ over $\dom$ as
follows:
\begin{align*}
  X^b\restr{\dom_b} &:=
                      \mathbb{P}(f)\restr{\dom_b} 
  & X^b\restr{\dom_w} &:= \mathbb{P}(\Wedge^2 f \oplus \mathbb{R})\restr{\dom_w}\\
  X^w\restr{\dom_b} &:= \mathbb{P}(\Wedge^2 f \oplus
                      \mathbb{R})\restr{\dom_b} 
  & X^w\restr{\dom_w} &:= \mathbb{P}(f)\restr{\dom_w}.
\end{align*}
Now use the $g_{ij}$ and their inverses to define
connections $\gamma^b, \gamma^w$ on
the projective line bundles
$X^b, X^w$ (c.f. \cref{th:54}).  In detail,
\[
  \gamma^b_{ij} :=
  \begin{cases}
    g_{ij}, & i \in \dom_b\\
    g^{-1}_{ji}, & i \in \dom_w
  \end{cases}, \quad \gamma^w_{ij} :=
  \begin{cases}
    g_{ij}, & i \in \dom_w\\
    g^{-1}_{ji}, & i \in \dom_b
  \end{cases}.
\]

The key point is that these connections are flat:
\begin{proposition}
  On any quadrilateral $\ijkl$, we have
  \[
    \gamma^w_{ij}\gamma^w_{jk}\gamma^w_{k
      \ell}\gamma^w_{\ell i} = \mathrm{id}_{X_i} =
    \gamma^b_{ij}\gamma^b_{jk}\gamma^b_{k
      \ell}\gamma^b_{\ell i}.
  \]
\end{proposition}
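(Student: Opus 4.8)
The plan is to reduce both flatness statements to a single identity for the maps $g_{ij}$ on one quadrilateral, and then verify that identity in coordinates adapted to the quadrilateral. Flatness of a connection $\Gamma$ on a quadrilateral $\ijkl$ is the same as triviality of the holonomy $\Gamma_{ij}\Gamma_{jk}\Gamma_{k\ell}\Gamma_{\ell i}$, and since this is cyclic in the vertices we may base the holonomy at whichever vertex is convenient: for $\gamma^b$ I would take it at a black vertex of the quadrilateral (still calling it $i$, so $k\in\dom_b$ and $j,\ell\in\dom_w$), and for $\gamma^w$ at a white one. Unwinding the definitions of $\gamma^{b}$, $\gamma^{w}$ and of the $g_{ij}$ then shows that in either case the holonomy equals the composite $g_{ij}\circ g^{-1}_{kj}\circ g_{k\ell}\circ g^{-1}_{i\ell}:\P(f_i)\to\P(f_i)$, so it is enough to prove this composite is $\mathrm{id}_{\P(f_i)}$ for every quadrilateral.

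For that I would fix $\ijkl$, put $W=f_i+f_j+f_k+f_\ell$, and invoke \cref{th:20}(2): $W$ is four-dimensional, so we may pick a basis $u_1,u_2,u_3,u_4$ of $W$ with $\spn{u_1}=s_{\ell i}$, $\spn{u_2}=s_{ij}$, $\spn{u_3}=s_{jk}$, $\spn{u_4}=s_{k\ell}$, whence $f_i=\spn{u_1,u_2}$, $f_j=\spn{u_2,u_3}$, and cyclically $f_k=\spn{u_3,u_4}$, $f_\ell=\spn{u_4,u_1}$. On the edge $ij$ the value $\eta_{ji}$ is a decomposable bivector lying in $\Wedge^2 f_{ij}=\Wedge^2\spn{u_1,u_2,u_3}$, and the non-degeneracy condition forces $\eta_{ji}=\lambda_{ij}\,(u_1+p_{ij}u_2)\wedge(u_3+q_{ij}u_2)$ with $\lambda_{ij}\neq0$, its two factors spanning $s_i^{ij}$ and $s_j^{ij}$; and likewise, cyclically, on the other three edges. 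Expanding $\d\eta=0$ on $\ijkl$ in the basis $\{u_a\wedge u_b\}$ of $\Wedge^2 W$ and equating each of the six coefficients to zero gives $\lambda_{ij}=\lambda_{k\ell}$ and $\lambda_{jk}=-\lambda_{\ell i}$, together with four further linear relations among the parameters $p_\bullet,q_\bullet$, one per edge.

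Then I would compute the four maps. Using \cref{lem:gijisomorphism} in the explicit form $g_{ab}([\,t\,e_{ab}\wedge\sigma^{ab}_b,\ r\,])=[\,r\,\sigma^{ab}_a+t\,e_{ab}\,]$, where $\eta_{ba}=\sigma^{ab}_a\wedge\sigma^{ab}_b$ and $e_{ab}$ spans $s_{ab}$, I would write each of $g_{ij},g_{kj},g_{k\ell},g_{i\ell}$ and its inverse explicitly in the $u$-coordinates as a projective $2\times2$ matrix, having first fixed once and for all $u_2\wedge u_3$ and $u_1\wedge u_4$ as the chosen generators of $\Wedge^2 f_j$ and $\Wedge^2 f_\ell$. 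Multiplying the four matrices in order and using the closedness relations to eliminate parameters as they appear, one finds that everything cancels and the product is a nonzero scalar multiple of the identity. That is precisely the identity we need, so $\gamma^b$ and $\gamma^w$ are both flat.

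I expect the only genuine work to be the bookkeeping in this last step: one has to reconcile the several edge-orientation conventions in play --- the one implicit in $\eta_{ji}$, the one in the formula for $\d\eta$, and the one built into the definition of $g_{ij}$ --- and to choose the generators of the various $\Wedge^2 f_j$ coherently, so that the four matrices are multiplied with the correct signs; once this is arranged the cancellation is forced and no further idea is needed. The regularity hypotheses of \cref{th:20} are exactly what guarantee that $W$ is four-dimensional, that all four $\lambda$'s are nonzero, and that each of $s_i^{ij},\,s_j^{ij}$ is well defined and distinct from $s_{ij}$, so that every matrix appearing is invertible.
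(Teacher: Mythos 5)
Your proposal is correct, but it takes a genuinely different route from the paper. The paper argues by contradiction and entirely without coordinates: it transports a generic point $s_i\in\P(f_i)$ around the two halves of the quadrilateral, and observes that if the two images $s_k',s_k''\in\P(f_k)$ differed, then closedness of $\eta$ together with the decomposability of $(\eta_{\ell i}+\tau_\ell)-(\eta_{ji}+\tau_j)\in s_i\wedge V$ would force the Pl\"ucker relation $s_\ell\wedge s_k'\wedge s_j\wedge s_k''=0$, whence all four curvature lines $s_{ij},s_{jk},s_{k\ell},s_{\ell i}$ would lie in a $3$-plane, contradicting second-order regularity; a short continuity argument then upgrades agreement at generic points to equality of the projective maps. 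You instead compute: you take the basis of the $4$-dimensional span adapted to the four intersection lines, parametrise $\eta$ edge by edge as $\lambda_{ij}(u_1+p_{ij}u_2)\wedge(u_3+q_{ij}u_2)$ and cyclic analogues, extract the six scalar relations from $\d\eta=0$, and multiply out the four $2\times2$ projective matrices. I checked that your computation does close up: with the relations $\lambda_{ij}=\lambda_{k\ell}$, $\lambda_{jk}=\pm\lambda_{\ell i}$ (the sign depending on your cyclic factor ordering, as you flag) and just two of the four $p,q$ relations, the composite $g_{ij}\circ g_{kj}^{-1}\circ g_{k\ell}\circ g_{i\ell}^{-1}$ comes out as $-\lambda_{ij}\lambda_{jk}$ times the identity, so the asserted cancellation is real and not a gap. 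One small looseness: the $\gamma^w$ holonomy based at a white vertex is the cyclic shift of the displayed composite rather than literally the same expression, but since your identity is proved for an arbitrary quadrilateral with arbitrary cyclic labelling this covers both bundles, exactly as the paper's opening ``cyclically permute the indices'' does. The trade-off is clear: the paper's proof is shorter and isolates precisely where second-order regularity is used, while yours is constructive, avoids the genericity/continuity step, and produces explicit transport formulas (and the closedness relations among the $p$'s and $q$'s) that could be reused elsewhere.
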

\begin{proof}
  For $(X, \gamma)$ one of the bundles with connection under
  consideration, cyclically permute the indices if necessary
  to arrange that $X_i = \mathbb{P}(f_i)$.  Choose
  $s_i \in \mathbb{P}(f_i)$ with
  $s_i \neq s_{ij}, s_{i\ell}$.  By continuity, it suffices
  to prove that
  \[
    (\gamma_{k\ell} \circ \gamma_{\ell i})(s_i) =
    (\gamma_{kj} \circ \gamma_{ji})(s_i).
  \]
	
  Call the left side $s'_k$ and the right $s''_k$.  We want
  to prove that these coincide so suppose, for a
  contradiction, that they do not.  Then, with
  $\gamma_{\ell i}(s_i) = [\tau_\ell ,1]$ and
  $\gamma_{ji}(s_i) = [\tau_j, 1]$, we have
  \begin{equation}\label{eqn:flat1}
    (\eta_{\ell i} + \tau_\ell), (\eta_{ji} + \tau_j) \in s_i \wedge V,
  \end{equation}
  while there are
  $s_j=s^{kj}_{j} \in \mathbb{P}(f_j) \setminus s_{kj}$,
  $s_\ell=s^{k\ell}_{\ell} \in \mathbb{P}(f_\ell) \setminus
  s_{k\ell}$ such that
  \[
    \eta_{\ell k} + \tau_\ell \in s_\ell \wedge s_k', \quad
    \eta_{jk} + \tau_j \in s_j \wedge s_k''.
  \]
  Since $\eta$ is closed we also have
  \begin{equation}\label{eqn:flat2}
    (\eta_{\ell i} + \tau_\ell) - (\eta_{ji} + \tau_j)
    = (\eta_{\ell k} + \tau_\ell) - (\eta_{jk} + \tau_j).
  \end{equation}
  Now, thanks to \eqref{eqn:flat1}, the left side of
  \eqref{eqn:flat2} lies in $s_i \wedge V$ and so is
  decomposable.  The right side therefore satisfies the
  Pl\"ucker relation and we deduce that
  $s_\ell \wedge s_k' \wedge s_j \wedge s_k'' = \{0\}$ so
  that $s_\ell, s_k', s_j, s_k''$ span a $3$-plane.
  However, this $3$-plane contains
  $f_k = s_k' \oplus s_k'' = s_{jk} \oplus s_{\ell k}$, and
  so $f_j = s_j \oplus s_{jk}$ and
  $f_\ell = s_\ell \oplus s_{\ell k}$.  Thus, our $3$-plane
  contains all four intersections $s_{ij}$, contradicting
  the second order regularity of $f$.
\end{proof}

\subsubsection{Applicability via \KM\ pair of \K\ maps}

Since the connections $\gamma^b,\gamma^w$ are flat, we may
choose a $\gamma^b$-parallel section $x^b$ of $X^b$ and a
$\gamma^w$-parallel section $x^w$ of $X^w$ and, using these,
define $s < f$ by
\begin{subequations}\label{eq:2}
  \begin{equation}\label{eq:8}
    s\restr{\dom_b} = x^b\restr{\dom_b}, \quad 
    s\restr{\dom_w} = x^w\restr{\dom_w}.
  \end{equation}
  $s$ is uniquely determined by its values at a pair of
  initial vertices, one black and one white.  By choosing
  the initial values away from a countable set, we assume
  that $s$ never coincides with an intersection $s_{ij}$.
  We then define $\tau \in \Gamma(\Wedge^2 f)$ by
  \begin{equation}\label{eq:9}
    [\tau, 1]\restr{\dom_b} = x^w\restr{\dom_b},
    [\tau, 1]\restr{\dom_w} = x^b\restr{\dom_w},
  \end{equation}
\end{subequations}
and our assumption on the intersections $s_{ij}$ ensures
that $\tau$ is never $\infty$ thanks to
\cref{lem:gijisomorphism}.  With $s,\tau$ so defined, we
have, on each edge $ij$, $s_i=g_{ij}([\tau_j,1])$ so that
\begin{equation}
  \label{eq:10}
  \eta_{ji}+\tau_j\in s_i\wedge V.
\end{equation}

We are about to prove that $(s,\eta+\d\tau)$ is a \K\ net
but first we show that it satisfies the regularity
conditions of \cref{th:19}.  
\begin{lemma}\label{th:3}
  Suppose that $s < f$ with $s_i \wedge s_j \neq 0$,
  equivalently $s_i\neq s_{ij}$,  on each
  edge $ij$.  Then, for any quadrilateral $\ijkl$,
  $s_i,s_j,s_k,s_{\ell}$ are pairwise distinct while
  $\dim (s_i + s_j + s_k + s_\ell) \geq 3$.
\end{lemma}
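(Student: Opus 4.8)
The plan is to do everything by linear algebra inside the $4$-dimensional space $W:=s_{ij}\oplus s_{jk}\oplus s_{k\ell}\oplus s_{\ell i}$, whose directness is precisely the second-order regularity in \cref{th:20}. Relative to the four lines $s_{ij},s_{jk},s_{k\ell},s_{\ell i}$, the decompositions $f_i=s_{\ell i}\oplus s_{ij}$ and $f_{ij}=s_{\ell i}\oplus s_{ij}\oplus s_{jk}$, together with their cyclic rotates $f_j=s_{ij}\oplus s_{jk}$, $f_{jk}=s_{ij}\oplus s_{jk}\oplus s_{k\ell}$, and so on, exhibit all of the $f_p$ and all of the $f_{pq}$ as coordinate subspaces of $W$, so that their sums and intersections become pure bookkeeping.

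First I would settle pairwise distinctness. For an edge $pq$ of the quadrilateral, $s_p=s_q$ would force $s_p\le f_p\cap f_q=s_{pq}$, hence $s_p=s_{pq}$, contrary to the hypothesis; this handles the four ``edge'' pairs $\{s_i,s_j\},\{s_j,s_k\},\{s_k,s_\ell\},\{s_\ell,s_i\}$. For the two diagonal pairs, $f_i\cap f_k=(s_{\ell i}\oplus s_{ij})\cap(s_{jk}\oplus s_{k\ell})=\{0\}$ because the four lines are in direct sum, so $s_i=s_k$ is impossible; symmetrically $f_j\cap f_\ell=(s_{ij}\oplus s_{jk})\cap(s_{k\ell}\oplus s_{\ell i})=\{0\}$ gives $s_j\neq s_\ell$.

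For the span, I would argue by contradiction: suppose $\dim(s_i+s_j+s_k+s_\ell)\le 2$. Since $s_i\neq s_j$ (just proved), the span is a $2$-plane $P$ containing all four lines. On each edge $pq$ we have $s_p,s_q\le f_p+f_q=f_{pq}$, and as $s_p\neq s_q$ this forces $P=s_p+s_q\le f_{pq}$. Hence $P\le f_{ij}\cap f_{k\ell}$ and $P\le f_{jk}\cap f_{\ell i}$. Now $f_{ij}$ and $f_{k\ell}$ are distinct $3$-planes of $W$ that both contain $s_{\ell i}$ and $s_{jk}$, so by dimension count $f_{ij}\cap f_{k\ell}=s_{\ell i}\oplus s_{jk}$; likewise $f_{jk}\cap f_{\ell i}=s_{ij}\oplus s_{k\ell}$. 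These two $2$-planes meet only at $\{0\}$ since $s_{ij},s_{jk},s_{k\ell},s_{\ell i}$ are independent, so $P=\{0\}$: a contradiction. Therefore $\dim(s_i+s_j+s_k+s_\ell)\ge 3$.

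The computations are all routine; the only place needing a moment's care is the identification $f_{ij}\cap f_{k\ell}=s_{\ell i}\oplus s_{jk}$ (and its cyclic rotate), which follows because both $3$-planes sit inside the $4$-plane $W$, so their sum is all of $W$ and their intersection is exactly $2$-dimensional, hence equals the $2$-plane $s_{\ell i}\oplus s_{jk}$ that both visibly contain. I do not expect any genuine obstacle here: the whole argument is linear algebra in $W$ organised by the direct-sum decompositions supplied by \cref{th:20}.
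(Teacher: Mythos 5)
Your proof is correct and follows essentially the same route as the paper: adjacent vertices are distinct by the hypothesis, diagonal vertices are distinct because $f_i\cap f_k=\{0\}=f_j\cap f_\ell$ by second-order regularity, and the span condition is obtained by trapping the putative $2$-plane inside both $f_{ij}\cap f_{k\ell}=s_{\ell i}\oplus s_{jk}$ and $f_{jk}\cap f_{\ell i}=s_{ij}\oplus s_{k\ell}$, which meet trivially. The only difference is that you spell out the dimension count identifying $f_{ij}\cap f_{k\ell}$, which the paper leaves implicit.
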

\begin{proof}
  Since $s_i \wedge s_j \neq \{0\}$, with
  $U := s_i + s_j + s_k + s_\ell$, we have $\dim U \geq 2$.
  Suppose now that $\dim U = 2$.  Then
  $U = s_i \oplus s_j < f_{ij}$, and similarly
  $U < f_{k\ell}$ so that
  $U = f_{ij} \cap f_{k\ell} = s_{i\ell} \oplus s_{kj}$.  In
  the same way,
  $U = f_{i\ell} \cap f_{jk} = s_{ij} \oplus s_{k\ell}$.
  But the second order regularity gives
  $(s_{ij} \oplus s_{k\ell})\cap (s_{i\ell} \oplus s_{kj}) =
  \{0\}$, and so a contradiction.

  Second order regularity also gives $f_i\cap
  f_k=\set0=f_j\cap f_{\ell}$ so diagonal vertices are also
  pairwise distinct.
\end{proof}

With this in hand, we have:
\begin{proposition}
  Let $(f,\eta)$ be applicable and define $s, \tau$ by
  \eqref{eq:2}.  Then $(s,\eta+\d\tau)$ is a \K\ net.
\end{proposition}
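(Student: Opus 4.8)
The plan is to verify directly that $s$, together with $\eta^{\tau}:=\eta+\dif\tau$, meets all the requirements of \cref{def:isothermic} as well as the standing regularity of \cref{th:19}. Two of these are immediate. The regularity of $s$ follows from \cref{th:3}, since by construction $s<f$ with $s_i\neq s_{ij}$ on each edge (the pair of initial values of $s$ was chosen away from a countable set precisely to arrange this). Closedness is automatic, as $\dif\eta^{\tau}=\dif\eta+\dif\dif\tau=0$ because $\eta$ is closed and $\dif\circ\dif=0$. So the real task is to show, on each edge $ij$, that $\eta^{\tau}_{ji}=\eta_{ji}+\tau_j-\tau_i$ is a non-zero element of $s_j\wedge s_i$.

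The key input is \eqref{eq:10} and its analogue obtained by interchanging $i$ and $j$ (valid because $s$ is simultaneously $\gamma^b$- and $\gamma^w$-parallel): on each edge,
\[
  \eta_{ji}+\tau_j\in s_i\wedge V,\qquad \eta_{ij}+\tau_i\in s_j\wedge V,
\]
the second of which, using $\eta_{ij}=-\eta_{ji}$, also reads $\eta_{ji}-\tau_i\in s_j\wedge V$. Now fix spanning vectors $\mu_i,\mu_j,\mu_{ij}$ of the lines $s_i,s_j,s_{ij}$. Since $s_i\neq s_{ij}$ and $s_j\neq s_{ij}$ — again guaranteed by the construction of $s$ — we have $f_i=s_i\oplus s_{ij}$ and $f_j=s_j\oplus s_{ij}$, so that $\tau_i\in\Wedge^2 f_i$ is a scalar multiple of $\mu_i\wedge\mu_{ij}$ and $\tau_j\in\Wedge^2 f_j$ a scalar multiple of $\mu_j\wedge\mu_{ij}$; in particular $\tau_i\in s_i\wedge V$ and $\tau_j\in s_j\wedge V$. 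Writing $\eta^{\tau}_{ji}=(\eta_{ji}+\tau_j)-\tau_i$ then places $\eta^{\tau}_{ji}$ in $s_i\wedge V$, and writing $\eta^{\tau}_{ji}=(\eta_{ji}-\tau_i)+\tau_j$ places it in $s_j\wedge V$. By \cref{th:3}, $s_i\neq s_j$; and a non-zero element common to $s_i\wedge V$ and $s_j\wedge V$ is a decomposable bivector whose underlying $2$-plane contains both $s_i$ and $s_j$, hence equals $s_i\oplus s_j$. Therefore $(s_i\wedge V)\cap(s_j\wedge V)=s_i\wedge s_j$, and so $\eta^{\tau}_{ji}\in s_j\wedge s_i$, as required.

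It remains to check $\eta^{\tau}_{ji}\neq 0$. If it vanished, then $\eta_{ji}=\tau_i-\tau_j$, which — being a difference of multiples of $\mu_i\wedge\mu_{ij}$ and $\mu_j\wedge\mu_{ij}$ — has the form $\mu_{ij}\wedge u$ for some $u\in V$; but then $\eta_{ji}\wedge s_{ij}=0$, contradicting the non-degeneracy condition on the applicable line congruence $(f,\eta)$. Hence $(s,\eta^{\tau})$ is a \K\ net.

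The step I expect to require most care is the opening claim of the middle paragraph: reading off the two membership relations from the $\gamma^b$- and $\gamma^w$-parallelism of $x^b$ and $x^w$. This is precisely \eqref{eq:10} for the two orientations of an edge, but carrying it out cleanly means tracking the black/white labelling together with the orientation conventions built into $\gamma^b$, $\gamma^w$ and the maps $g_{ij}$. Everything afterwards is elementary bivector algebra.
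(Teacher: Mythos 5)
Your proof is correct and follows essentially the same route as the paper's: both deduce $\eta^{\tau}_{ji}\in(s_i\wedge V)\cap(s_j\wedge V)=s_i\wedge s_j$ from \eqref{eq:10} applied to both orientations of the edge, together with $\tau_i\in s_i\wedge V$. Your explicit verification that $\eta^{\tau}_{ji}\neq0$ via the non-degeneracy condition $\eta_{ji}\wedge s_{ij}\neq\{0\}$ is a small point the paper leaves implicit, and it is correct.
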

\begin{proof}
  The only thing to prove is that
  $(\eta+\d\tau)_{ji}\in s_i\wedge s_j$ on each edge $ij$.
  Clearly $\tau_i\in s_i\wedge V$ so that, by \eqref{eq:10},
  \begin{equation*}
    (\eta+\d\tau)_{ji}=(\eta_{ji}+\tau_j)-\tau_i\in
    s_i\wedge V.
  \end{equation*}
  By the same argument,
  $(\eta+\d\tau)_{ji}=-(\eta+\d\tau)_{ij}\in s_j\wedge
  V$ so that
  $(\eta+\d\tau)_{ji}\in (s_i\wedge V)\cap(s_j\wedge
  V)=s_i\wedge s_j$ as required.
\end{proof}

We can say more: choose distinct initial conditions for two
parallel sections each of $X^b, X^w$ to arrive at pointwise
distinct maps $s^\pm < f$ and $\tau^\pm$ with
$\eta^\pm := \eta + \d \tau^\pm$ satisfying
\[
  \eta^\pm_{ij} \in s^\pm_j \wedge s^\pm_i
\]
on all edges.  Since $s^\pm$ are pointwise distinct, we have
$f = s^+ \oplus s^-$.  Finally, set $\tau = \tau^- - \tau^+$
so that
\[
  \eta^- = \eta^+ + \d \tau.
\]
Observe that $\tau$ is never zero: indeed if $\tau_j = 0$,
and $ij$ is an edge, then $\tau^{\pm}_j$ coincide so that
$s^{\pm}_i=g_{ij}([\tau_j^{\pm},1])$ coincide also.  In view
of \cref{th:2}, we have therefore arrived at the following
characterisation of an applicable line congruence.

\begin{theorem}\label{th:4}
  Let $f:\dom\to G_2(V)$.  Then the following are
  equivalent:
  \begin{compactenum}
  \item $(f, \eta)$ is a regular applicable line
    congruence.
  \item $f$ is spanned by a \KM\ pair of \K\ nets
    $s^\pm < f$ and regular.
  \end{compactenum}
  In this case $[\eta]=[\eta^+]=[\eta^-]$.
\end{theorem}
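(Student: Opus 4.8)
The plan is that most of the statement has already been established in the run‑up to the theorem, so the proof is largely a matter of assembling the pieces and keeping track of the gauge classes $[\eta]$; only one genuinely geometric point remains.

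For $(1)\Rightarrow(2)$ I would begin from a regular applicable $(f,\eta)$ and recall that the connections $\gamma^b,\gamma^w$ built above are flat, so by \cref{th:52}(b) admit parallel sections. Choosing two $\gamma^b$-parallel sections of $X^b$ and two $\gamma^w$-parallel sections of $X^w$ with distinct initial values at a black and a white base vertex and away from the countable bad locus, I obtain via \eqref{eq:2} pointwise distinct line subbundles $s^\pm<f$ and sections $\tau^\pm\in\Gamma\Wedge^2f$; the proposition immediately preceding the theorem then says $(s^\pm,\eta^\pm)$ are \K\ nets, where $\eta^\pm:=\eta+\dif\tau^\pm$, so that $[\eta^\pm]=[\eta]$. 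Since $s^\pm$ are pointwise distinct, $f=s^+\oplus s^-$; hence each fibre $\Wedge^2f_i$ is the line $s^+_i\wedge s^-_i$, and $\tau:=\tau^--\tau^+$ is automatically a section of $s^+\wedge s^-$ with $\eta^-=\eta^++\dif\tau$. I would then note that $\tau$ is nowhere zero (if $\tau_j=0$ then $\tau^+_j=\tau^-_j$, forcing $s^+_i=g_{ij}([\tau^+_j,1])=g_{ij}([\tau^-_j,1])=s^-_i$ on any edge $ij$, contrary to distinctness) and that $s^+\sqcup s^-$ satisfies the regularity required in the definition of a \KM\ transformation (which follows from the regularity of $f$ and the genericity of the initial data). \cref{th:2} then exhibits $s^\pm$ as a \KM\ pair, with $[\eta]=[\eta^+]=[\eta^-]$ as already observed.

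For $(2)\Rightarrow(1)$ I would take the applicable structure to be $\eta:=\eta^+$. Conditions (1) and (2) of the definition of an applicable line congruence are immediate, since $(s^+,\eta^+)$ is \K\ and $\eta^+_{ij}\in s^+_i\wedge s^+_j\subset\Wedge^2f_{ij}$ using $s^+_i\leq f_i\leq f_{ij}$ (and likewise at $j$). The substantive step is the non-degeneracy $\eta^+_{ij}\wedge s_{ij}\neq\{0\}$, i.e.\ $s_{ij}=f_i\cap f_j\notin\abrack{\eta^+_{ij}}=\spn{s^+_i,s^+_j}$. If $s_{ij}$ lay on this line, then a dimension count inside the $2$-plane $\spn{s^+_i,s^+_j}\subseteq f_{ij}$ would force $s^+_i=s_{ij}$ or $s^+_j=s_{ij}$; taking $s^+_j=s_{ij}\leq f_i$, so $\mu^+_j\in f_i$ for a Moutard lift $\mu^+$ of $s^+$, the edge Moutard equation \eqref{eqn:MoutardOnTheSides} together with $s^+_j\neq s^-_i$ (part of the regularity of the \KM\ pair) would force $\mu^-_j\in f_i$ as well, giving $f_j=f_i$ and contradicting the first-order regularity of $f$. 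Hence $s^+<f$ meets the hypothesis of \cref{h:appl-line-congr}, $(f,\eta^+)$ is applicable, and it is regular by assumption. Finally \cref{th:2} provides $\tau\in\Gamma(s^+\wedge s^-)=\Gamma\Wedge^2f$ with $\eta^-=\eta^++\dif\tau$, so $[\eta]=[\eta^+]=[\eta^-]$.

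The hard part — the one step that is not pure bookkeeping on top of the earlier constructions — is the non-degeneracy check in $(2)\Rightarrow(1)$: excluding $s_{ij}\in\abrack{\eta^+_{ij}}$, which is exactly where the first-order regularity of $f$, the regularity of the \KM\ pair, and the edge Moutard equation \eqref{eqn:MoutardOnTheSides} all come into play.
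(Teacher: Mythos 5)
Your proof is correct, and the $(1)\Rightarrow(2)$ direction is exactly the paper's argument: the flat connections $\gamma^b,\gamma^w$, two choices of parallel section, the resulting $s^\pm$, $\tau^\pm$ with $\eta^\pm=\eta+\dif\tau^\pm$, the observation that $\tau=\tau^--\tau^+$ is nowhere zero, and \cref{th:2}. The only divergence is in $(2)\Rightarrow(1)$, where you and the paper each make explicit a different one of the two remaining checks. The paper's stated proof verifies only that the span of a \KM\ pair is a line congruence (via the Moutard equation \eqref{eqn:MoutardOnTheSides}, which forces $\dim(f_i+f_j)\leq 3$), and delegates applicability to \cref{h:appl-line-congr}; you instead subsume the line-congruence property into the regularity hypothesis (defensible, since first-order regularity in \cref{th:20} already gives $\dim f_{ij}=3$) and spell out the non-degeneracy $\eta^+_{ji}\wedge s_{ij}\neq\{0\}$, which \cref{h:appl-line-congr} passes over in silence. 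Your argument for that step is sound: $s_{ij}\in\abrack{\eta^+_{ji}}$ forces $s^+_i=s_{ij}$ or $s^+_j=s_{ij}$ by intersecting the line $\abrack{\eta^+_{ji}}$ with $\P(f_i)$ and $\P(f_j)$, and then the edge Moutard equation together with $s^+_j\neq s^-_i$, $s^+_i\neq s^-_j$ drives $f_j\leq f_i$, contradicting first-order regularity. This is a worthwhile addition rather than a detour; net, the two proofs cover the same ground with complementary emphasis, and yours is the more complete on the applicability side.
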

\begin{proof}
  The only thing left to prove is that the span of a \KM\
  pair is a line congruence.  However, the Moutard equation
  \eqref{eqn:Moutard} on vertical quadrilaterals assures us
  that $s^{\pm}_i$ and $s^{\pm}_j$ are coplanar so that
  $f_i$ and $f_j$ intersect.
\end{proof}
\begin{remark}\label{th:53}
  Of course, the \K\ nets of \cref{th:4} are far from
  unique: their values may be chosen freely on a pair of
  initial vertices, one black and one white,
  c.f.~\cite[Lemma~3.3]{burstall_discrete_2018}.
\end{remark}
\subsubsection{Applicability via \K\ dual lifts}

Let $f:\dom\to G_2(V)$ be a line congruence and suppose it
is spanned by sections $\sigma^{\pm}$ which are \K\ dual:
thus $\sigma^{\pm}:\dom\to V$ with
\begin{subequations}
  \label{eq:11}
  \begin{align}
    \label{eq:12}
    \d\sigma^+_{ij}\wedge\d\sigma^-_{ij}&=0,\quad\text{on
                                              each edge
                                              $ij$;}\\
    \label{eq:13}
    \d\sigma^+\curlywedge\d\sigma^-&=0,\quad\text{or,
                                         equivalently, $A(\sigma^+,\sigma^-)=0$.}
  \end{align}
\end{subequations}
We emphasise that here we do not require that $\sigma^{\pm}$
take values in an affine subspace of $V$ or even that they
have planar quadrilaterals.

In this situation, set $s^{\pm}=\spn{\sigma^{\pm}}$ and then
define $\eta^{\pm},\tau$ by
\begin{equation}\label{eq:15}
  \eta^{\pm}=\d\sigma^{\mp}\curlywedge\sigma^{\pm}\qquad
  \tau=\sigma^+\curlywedge\sigma^-.
\end{equation}
Then each $\eta_{ji}^{\pm}\in s_i^{\pm}\wedge s_j^{\pm}$ by
\eqref{eq:12} while $\d\eta^{\pm}=0$ by \eqref{eq:13}.
Finally $\eta^-=\eta^{+}+\d\tau$ so that $s^{\pm}$ are a
\KM\ pair and $f$ is applicable.

If $\mu^{\pm}$ are the corresponding Moutard lifts then
\cref{cor:K-Mconsistency} tells us that
$\tau=\mu^-\curlywedge\mu^+$ so that there is a function
$r:\dom\to\R^{\times}$ with
\begin{equation}\label{eqn:konigsToMoutard}
  \mu^\pm = \mp r^{\pm 1} \sigma^{\pm}.
\end{equation}
With $\d\sigma_{ji}^-=\lambda_{ij}\d\sigma^+_{ji}$, we have
\begin{equation*}
  \begin{split}
    r_ir_j\sigma^+_j\wedge\sigma^+_i=
    \mu^+_j\wedge\mu^+_i=\eta^+_{ji}=\half\d\sigma^-_{ji}\wedge(\sigma^+_i+\sigma^+_j)
    \\=\half\lambda_{ij}\d\sigma^+_{ji}\wedge(\sigma^+_i+\sigma^+_j)=
    \lambda_{ij}\sigma^+_j\wedge\sigma^+_i
  \end{split}
\end{equation*}
and so we arrive at the \emph{Christoffel formula}:
\begin{equation}\label{eqn:Christoffel}
  \d \sigma^-_{ij} = r_i r_j \d \sigma^+_{ij}.
\end{equation}
In terms of $\mu^{\pm}$, this last reads
\begin{equation}\label{eqn:moutardToKonigs}
  \mu^-_j - \mu^+_i = \frac{r_i}{r_j}(\mu^-_i - \mu^+_j)
\end{equation}
which is a refinement of \eqref{eqn:MoutardOnTheSides}.

Conversely, if \eqref{eqn:moutardToKonigs} holds, we can
reverse this argument and starting from $\mu^{\pm}$, obtain
$\sigma^{\pm}$ from \eqref{eqn:konigsToMoutard} for which
\eqref{eq:15} holds so that $\sigma^{\pm}$ are \K\ dual.

In fact, \eqref{eqn:moutardToKonigs} always holds and we have:
\begin{proposition}\label{prop:konigsOmega}
  Let $f : \dom \to G_2(V)$ be a line congruence.  Then $f$
  is spanned by \K\ dual sections if and only if $f$ is an
  applicable net.
\end{proposition}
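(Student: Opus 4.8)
The plan is to prove the two implications separately; the direction ``$f$ spanned by \K\ dual sections $\Rightarrow$ $f$ applicable'' is essentially already in hand from the discussion preceding the statement, so the real work is the converse. Indeed, if $\sigma^{\pm}$ are \K\ dual sections spanning $f$, then, exactly as in the run-up to the statement, the $1$-forms $\eta^{\pm}=\diff\sigma^{\mp}\curlywedge\sigma^{\pm}$ and $\tau=\sigma^{+}\curlywedge\sigma^{-}$ of \eqref{eq:15} exhibit $s^{\pm}=\spn{\sigma^{\pm}}$ as a \KM\ pair with $\eta^{-}=\eta^{+}+\diff\tau$, whence $f$ is applicable by \cref{th:4} (or directly by \cref{h:appl-line-congr}).

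For the converse, suppose $(f,\eta)$ is applicable. By \cref{th:4} I may write $f=s^{+}\oplus s^{-}$ for a \KM\ pair $s^{\pm}<f$, which I take — using \cref{th:53} and the construction in the proof of \cref{th:4} — so that no $s^{\pm}_v$ coincides with an intersection $s_{vw}$; then \cref{cor:K-Mconsistency} supplies Moutard lifts $\mu^{\pm}\in\Gamma s^{\pm}$ with $(\mu^{+}_j-\mu^{-}_i)\wedge(\mu^{+}_i-\mu^{-}_j)=0$ on each edge $ij$, i.e.\ \eqref{eqn:MoutardOnTheSides}. By the discussion preceding the statement, it remains only to upgrade this to \eqref{eqn:moutardToKonigs}: once I have $r:\dom\to\R^{\times}$ with $\mu^{-}_j-\mu^{+}_i=\tfrac{r_i}{r_j}(\mu^{-}_i-\mu^{+}_j)$, then putting $\sigma^{\pm}:=\mp r^{\mp1}\mu^{\pm}$ as in \eqref{eqn:konigsToMoutard}, the formulae \eqref{eq:15} hold (this is the reverse of the computation behind the Christoffel formula \eqref{eqn:Christoffel}), so $\sigma^{\pm}$ are \K\ dual sections spanning $f$, as required.

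To produce $r$: since $s^{\pm}_j\neq s_{ij}=f_i\cap f_j$, neither factor in \eqref{eqn:MoutardOnTheSides} vanishes, so there is a nowhere-zero edge function $\rho$ with $\rho_{ji}=\rho_{ij}^{-1}$ and $\mu^{+}_j-\mu^{-}_i=\rho_{ij}(\mu^{+}_i-\mu^{-}_j)$. Producing $r$ with $\rho_{ij}=r_j/r_i$ is equivalent — by simple connectedness of $\dom$, the multiplicative version of \cref{th:52} applied to the flat $\R^{\times}$-connection $\Gamma_{ji}=\rho_{ij}$ on $\dom\times\R$ — to proving the quadrilateral relation $\rho_{ij}\rho_{jk}\rho_{k\ell}\rho_{\ell i}=1$. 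To prove the latter I would solve each of the four edge relations around $\ijkl$ for its ``$\mu^{-}$''-endpoint and substitute cyclically, obtaining
\begin{equation*}
  \mu^{-}_i=\bigl(\text{a linear combination of }\mu^{+}_i,\mu^{+}_j,\mu^{+}_k,\mu^{+}_\ell\bigr)+\rho_{ij}\rho_{jk}\rho_{k\ell}\rho_{\ell i}\,\mu^{-}_i,
\end{equation*}
so that $(1-\rho_{ij}\rho_{jk}\rho_{k\ell}\rho_{\ell i})\,\mu^{-}_i\in\spn{\mu^{+}_i,\mu^{+}_j,\mu^{+}_k,\mu^{+}_\ell}$. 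If the product were $\neq1$, this would put $\mu^{-}_i$ — and then, propagating through the edge relations, each of $\mu^{-}_j,\mu^{-}_k,\mu^{-}_\ell$ — into $\spn{\mu^{+}_i,\mu^{+}_j,\mu^{+}_k,\mu^{+}_\ell}$, which is $3$-dimensional because $s^{+}$ is a regular \K\ net and hence has planar quadrilaterals. But then every $f_v=\spn{\mu^{+}_v,\mu^{-}_v}$ would lie in that $3$-space, forcing $\dim(f_i+f_j+f_k+f_\ell)\le 3$ and contradicting the second-order regularity of $f$ (\cref{th:20}(2)). Hence $\rho_{ij}\rho_{jk}\rho_{k\ell}\rho_{\ell i}=1$.

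I expect this last rigidity step to be the only real difficulty: the loop substitution is mechanical, and the content is the observation that a failure of closedness would collapse the plane of the $\mu^{+}$-quadrilateral onto the necessarily $4$-dimensional space $f_i+f_j+f_k+f_\ell$, which the applicability (second-order) regularity of $f$ forbids. Everything else is assembling ingredients already in the text — \cref{th:4}, \cref{cor:K-Mconsistency}, \eqref{eq:15}, \eqref{eqn:konigsToMoutard}, and \cref{th:52}.
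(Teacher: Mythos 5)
Your proof is correct, and its skeleton matches the paper's: in both, the forward direction is the computation already carried out before the statement, and the converse reduces (via \cref{cor:K-Mconsistency}, \eqref{eq:15} and \eqref{eqn:konigsToMoutard}) to showing that the edge multipliers of \eqref{eqn:MoutardOnTheSides} satisfy the multiplicative quadrilateral identity $\rho_{ij}\rho_{jk}\rho_{k\ell}\rho_{\ell i}=1$, after which simple connectedness of $\dom$ yields $r$. Where you genuinely diverge is in the proof of that identity. The paper assembles the eight points $\mu^\pm$ into a Moutard cube and quotes identity (2.52) of \cite[Theorem~2.34]{bobenko_discrete_2008}, which relates the multipliers of opposite faces ($b_{k\ell}/b_{ji}=b_{kj}/b_{\ell i}$). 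You instead run the four edge relations around the quadrilateral to obtain $(1-\rho_{ij}\rho_{jk}\rho_{k\ell}\rho_{\ell i})\,\mu^-_i\in\spn{\mu^+_i,\mu^+_j,\mu^+_k,\mu^+_\ell}$ and rule out a product $\neq 1$ by a dimension count: the right-hand span is at most $3$-dimensional by planarity of the $\mu^+$-quadrilateral, whereas if every $\mu^\pm_v$ lay in it then so would $f_i+f_j+f_k+f_\ell=s_{ij}\oplus s_{jk}\oplus s_{k\ell}\oplus s_{\ell i}$, which is $4$-dimensional by second-order regularity (\cref{th:20}). I checked the loop substitution, the propagation of $\mu^-_j,\mu^-_k,\mu^-_\ell$ into the span, and the non-vanishing of the factors of \eqref{eqn:MoutardOnTheSides} (which follows from $s^\pm_v\neq s_{vw}$); all are sound. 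Your route is self-contained where the paper's leans on an external consistency theorem, at the cost of explicitly invoking the regularity of $f$ --- a standing assumption in any case, and one the paper's own flatness proof uses in exactly the same way. The only slip is cosmetic: in your closing remark the collapse goes the other way --- it is the necessarily $4$-dimensional space $f_i+f_j+f_k+f_\ell$ that would be forced into the $3$-dimensional plane of the $\mu^+$-quadrilateral, not vice versa --- but this does not affect the argument.
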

\begin{proof}
  Let $f=s^+\oplus s^-$ be applicable, and let
  $\mu^\pm \in \Gamma s^\pm$ be Moutard lifts of $f$ with
  \eqref{eqn:MoutardOnTheSides}, that is,
  \[
    \mu^-_j - \mu^+_i = b_{ji}(\mu^-_i - \mu^+_j)
  \]
  for some discrete function $b_{ij}$ defined on oriented
  edges $ij$ such that $b_{ij} = 1/b_{ji}$.  From the
  discussion above, we only need to show that
  there is some function $r$ such that $b_{ji} = r_i/r_j$,
  or, equivalently, that
  \[
    b_{i\ell}b_{\ell k}b_{kj}b_{ji} = 1
  \]
  on any quadrilateral $\ijkl$.
	
  For this last, consider the following Moutard cube in the
  sense of \cite{bobenko_discrete_2008}:
  \[
    \begin{tikzpicture}[scale=0.9]
      \node (A) at (-1,-1) {$\mu^+_i$};
      \node (B) at (1,-1) {$\mu^+_j$};
      \node (C) at (1,1) {$\mu^+_k$};
      \node (D) at (-1,1) {$\mu^+_\ell$};
      \node (E) at (-2.3,-2.3) {$\mu^-_i$};
      \node (F) at (2.3,-2.3) {$\mu^-_j.$};
      \node (G) at (2.3,2.3) {$\mu^-_k$};
      \node (H) at (-2.3,2.3) {$\mu^-_\ell$};
      \path
      (A) edge (B)
      (B) edge (C)
      (C) edge (D)
      (D) edge (A)
      (E) edge (F)
      (F) edge (G)
      (G) edge (H)
      (H) edge (E)
      (A) edge (E)
      (B) edge (F)
      (C) edge (G)
      (D) edge (H);
    \end{tikzpicture}
  \]
  There is a Moutard equation (and so a $b$) relating the
  four vertices of each face of the cube.  The content of
  equation (2.52) of \cite[Theorem
  2.34]{bobenko_discrete_2008} is that suitable ratios of
  the $b$'s from any pair of opposite faces coincide.  In
  our case, this yields
  \[
    \frac{b_{k\ell}}{b_{ji}} = \frac{b_{kj}}{b_{\ell i}}
  \]
  and so the desired conclusion.
\end{proof}

\begin{remark}
  \cref{prop:konigsOmega} guarantees the existence of
  applicable congruences through any \K\ net $s$: simply
  take the line spanned by an affine lift $F=\sigma^+$ of $s$
  and the \K\ dual affine lift $\dual{F}=\sigma^-$ provided by
  \cref{th:1}.
\end{remark}

We conclude this section with a summary of our discussion:
\begin{theorem}\label{thm:applicability}
  Let $f$ be a regular line congruence.  Then the
  following are equivalent:
  \begin{itemize}
  \item $f$ is applicable;
  \item $f$ is spanned by a \KM\ pair of \K\ nets;
  \item $f$ is spanned by Moutard sections satisfying
    \eqref{eqn:MoutardOnTheSides};
  \item $f$ is spanned by \K\ dual sections.
  \end{itemize}
\end{theorem}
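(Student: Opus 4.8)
The plan is to collate the equivalences proved piecemeal throughout this section, so the proof is essentially a matter of citation, with the one substantive ingredient already isolated in \cref{prop:konigsOmega}. Concretely, I would run the cycle: \emph{applicable} $\Leftrightarrow$ \emph{spanned by a \KM\ pair of \K\ nets} $\Leftrightarrow$ \emph{spanned by Moutard sections satisfying \eqref{eqn:MoutardOnTheSides}} $\Leftrightarrow$ \emph{spanned by \K\ dual sections} $\Leftrightarrow$ \emph{applicable}.

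The first equivalence is exactly \cref{th:4}: from the $1$-form $\eta$ one builds the flat connections $\gamma^{b},\gamma^{w}$ on the projective-line bundles $X^{b},X^{w}$, chooses two parallel sections apiece to obtain pointwise-distinct $s^{\pm}<f$ together with $\tau^{\pm}$, and then \cref{th:2} identifies $(s^{\pm},\eta+\dif\tau^{\pm})$ as a \KM\ pair; conversely, the Moutard equation \eqref{eqn:Moutard} on vertical quadrilaterals makes $s^{\pm}_i,s^{\pm}_j$ coplanar, so $f$ is a line congruence, applicable via \cref{h:appl-line-congr}. The second equivalence is \cref{cor:K-Mconsistency}, which says that specifying a \KM\ pair $(s^{\pm},\eta^{\pm})$ is the same as choosing Moutard lifts $\mu^{\pm}\in\Gamma s^{\pm}$ with $(\mu^+_j-\mu^-_i)\wedge(\mu^+_i-\mu^-_j)=0$, the forms being recovered as $\eta^{\pm}=\dif\mu^{\pm}\curlywedge\mu^{\pm}$.

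For the last equivalence I would invoke \cref{prop:konigsOmega}, and this is the step that carries the weight. The passage between Moutard lifts $\mu^{\pm}$ and \K\ dual sections $\sigma^{\pm}$ is the rescaling \eqref{eqn:konigsToMoutard}, $\mu^{\pm}=\mp r^{\pm 1}\sigma^{\pm}$, and it exists precisely when the edge-function $b$ from \eqref{eqn:moutardToKonigs} factors as $b_{ji}=r_i/r_j$, equivalently $b_{i\ell}b_{\ell k}b_{kj}b_{ji}=1$ on each quadrilateral. That cocycle identity is supplied by the opposite-faces relation for the Moutard cube built from $\mu^{\pm}_i,\dots,\mu^{\pm}_\ell$, namely \cite[Theorem~2.34, eq.~(2.52)]{bobenko_discrete_2008}; given it, \eqref{eqn:Christoffel} exhibits $\sigma^{\pm}$ as \K\ dual, and the converse has already been spelled out before \cref{prop:konigsOmega}.

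Assembling these three equivalences proves the theorem, and tracing through the constructions also shows the gauge classes match, $[\eta]=[\eta^+]=[\eta^-]$, as recorded in \cref{th:4}. The only genuine obstacle, the cocycle identity $b_{i\ell}b_{\ell k}b_{kj}b_{ji}=1$, has thus been dispatched in advance; everything else is definition-chasing, so I do not expect any further difficulty.
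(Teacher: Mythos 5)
Your proposal is correct and matches the paper exactly: the theorem is stated there as a summary with no separate proof, being precisely the collation of \cref{th:4}, \cref{cor:K-Mconsistency} and \cref{prop:konigsOmega} that you describe. You have also correctly identified the one substantive ingredient (the cocycle identity $b_{i\ell}b_{\ell k}b_{kj}b_{ji}=1$ from the Moutard cube) as residing in \cref{prop:konigsOmega}, so nothing further is needed.
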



\section{Isothermic nets}
\label{sec:isothermic-nets}
We now restrict attention to nets and line congruences
taking values in a non-singular quadric.  The quadric
reduces the ambient projective geometry to conformal
geometry of some signature $(p,q)$.  In our application to
$\Omega$-nets, $(p,q)=(3,1)$ so we will emphasise the case of
indefinite signature.

So contemplate the pseudo-Euclidean space
$\mathbb{R}^{p+1,q+1}$, a $(p+q+2)$-dimensional space
equipped with a non-degenerate symmetric bilinear form
$(\cdot, \cdot)$ of signature $(p+1,q+1)$.

Let
$\mathcal{L} := \{ x \in \mathbb{R}^{p+1,q+1}\st (x,x) = 0\}$
be the light cone, and
$\QQ = \QQ^{p,q}:=
\P(\mathcal{L})=\set{\spn{x}\in\P(\R^{p+1,q+1})\st
  x \in\cL\setminus\set0} \subset
\mathbb{P}(\mathbb{R}^{p+1,q+1})$ be the projective light
cone.  Thus $\QQ$ is a non-singular quadric.
It carries an $\Ortho(p+1,q+1)$-invariant conformal structure
of signature $(p,q)$.

We identify the Lie algebra $\mathfrak{o}(p+1,q+1)$ with the
exterior algebra $\Wedge^2\mathbb{R}^{p+1,q+1}$ via
\begin{equation}\label{eq:14}
  x \wedge y (z) = (x,z) y - (y,z) x
\end{equation}
for $x,y,z \in \mathbb{R}^{p+1,q+1}$.

On maps $s:\dom\to\QQ\subset\P(\R^{p+1,q+1})$ we impose 
regularity assumptions extending those of \cref{th:19}:
\begin{assumption}
  \label{th:34}
  On each quadrilateral $\ijkl$:
  \begin{compactenum}
  \item $s_i,s_j,s_k,s_{\ell}$ are pair-wise distinct;
  \item $s_i,s_j,s_k,s_{\ell}$ are not collinear;
  \item diagonals are non-isotropic.  That is, the
    projective lines $s_is_k$ and $s_js_{\ell}$ do not lie
    in $\QQ$.
  \end{compactenum}
\end{assumption}

\subsection{Isothermic nets and the Moutard equation}
\label{sec:isoth-nets-mout}

\begin{definition}[Isothermic net]
  A map $s:\dom\to\QQ\subset\P(\R^{p+1,q+1})$ is
  \emph{isothermic} if it is \K\ as a map into
  $\P(\R^{p+1,q+1})$.
\end{definition}
Thus $s$ is isothermic if and only if there is a closed,
never-zero $1$-form $\eta$ with
$\eta_{ij}\in s_i\wedge s_j\leq\Wedge^2\R^{p+1,q+1}$, for
each edge $ij$.  

According to \cref{thm:isothermicMoutard}, $s$ is isothermic
exactly when it admits a Moutard lift $\mu \in \Gamma s$ and
then $\eta_{ji} = \mu_j \wedge \mu_i$.  However, in our
conformal setting, the Moutard equation \eqref{eqn:Moutard}
becomes much more rigid and reads:
\begin{equation}
  \label{eq:38}
  \mu_k-\mu_i=
  \frac{(\mu_i,\mu_{\ell}-\mu_j)}{(\mu_{\ell},\mu_j)}(\mu_{\ell}-\mu_j).
\end{equation}
Indeed, \eqref{eqn:Moutard} tells us that
\begin{equation}\label{eq:66}
  \mu_k=\mu_i+c(\mu_{\ell}-\mu_j),
\end{equation}
for some $c\in\R^{\times}$.  However, taking the inner
product of \eqref{eq:66} with itself and using the vanishing
of $(\mu,\mu)$ repeatedly allows us to solve for $c$ and
arrive at \eqref{eq:38}.

Moreover, $\mu$ gives rise to an edge-labelling
$m_{ij} := \frac{1}{(\mu_i, \mu_j)}\in\R\cup\set{\infty}$,
that is \cite[Theorem~4.5]{bobenko_discrete_2008},
\begin{equation}\label{eq:43}
  m_{ij} = m_{k\ell}, \qquad m_{i\ell} = m_{jk}.
\end{equation}
In addition, our regularity assumptions assure us that
$m_{ij}\neq m_{i\ell}$.

When $m$ is finite (which is guaranteed by regularity of $s$
if $q=0$) \eqref{eq:38} tells us $m$ is a cross ratio
factorising function
\cite[Lemma~3.5]{burstall_discrete_2018}, that is, on any
quadrilateral $\ijkl$, the vertices $s_i,s_j,s_k,s_{\ell}$
lie on a nonsingular conic with cross ratio
\begin{equation}\label{eq:17}
  [s_i, s_j, s_k, s_\ell] = \frac{m_{jk}}{m_{ij}}.
\end{equation}
\begin{remark}
  \label{th:42}
  On an edge $ij$ where $m_{ij}$ is finite, $m_{ij}$ is
  equivalent data to $\eta_{ij}$ since $s_i\wedge s_j$ is
  $1$-dimensional and $m_{ij}$ fixes the scale.  In
  particular, if we scale $\eta$, then $m_{ij}$ scales
  reciprocally.
\end{remark}

\subsection{\KM\ transformations of isothermic
  nets}
\label{sec:darb-km-transf}

Let $(s^{\pm},\eta^{\pm}):\dom\to\QQ$ be a \KM\ pair of
isothermic nets with Moutard lifts $\mu^{\pm}$.  Thus there
is an isothermic net
$s=s^+\sqcup s^{-}:\set{0,1}\times\dom\to\QQ$
\begin{equation*}
  s\restr{\set{0}\times\dom}=s^+\qquad
  s\restr{\set{1}\times\dom}=s^-
\end{equation*}
and Moutard lift $\mu=\mu^{+}\sqcup\mu^{-}$.  In particular,
the edge-labelling property for $\mu$ on vertical edges says
that $(\mu_{(0,i)},\mu_{(1,i)})=(\mu_{(0,j)},\mu_{(1,j)})$
so that $m:=1/(\mu^+,\mu^-)$ is constant.  When $m$ is
finite, $s^{\pm}_{i},s^{\pm}_{j}$ lie on a non-singular
conic while \eqref{eq:17} on vertical faces reads
\begin{equation*}
  [s_{i}^+,s_{j}^+,s_j^-,s_i^-]=\frac{m}{m_{ij}}
\end{equation*}
so that we recognise that $s^-$ is precisely a Darboux transform
\cite[Definition~4.7]{bobenko_discrete_2008} of $s^+$ with
parameter $m$.  In view of this, we mildly extend the notion
of Darboux transform to include the case $m=\infty$:
\begin{definition}[Darboux transform, Darboux pair]
  Let $(s^{+},\eta^{+}):\dom\to\QQ$ be isothermic.  A \KM\ transform
  $(s^-,\eta^{-})$ with $1/(\mu^+,\mu^-)\equiv m\in\R\cup\set{\infty}$ is
  called a \emph{Darboux transform of $s^+$ with parameter
    $m$} and $s^+,s^-$ are called a \emph{Darboux pair with
    parameter $m$}.

  An \emph{isotropic Darboux pair} is a Darboux pair with
  parameter $m=\infty$.
\end{definition}

The nets $s^{\pm}$ of an isotropic Darboux pair are
orthogonal and so span an applicable congruence of lines
lying in $\QQ$ by \cref{th:4}.  We shall have more to say
about such congruences below in
Section~\ref{sec:appl-legendre-maps}.

\subsection{Duality for isothermic nets}
\label{sec:dual-isoth-nets}

\subsubsection{Circular nets in $\R^{p,q}$}
\label{sec:circular-nets-rp}

Let $\fo,\q\in\cL\subset\R^{p+1,q+1}$ with $(\fo,\q)=-1$.
Set $\R^{p,q}:=\spn{\fo,\q}^{\perp}$ and let
\begin{equation*}
  E=\set{v\in\cL\st (v,\q)=-1}.
\end{equation*}
Then $\phi:\R^{p,q}\to E$ given by
\begin{equation}
  \label{eq:16}
  \phi(x):=\fo + x + \half(x,x)\q
\end{equation}
is an isometry with inverse $\psi:=\pi\restr{E}$ for
$\pi:\R^{p+1,q+1}\to\R^{p,q}$ orthoprojection. Meanwhile,
the projection $\cL\to\P(\cL)$ restricts to a conformal
diffeomorphism
$E\cong\P(\cL)\setminus\P(\cL\cap{\q}^{\perp})$.
Putting these together yields \emph{stereoprojection}
$\P(\cL)\setminus\P(\cL\cap\q^{\perp})\cong\R^{p,q}$
with inverse $x\mapsto\spn{\phi(x)}$.

We note that, for $x_1,x_2\in\R^{p,q}$ and $y_i=\phi(x_i)$, we have
\begin{equation}
  \label{eq:24}
  (y_1,y_2)=-\half(x_1-x_2,x_1-x_2).
\end{equation}

\begin{definition}[Euclidean lift]
  Let $s:\dom\to\QQ$ with stereoprojection
  $x:\dom\to\R^{p,q}$.  We call
  $y:=\phi(x)=\fo+x+\half(x,x)\q\in\Gamma s$ the
  \emph{Euclidean lift} of $s$ (or $x$) with respect to
  $\q$.

  It is the unique section $y$ of $s$ with $(y,\q)\equiv-1$.
\end{definition}

A circular net in $\R^{p,q}$ is the stereoprojection of a
$Q$-net in $\QQ\setminus\P(\cL\cap\q^{\perp})$:
\begin{definition}[Circular net]
  $x: \dom \to \mathbb{R}^{p,q}$ is called a \emph{circular
    net} if $x$ has non-collinear quadrilaterals and, on
  each such quadrilateral $\ijkl$, its inverse
  stereoprojection $s:\dom\to\QQ$ has
  \begin{equation*}
    \dim s_{\ijkl}=3,
  \end{equation*}
  where $s_{\ijkl}=s_i+s_j+s_k+s_{\ell}$.
\end{definition}

\begin{remarks}\label{th:5}
\item[]
  \begin{compactenum}
  \item It is easy to see that a circular
    net has (affine) planar quadrilaterals and so is a
    $Q$-net in $\R^{p,q}$.
  \item A \emph{circle} in $\R^{p,q}$ is the intersection of
    an affine $2$-plane with a quadric cone of the form
    $\set{x\in\R^{p,q}\st (x-c,x-c)=R}$, for some $c\in\R^{p,q}$ and
    $R\in\R$.  We note that $x\in\R^{p,q}$ lies on such a
    circle if and only if its Euclidean lift
    $\fo+x+\half(x,x)\q$ is orthogonal to
    $\fo+c+\half((c,c)-R)\q$ in $\R^{p+1,q+1}$.

    In indefinite signature, circles need not be
    $1$-dimensional: null affine $2$-planes are circles.

    One can show that $x$ is circular exactly when the
    vertices of each quadrilateral lie on a
    circle: an element of
    $s_{\ijkl}^{\perp}\setminus\q^{\perp}$ is, up to scale,
    of the form $\fo+c+\half((c,c)-R)\q$.
  \end{compactenum}
\end{remarks}

For later use, we record:
\begin{lemma}
  \label{th:6}
  Let $x:\dom\to\R^{p,q}$ be a circular net with inverse
  stereoprojection $s$ and let $\ijkl$ be a
  quadrilateral.  Set $U_{\ijkl}:=\spn{\d x_{ij}, \d
    x_{jk}, \d x_{k\ell}, \d x_{\ell i}}$ and
  $W_{\ijkl}:=s_{\ijkl}\cap\q^{\perp}$.

  Then $\dim U_{\ijkl}=\dim W_{\ijkl}=2$ and orthoprojection
  $\pi:\R^{p+1,q+1}\to\R^{p,q}$ restricts to an isomorphism
  $W_{\ijkl}\cong U_{\ijkl}$.

  In particular,
  $\Wedge^2 \pi\restr{\Wedge^2 W_{\ijkl}} : \Wedge^2 W_{\ijkl}
  \to \Wedge^2 U_{\ijkl},\Wedge^2 \pi (a \wedge b) = \pi(a)
  \wedge \pi(b)$ is also an isomorphism.
\end{lemma}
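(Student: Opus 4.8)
The claim is essentially a statement of elementary linear algebra once we understand the subspaces $U_{\ijkl}$ and $W_{\ijkl}$ well enough. The strategy is: (i) compute $\dim W_{\ijkl}$ using the circularity hypothesis, (ii) show $\pi\restr{W_{\ijkl}}$ is injective by identifying its kernel, (iii) deduce $\dim U_{\ijkl}=2$ as a consequence, and (iv) pass to $\Wedge^2$ formally. Throughout, I would work with the Euclidean lift $y=\fo+x+\half(x,x)\q\in\Gamma s$ so that $\diff y_{ij}=\diff x_{ij}+\half\bigl((x_j,x_j)-(x_i,x_i)\bigr)\q$, hence $\pi(\diff y_{ij})=\diff x_{ij}$.

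\textbf{Step (i): $\dim W_{\ijkl}=2$.} Since $x$ is circular, $\dim s_{\ijkl}=3$, so $s_{\ijkl}^{\perp}$ is $(p+q-1)$-dimensional; and $\q^{\perp}$ is a hyperplane. By \cref{th:5}\,(2), $s_{\ijkl}^{\perp}\not\subseteq\q^{\perp}$ (there is a vector of the form $\fo+c+\half((c,c)-R)\q$ in it, which pairs to $-1$ with $\q$). One could instead argue directly: $W_{\ijkl}=s_{\ijkl}\cap\q^{\perp}$ and I would show $\q^{\perp}\cap s_{\ijkl}$ is a hyperplane in $s_{\ijkl}$, i.e. $s_{\ijkl}\not\subseteq\q^{\perp}$. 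But $\fo+x_i+\half(x_i,x_i)\q$ has inner product $-1$ with $\q$, so $s_i\not\leq\q^{\perp}$, giving $s_{\ijkl}\not\subseteq\q^{\perp}$ and hence $\dim W_{\ijkl}=\dim s_{\ijkl}-1=2$.

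\textbf{Steps (ii)--(iii): $\pi\restr{W_{\ijkl}}$ is injective and $\dim U_{\ijkl}=2$.} The kernel of $\pi:\R^{p+1,q+1}\to\R^{p,q}$ is $\spn{\fo,\q}$. A vector in $W_{\ijkl}\cap\spn{\fo,\q}$ lies in $\q^{\perp}$, and since $(\fo,\q)=-1$ while $(\q,\q)=0$, the only multiples of $\fo$ in $\q^{\perp}$ are $0$; so $W_{\ijkl}\cap\spn{\fo,\q}\subseteq\spn{\q}$. Thus $\pi\restr{W_{\ijkl}}$ has kernel at most $\spn{\q}\cap W_{\ijkl}$. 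Now I observe $\q\notin s_{\ijkl}$: indeed $s_{\ijkl}$ is spanned by four Euclidean lifts each pairing to $-1$ with $\q$, and if $\q\in s_{\ijkl}$ then writing $\q$ as a combination of these lifts and pairing with $\q$ forces the sum of coefficients to be $0$, whence $\q$ would lie in the $\q^{\perp}$-component; but $(\q,\q)=0$ makes this compatible only in degenerate configurations excluded by $\dim s_{\ijkl}=3$ together with \cref{th:34}. (More cleanly: since $\phi$ of \eqref{eq:16} is an isometry onto $E$ and the four points $x_i,\dots,x_\ell$ are not collinear by circularity, their lifts plus the condition $\dim s_{\ijkl}=3$ rule out $\q\in s_{\ijkl}$, as $\q$ would be the lift of a ``point at infinity'' not on the affine plane of the $x$'s.) Hence $\pi\restr{W_{\ijkl}}$ is injective, so its image has dimension $2$. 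Since $\pi(\diff y_{ij})=\diff x_{ij}$ and each $\diff y_{ij}\in s_{\ijkl}\cap\q^{\perp}=W_{\ijkl}$ (the latter because $\sum$ around the quadrilateral telescopes and each $\diff y$ pairs to... actually $\diff y_{ij}\in s_i+s_j\leq s_{\ijkl}$ and $(\diff y_{ij},\q)=(y_j,\q)-(y_i,\q)=-1-(-1)=0$), we get $U_{\ijkl}=\pi(W_{\ijkl})$, so $\dim U_{\ijkl}=2$ and $\pi$ restricts to an isomorphism $W_{\ijkl}\xrightarrow{\sim}U_{\ijkl}$.

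\textbf{Step (iv): passage to $\Wedge^2$.} This is formal: if $T:W\to U$ is a linear isomorphism of $2$-dimensional spaces, then $\Wedge^2 T:\Wedge^2 W\to\Wedge^2 U$ is a linear isomorphism of $1$-dimensional spaces, since $\Wedge^2 T(a\wedge b)=Ta\wedge Tb$ is nonzero whenever $a\wedge b\neq 0$ (as $Ta,Tb$ are then independent). Applying this with $T=\pi\restr{W_{\ijkl}}$ finishes the proof.

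\textbf{Main obstacle.} The only genuinely delicate point is verifying $\q\notin s_{\ijkl}$ cleanly from the hypotheses; everything else is bookkeeping with the Euclidean lift. I would route this through \cref{th:5}\,(2): a nonzero element of $s_{\ijkl}^{\perp}\setminus\q^{\perp}$ exists (it is the lift of the circle through the four vertices), which immediately gives $s_{\ijkl}\not\subseteq\q^{\perp}$, and then injectivity of $\pi\restr{W_{\ijkl}}$ follows from a dimension count $\dim W_{\ijkl}=2=\dim U_{\ijkl}$ together with surjectivity onto $U_{\ijkl}$, avoiding the need to pin down $\ker\pi\cap W_{\ijkl}$ explicitly. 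That reordering — prove surjectivity onto $U_{\ijkl}$ first, then use equality of dimensions — is the cleanest route and I would adopt it.
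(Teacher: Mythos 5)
Your final route---$\dim W_{\ijkl}=2$ because $s_{\ijkl}\not\subseteq\q^{\perp}$ (each $(y_i,\q)=-1$), surjectivity of $\pi\restr{W_{\ijkl}}$ onto $U_{\ijkl}$ via the lifts $\d y_{ij}\in W_{\ijkl}$, then a dimension count---is exactly the paper's proof and is correct; the hand-waved direct verification of $\q\notin s_{\ijkl}$ in your steps (ii)--(iii) is not needed once you adopt it. Two small caveats: the claim that the existence of an element of $s_{\ijkl}^{\perp}\setminus\q^{\perp}$ ``immediately gives'' $s_{\ijkl}\not\subseteq\q^{\perp}$ is not a valid inference (the latter is equivalent to $\q\in s_{\ijkl}^{\perp}$, which is perfectly compatible with $s_{\ijkl}^{\perp}$ also containing vectors not orthogonal to $\q$), though your direct argument already does the job; and in the final dimension count you should say explicitly that $\dim U_{\ijkl}=2$ comes from non-collinearity of the quadrilateral (giving $\dim U_{\ijkl}\geq 2$, as in the paper) together with $U_{\ijkl}\subseteq\pi(W_{\ijkl})$ (giving $\leq 2$), since deriving it from injectivity as in your step (iii) would be circular at that point.
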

\begin{proof}
  First note that $s_{\ijkl}$ is not contained in
  $\q^{\perp}$ since no $s_{i}$ lies in $\q^{\perp}$.  Thus
  $\dim W_{\ijkl}=2$.  
  Moreover, $W_{\ijkl}=\spn{\d y_{ij},\d y_{jk},\d y_{k\ell},\d
    y_{\ell i}}$ since $s_{\ijkl}=\spn{y_i,y_j,y_k,y_{\ell}}$, 
   while $\pi(\d y)=\d x$ so that
  $\pi(W_{\ijkl})=U_{\ijkl}$.  Now the quadrilateral is
  non-collinear so that $\dim U_{\ijkl}=2$ also, whence
  $\pi\restr{W_{\ijkl}}$ is an isomorphism.
\end{proof}
\begin{remark}
  Note that
  \begin{equation*}
    W_{\ijkl}\cap\ker\pi=s_{\ijkl}\cap\q^{\perp}\cap\spn{\fo,\q}
    =s_{\ijkl}\cap\spn{\q}
  \end{equation*}
  so that a quadrilateral $\ijkl$ of $x$ is collinear
  exactly when $\q\in s_{\ijkl}$.  Thus the stereoprojection
  of a $Q$-net $s$ with respect to $\fo,\q$ is circular in
  our sense so long as $\q\notin s_{\ijkl}$ for any
  elementary quadrilateral $\ijkl$.  This amounts to
  choosing $\q$ off a set of measure zero in $\QQ$.
\end{remark}

We conclude our present discussion of circular nets with a result
which is ``obvious'' \cite[page~156]{bobenko_discrete_2008}
in the definite case but less so (at least to us!) in the
present setting:
\begin{proposition}
  \label{th:9}
  Let $x:\dom\to\R^{p,q}$ be a circular net and
  $\dual{x}:\dom\to\R^{p,q}$ an edge-parallel net.  Then
  $\dual{x}$ is also circular.
\end{proposition}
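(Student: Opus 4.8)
The plan is to reduce to a single quadrilateral and apply the characterisation in \cref{th:5}(2): $\dual{x}$ is circular precisely when, on every quadrilateral, its four vertices lie on a circle. So fix a quadrilateral $\ijkl$ and set $U:=U_{\ijkl}=\spn{\d x_{ij},\d x_{jk},\d x_{k\ell},\d x_{\ell i}}$, which is $2$-dimensional since $x$, being circular, is a non-collinear $Q$-net (\cref{th:5}(1), \cref{th:6}). Edge-parallelism gives $\d\dual{x}_{ij}=\lambda_{ij}\,\d x_{ij}$ for a function $\lambda$ on edges; thus every edge vector of $\dual{x}$ on this quadrilateral lies in $U$ and, summing to zero around the face, they force $\dual{x}_i,\dots,\dual{x}_\ell$ into a coset of $U$, so $\dual{x}$ too is a $Q$-net.

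The heart of the matter is an algebraic concircularity criterion valid in any signature. Writing $g:=(\cdot,\cdot)\restr{U}$: four distinct points $p_i,\dots,p_\ell$ in a coset of $U$ lie on a circle $\{x:(x-c,x-c)=R\}$ in $\R^{p,q}$ if and only if the values $(p_v,p_v)$, $v\in\{i,j,k,\ell\}$, are the restriction to these points of an affine function on $\R^{p,q}$; and since the $p_v$ are affinely dependent with an essentially unique relation $\sum_v a_v p_v=0$, $\sum_v a_v=0$ (the quadrilateral being non-collinear), this is the single translation-invariant scalar equation $\sum_v a_v\,(p_v,p_v)=0$. (We use here that the centre $c$ ranges over all of $\R^{p,q}$: by non-degeneracy of the ambient form this affords the full space of affine functions, so no positivity of $g$ is needed; cf.\ \cref{th:5}(2).) Applying this to $x$ with its affine relation $(b_v)$ turns the circularity of $x$ into one scalar equation $(\star)$; we must derive the corresponding equation for $\dual{x}$ with its affine relation $(a_v)$.

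To relate them, pick the basis $e_1=\d x_{ij}$, $e_2=\d x_{\ell i}$ of $U$ --- independent after cyclically relabelling if necessary, since a non-collinear quadrilateral has at most one collinear triple of vertices --- and write $\d x_{jk}=\alpha e_1+\beta e_2$, so $\d x_{k\ell}=-(1+\alpha)e_1-(1+\beta)e_2$ by closure of $x$. Closure of $\dual{x}$ then reads
\begin{equation*}
  \lambda_{ij}+\alpha\lambda_{jk}=(1+\alpha)\lambda_{k\ell},\qquad
  \lambda_{\ell i}+\beta\lambda_{jk}=(1+\beta)\lambda_{k\ell}.
\end{equation*}
A direct computation of $(a_v)$ and of $\sum_v a_v\,(\dual{x}_v-\dual{x}_i,\dual{x}_v-\dual{x}_i)$ from the data $\d\dual{x}_{ij}=\lambda_{ij}e_1$, $\d\dual{x}_{jk}=\lambda_{jk}(\alpha e_1+\beta e_2)$, $\d\dual{x}_{\ell i}=\lambda_{\ell i}e_2$, and then using the two displayed relations to eliminate $\lambda_{ij}$ and $\lambda_{\ell i}$, shows this quantity equals $\lambda_{jk}\lambda_{k\ell}$ times the left-hand side of $(\star)$ --- which vanishes because $x$ is circular. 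Hence $\dual{x}_i,\dots,\dual{x}_\ell$ lie on a circle, and since $\ijkl$ was arbitrary, $\dual{x}$ is circular.

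I expect the difficulty to lie not in the computation --- routine once set up --- but in robustness to degeneracies. The form $g\restr{U}$ may be degenerate or even zero (a null $2$-plane $U$ is allowed here), which is why I avoid the classical synthetic argument (parallel edges $\Rightarrow$ equal angles $\Rightarrow$ the quadrilateral stays cyclic) in favour of the purely algebraic criterion; and the non-generic configurations --- a collinear triple among $x_i,\dots,x_\ell$, or a vanishing $\lambda_{ij}$ (so that $\dual{x}$ has a repeated vertex) --- need separate treatment, either by the standing regularity hypotheses on a net or by continuity, although the identity relating the two concircularity defects by the factor $\lambda_{jk}\lambda_{k\ell}$ already settles the conclusion even when some $\lambda$ vanishes.
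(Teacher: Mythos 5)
Your proof is correct, but it takes a genuinely different route from the paper's. The paper also works on a single quadrilateral in coordinates on $U_{\ijkl}$, but after normalising $\dual{x}_i=x_i$, $\dual{x}_j=x_j$ it views the edge-parallel nets as a one-parameter family in $t$ and considers $p(t)=\dual{y}_i\wedge\dual{y}_j\wedge\dual{y}_k\wedge\dual{y}_\ell\in\Wedge^4\R^{p+1,q+1}$, a polynomial of degree at most $3$ whose vanishing is circularity; it then exhibits enough roots (at $t=0,1,-\alpha/\gamma,-\beta/\delta$, where the quadrilateral of $\dual{x}$ degenerates or coincides with that of $x$) to force $p\equiv0$. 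You instead prove an explicit identity between the scalar concircularity defects $\sum_v a_v(p_v,p_v)$ of $x$ and of $\dual{x}$; I have checked your computation and it is right — with the relation for $\dual{x}$ normalised polynomially in the $\lambda$'s (take $a_k=\lambda_{ij}\lambda_{\ell i}b_k$, etc.) the clean statement is that the defect of $\dual{x}$ is $\lambda_{ij}\lambda_{jk}\lambda_{k\ell}\lambda_{\ell i}$ times that of $x$, your factor $\lambda_{jk}\lambda_{k\ell}$ being the same identity after dividing through by $\lambda_{ij}\lambda_{\ell i}$. Your scalar criterion (circularity $\Leftrightarrow\sum_v a_v(p_v,p_v)=0$ for the essentially unique affine relation $(a_v)$) is exactly the mechanism the paper itself deploys later, in the proof of \cref{th:28}, so your argument is very much in the spirit of Section~\ref{sec:o-systems-omega}. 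What each approach buys: the paper's root count avoids any explicit computation of the defect but gives only the vanishing (and silently requires the listed roots to be distinct, or counted with multiplicity, in degenerate positions); yours is more computational but yields a quantitative proportionality that remains a polynomial identity in the $\lambda$'s and so, as you note, survives the degeneracies where some $\lambda$ vanishes. Both proofs share the residual caveat — not worth belabouring — that when the quadrilateral of $\dual{x}$ is itself collinear the conclusion must be read as $\dim\dual{s}_{\ijkl}\le3$ rather than circularity in the strict sense of the definition.
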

\begin{proof}
  We work on a single quadrilateral $\ijkl$.  By translation
  and scaling, we may assume without loss of generality that
  $x_i = \dual{x}_i$ and $x_j = \dual{x}_j$.  Also without loss
  of generality, assume that $\d x_{i\ell},\d x_{k\ell}$
  span $U_{\ijkl}$ and write
  \begin{align*}
    \d x_{ij} &= \alpha \d x_{i\ell} + \beta \d
                x_{k\ell} \\
    \d x_{jk} &= \gamma \d x_{i\ell} + \delta \d
                x_{k\ell},
  \end{align*}
  for $\alpha,\beta,\gamma,\delta\in\R$.  Then, for some
  $t\in\R$,
  $\d \dual{x}_{jk} = t \d x_{jk} = t (\gamma \d x_{i\ell} +
  \delta \d x_{k\ell})$ from which we deduce, using
  $\d\dual{x}_{k\ell}\prl\d x_{k\ell}$ that
  \begin{subequations}\label{eq:18}
    \begin{align}
      \d\dual{x}_{i \ell}&=(\alpha + \gamma t)\d
                           x_{i\ell}\\
      \d\dual{x}_{k \ell}&=-(\beta+\delta t)\d x_{k \ell}.
    \end{align}
  \end{subequations}

  Now let $y,\dual{y}$ be the Euclidean lifts of
  $x,\dual{x}$ and contemplate
  \[
    p(t) := \dual{y}_i \wedge \dual{y}_j \wedge \dual{y}_k
    \wedge \dual{y}_\ell,
  \]
  a polynomial with values in $\Wedge^4\R^{p+1,q+1}$ which
  vanishes exactly when $\dual{x}$ is circular.

  In view of \eqref{eq:18}, $p(t)$ is cubic in $t$ if
  $\gamma\delta\neq 0$ and quadratic otherwise.  However,
  $p(t)$ has roots at $0$, when $\dual{x}_k = \dual{x}_j$;
  at $1$, when $x=\dual{x}$; at $-\alpha/\gamma$, when
  $\dual{x}_i=\dual{x}_{\ell}$, if $\gamma\neq 0$ and at
  $-\beta/\delta$, when $\dual{x}_k=\dual{x}_{\ell}$, if
  $\delta\neq 0$.  In any case, $p(t)$ has four roots when
  it is cubic and three when quadratic and so vanishes
  identically.  Thus any edge-parallel $\dual{x}$ is
  circular.
\end{proof}

\subsubsection{The Christoffel dual}
\label{sec:christoffel-dual}

With these preparations in hand, we show that, just as in
the definite case
\cite[Theorem~4.32]{bobenko_discrete_2008}, a net in $\QQ$
is isothermic if and only if its stereoprojection is a
circular \K\ net.  More precisely,
\begin{theorem}
  \label{th:8}
  Let $(s,\eta):\dom\to\QQ\setminus\P(\cL\cap\q^{\perp})$ be a
  $Q$-net with (circular) stereoprojection $x$.  Then $s$ is
  isothermic if and only if $x$ has a \K\ dual $\dual{x}$:
  that is, $\dual{x}$ is edge-parallel to $x$ with
  \begin{equation}\label{eq:19}
    \d x\curlywedge\d\dual{x}=0.
  \end{equation}
  In this case, $\dual{x}$ is also the stereoprojection of
  an isothermic net.

  In view of this, we say that $x:\dom\to\R^{p,q}$ is
  isothermic if it is a circular \K\ net.  We call
  $\dual{x}$ a \emph{Christoffel dual} of $x$.  It is
  determined up to translation and a constant scaling (and
  then $\eta$ is scaled in the same way).
\end{theorem}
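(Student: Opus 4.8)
The plan is to reduce the statement to \cref{th:1} by using the Euclidean lift of $x$ as the affine lift appearing there. First I would observe that the Euclidean lift $y:=\fo+x+\half(x,x)\q\in\Gamma s$ satisfies $(y,\q)\equiv-1$, and that the hypothesis that $s$ avoids $\P(\cL\cap\q^{\perp})$ means its image misses the hyperplane $\P(\q^{\perp})$; hence $y$ is an affine lift of $s$ in the sense of \cref{th:1} (with $\alpha=(\cdot\,,\q)$), so that $s$ is isothermic if and only if $y$ admits a \K\ dual $z:\dom\to V$, in which case $\eta=\d z\curlywedge y$. The task then becomes to show that $y$ admits a \K\ dual precisely when $x$ does, the two being related by the orthoprojection $\pi\colon V\to\R^{p,q}$ onto $\R^{p,q}=\spn{\fo,\q}^{\perp}$; note $x=\pi\circ y$.

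One direction is immediate from linearity of $\pi$: given a \K\ dual $z$ of $y$, the map $\dual{x}:=\pi\circ z$ is edge-parallel to $x$, and, since $\Wedge^2\pi$ is compatible with the two exterior products, $A(x,\dual{x})=\Wedge^2\pi\bigl(A(y,z)\bigr)=0$, so $\dual{x}$ is a \K\ dual of $x$. As $x$ is circular, this makes $x$ isothermic in the sense of the theorem. I would also note here that $\dual{x}$, being edge-parallel to the circular net $x$, is itself circular by \cref{th:9}, and that $A(\dual{x},x)=A(x,\dual{x})=0$ by the symmetry of $A$, so $\dual{x}$ is a circular \K\ net; applying the other direction below to the inverse stereoprojection of $\dual{x}$ then yields the last assertion of the theorem.

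For the converse, given a \K\ dual $\dual{x}$ of $x$, I would write $\d\dual{x}_{ij}=\lambda_{ij}\,\d x_{ij}$ and define a $\q^{\perp}$-valued $1$-form $\beta$ by $\beta_{ij}:=\lambda_{ij}\,\d y_{ij}$, observing that on each quadrilateral $\ijkl$ every $\d y_{ij}$ lies in $W_{\ijkl}:=s_{\ijkl}\cap\q^{\perp}$. Since $\pi\circ\beta=\d\dual{x}$ is closed, $\d\beta$ is valued in $W_{\ijkl}\cap\ker\pi=s_{\ijkl}\cap\spn{\q}$, which vanishes because $x$ is circular; thus $\beta$ is closed and, by \cref{th:52}, integrates to a map $z:\dom\to V$ edge-parallel to $y$ with $\pi\circ\d z=\d\dual{x}$. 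On a fixed quadrilateral $\d y$ and $\d z$ are both $W_{\ijkl}$-valued, so $\d y\curlywedge\d z\in\Wedge^2W_{\ijkl}$; since $\Wedge^2\pi(\d y\curlywedge\d z)=\d x\curlywedge\d\dual{x}=0$ and $\Wedge^2\pi$ restricts to an isomorphism of $\Wedge^2W_{\ijkl}$ onto $\Wedge^2U_{\ijkl}$ by \cref{th:6}, it follows that $\d y\curlywedge\d z=0$, i.e.\ $A(y,z)=0$. Hence $z$ is a \K\ dual of $y$, and $s$ is isothermic by \cref{th:1}.

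Finally, the uniqueness clause I would obtain from the construction: for a fixed $\eta$ the \K\ dual $z$ of $y$ is unique up to an additive constant (as in the proof of \cref{th:1}), hence so is $\dual{x}=\pi\circ z$ up to a translation; and $\eta$ is itself determined only up to a non-zero constant scale, under which $z$, and therefore $\dual{x}$ and $\eta=\d z\curlywedge y$, rescale together. The step I expect to be the main obstacle is the converse direction, where both the closedness of $\beta$ and the implication $A(x,\dual{x})=0\Rightarrow A(y,z)=0$ rest on the circularity of $x$, entering exactly through the isomorphism $\Wedge^2\pi\colon\Wedge^2W_{\ijkl}\to\Wedge^2U_{\ijkl}$ of \cref{th:6} --- which is precisely what the circular-net machinery developed in this section is designed to supply.
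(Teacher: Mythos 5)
Your proof is correct and follows essentially the same route as the paper: both directions hinge on the Euclidean lift, the orthoprojection $\pi$, and the isomorphism $\Wedge^2\pi\colon\Wedge^2W_{\ijkl}\to\Wedge^2U_{\ijkl}$ of \cref{th:6}, and your $1$-form $\beta_{ij}=\lambda_{ij}\,\d y_{ij}$ is exactly the paper's $\omega=\d\dual{x}+(x\wedge\d\dual{x})\q$. The only cosmetic difference is that in the converse you integrate $\beta$ to a dual lift $z$ and then invoke \cref{th:1}, whereas the paper writes down $\eta=\omega\curlywedge y$ directly and verifies its closedness on each face.
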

\begin{proof}
  First suppose that $s$ is isothermic so that the Euclidean
  lift $y$ of $s$ takes values in the affine space
  $\A=\set{v\st (\q,v)=-1}$ and let $\dual{y}$ be the \K\
  dual of $y$ as in \cref{th:1}.  Thus
  $\eta=\d\dual{y}\curlywedge y$.  Set
  $\dual{x}=\pi\dual{y}$.  Then $\dual{x}$ is edge-parallel
  to $x$ since $\dual{y}$ is edge-parallel to $y$.
  Furthermore, we have $\d y\curlywedge\d\dual{y}=0$ and taking
  $\Wedge^2\pi$ of this yields \eqref{eq:19}.

  For the converse, we must exploit the circularity of $x$.
  Define a $\q^{\perp}$-valued $1$-form $\omega$ by
  \begin{equation*}
    \omega=\d\dual{x}+(x\wedge\d\dual{x})\q
  \end{equation*}
  and observe that each $\omega_{ji}\prl \d y_{ji}=\d
  x_{ji}+(x\wedge\d x)_{ji}\q$ since $\d\dual{x}_{ji}\prl \d x_{ji}$.
  We now define $\eta:=\omega\curlywedge y$ and deduce that each
  $\eta_{ji}\prl y_{i}\wedge y_j$ and so takes values in
  $s_i\wedge s_j$.  Moreover, on a quadrilateral $\ijkl$,
  both $\omega$ and $\d y$ take values in $W_{\ijkl}$ while
  \begin{align*}
    \pi(\d\omega)&=\d^{2}\dual{x}=0\\
    \Wedge^2\pi(\omega\curlywedge\d y)&=\d\dual{x}\curlywedge\d x=0.
  \end{align*}
  Thus \cref{th:6} tells us that $\d\omega$, $\omega\curlywedge\d
  y$ and so $\d\eta$ vanish.  Thus $s$ is isothermic.

  Finally, thanks to \cref{th:9}, $\dual{x}$ is also
  circular with \K\ dual $x$ and so is also the
  stereoprojection of an isothermic net.
\end{proof}

\begin{remark}
  \label{th:10}
  Note that we may express $\d\dual{x}$ directly in terms of
  $\eta$: the argument of \cref{th:1} tells us that
  $\d\dual{y}=\eta\q$ so that:
  \begin{subequations}
    \label{eq:20}
    \begin{align}
      \label{eq:21}
      \eta&=(\eta\q)\curlywedge y\\
      \label{eq:22}
      \d\dual{x}&=\pi(\eta\q).
    \end{align}
  \end{subequations}
\end{remark}

The scaling between $\d x$ and $\d\dual{x}$ can be expressed
directly in terms of the edge labelling $m_{ij}$:
\begin{proposition}
  \label{th:11}
  Let $x:\dom\to\R^{p,q}$ be isothermic with edge-labelling
  $m_{ij}$ and Christoffel dual $\dual{x}$.  Then, for each
  edge $ij$,
  \begin{equation}
    \label{eq:23}
    (\d x_{ij},\d\dual{x}_{ij})=-\frac{2}{m_{ij}}.
  \end{equation}
\end{proposition}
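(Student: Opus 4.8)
The plan is to evaluate everything against two distinguished lifts of the underlying isothermic net $s$: its Euclidean lift $y=\phi(x)\in\Gamma s$ and a Moutard lift $\mu\in\Gamma s$. The key observation is that these two sections of the line bundle $s$ differ by a scalar function whose values are controlled by the pairing $(\mu,\q)$.

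First I would fix the relation between the lifts. Since $y,\mu\in\Gamma s$, write $\mu_i=\rho_i y_i$ for a never-zero scalar function $\rho$; the defining normalisation $(y_i,\q)\equiv-1$ of the Euclidean lift forces $\rho_i=-(\mu_i,\q)$ (this is where the hypothesis $s(\dom)\cap\P(\cL\cap\q^{\perp})=\emptyset$ enters, guaranteeing $\rho_i\neq0,\infty$). Next, recall from \eqref{eq:22} that $\d\dual x=\pi(\eta\q)$, with $\pi:\R^{p+1,q+1}\to\R^{p,q}$ orthoprojection, while \cref{thm:isothermicMoutard} gives $\eta_{ji}=\mu_j\wedge\mu_i$ on each edge. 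Unwinding the Lie algebra identification \eqref{eq:14},
\[
  \eta_{ji}(\q)=(\mu_j,\q)\mu_i-(\mu_i,\q)\mu_j=\rho_i\rho_j(y_j-y_i)=\rho_i\rho_j\,\d y_{ji};
\]
applying $\pi$ (which carries $y_k$ to $x_k$) yields $\d\dual x_{ji}=\rho_i\rho_j\,\d x_{ji}$ on every oriented edge. In particular this reconfirms that $\dual x$ is edge-parallel to $x$ and pins down the scale factor; it is the conformal incarnation of the Christoffel formula \eqref{eqn:Christoffel}.

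It then remains to compute $(\d x_{ij},\d x_{ij})$, which \eqref{eq:24} does for us: $(y_i,y_j)=-\half(x_i-x_j,x_i-x_j)$, so $(\d x_{ij},\d x_{ij})=-2(y_i,y_j)=-\tfrac{2}{\rho_i\rho_j}(\mu_i,\mu_j)$. Putting the two steps together,
\[
  (\d x_{ij},\d\dual x_{ij})=\rho_i\rho_j\,(\d x_{ij},\d x_{ij})=-2(\mu_i,\mu_j)=-\frac{2}{m_{ij}},
\]
since $m_{ij}=1/(\mu_i,\mu_j)$ by definition; when $m_{ij}=\infty$ both sides vanish, $\d x_{ij}$ then being null.

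I expect the only real hurdle to be bookkeeping rather than substance: keeping the Euclidean and Moutard lifts distinct and tracking $\rho$ through the computation, getting the edge orientation right in $\d\dual y_{ji}=\eta_{ji}(\q)$, and applying the action \eqref{eq:14} of $\Wedge^2\R^{p+1,q+1}$ on vectors correctly — once these are pinned down the rest is a one-line substitution. As a sanity check, the asserted identity respects the scaling freedom in $\eta$: rescaling $\eta$ multiplies $\d\dual x$ and $1/m_{ij}$ by the same constant (\cref{th:42}), so both sides transform consistently.
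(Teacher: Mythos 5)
Your proposal is correct and follows essentially the same route as the paper's proof: relate the Moutard and Euclidean lifts by $\mu=ry$ with $r=-(\mu,\q)$, use $\eta_{ji}\q=r_ir_j\,\d y_{ji}$ together with \eqref{eq:22} to get $\d\dual{x}_{ij}=r_ir_j\,\d x_{ij}$, and combine with \eqref{eq:24} to conclude. The only (harmless) addition is your explicit remark about the $m_{ij}=\infty$ case.
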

\begin{proof}
  Let $s$ be the inverse stereoprojection of $x$ and
  $y,\mu\in\Gamma s$ the Euclidean and Moutard lifts.  We
  have $\mu=ry$ where $r=-(\mu,\q)$.  Now \eqref{eq:24}
  gives
  \begin{equation}\label{eq:25}
    \frac{1}{m_{ij}}=(\mu_i,\mu_j)=r_ir_j(y_i,y_j)=-r_ir_j\half(\d
    x_{ij},\d x_{ij}),
  \end{equation}
  while, on the other hand,
  \begin{equation*}
    \eta_{ij}\q=(\mu_i\wedge\mu_j)\q=r_ir_j\d y_{ij}
  \end{equation*}
  so that \eqref{eq:22} gives
  \begin{equation}\label{eq:26}
    \d\dual{x}_{ij}=r_ir_j\d x_{ij}.
  \end{equation}
  Putting \eqref{eq:25} and \eqref{eq:26} together yields
  \eqref{eq:23}.
\end{proof}

Since $x$ is the Christoffel dual of $\dual{x}$, we
immediately learn:
\begin{corollary}
  \label{th:12}
  An isothermic net and its Christoffel dual have the same
  edge-labelling.
\end{corollary}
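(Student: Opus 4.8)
The plan is to read \cref{th:12} straight off \cref{th:11} using the symmetry of the ambient inner product. By \cref{th:8}, $\dual{x}$ is again an isothermic net --- the stereoprojection of an isothermic net $\dual{s}$ --- and $x$ is a Christoffel dual of $\dual{x}$. Fixing $\eta$ for $s$ already fixes the scaling of $\dual{x}$ via $\d\dual{x}=\pi(\eta\q)$ (see \cref{th:10}), and then there is a unique scaling of the $1$-form $\dual{\eta}$ for $\dual{s}$ for which the Christoffel dual of $\dual{x}$ built from $\dual{\eta}$ is exactly $x$, rather than merely a constant multiple of it; let $\dual{m}$ be the edge-labelling of $\dual{x}$ with respect to this normalised $\dual{\eta}$.

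First I would apply \cref{th:11} to the isothermic net $x$ with edge-labelling $m$ and Christoffel dual $\dual{x}$: this is precisely \eqref{eq:23}, namely $(\d x_{ij},\d\dual{x}_{ij})=-2/m_{ij}$ on every edge $ij$. Then I would apply the same proposition with the roles of $x$ and $\dual{x}$ exchanged --- legitimate since $x$ is the Christoffel dual of $\dual{x}$ and $\dual{m}$ is its edge-labelling --- to get $(\d\dual{x}_{ij},\d x_{ij})=-2/\dual{m}_{ij}$. Since $(\cdot,\cdot)$ is symmetric, the left-hand sides of the two identities coincide, whence $-2/m_{ij}=-2/\dual{m}_{ij}$ and so $\dual{m}_{ij}=m_{ij}$ on every edge, as claimed.

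The one point that needs attention is the normalisation singled out above: by \cref{th:42}, rescaling $\dual{\eta}$ rescales $\dual{m}$ reciprocally while rescaling the Christoffel dual of $\dual{x}$ in the opposite sense, so the two effects cancel in $(\d\dual{x}_{ij},\d x_{ij})=-2/\dual{m}_{ij}$ exactly when that Christoffel dual is taken to be $x$ on the nose --- which is the meaning of ``$x$ is the Christoffel dual of $\dual{x}$''. I do not expect any genuine obstacle here. Alternatively, one can sidestep this bookkeeping entirely by a direct computation: \eqref{eq:26} forces the scaling functions $r$ of $x$ (with $\mu=ry$, $r=-(\mu,\q)$) and $\dual{r}$ of $\dual{x}$ to satisfy $r_ir_j\,\dual{r}_i\dual{r}_j=1$ on each edge, while \eqref{eq:24} expresses both $m_{ij}$ and $\dual{m}_{ij}$ through $(\d x_{ij},\d x_{ij})$ and these scaling functions --- the factors of $r_ir_j$ then cancel and $\dual{m}_{ij}=m_{ij}$ drops out.
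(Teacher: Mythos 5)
Your argument is correct and is essentially the paper's own: the corollary is read off from \cref{th:11} by exchanging the roles of $x$ and $\dual{x}$ (legitimate since each is the Christoffel dual of the other, by \cref{th:8}) and using the symmetry of the inner product in \eqref{eq:23}. Your extra care about normalising $\dual{\eta}$ so that the dual of $\dual{x}$ is $x$ on the nose is a point the paper leaves implicit, but it does not change the substance of the argument.
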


\cref{th:11} can be interpreted in two interesting ways.
When $\Sigma=\Z^n$ and all $m_{ij}$ are finite, we recover
the well-known Christoffel formula \cite[\S5.7.7]{Her03}:
\begin{equation*}
  \d\dual{x}_{ij}=-\tfrac{2}{m_{ij}}\d x_{ij}/(\d x_{ij},\d x_{ij}).
\end{equation*}
On the other hand, if $\hat{x}$ is a Darboux transform of
$x$ on $\Z^n$, we may apply \cref{th:8} and \cref{th:11} to
$x\sqcup \hat{x}$ on $\dom=\set{0,1}\times\Z^n$ to obtain a
result due, in the classical smooth case, to Bianchi
\cite[p.~105]{bianchi_ricerche_1905} (see
\cite[\S5.7.32]{Her03} for the discrete definite case):
\begin{corollary}
  \label{th:13}
  Let $x:\Z^n\to\R^{p,q}$ be isothermic and
  $\hat{x}:\Z^n\to\R^{p,q}$ a Darboux transform  with
  parameter $m\in\R^{\times}\cup\set{\infty}$.  Let $\dual{x}$
  be a Christoffel dual of $x$.

  Then there is a Christoffel dual $\dual{\hat{x}}$ of
  $\hat{x}$ which is simultaneously a Darboux transform of
  $\dual{x}$ with parameter $m$. Moreover $\hat{x}-x$ and
  $\dual{\hat{x}}-\dual{x}$ are pointwise parallel and
  \begin{equation}
    \label{eq:27}
    (\hat{x}-x,\dual{\hat{x}}-\dual{x})=-\frac{2}{m}.
  \end{equation}
\end{corollary}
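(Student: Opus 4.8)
The plan is to apply \cref{th:8} and \cref{th:11} to the net on $\{0,1\}\times\Z^n$ obtained by stacking $x$ and $\hat{x}$, exploiting that a Darboux transform is, in particular, a \KM\ transform.

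First I would fix $\fo,\q\in\cL$ with $(\fo,\q)=-1$ and let $s,\hat{s}:\Z^n\to\QQ$ be the inverse stereoprojections of $x,\hat{x}$. Since $\hat{x}$ is a Darboux transform of $x$ with parameter $m$, the pair $s,\hat{s}$ is a \KM\ pair, so $\mathbf{s}:=s\sqcup\hat{s}:\{0,1\}\times\Z^n\to\QQ$ is isothermic with Moutard lift $\mu\sqcup\hat{\mu}$ (notation as in \cref{sec:darb-km-transf}), and its edge-labelling $M$ satisfies, on every vertical edge, $M_{(1,i)(0,i)}=1/(\mu_i,\hat{\mu}_i)=m$, the Darboux parameter. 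The domain $\{0,1\}\times\Z^n$ is simply connected by \cref{th:51}, and the stereoprojection of $\mathbf{s}$ is $X:=x\sqcup\hat{x}$, so \cref{th:8} applies: $X$ is a circular \K\ net and admits a Christoffel dual $\dual{X}:\{0,1\}\times\Z^n\to\R^{p,q}$, itself the stereoprojection of an isothermic net.

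Next I would normalise $\dual{X}$ so that it restricts to the prescribed dual. Restricting $\d X\curlywedge\d\dual{X}=0$ together with the edge-parallelism to the layer $\{0\}\times\Z^n$ exhibits $\dual{X}\restr{\{0\}\times\Z^n}$ as a \K, hence Christoffel, dual of $x$; since any two Christoffel duals of $x$ over the connected domain $\Z^n$ differ by a translation and a constant scaling (\cref{th:8}), and applying such a map to $\dual{X}$ leaves it a Christoffel dual of $X$, we may assume $\dual{X}\restr{\{0\}\times\Z^n}=\dual{x}$. Put $\dual{\hat{x}}:=\dual{X}\restr{\{1\}\times\Z^n}$; restricting to the layer $\{1\}\times\Z^n$ shows $\dual{\hat{x}}$ is a Christoffel dual of $\hat{x}$. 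Moreover the isothermic net with stereoprojection $\dual{X}$ has as its two layers the inverse stereoprojections of $\dual{x}$ and $\dual{\hat{x}}$, so $\dual{\hat{x}}$ is a \KM, hence Darboux, transform of $\dual{x}$, with parameter equal to the edge-labelling of that net on vertical edges. By \cref{th:12} this net has the same edge-labelling as $X$, which on vertical edges is $m$; hence $\dual{\hat{x}}$ is a Darboux transform of $\dual{x}$ with parameter $m$.

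Finally the two remaining assertions are read off a vertical edge $(1,i)(0,i)$. Edge-parallelism of $\dual{X}$ to $X$ gives
\[
  \dual{\hat{x}}_i-\dual{x}_i=\d\dual{X}_{(1,i)(0,i)}\prl\d X_{(1,i)(0,i)}=\hat{x}_i-x_i,
\]
so $\hat{x}-x$ and $\dual{\hat{x}}-\dual{x}$ are pointwise parallel, while \cref{th:11} applied to $X$ with edge-labelling $M$ yields
\[
  (\hat{x}_i-x_i,\dual{\hat{x}}_i-\dual{x}_i)=(\d X_{(1,i)(0,i)},\d\dual{X}_{(1,i)(0,i)})=-\frac{2}{M_{(1,i)(0,i)}}=-\frac{2}{m},
\]
which is \eqref{eq:27} (read with $1/\infty=0$ when $m=\infty$). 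I expect the only delicate point to be the bookkeeping of the normalisation step: arranging that the single Christoffel dual $\dual{X}$ of the stacked net restricts to the given $\dual{x}$ on one layer and to a bona fide Christoffel dual $\dual{\hat{x}}$ on the other, and matching the Darboux parameter through the shared edge-labelling; everything else is a direct appeal to \cref{th:8}, \cref{th:11} and \cref{th:12}.
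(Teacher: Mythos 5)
Your proposal is correct and follows the paper's own proof essentially verbatim: apply \cref{th:8} to the stacked net $x\sqcup\hat{x}$ on $\set{0,1}\times\Z^n$, normalise the resulting Christoffel dual so that it extends $\dual{x}$, identify the Darboux parameter of the dual pair via \cref{th:12}, and read off the parallelism and \eqref{eq:27} on vertical edges from \cref{th:11}. The only difference is that you spell out the normalisation step in more detail than the paper's terse ``translate''.
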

\begin{proof}
  Apply \cref{th:8} to $x\sqcup\hat{x}$ and translate to
  get a Christoffel dual
  $(x\sqcup \hat{x})^{\vee}=\dual{x}\sqcup \dual{\hat{x}}$
  of $x\sqcup \hat{x}$ extending $\dual{x}$.  Then
  $\dual{\hat{x}}$ is a Christoffel dual of $\hat{x}$
  and also, thanks to \cref{th:12}, a Darboux transform of
  $\dual{x}$ with parameter $m$ (the edge label for vertical
  edges).  Again, since $(x\sqcup \hat{x})^{\vee}$ is
  edge-parallel to $x\sqcup \hat{x}$ on vertical edges, we
  get that $(\hat{x}-x)\prl(\dual{\hat{x}}-\dual{x})$ while
  \cref{th:11} yields \eqref{eq:27}.
\end{proof}

\subsection{Families of flat connections}
\label{sec:famil-flat-conn}

A defining characteristic of an isothermic net $s$ with
finite cross-ratio factorising function $m$ is a
$1$-parameter family of flat connections
$(\Gamma^s(t))_{t\in\R}$ on the trivial bundle
$\dom\times\R^{p+1,q+1}$ which are defined as follows:
\begin{equation}\label{eq:40}
  \Gamma^s(t)_{ji}  := \Gamma_{s_i}^{s_j} (1 - t/m_{ij}),
\end{equation}
where, for $\lambda\in\R^{\times}$,
\begin{equation*}
  \Gamma_{s_i}^{s_{j}}(\lambda)=
  \begin{cases}
    \lambda&\text{on $s_j$}\\1&\text{on $(s_i\oplus
      s_j)^{\perp}$}\\1/\lambda&\text{on $s_i$}.
  \end{cases}
\end{equation*}

Implicit in the discussion in
\cite[\S3]{burstall_discrete_2018} is an extension of this
to the case where $m_{ij}=\infty$ on one family of edges.
We give an explicit self-contained argument here.
\begin{proposition}
  \label{th:21}
  Let $(s,\eta):\dom\to\QQ$ be isothermic with
  edge-labelling $m$.  Define connections
  $(\Gamma^s(t))_{t\in\R}$ on $\dom\times\R^{p+1,q+1}$ by
  \eqref{eq:40} if $m_{ij}$ is finite and
  \begin{equation}
    \label{eq:41}
    \Gamma^s(t)_{ji}=\exp(t\eta_{ji})
  \end{equation}
  when $m_{ij}=\infty$.

  Then each $\Gamma^s(t)$ is a flat connection: on each
  quadrilateral $\ijkl$,
  \begin{equation}
    \label{eq:42}
    \Gamma^s(t)_{kj}\Gamma^s(t)_{ji}=
    \Gamma^s(t)_{k\ell}\Gamma^s(t)_{\ell i}.
  \end{equation}
\end{proposition}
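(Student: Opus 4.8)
The plan is to verify \eqref{eq:42} on a fixed quadrilateral $\ijkl$, splitting into two cases according to the edge-labelling $m$. By \eqref{eq:43} the function $m$ takes a single value on each of the two families of opposite edges, and by \cref{th:34} these two values, $m_{ij}=m_{k\ell}$ and $m_{jk}=m_{\ell i}$, are distinct. Hence either (i) all four of $m_{ij},m_{jk},m_{k\ell},m_{\ell i}$ are finite, or (ii), after a cyclic relabelling of the vertices (allowed since \eqref{eq:42} is cyclically invariant), $m_{ij}=m_{k\ell}=\infty$ and $m_{jk}=m_{\ell i}=\nu$ for some $\nu\in\R^{\times}$. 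In case (i) the $\Gamma^s(t)$ are the classical pencil of flat connections of an isothermic net with cross-ratio factorising function $m$, whose flatness is proved in \cite[\S3]{burstall_discrete_2018}; so the real content is case (ii), on which I now concentrate.

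I would work with a Moutard lift $\mu\in\Gamma s$ (\cref{thm:isothermicMoutard}), so that $\eta_{ba}=\mu_b\wedge\mu_a$ and $(\mu_a,\mu_b)=1/m_{ab}$ on each edge, and $\mu$ obeys the conformal Moutard equation \eqref{eq:38}. Put $P:=\spn{\mu_i,\mu_j,\mu_k,\mu_\ell}$; by \eqref{eqn:Moutard} and the non-collinearity in \cref{th:34}, $\dim P=3$. The structural observation driving the proof is: for every edge $ab$ of the quadrilateral, $\Gamma^s(t)_{ba}$ is a linear isometry of $\R^{p+1,q+1}$ with $\image\bigl(\Gamma^s(t)_{ba}-\Id\bigr)\subseteq\spn{\mu_a,\mu_b}\subseteq P$. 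For a finite edge this is immediate from the definition of $\Gamma_{s_a}^{s_b}(\lambda)$ and the non-degeneracy of $s_a\oplus s_b$; for the isotropic edges $ij$ and $k\ell$ it holds because an isotropic edge makes $\mu_b\wedge\mu_a$ square to zero, so $\Gamma^s(t)_{ba}=\exp(t\eta_{ba})=\Id+t\,\mu_b\wedge\mu_a$. Composing, both $\Gamma^s(t)_{kj}\Gamma^s(t)_{ji}$ and $\Gamma^s(t)_{k\ell}\Gamma^s(t)_{\ell i}$ are isometries differing from $\Id$ by a map with image in $P$.

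Next I would record a small rigidity lemma: if $g_1,g_2\in\Ortho(p+1,q+1)$ agree on $P$ and each $g_r-\Id$ has image in $P$, then $g_1=g_2$. Indeed, $h:=g_1g_2^{-1}$ is an isometry with $h|_P=\Id$ and $\image(h-\Id)\subseteq P$; writing $h=\Id+k$ with $k$ vanishing on $P$, the relation $(hv,hw)=(v,w)$ with $w\in P$ gives $(k(v),w)=0$, so $k$ takes values in the radical $P\cap P^{\perp}$, which is null and at most one-dimensional; a second use of the isometry relation, now pairing the two null vectors $k(v),k(w)$, forces $k=0$ (a product of two linear functionals on $\R^{p+1,q+1}$, one of them $(\,\cdot\,,\rho)$ with $\rho$ spanning the radical, cannot vanish identically). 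Granted this, it suffices to check that the two compositions agree on $P$, i.e.\ on the spanning set $\mu_i,\mu_j,\mu_\ell$.

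That final check is the main obstacle and the genuinely new computation. Using \eqref{eq:38} together with $(\mu_i,\mu_j)=(\mu_k,\mu_\ell)=0$ and $(\mu_j,\mu_k)=(\mu_\ell,\mu_i)=1/\nu$ one gets $\mu_k=\mu_i+\tfrac1{\nu d}(\mu_\ell-\mu_j)$ and $(\mu_i,\mu_k)=\tfrac1{\nu^{2}d}$, where $d:=(\mu_j,\mu_\ell)$ is nonzero by the non-isotropy of diagonals in \cref{th:34}. Applying $\Gamma^s(t)_{ji}=\Id+t\,\mu_j\wedge\mu_i$ and then $\Gamma^s(t)_{kj}=\Gamma_{s_j}^{s_k}(1-t/\nu)$, decomposing each intermediate vector along $s_j\oplus s_k\oplus(s_j\oplus s_k)^{\perp}$ — and likewise for the other composition along $s_i\oplus s_\ell\oplus(s_i\oplus s_\ell)^{\perp}$ — a direct calculation shows that both send $\mu_j\mapsto(1-t/\nu)^{-1}\mu_j$, $\mu_\ell\mapsto(1-t/\nu)\mu_\ell$ and $\mu_i\mapsto\mu_i+\tfrac{t}{\nu^{2}d(1-t/\nu)}\,\mu_j$, which completes the argument. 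The bookkeeping is lengthy but routine; the only delicacy is that $P$ is degenerate here, with null radical $\spn{\mu_i-\tfrac1{\nu d}\mu_j}$, which is exactly why one needs the rigidity lemma rather than a naive splitting $\R^{p+1,q+1}=P\oplus P^{\perp}$.
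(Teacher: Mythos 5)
Your proof is correct — I checked the final computation on $\mu_i,\mu_j,\mu_\ell$ and the claimed images are right (e.g.\ the $\mu_j$-coefficient $1+\tfrac t\nu+\tfrac{t^2}{\nu^2\lambda}$ does collapse to $\lambda^{-1}$ for $\lambda=1-t/\nu$), and the rigidity lemma is sound: the radical of $P$ is the null line $\spn{\nu d\,\mu_i-\mu_j}$ and your "product of two functionals" step works because a real vector space is not a union of two proper subspaces. The overall shape — both sides are isometries, so it suffices to pin them down on a few vectors, with the essential identity being the conformal Moutard equation \eqref{eq:38} — matches the paper, but the reduction is genuinely different. Following \cite[Lemma~4.7]{burstall_isothermic_2011}, the paper shows that \emph{both} products equal the explicit third transformation $\Gamma_{s_j}^{s_\ell}(1-t/m_{\ell i})$: after observing that both sides act as $\lambda^{-1}$ on the null line $s_j$ and trivially on $s_j^{\perp}/s_j$, an element of the parabolic stabilising $s_j$ is determined by its action on one further non-orthogonal null line, so only the single vector $\mu_\ell$ needs to be computed. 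You instead compare the two products directly via rigidity for isometries $g$ with $\image(g-\Id)$ in the degenerate $3$-plane $P$, at the cost of three vector computations and of handling the radical of $P$ explicitly. The paper's route buys the structural bonus that the half-quadrilateral holonomy is itself a transformation of type $\Gamma_{s_j}^{s_\ell}$ (a Bianchi-permutability-flavoured fact used elsewhere); yours buys self-containedness, since you never need to guess the common value in advance. Two small points worth a half-sentence each in a write-up: your rigidity lemma implicitly uses that $g_2$ preserves $P$ (so that $g_1g_2^{-1}$ restricts to the identity on $P$), which does follow from $\image(g_2-\Id)\subseteq P$; and the step $\exp(t\eta_{ji})=\Id+t\,\mu_j\wedge\mu_i$ deserves the one-line justification that $(\mu_j\wedge\mu_i)^2=0$ on an isotropic edge.
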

\begin{proof}
  We suppose that $m_{ij}=m_{k\ell}=\infty$.  \cref{th:34}
  assures us that $s_j,s_{\ell}$ are not orthogonal and that
  $m_{i\ell}$ is finite.  We now follow the
  strategy for the case of finite $m$ in
  \cite[Lemma~4.7]{burstall_isothermic_2011} by proving that
  both sides of \eqref{eq:42} equal
  $\Gamma_{s_j}^{s_{\ell}}(1-t/m_{\ell i})$.  With $L(t)$
  denoting the left side, it is easy to see that $L(t)$ and
  $\Gamma_{s_j}^{s_{\ell}}(1-t/m_{\ell i})$ agree on both
  $s_j$ and $s_j^{\perp}/s_j$ so that, since both lie in
  $\Ortho(p+1,q+1)$, it suffices to show that they agree on
  $s_{\ell}$, that is, with $\mu\in\Gamma s$, the Moutard
  lift,
  \begin{equation*}
    \Gamma^{s_k}_{s_j}(1-t/m_{jk})\exp(t\eta_{ji})\mu_{\ell}=
    (1-t/m_{\ell i})\mu_{\ell},
  \end{equation*}
  or, equivalently,
  \begin{equation*}
    \exp(t\eta_{ji})\mu_{\ell}=
    (1-t/m_{\ell i})\Gamma^{s_j}_{s_k}(1-t/m_{jk})\mu_{\ell}.
  \end{equation*}
  However, a straightforward calculation using \eqref{eq:14}
  and $m_{\ell i}=m_{jk}$ shows that
  this last amounts to \eqref{eq:38}.  The equality for the
  right hand side is similar.
\end{proof}

\begin{remark}\label{th:43}
  The connections $\Gamma^s(t)$ and the $1$-form $\eta$ are
  equivalent data: given $(\Gamma^s(t))_{t\in\R}$ we recover
  $\eta$ by
  \begin{equation*}
    \eta=\partial/\partial t\restr{t=0}\Gamma^s(t).
  \end{equation*}
  Conversely, given $\eta$, we have
  \begin{equation*}
    \Gamma^s_{ji}(t)=\exp\bigl( - m_{ij} \log(1 -
    t/m_{ij}) \eta_{ji} \bigr),
  \end{equation*}
  where we use L'H\^{o}pital's rule to interpret the right side
  when $m_{ij}=\infty$.  In particular, scaling $\eta$ by a
  constant scales the parameter $t$ also.  In more detail,
  replacing $\eta$ by $\lambda\eta$, for
  $\lambda\in\R^{\times}$ a constant, requires us to replace
  $m_{ij}$ by $m_{ij}/\lambda$ and $\Gamma^{s}(t)$ by
  $\Gamma^{s}(\lambda t)$.
\end{remark}

These flat connections give an alternative perspective on
the transformation theory of isothermic nets.  In
particular, we have a discrete analogue of Darboux's linear
system \cite{Dar99e}, see also
\cite{burstall_isothermic_2011,burstall_discrete_2018}:
\begin{proposition}
  \label{th:22}
  Let $s:\dom\to\QQ$ be an isothermic net and
  $m\in\R^{\times}$ not equal to any $m_{ij}$.  Let
  $\hat{s}:\dom\to\QQ$ be pointwise
  non-orthogonal to $s$.

  Then $\hat{s}$ is a Darboux transform of $s$ with
  parameter $m$ if and only if $\hat{s}$ is
  $\Gamma^s(m)$-parallel:
  \begin{equation*}
    \hat{s}_j=\Gamma^s(m)_{ji}\hat{s}_i,
  \end{equation*}
  for all edges $ij$.

  In particular, $\hat{s}$ is uniquely determined by its
  value at a single point of $\dom$.
\end{proposition}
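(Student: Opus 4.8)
Throughout, fix the Moutard lift $\mu$ of $(s,\eta)$, so that $\eta_{ji}=\mu_j\wedge\mu_i$ and the edge-labelling is $m_{ij}=1/(\mu_i,\mu_j)$; since $\hat{s}$ is pointwise non-orthogonal to $s$ and $m\in\R^{\times}$, rescale a lift $\hat\mu\in\Gamma\hat{s}$ so that $(\mu_i,\hat\mu_i)=1/m$ at every vertex. The plan is to reduce the whole statement to a single equivalence on each edge. On the one hand, by \cref{th:2} and \cref{cor:K-Mconsistency}, $\hat{s}$ is a Darboux transform of $s$ with parameter $m$ exactly when $s\sqcup\hat{s}$ is isothermic with Moutard lift $\mu\sqcup\hat\mu$ and $1/(\mu_i,\hat\mu_i)\equiv m$; by the rigidity \eqref{eq:38} of the Moutard equation in $\QQ$ this is the same as asking that the vertical-face Moutard equation
\begin{equation*}
  \hat\mu_j-\mu_i=\tfrac{(\mu_i,\hat\mu_i-\mu_j)}{(\hat\mu_i,\mu_j)}(\hat\mu_i-\mu_j)
\end{equation*}
hold over every edge $ij$ (with $(\mu_i,\hat\mu_i)\equiv1/m$, which also forces the edge-labellings of $s$ and $\hat{s}$ to agree). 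On the other hand, $\Gamma^{s}(m)$-parallelism $\hat{s}_j=\Gamma^{s}(m)_{ji}\hat{s}_i$ is also an edge-by-edge condition on the lines $\hat{s}_i$, and a $\Gamma^{s}(m)$-parallel $\hat{s}$ over the connected set $\dom$ is determined by its value at one vertex, which yields the last assertion. So it suffices to prove, on each edge $ij$, that $\hat{s}_j=\Gamma^{s}(m)_{ji}\hat{s}_i$ if and only if the displayed Moutard equation holds.

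This single-edge equivalence is the heart of the argument and must be checked in two cases. If $m_{ij}$ is finite then $s_i\oplus s_j$ is non-degenerate, so write $\hat\mu_i=\alpha\mu_i+\beta\mu_j+w$ with $w\in(s_i\oplus s_j)^{\perp}$; the normalisation forces $\beta=m_{ij}/m$, and the regularity of the vertical face (\cref{th:34}(3): non-isotropic diagonals, i.e.\ $(\hat\mu_i,\mu_j)\neq0$) forces $\alpha\neq0$. Since $\Gamma^{s}(m)_{ji}=\Gamma_{s_i}^{s_j}(1-m/m_{ij})$ acts on $\mu_i,\mu_j,w$ by the explicit scalars $m_{ij}/(m_{ij}-m)$, $(m_{ij}-m)/m_{ij}$, $1$, a short computation shows that $\tfrac{m_{ij}-m}{m\alpha}\,\Gamma^{s}(m)_{ji}\hat\mu_i$ equals $\mu_i+\tfrac{m_{ij}-m}{m\alpha}(\hat\mu_i-\mu_j)$, which is exactly the point produced by the displayed Moutard equation (one uses $(\mu_i,\hat\mu_i-\mu_j)=\tfrac1m-\tfrac1{m_{ij}}$ and $(\hat\mu_i,\mu_j)=\alpha/m_{ij}$). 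If $m_{ij}=\infty$ then $s_i\oplus s_j$ is isotropic, $\mu_j\wedge\mu_i$ squares to zero, so $\Gamma^{s}(m)_{ji}=\exp(m\,\mu_j\wedge\mu_i)=\Id+m\,\mu_j\wedge\mu_i$, and the analogous one-line computation again identifies a scalar multiple of $\Gamma^{s}(m)_{ji}\hat\mu_i$ with the Moutard point. Conversely, given $\hat{s}_j=\Gamma^{s}(m)_{ji}\hat{s}_i$, write $\hat\mu_j=\kappa\,\Gamma^{s}(m)_{ji}\hat\mu_i$; imposing $(\mu_j,\hat\mu_j)=1/m$ pins down $\kappa$ (and forces $\alpha\neq0$, resp.\ $(\mu_j,\hat\mu_i)\neq0$), and reading the computation backwards gives the Moutard equation. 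The main obstacles here are purely organisational: checking that the normalisation $(\mu_i,\hat\mu_i)=1/m$ is precisely what makes $\Gamma^{s}(m)$-transport land on the Moutard point, and handling the unipotent case $m_{ij}=\infty$ separately.

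It remains to assemble both directions from the edge equivalence. If $\hat{s}$ is a Darboux transform of $s$ with parameter $m$, then $s\sqcup\hat{s}$ is isothermic with Moutard lift $\mu\sqcup\hat\mu$ and $1/(\mu_i,\hat\mu_i)\equiv m$, so the vertical Moutard equation holds on every edge and hence $\hat{s}_j=\Gamma^{s}(m)_{ji}\hat{s}_i$ throughout. Conversely, suppose $\hat{s}$ is $\Gamma^{s}(m)$-parallel; then the edge equivalence gives $\hat\mu_j-\mu_i=c_{ij}(\hat\mu_i-\mu_j)$ on every edge for some $c_{ij}\in\R^{\times}$. Put $\tau:=\hat\mu\curlywedge\mu\in\Gamma(s\wedge\hat{s})$ and $\eta^{-}:=\eta+\d\tau$, so that $\eta^{-}_{ji}=\mu_j\wedge\mu_i+\hat\mu_j\wedge\mu_j-\hat\mu_i\wedge\mu_i$; substituting $\hat\mu_j=\mu_i+c_{ij}(\hat\mu_i-\mu_j)$ and expanding, all terms cancel \emph{independently of $c_{ij}$} and leave $\eta^{-}_{ji}=\hat\mu_j\wedge\hat\mu_i$. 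Thus $\eta^{-}$ is closed (it is $\eta+\d\tau$ with $\eta$ closed), never zero (by the standing regularity, $\hat{s}_i\neq\hat{s}_j$ on edges), and lies in $\hat{s}_j\wedge\hat{s}_i$, so $(\hat{s},\eta^{-})$ is a \K\ net; since also $\tau\in\Gamma(s\wedge\hat{s})$ and $\eta^{-}=\eta+\d\tau$, \cref{th:2} shows $(s,\eta)$ and $(\hat{s},\eta^{-})$ are a \KM\ pair, and because $1/(\mu_i,\hat\mu_i)\equiv m$ this is precisely a Darboux pair with parameter $m$. Finally, a $\Gamma^{s}(m)$-parallel $\hat{s}$ is, as noted, determined by its value at any single vertex.
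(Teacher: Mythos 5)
Your proof is correct and follows essentially the same path as the paper's: after normalising $\hat{\mu}$ by $(\mu,\hat{\mu})\equiv 1/m$, the paper records the single identity \eqref{eq:44}, valid uniformly on edges with $m_{ij}$ finite or infinite, and observes that its right-hand side is a multiple of $\hat{\mu}_j$ precisely when the vertical Moutard equation holds --- which is exactly the content of your two-case edge computation. The only genuine divergence is in how the converse is concluded: where you verify directly that $\eta^{-}:=\eta+\d(\hat{\mu}\curlywedge\mu)$ satisfies $\eta^{-}_{ji}=\hat{\mu}_j\wedge\hat{\mu}_i$ and then invoke \cref{th:2}, the paper instead derives the vertical Moutard equation by pairing with $\mu_j$ and appeals to the multidimensional consistency of the Moutard equation to finish. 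Your version is more self-contained, re-deriving the relevant piece of \cref{cor:K-Mconsistency} by hand at the cost of a short extra computation; both arguments are sound.
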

\begin{proof}
  Let $\mu\in\Gamma s$ be the Moutard lift with
  $m_{ij}=1/(\mu_i,\mu_j)$ and let $\hat{\mu}\in\Gamma
  \hat{s}$ be the unique section with
  $(\mu,\hat{\mu})=1/m$.  Then, for any edge $ij$, including
  any with $m_{ij}=\infty$, we have
  \begin{equation}\label{eq:44}
    \Gamma^s(m)_{ji}\hat{\mu}_i=
    \hat{\mu}_{i}-\mu_{j}+\frac{(\mu_j,\hat{\mu}_i)}{(\mu_i,\hat{\mu}_i-\mu_j)}\mu_i.
  \end{equation}
  If $\hat{s}$ is a Darboux transform with parameter $m$,
  then $\hat{\mu}$ is the Moutard lift of $\hat{s}$ and the
  Moutard equation \eqref{eq:38} on the vertical
  quadrilateral tells us that \eqref{eq:44} is a multiple of
  $\hat{\mu}_j$ and so takes values in $\hat{s}_j$.  Thus
  $\hat{s}$ is $\Gamma^s(m)$-parallel.

  Conversely, if $\hat{s}$ is parallel, we have
  \begin{equation*}
    \hat{\mu}_{i}-\mu_{j}+\frac{(\mu_j,\hat{\mu}_i)}{(\mu_i,\hat{\mu}_i-\mu_j)}\mu_i
    =c\hat{\mu}_j,
  \end{equation*}
  for some $c\in\R$.  Taking the inner product with $\mu_j$
  rapidly yields
  \begin{equation*}
    c=\frac{(\mu_j,\hat{\mu}_i)}{(\mu_i,\hat{\mu}_i-\mu_j)}
  \end{equation*}
  so that $\mu,\hat{\mu}$ solve the Moutard equation on
  vertical quadrilaterals.  That $\hat{s}$ is a Darboux
  transform now follows at once from the multidimensional
  consistency of the Moutard equation.
\end{proof}

Again, the flat connections are responsible for the Calapso
transformation and we have the following extension of
\cite[\S2]{burstall_discrete_2014} (see also
\cite[\S5.7.16]{Her03}) to include the case where
$m_{ij}=\infty$:
\begin{proposition}
  \label{th:23}
  Let $(s,\eta):\dom\to\QQ$ be isothermic with Moutard lift
  $\mu$, edge-labelling $m_{ij}$ and flat connections
  $\Gamma^s(t)$.  Let $T(t):\dom\to\Ortho(p+1,q+1)$
  trivialise $\Gamma^s(t)$:
  $\Gamma^s(t)_{ji}=T(t)_j^{-1}T(t)_i$ on each edge
  $ij$.

  Define $s(t):=T(t)s:\dom\to\QQ$.  Then $s(t)$ is
  isothermic with Moutard lift $T(t)\mu$,
  edge-labelling $m(t)_{ij}=m_{ij}-t$ (interpreted as
  $\infty$ if $m_{ij}=\infty$) and flat connections
  given by
  \begin{equation}
    \label{eq:45}
    \Gamma^{s(t)}(u)=T(t)\cdot\Gamma^s(t+u).
  \end{equation}
  We call $s(t)$ a \emph{Calapso transform} of $s$.  It is
  defined up to a constant element $g(t)\in\Ortho(p+1,q+1)$.
\end{proposition}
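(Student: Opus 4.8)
The plan is to take $\mu(t):=T(t)\mu$ as the candidate Moutard lift of $s(t)$ and to recognise the closed $1$-form it determines as $\eta(t):=\partial_u|_{u=0}\bigl(T(t)\cdot\Gamma^s(t+u)\bigr)$; then the result drops out of \cref{thm:isothermicMoutard} together with the dictionary of \cref{th:43} between isothermic data and families of flat connections. First note that a trivialising $T(t):\dom\to\Ortho(p+1,q+1)$ exists since each $\Gamma^s(t)$ is a flat, inner-product-preserving connection (\cref{th:21}) and $\dom$ is simply connected (part~(b) of \cref{th:52}, in the structured form of \cref{th:54}); it is unique up to a constant left factor $g(t)\in\Ortho(p+1,q+1)$, which is precisely the asserted indeterminacy in $s(t)=T(t)s$. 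Feeding the Moutard lift $\mu$ into \eqref{eq:40}, \eqref{eq:41} gives, on each edge $ij$,
\begin{equation*}
  \Gamma^s(t)_{ji}\mu_i=\tfrac{m_{ij}}{m_{ij}-t}\,\mu_i,\qquad
  \Gamma^s(t)_{ij}\mu_i=\tfrac{m_{ij}-t}{m_{ij}}\,\mu_i,
\end{equation*}
with the convention $\tfrac{m_{ij}}{m_{ij}-t}=1$ when $m_{ij}=\infty$ (consistent with $\exp(t\eta_{ji})\mu_i=\mu_i$ there, as $\eta_{ji}\mu_i=(\mu_j,\mu_i)\mu_i=0$). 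Hence $\mu(t)_i=T(t)_j\,\Gamma^s(t)_{ji}\mu_i=\tfrac{m_{ij}}{m_{ij}-t}T(t)_j\mu_i$, so $T(t)_j\mu_i=\tfrac{m_{ij}-t}{m_{ij}}\mu(t)_i$, and, $T(t)_j$ being orthogonal, $(\mu(t)_i,\mu(t)_j)=(\Gamma^s(t)_{ji}\mu_i,\mu_j)=\tfrac{m_{ij}}{m_{ij}-t}(\mu_i,\mu_j)=\tfrac1{m_{ij}-t}$, which will give $m(t)_{ij}=m_{ij}-t$ once $\mu(t)$ is shown to be a Moutard lift of $s(t)$.

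Next I would show $(s(t),\eta(t))$ is isothermic. Put $\Gamma(t,u):=T(t)\cdot\Gamma^s(t+u)$; each of these is flat, being a gauge transform of the flat connection $\Gamma^s(t+u)$, and $\Gamma(t,0)=\id$ because $\Gamma^s(t)_{ji}=T(t)_j^{-1}T(t)_i$. Differentiating $\Gamma(t,u)_{ij}\Gamma(t,u)_{ji}=\id$ and the flatness identity at $u=0$ then shows that $\eta(t)$ is a closed $1$-form. To compute it, differentiate the exponential presentation of $\Gamma^s$ from \cref{th:43} to obtain $\partial_u|_{u=0}\Gamma^s(t+u)_{ji}=\tfrac{m_{ij}}{m_{ij}-t}\,\eta_{ji}\,\Gamma^s(t)_{ji}$ (reading $\eta_{ji}\exp(t\eta_{ji})$ when $m_{ij}=\infty$), and conjugate by $T(t)$, using $\Gamma^s(t)_{ji}T(t)_i^{-1}=T(t)_j^{-1}$ and \eqref{eq:14}:
\begin{equation*}
  \eta(t)_{ji}=\tfrac{m_{ij}}{m_{ij}-t}\,\Ad_{T(t)_j}\eta_{ji}
  =\tfrac{m_{ij}}{m_{ij}-t}\,(T(t)_j\mu_j)\wedge(T(t)_j\mu_i)=\mu(t)_j\wedge\mu(t)_i,
\end{equation*}
the last equality using $T(t)_j\mu_j=\mu(t)_j$ and $T(t)_j\mu_i=\tfrac{m_{ij}-t}{m_{ij}}\mu(t)_i$. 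So $\eta(t)_{ji}\in s(t)_j\wedge s(t)_i$ is never zero, whence $(s(t),\eta(t))$ is a \K\ net with values in $\QQ$, i.e.\ isothermic, with Moutard lift $\mu(t)=T(t)\mu$ by \cref{thm:isothermicMoutard}, and hence with edge-labelling $m(t)_{ij}=m_{ij}-t$.

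It remains to establish \eqref{eq:45}. By \cref{th:43}, $\Gamma^{s(t)}(u)$ is determined by $\eta(t)$, so it suffices to verify $\Gamma^{s(t)}(u)=\Gamma(t,u)$. When $m_{ij}=\infty$ this is immediate: $\eta_{ji}$ commutes with $\Gamma^s(t)_{ji}=\exp(t\eta_{ji})$, so $\Gamma(t,u)_{ji}=T(t)_j\exp((t+u)\eta_{ji})T(t)_i^{-1}=T(t)_j\exp(u\eta_{ji})T(t)_j^{-1}=\exp(u\,\eta(t)_{ji})$, which is $\Gamma^{s(t)}(u)_{ji}$ by \eqref{eq:41}. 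When $m_{ij}$ is finite one evaluates $\Gamma(t,u)_{ji}=T(t)_j\Gamma^s(t+u)_{ji}T(t)_i^{-1}$ on $\mu(t)_i$, on $\mu(t)_j$, and on $(s(t)_i\oplus s(t)_j)^{\perp}$: the scaling identities from the first paragraph (now with $t+u$ in place of $t$) collapse it to multiplication by $1-u/(m_{ij}-t)$ on $s(t)_j$, by the reciprocal on $s(t)_i$, and by $1$ on the orthogonal complement, that is, to $\Gamma^{s(t)_j}_{s(t)_i}(1-u/m(t)_{ij})=\Gamma^{s(t)}(u)_{ji}$ in the notation of \eqref{eq:40}. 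The work throughout is bookkeeping rather than conceptual; the genuinely delicate point is to carry the scalings accurately enough to land on $\eta(t)_{ji}=\mu(t)_j\wedge\mu(t)_i$ exactly — not merely up to a scalar — and to handle the edges with $m_{ij}=\infty$ on the same footing as the rest, both of which are managed by the exponential presentation of \cref{th:43} and the convention $\tfrac{m_{ij}}{m_{ij}-t}=1$ at $m_{ij}=\infty$.
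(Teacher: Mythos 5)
Your proposal is correct and follows essentially the same route as the paper: gauge-transform the connections to $T(t)\cdot\Gamma^s(t+u)$, define $\eta(t)$ as the $u$-derivative at $u=0$, use the scaling identity $T(t)_j^{-1}T(t)_i\mu_i=\tfrac{1}{1-t/m_{ij}}\mu_i$ to land on $\eta(t)_{ji}=\mu(t)_j\wedge\mu(t)_i$ exactly, read off closedness from flatness and the edge-labelling $1/(m_{ij}-t)$, and then identify $T(t)\cdot\Gamma^s(t+u)$ with $\Gamma^{s(t)}(u)$. The extra detail you supply (existence of $T(t)$, the $m_{ij}=\infty$ bookkeeping, and the pointwise verification of \eqref{eq:45} on $\mu(t)_i$, $\mu(t)_j$ and the orthogonal complement) simply fills in what the paper leaves as ``a computation''.
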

\begin{proof}
  We start with the flat connections
  $T(t)\cdot\Gamma^s(t+u)$.  A computation using
  $T(t)_i^{-1}T(t)_j=\Gamma^s(t)_{ij}$ reveals that
  \begin{equation}\label{eq:46}
    (T(t)\cdot\Gamma^s(t+u))_{ji}=
    \begin{cases}
      T(t)_{j}\Gamma^{s_j}_{s_i}(1-u/(m_{ij}-t))T(t)_j^{-1}&\text{if
        $m_{ij}\neq\infty$}\\
      T(t)_{j}\exp(u\eta_{ji})T(t)_j^{-1}&\text{if
        $m_{ij}=\infty$}.
    \end{cases}
  \end{equation}
  Define $1$-forms $\eta(t)$ by
  \begin{equation*}
    \eta(t):=\partial/\partial u\restr{u=0}T(t)\cdot\Gamma^s(t+u)
  \end{equation*}
  and use \eqref{eq:46} to get, in all cases,
  \begin{equation*}
    \eta(t)_{ji}=\frac{1}{1-t/m_{ij}}\Ad_{T(t)_j}\eta_{ji}
    =(T(t)\mu)_j\wedge(T(t)\mu)_i,
  \end{equation*}
  where we have used
  \begin{equation}
    \label{eq:47}
    T(t)_j^{-1}T(t)_i\mu_i=\frac1{1-t/m_{ij}}\mu_i.
  \end{equation}
  Now $\eta(t)$ is closed since
  $T(t)\cdot\Gamma^s(t+u)_{ji}$ is flat for all $u$, so
  that $s(t)$ is isothermic with Moutard lift $\mu(t)=T(t)\mu$.
  Moreover, \eqref{eq:47} rapidly yields
  $(\mu(t)_i,\mu(t)_j)=1/(m_{ij}-t)$ which, together with
  \eqref{eq:46}, gives \eqref{eq:45}.
\end{proof}


\section{Applicable Legendre maps}
\label{sec:appl-legendre-maps}

For $p,q\geq 1$, let $\cZ=\cZ^{p,q}$ be the space of
projective lines in $\QQ^{p,q}$ or, equivalently, the
Grassmannian of null $2$-planes in $\R^{p+1,q+1}$.  Then
$\cZ$ is a contact manifold of dimension $2(p+q)-3$. 

\begin{definition}[Legendre map]
  A \emph{Legendre map} is a discrete line congruence
  $f:\dom\to\cZ$.
\end{definition}

We study applicable Legendre maps and their transformations.
The key observation is that, thanks to \cref{th:4} and the
discussion in Section~\ref{sec:darb-km-transf},
$f:\dom\to\cZ$ is an applicable Legendre map if and only if
it is spanned by an isotropic Darboux pair of isothermic
nets.

\subsection{Duality for applicable Legendre maps}
\label{sec:dual-appl-legendre}

With notation as in Section~\ref{sec:dual-isoth-nets}, write
$\R^{p+1,q+1}=\R^{p,q}\oplus\spn{\fo,\q}$ and let $\cZ_{\q}$
denote the set of affine null lines in $\R^{p,q}$.
Inverse stereoprojection identifies $\cZ_{\q}$ with the open
subset of $\cZ$ consisting of lines in $\QQ$ that do not lie
in the quadric at infinity $\P(\cL\cap\q^{\perp})$ (which
only contains lines if $p,q\geq 2$).

If $L:\dom\to\cZ_{\q}$ is the stereoprojection of $f$
then $f$ is Legendre exactly when $L_i,L_j$ are affine
coplanar for each edge $ij$.  We say that $L$ is applicable
if $f$ is.

So let $(f,[\eta])$ be an applicable Legendre map with
stereoprojection $L$.  For any $\eta\in[\eta]$, $\eta\q$ is
closed and so there is $\dual{x}^{\eta}:\dom\to\R^{p,q}$, unique up
to translation, with
\begin{equation}\label{eq:30}
  \d\dual{x}^{\eta}=\pi\eta\q.
\end{equation}
Moreover, for $\tau\in\Gamma\Wedge^2f$, we may take
\begin{equation}\label{eq:28}
  \dual{x}^{\eta+\d\tau}=\dual{x}^{\eta}+\pi\tau\q.
\end{equation}
Let $x_1,x_2$ span $L$ with Euclidean lifts $y_1,y_2$ so
that any $\tau\in\Gamma\Wedge^2f$ is of the form
$\lambda y_2\Wedge y_1$, for some $\lambda:\dom\to\R$.
Then \eqref{eq:28} reads
\begin{equation}
  \label{eq:29}
  \dual{x}^{\eta+\d\tau}=\dual{x}^{\eta}+\lambda(x_2-x_1).
\end{equation}
Thus all $\dual{x}^{\eta+\d\tau}$ lie on the family
$\dual{L}$ of affine null lines through $\dual{x}$ that are
pointwise parallel to $L$. In particular, $\dual{L}$
contains a Christoffel dual of any isothermic net in $L$.
Further, any section of $\dual{L}$ is of the form
$\dual{x}^{\eta+\d\tau}$, for some $\tau\in\Gamma\Wedge^2f$.

We have:
\begin{theorem}
  \label{th:14}
  Let $(f,\eta):\dom\to\cZ$ be an applicable Legendre map
  with stereoprojection $L$, let
  $\dual{x}^{\eta}:\dom\to\R^{p,q}$ solve \eqref{eq:30} and
  let $\dual{L}:\dom\to\cZ_{\q}$ consist of the lines through
  $\dual{x}^{\eta}$ parallel to $L$.

  Then $\dual{L}$ is also the stereoprojection of an
  applicable Legendre map.

  We call $\dual{L}$ the \emph{dual Legendre map to $L$ with
    respect to $\q$}.  It is determined up to translation.
\end{theorem}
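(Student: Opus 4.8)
The plan is to realise $\dual L$ as the span of an isotropic Darboux pair of isothermic nets and then invoke the characterisation recorded at the start of Section~\ref{sec:appl-legendre-maps} (coming from \cref{th:4} and Section~\ref{sec:darb-km-transf}): a map into $\cZ_{\q}$ is the stereoprojection of an applicable Legendre map exactly when it is spanned by an isotropic Darboux pair of isothermic nets. Both of the assertions to be proved -- that $\dual L$ is a Legendre map at all, and that it is applicable -- will then follow at once.

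First I would use \cref{thm:applicability} to write $f=s^{+}\oplus s^{-}$ as the span of a \KM\ pair of isothermic nets $s^{\pm}<f$; since $f$ takes values in $\cZ$, the discussion of Section~\ref{sec:darb-km-transf} identifies this as an isotropic Darboux pair, and the construction preceding \cref{th:4} produces $1$-forms $\eta^{\pm}=\eta+\d\tau^{\pm}$ with $\tau^{\pm}\in\Gamma\Wedge^{2}f$ and $\eta^{\pm}_{ij}\in s^{\pm}_{j}\wedge s^{\pm}_{i}$. Choosing the (free) initial data of the \KM\ pair away from a measure-zero set (\cref{th:53}), I may assume $s^{\pm}$ avoid $\P(\cL\cap\q^{\perp})$ pointwise, so they stereoproject to isothermic nets $x^{\pm}:\dom\to\R^{p,q}$ spanning $L$.

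The heart of the argument is a simultaneous construction of Christoffel duals. Apply \cref{th:8} to the isothermic net $x:=x^{+}\sqcup x^{-}$ on the domain $\{0,1\}\times\dom$, which is again simply connected by \cref{th:51}: this yields a Christoffel dual $\dual x=\dual x^{+}\sqcup\dual x^{-}$, itself the stereoprojection of an isothermic net (so $\dual x^{+}_{i}\neq\dual x^{-}_{i}$ pointwise, by the standing regularity \cref{th:34} for isothermic nets) and edge-parallel to $x$. By \cref{th:12} it inherits the edge-labelling, so on vertical edges the label is $\infty$ and $\dual x^{\pm}$ form an isotropic Darboux pair, while edge-parallelism on vertical edges gives $(\dual x^{+}-\dual x^{-})\prl(x^{+}-x^{-})$ pointwise. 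Now normalise the constant scale of $\dual x$ so that the $1$-form of $x^{+}\sqcup x^{-}$ restricts on $\{0\}\times\dom$ to exactly the $\eta^{+}$ above; then \cref{th:10} gives $\d\dual x^{+}=\pi(\eta^{+}\q)$, and comparing with \eqref{eq:30} and using $\eta^{+}=\eta+\d\tau^{+}$ we get (after fixing the translation of $\dual x^{+}$) that $\dual x^{+}=\dual x^{\eta}+\pi\tau^{+}\q=\dual x^{\eta+\d\tau^{+}}$, which by \eqref{eq:28}--\eqref{eq:29} is a section of $\dual L$. Since $(\dual x^{+}-\dual x^{-})\prl(x^{+}-x^{-})$ is also the direction of $\dual L$, the net $\dual x^{-}$ lies on $\dual L$ too, and as $\dual x^{+}\neq\dual x^{-}$ we conclude $\dual L=\spn{\dual x^{+},\dual x^{-}}$.

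This displays $\dual L$ as spanned by an isotropic Darboux pair of isothermic nets, so the characterisation cited above shows it is the stereoprojection of an applicable Legendre map; the only surviving freedom, translation of $\dual x^{\eta}$, merely translates $\dual L$. The step that needs care -- and where I expect the real obstacle to lie -- is this doubled-domain construction together with the bookkeeping that pins the span of the two Christoffel duals onto $\dual L$: one must keep track of the gauge parameters $\tau^{\pm}$ and the normalisation of the dual. (An alternative would be to mimic the converse half of \cref{th:8}, defining a candidate $\dual\eta$ directly from a $\q^{\perp}$-valued form built out of a section of $L$ and its dual, but one still needs two such sections to see that $\dual L$ is a line congruence, so this amounts to the same thing.) Carrying out the construction on $\{0,1\}\times\dom$ rather than quoting \cref{th:13} has the advantage of imposing no restriction on $\dom$.
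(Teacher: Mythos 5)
Your proof is correct and follows essentially the same route as the paper's: both realise $\dual{L}$ as the span of the simultaneous Christoffel duals $\dual{x}^{\pm}$ of an isotropic Darboux pair spanning $L$, then invoke the characterisation of applicable Legendre maps via isotropic Darboux pairs. The only difference is that you inline the doubled-domain argument behind \cref{th:13} (which incidentally avoids that corollary's restriction to $\dom=\Z^n$) and spell out the gauge/normalisation bookkeeping identifying $\dual{x}^{+}$ with $\dual{x}^{\eta+\d\tau^{+}}$, whereas the paper simply cites \cref{th:13}.
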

\begin{proof}
  $f:\dom\to\cZ$ is spanned by an isotropic Darboux pair
  $(s^{\pm},\eta^{\pm})$ of isothermic nets with
  $\eta^{\pm}\in[\eta]$.  Then
  $\dual{x}^{+}:=\dual{x}^{\eta^+}$ is a Christoffel dual of
  $x^+$ and with $\dual{x}^{-}$ the simultaneous Christoffel
  dual of $x^-$ and Darboux transform of $\dual{x}^{+}$
  provided by \cref{th:13}, we have that $\dual{x}^-$ lies
  on the line through $\dual{x}^+$ in the direction
  $x^+-x^-$ which is $\dual{L}$.  Thus $\dual{L}$ is spanned
  by an isotropic Darboux pair of isothermic nets and so is
  the stereoprojection of an applicable Legendre map as
  required.
\end{proof}

\subsection{Transformations of applicable Legendre maps}
\label{sec:transf-appl-legendre}

Let $f:\dom\to\cZ$ be an applicable Legendre map spanned by
an isotropic Darboux pair $s^{\pm}$.  The key to
\cref{th:14} was to consider the isothermic net
$s=s^+\sqcup s^-:\set{0,1}\times\dom\to\QQ$ and then take
the Christoffel dual of $s$.

The same idea gives us Darboux and Calapso transformations
of applicable Legendre maps:

\subsubsection{Darboux transformations}
\label{sec:darb-transf}

\begin{proposition}\label{th:35}
  Let $f=s^+\oplus s^-:\dom\to\cZ$ be an applicable Legendre
  map spanned by an isotropic Darboux pair of isothermic
  nets and set $s:s^+\sqcup s^-:\set{0,1}\times\dom\to\QQ$.

  Let $m\in\R^{\times}$ be distinct from any $m_{ij}$ of $s$
  and let $\hat{s}=\hat{s}^+\sqcup \hat{s}^-$ be a Darboux
  transform of $s$ with parameter $m$.

  Then $\hat{f}:=\hat{s}^+\oplus\hat{s}^-$ is an applicable
  Legendre map which we call a \emph{Darboux transform of
    $f$ with parameter $m$}.
\end{proposition}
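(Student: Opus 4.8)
The plan is to replicate the device behind \cref{th:14}: work on the doubled domain, exploit that $\hat s$ is isothermic there, and show that the ``vertical'' edge-label of $\hat s$ is again $\infty$, so that $\hat s^{+},\hat s^{-}$ form an \emph{isotropic} Darboux pair and hence span an applicable Legendre map by the characterisation recalled at the start of Section~\ref{sec:appl-legendre-maps}.

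First I would record the data. Since $f=s^{+}\oplus s^{-}$ is an applicable Legendre map spanned by an isotropic Darboux pair, $s=s^{+}\sqcup s^{-}:\set{0,1}\times\dom\to\QQ$ is isothermic with Moutard lift $\mu=\mu^{+}\sqcup\mu^{-}$, its edge-labelling $m$ is $\infty$ on every vertical edge $\set{0,1}\times\set{i}$, and $(\mu^{+}_{i},\mu^{-}_{i})=0$, so $f_{i}=\spn{\mu^{+}_{i},\mu^{-}_{i}}$ is a totally null $2$-plane. By definition a Darboux transform $\hat s=\hat s^{+}\sqcup\hat s^{-}$ of $s$ is a \KM\ transform of the isothermic net $s$, so $\hat s$ is itself isothermic on $\set{0,1}\times\dom$; in particular its two sheets $\hat s^{\pm}:\dom\to\QQ$ are isothermic nets forming a \KM\ pair, hence a Darboux pair, with some constant parameter. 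It remains only to show that parameter is $\infty$, i.e.\ $(\hat s^{+}_{i},\hat s^{-}_{i})=0$ for every $i$; granting this, $\hat f=\hat s^{+}\oplus\hat s^{-}$ is an applicable Legendre map.

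For the parameter, I would use \cref{th:22}: $\hat s$ is $\Gamma^{s}(m)$-parallel on $\set{0,1}\times\dom$. On a vertical edge, where $m_{(0,i)(1,i)}=\infty$, formula \eqref{eq:41} gives $\hat s^{-}_{i}=\exp(m\,\eta_{(1,i)(0,i)})\,\hat s^{+}_{i}$ with $\eta_{(1,i)(0,i)}=\mu^{-}_{i}\wedge\mu^{+}_{i}$. Because $\spn{\mu^{+}_{i},\mu^{-}_{i}}$ is totally null, the endomorphism $A:=\mu^{-}_{i}\wedge\mu^{+}_{i}$ of \eqref{eq:14} has image inside $\spn{\mu^{+}_{i},\mu^{-}_{i}}$, on which every inner product appearing in $A$ vanishes, so $A^{2}=0$ and $\exp(mA)=\Id+mA$. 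Picking any lift $\hat\mu^{+}\in\Gamma\hat s^{+}$, a one-line computation with \eqref{eq:14} then gives $(\hat\mu^{+}_{i},(\Id+mA)\hat\mu^{+}_{i})=m\,(\hat\mu^{+}_{i},A\hat\mu^{+}_{i})=0$, since the two resulting terms cancel by symmetry of $(\cdot,\cdot)$; hence $\hat s^{+}_{i}\perp\hat s^{-}_{i}$. Alternatively one can invoke multidimensional consistency of Darboux transforms to build the isothermic net on $\set{0,1}^{2}\times\dom$ and read the label of the $\hat s^{+}_{i}\hat s^{-}_{i}$ edge off the opposite edge $s^{+}_{i}s^{-}_{i}$.

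I expect the main obstacle to be precisely this last point — that the new vertical edge-label is $\infty$ — and the mechanism making it work is that a bivector supported on a null $2$-plane acts nilpotently, so the transition $\exp(m\eta)$ across the isotropic edge is unipotent and preserves the isotropy relation $\hat s^{+}_{i}\perp\hat s^{-}_{i}$. Everything else is bookkeeping: $\hat f$ is a line congruence because the Moutard equation holds on vertical quadrilaterals (exactly as in the proof of \cref{th:4}); it takes values in $\cZ$ by the computation above; its applicability data, and the gauge class $[\hat\eta]=[\hat\eta^{+}]=[\hat\eta^{-}]$, are those furnished by \cref{th:4} for the \KM\ pair $\hat s^{\pm}$; and the regularity hypotheses of \cref{th:20} hold for generic choice of the initial value of the Darboux transform, which by \cref{th:22} ranges over an open subset of $\QQ$.
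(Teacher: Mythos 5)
Your proposal is correct, and its overall strategy coincides with the paper's: everything reduces to showing that the vertical edge-label of $\hat{s}$ is again $\infty$, so that $\hat{s}^{\pm}$ form an isotropic Darboux pair and hence span an applicable Legendre map. The difference is in how that single nontrivial step is justified. The paper's proof is one sentence: a Darboux transform has the same edge-labelling as the original net — this is the opposite-edge property \eqref{eq:43} of the Moutard lift applied to the vertical quadrilaterals of the isothermic net $s\sqcup\hat{s}$ on $\set{0,1}\times\set{0,1}\times\dom$, where the edge $s^{+}_{i}s^{-}_{i}$ (label $\infty$) is opposite the edge $\hat{s}^{+}_{i}\hat{s}^{-}_{i}$. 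That is exactly the route you relegate to an ``alternative''. Your primary argument instead verifies the isotropy $\hat{s}^{+}_{i}\perp\hat{s}^{-}_{i}$ directly from the flat connection: by \cref{th:22} and \eqref{eq:41}, $\hat{s}^{-}_{i}=\exp(mA)\hat{s}^{+}_{i}$ with $A=\mu^{-}_{i}\wedge\mu^{+}_{i}$ supported on the null $2$-plane $f_{i}$, so $A^{2}=0$ and $(\hat\mu^{+}_{i},(\Id+mA)\hat\mu^{+}_{i})=m(\hat\mu^{+}_{i},A\hat\mu^{+}_{i})=0$ by skew-adjointness of $A$. This computation is correct and self-contained (it avoids the Moutard-cube consistency underlying the edge-labelling claim), at the mild cost of checking the hypotheses of \cref{th:22} — $m$ finite and distinct from all $m_{ij}$ is assumed, and $\hat{s}$ nowhere orthogonal to $s$ is automatic since the Darboux parameter $m=1/(\mu,\hat\mu)$ is finite. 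Either route is fine; the paper's is shorter, yours makes the mechanism (unipotency of the transition across an isotropic edge) explicit.
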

\begin{proof}
  $s$ and $\hat{s}$ have the same edge labelling so that, in
  particular, the label on the vertical edges of $\hat{s}$
  is $\infty$.  It follows at once that $\hat{s}^{\pm}$ are
  an isotropic Darboux pair and so span an applicable
  Legendre map.
\end{proof}

We give another characterisation of the Darboux transform
$\hat{f}$ which replicates the formulation of
\cite[\S2.4.2]{burstall_polynomial_2018} in the smooth case.
It also shows that $\hat{f}$ is independent of the choice of
isothermic sphere congruences (see \cref{th:53}):
\begin{proposition}
  \label{th:36}
  Let $f:\dom\to\cZ$ be an applicable Legendre map and $s^{+}<f$
  an isothermic net.  Let $\hat{s}^+$ be a Darboux transform
  of $s^+$ with parameter $m\in\R^{\times}$ and set
  $\hat{f}:=\hat{s}^+\oplus (f\cap (\hat{s}^{+})^{\perp})$.

  Then $\hat{f}:\dom\to\cZ$ is a Darboux transform of $f$ with
  parameter $m$ and all such arise this way.
\end{proposition}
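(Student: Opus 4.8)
The plan is to realise $\hat f:=\hat s^+\oplus(f\cap(\hat s^+)^\perp)$ as a Darboux transform in the sense of \cref{th:35}, by stacking $f$ and extending $\hat s^+$ to the stacked domain. First I would use \cref{th:4} (and \cref{th:53}) to complete the given isothermic $s^+<f$ to an isotropic Darboux pair $s^+,s^-$ spanning $f$, and form $s:=s^+\sqcup s^-\colon\set{0,1}\times\dom\to\QQ$, which is isothermic as in Section~\ref{sec:darb-km-transf}. Writing $\mu^\pm\in\Gamma s^\pm$ for the Moutard lifts, the vertical edge-label is $\infty$, i.e.\ $(\mu^+_i,\mu^-_i)\equiv 0$, and the remaining edge-labels of $s$ are those of $s^\pm$; since $m\in\R^\times$ is distinct from all of them, \cref{th:22} applies: fixing a vertex $i_0$, there is a unique Darboux transform $\hat s$ of $s$ with parameter $m$ through $\hat s^+_{i_0}$. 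Restricted to $\set{0}\times\dom$ it is a Darboux transform of $s^+$ with parameter $m$ agreeing with $\hat s^+$ at $i_0$, hence equals $\hat s^+$; set $\hat s^-:=\hat s\restr{\set{1}\times\dom}$. Then \cref{th:35}, applied to $s=s^+\sqcup s^-$ and $\hat s=\hat s^+\sqcup\hat s^-$, already tells us that $\hat s^+\oplus\hat s^-$ is a Darboux transform of $f$ with parameter $m$.

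The real work is to identify $\hat s^+\oplus\hat s^-$ with $\hat f$ pointwise. I would normalise the Moutard lift $\hat\mu^+$ of $\hat s^+$ by $(\mu^+,\hat\mu^+)\equiv 1/m$ and apply the conformal Moutard equation \eqref{eq:38} to $\hat s$ on the quadrilateral at $i$ with cyclic vertices $s^+_i,s^-_i,\hat s^-_i,\hat s^+_i$:
\begin{equation*}
  \hat\mu^-_i-\mu^+_i=c_i(\hat\mu^+_i-\mu^-_i),\qquad
  c_i=\frac{(\mu^+_i,\hat\mu^+_i-\mu^-_i)}{(\hat\mu^+_i,\mu^-_i)}.
\end{equation*}
Using $(\mu^+_i,\mu^-_i)=0$ and $(\mu^+_i,\hat\mu^+_i)=1/m$ this simplifies to $c_i=1/\bigl(m(\hat\mu^+_i,\mu^-_i)\bigr)$, the denominator being non-zero because \cref{th:34}(3) forces the diagonal $s^-_i\hat s^+_i$ of this quadrilateral to be non-isotropic. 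On the one hand, the Moutard equation gives $\hat s^+_i\oplus\hat s^-_i=\spn{\hat\mu^+_i,\hat\mu^-_i}=\spn{\hat\mu^+_i,\mu^+_i-c_i\mu^-_i}$. On the other hand, $(\mu^+_i,\hat\mu^+_i)=1/m\neq 0$ shows $\hat s^+_i\not\subseteq f_i$ and $\hat s^+_i\not\perp f_i$, so $f_i\cap(\hat s^+_i)^\perp$ is a line, and $(\mu^+_i-c_i\mu^-_i,\hat\mu^+_i)=1/m-c_i(\hat\mu^+_i,\mu^-_i)=0$ shows it is spanned by $\mu^+_i-c_i\mu^-_i$. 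Hence $\hat s^+_i\oplus\hat s^-_i=\hat s^+_i\oplus(f_i\cap(\hat s^+_i)^\perp)=\hat f_i$ for every $i$, so $\hat f=\hat s^+\oplus\hat s^-$ is a Darboux transform of $f$ with parameter $m$.

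For ``all such arise this way'', I would run the correspondence backwards: by \cref{th:22}, restriction to $\set{0}\times\dom$ is a bijection from Darboux transforms of $s=s^+\sqcup s^-$ with parameter $m$ onto Darboux transforms of $s^+$ with parameter $m$, and composing with \cref{th:35} this matches Darboux transforms of $f$ with parameter $m$ to Darboux transforms of $s^+$ with parameter $m$; the computation of the previous paragraph identifies this correspondence with $\hat s^+\mapsto\hat s^+\oplus(f\cap(\hat s^+)^\perp)$, which is exactly the assertion. The step I expect to be the main obstacle is the pointwise identification: getting the combinatorics of the Moutard quadrilateral right and extracting the precise constant $c_i$ from \eqref{eq:38}. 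By contrast, the existence and uniqueness of the extension $\hat s$ should be routine, being a direct application of \cref{th:22} together with the multidimensional consistency of the \KM\ transformation recorded after \cref{cor:K-Mconsistency}.
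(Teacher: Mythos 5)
Your proposal is correct and follows essentially the same route as the paper: complete $s^+$ to an isotropic Darboux pair spanning $f$, extend $\hat{s}^+$ to a Darboux transform $\hat{s}=\hat{s}^+\sqcup\hat{s}^-$ of $s^+\sqcup s^-$ via the uniqueness in \cref{th:22}, and then use the Moutard equation on the vertical quadrilateral to identify $\hat{s}^+\oplus\hat{s}^-$ with $\hat{s}^+\oplus(f\cap(\hat{s}^+)^{\perp})$. The only difference is cosmetic: where you compute the coefficient $c_i$ explicitly and verify the orthogonality by hand, the paper argues synthetically that coplanarity forces $f_i$ and $(\hat{s}^+\oplus\hat{s}^-)_i$ to intersect, and the intersection point is automatically orthogonal to $\hat{s}^+_i$ because it lies on the null plane $(\hat{s}^+\oplus\hat{s}^-)_i$.
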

\begin{proof}
  Choose a Darboux transform $s^-$ of $s^+$ so that
  $f=s^+\oplus s^-$ and let $s=s^+\sqcup s^-$.  Fix some
  $i_0\in\dom$ and let $\hat{s}$ be the Darboux transform of
  $s$ with parameter $m$ which coincides with
  $\hat{s}^+_{i_{0}}$ at $(0,i_0)$.  Then the uniqueness
  assertion in \cref{th:22} tells us that
  $\hat{s}\restr{\set{0}\times \Sigma}=\hat{s}^+$.  Set
  $\hat{s}^-=\hat{s}\restr{\set{1}\times\dom}$.  The Moutard
  equation relating $s^{\pm}_i$ and $\hat{s}^{\pm}_i$ tells
  us that $f_i$ and $(\hat{s}^+\oplus \hat{s}^-)_i$ are
  coplanar and so intersect, necessarily at
  $f\cap (\hat{s}^{+})^{\perp}$.  It follows at once that
  $\hat{f}$ and the Darboux transform
  $\hat{s}^{+}\oplus \hat{s}^-$ coincide.
\end{proof}

\subsubsection{Calapso transformations}
\label{sec:calapso-transf}

\begin{proposition}
  \label{th:38}
  Let $f=s^+\oplus s^-:\dom\to\cZ$ be an applicable Legendre
  map spanned by an isotropic Darboux pair of isothermic
  nets and set $s:s^+\sqcup s^-:\set{0,1}\times\dom\to\QQ$.

  For $t\in\R$, let $s(t)=s^+(t)\sqcup s^-(t)$ be the
  Calapso transform of $s$ and set $f(t):=s^{+}(t)\oplus
  s^-(t)$.

  Then $f(t):\dom\to\cZ$ is an applicable Legendre map that
  we call the \emph{Calapso transform of $f$}.
\end{proposition}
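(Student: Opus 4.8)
The plan is to replicate, for the Calapso transform, the argument behind \cref{th:14} and \cref{th:35}: transfer the structure from the isothermic net $s=s^{+}\sqcup s^{-}$ on $\set{0,1}\times\dom$ to its Calapso transform $s(t)$, and recognise $f(t)$ from the two horizontal slices of $s(t)$. First I would use the key observation of Section~\ref{sec:appl-legendre-maps} (that is, \cref{th:4} together with Section~\ref{sec:darb-km-transf}): since $f$ is applicable it is spanned by an isotropic Darboux pair $(s^{+},s^{-})$ of isothermic nets, so $s=s^{+}\sqcup s^{-}$ is an isothermic net on the lattice $\set{0,1}\times\dom$, which is simply connected by \cref{th:51}(b); moreover its edge-labelling $m$ equals $\infty$ on every vertical edge. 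This is exactly the hypothesis of \cref{th:23}, which I would invoke to conclude that the Calapso transform $s(t)=T(t)s$ is isothermic with Moutard lift $\mu(t):=T(t)\mu$ (where $\mu=\mu^{+}\sqcup\mu^{-}$) and edge-labelling $m(t)_{ij}=m_{ij}-t$, still read as $\infty$ on vertical edges.

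Next I would split $s(t)$. Since $T(t)$ acts pointwise, $s^{+}(t):=s(t)\restr{\set{0}\times\dom}$ and $s^{-}(t):=s(t)\restr{\set{1}\times\dom}$ are isothermic nets on $\dom$: the Moutard lift $\mu(t)$ restricted to a horizontal copy satisfies the Moutard equation on horizontal quadrilaterals, so \cref{thm:isothermicMoutard} applies. On a vertical edge the label of $s(t)$ is still $\infty$, i.e.\ $(\mu(t)_{(0,i)},\mu(t)_{(1,i)})=0$, so $s^{+}(t)_{i}$ and $s^{-}(t)_{i}$ are orthogonal null lines; hence $f(t)_{i}=s^{+}(t)_{i}\oplus s^{-}(t)_{i}$ is a null $2$-plane, i.e.\ a point of $\cZ$, while the Moutard equation on vertical quadrilaterals (as in the closing lines of the proof of \cref{th:4}) shows $f(t)$ is a line congruence. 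Thus $(s^{+}(t),s^{-}(t))$ is an isotropic Darboux pair of isothermic nets, and one last appeal to the key observation of Section~\ref{sec:appl-legendre-maps} identifies $f(t)=s^{+}(t)\oplus s^{-}(t)$ as an applicable Legendre map.

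The only delicate point --- and the closest thing to an obstacle --- is regularity: one needs $s(t)$ to satisfy \cref{th:34} and $f(t)$ to satisfy \cref{th:20}. The edge-label conditions persist for all finite $t$, because $m(t)_{ij}-m(t)_{i\ell}=m_{ij}-m_{i\ell}\neq 0$ and the vertical label $\infty$ stays distinct from the finite horizontal ones; the remaining open conditions (distinctness and non-collinearity of vertices, non-isotropy of the diagonals) can fail only on a finite subset of $t$-values per quadrilateral, so $f(t)$ is an applicable Legendre map wherever it is regular. No new computation is required: the real content sits in \cref{th:23}.
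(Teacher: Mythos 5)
Your proposal is correct and follows essentially the same route as the paper: the paper's proof simply invokes \cref{th:23} to note that the edge label on vertical edges of $s(t)$ remains $\infty$, so that $s^{\pm}(t)$ are again an isotropic Darboux pair and hence span an applicable Legendre map. Your additional remarks on splitting the Moutard lift and on regularity make explicit what the paper leaves implicit (regularity being assumed ``without further comment'' throughout), but add no new idea.
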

\begin{proof}
  According to \cref{th:23}, the edge label on vertical
  edges of $s(t)$ is $\infty$ so that $s^{\pm}(t)$ are again
  an isotropic Darboux pair and so their span $f(t)$ is
  an applicable Legendre map.
\end{proof}

Again, the construction is independent of the choice of
isothermic nets in $\QQ$.  To see this and to make
contact with the discussion in
\cite[\S3]{burstall_discrete_2018}, we observe that
\cref{th:21}, applied to the vertical quadrilaterals of $s$ yields:
\begin{lemma}[{c.f.\ \cite[Corollary 3.8]{burstall_discrete_2018}}]
  \label{th:37}
  Let $(s^{\pm},\eta^{\pm})$ be an isotropic Darboux pair of
  isothermic nets.  Let $\Gamma^{\pm}(t)$ be the
  corresponding families of flat connections and
  $\tau\in\Gamma s^+\wedge s^-$ such that
  $\eta^{-}=\eta^++\d\tau$.  Then, for all $t\in\R$,
  \begin{equation}
    \label{eq:54}
    (\exp t\tau)\cdot\Gamma^+(t)=\Gamma^-(t).
  \end{equation}
\end{lemma}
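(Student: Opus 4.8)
The plan is to reduce the identity \eqref{eq:54} to the flatness statement of \cref{th:21} applied to the vertical quadrilaterals of $s=s^+\sqcup s^-$. First I would recall that, since $(s^\pm,\eta^\pm)$ is an isotropic Darboux pair, the combined net $s$ is isothermic with edge-labelling $m$ that is finite on horizontal edges (those coming from $\dom$) and equals $\infty$ on vertical edges $\set{0,1}\times\set i$; the corresponding $1$-form $\eta$ restricts to $\eta^\pm$ on the two copies of $\dom$, and on the vertical edge at $i$ we have $\eta_{(1,i)(0,i)}=\tau_i$ by the very definition of $\tau$ (cf.\ the discussion preceding \cref{th:2}). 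Consequently the family of flat connections $\Gamma^s(t)$ provided by \cref{th:21} restricts to $\Gamma^\pm(t)$ on the two horizontal copies of $\dom$, while on a vertical edge it is $\Gamma^s(t)_{(1,i)(0,i)}=\exp(t\eta_{(1,i)(0,i)})=\exp(t\tau_i)$ by \eqref{eq:41}.

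Next I would exploit flatness of $\Gamma^s(t)$ on a vertical quadrilateral $\set{0,1}\times\set{i,j}$. Writing out \eqref{eq:42} for the four vertices $(0,i),(0,j),(1,j),(1,i)$ gives
\begin{equation*}
  \Gamma^s(t)_{(1,j)(0,j)}\,\Gamma^+(t)_{ji}
  =\Gamma^-(t)_{ji}\,\Gamma^s(t)_{(1,i)(0,i)},
\end{equation*}
that is $\exp(t\tau_j)\,\Gamma^+(t)_{ji}=\Gamma^-(t)_{ji}\,\exp(t\tau_i)$. Rearranging, $\Gamma^-(t)_{ji}=\exp(t\tau_j)\,\Gamma^+(t)_{ji}\,\exp(-t\tau_i)=\bigl((\exp t\tau)\cdot\Gamma^+(t)\bigr)_{ji}$, which is exactly \eqref{eq:54}. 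So the identity is really just the vertical-quadrilateral component of flatness, repackaged via the gauge-action formula.

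The one point requiring genuine care — and the main obstacle — is matching conventions so that the vertical edge-form of $\eta$ is literally $\tau$ and the direction of the vertical edge in \eqref{eq:42} is the one producing $\exp(\pm t\tau)$ with the correct signs; this is where a sign slip would most easily creep in. I would also need to confirm that the regularity hypotheses of \cref{th:34} hold for the vertical quadrilaterals of $s$, so that \cref{th:21} genuinely applies: the isotropic Darboux pair condition forces $s^\pm$ to be pointwise non-orthogonal (so $m=\infty$ is the vertical label and diagonals $s^+_is^-_j$, $s^+_js^-_i$ are non-isotropic), which is precisely what is needed. Once these bookkeeping points are settled, the proof is the two-line computation above, so I would present it tersely: invoke \cref{th:21} for $s$, read off the restrictions of $\Gamma^s(t)$ and of $\eta$ on horizontal and vertical edges, and expand the flatness relation on a vertical face.
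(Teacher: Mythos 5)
Your proof is correct and is precisely the paper's argument: the lemma is stated there as the result of applying \cref{th:21} to the vertical quadrilaterals of $s=s^+\sqcup s^-$, reading off $\exp(t\tau_i)$ on vertical edges from \eqref{eq:41} and rearranging the flatness relation \eqref{eq:42} into the gauge-action formula. Your bookkeeping on signs, on $\tau_i=\eta_{(1,i)(0,i)}$, and on the regularity needed for \cref{th:21} to apply is all consistent with the paper's conventions.
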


With this in hand, we let $T(t)=T^+(t)\sqcup
T^-(t):\set{0,1}\times\dom\to\Ortho(p+1,q+1)$ trivialise
$\Gamma^s(t)$ and note that, by \cref{th:37}, we may take
$T^-(t)=T^+(t)\exp(-t\tau)$.  Thus
\begin{equation*}
  f(t)=T^+(t)s^+\oplus T^-(t)s^{-}=T^{+}(t)(s^+\oplus \exp -t
  \tau s^{-})=T^+(t)(s^+\oplus s^-)=T^+(t)f,
\end{equation*}
since $\exp -t\tau s^-=s^-$.  We conclude that our Calapso
transforms coincide with those of
\cite[Theorem~3.9]{burstall_discrete_2018}:
\begin{proposition}
  \label{th:39}
  Let $f:\dom\to\cZ$ be an applicable Legendre map and
  $s^+<f$ an isothermic net with flat connections
  $\Gamma^+(t)$ trivialised by
  $T^{+}(t):\dom\to\Ortho(p+1,q+1)$.

  Then the Calapso transform $f(t)$ of $f$ is given by
  \begin{equation*}
    f(t)=T^+(t)f.
  \end{equation*}
\end{proposition}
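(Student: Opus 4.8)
The plan is to transport everything to the doubled domain and use the corresponding result for isothermic nets, \cref{th:23}, together with the compatibility of flat connections recorded in \cref{th:37}. First I would use \cref{th:4} (and the discussion opening Section~\ref{sec:appl-legendre-maps}) to choose a Darboux transform $s^{-}$ of $s^{+}$ with $f=s^{+}\oplus s^{-}$, so that $s:=s^{+}\sqcup s^{-}:\set{0,1}\times\dom\to\QQ$ is isothermic, its edge-labelling restricts to that of $s^{\pm}$ on the two horizontal copies of $\dom$ and equals $\infty$ on every vertical edge. Writing $\tau\in\Gamma(s^{+}\wedge s^{-})$ for the section with $\eta^{-}=\eta^{+}+\d\tau$ (\cref{th:2}), so that $\tau_{i}=\eta_{(1,i)(0,i)}$, the flat connection $\Gamma^{s}(t)$ then restricts to $\Gamma^{+}(t)$ on the copy $\set{0}\times\dom$, to $\Gamma^{-}(t)$ on the copy $\set{1}\times\dom$, and, by \eqref{eq:41}, equals $\exp(t\tau_{i})$ on the vertical edge over $i$.

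Next I would promote the given trivialisation $T^{+}(t)$ of $\Gamma^{+}(t)$ on $\dom$ to a trivialisation $T(t)$ of $\Gamma^{s}(t)$ on $\set{0,1}\times\dom$ by setting $T(t)\restr{\set{0}\times\dom}=T^{+}(t)$ and $T(t)\restr{\set{1}\times\dom}=T^{+}(t)\exp(-t\tau)$. Checking that this is indeed a trivialisation is where \cref{th:37} does the work: on a horizontal edge $ij$ at level $1$ the required identity reads $\exp(t\tau_{j})\Gamma^{+}(t)_{ji}\exp(-t\tau_{i})=\Gamma^{-}(t)_{ji}$, which is exactly \eqref{eq:54}, while on a vertical edge it reduces to $\exp(t\tau_{i})=\Gamma^{s}(t)_{(1,i)(0,i)}$, the description of $\Gamma^{s}(t)$ just obtained. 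By \cref{th:23} the Calapso transform of $s$ is then $s(t)=T(t)s$, so that, by the definition of the Calapso transform of $f$ in \cref{th:38},
\begin{equation*}
  f(t)=s^{+}(t)\oplus s^{-}(t)=T^{+}(t)s^{+}\oplus T^{+}(t)\exp(-t\tau)s^{-}.
\end{equation*}

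Finally I would observe that $\exp(-t\tau)$ fixes $s^{-}$. If $a_{i}\in s^{+}_{i}$ and $b_{i}\in s^{-}_{i}$ are non-zero, then $\tau_{i}=a_{i}\wedge b_{i}$, and since the pair is isotropic, $s^{\pm}_{i}$ are null and mutually orthogonal, so \eqref{eq:14} gives $\tau_{i}(b_{i})=(a_{i},b_{i})b_{i}-(b_{i},b_{i})a_{i}=0$; hence $\exp(-t\tau_{i})\restr{s^{-}_{i}}=\Id$ and $\exp(-t\tau)s^{-}=s^{-}$ as a line subbundle. Therefore $f(t)=T^{+}(t)(s^{+}\oplus s^{-})=T^{+}(t)f$, and since $T^{+}(t)$ and $f(t)$ are each only determined up to a constant element of $\Ortho(p+1,q+1)$, this is precisely the asserted identity.

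The step I expect to be the only real obstacle is organisational rather than computational: one must carefully separate the horizontal and vertical edges of the doubled net and recognise that the single nontrivial ingredient needed to assemble $T(t)$ from $T^{+}(t)$ — the intertwining $(\exp t\tau)\cdot\Gamma^{+}(t)=\Gamma^{-}(t)$ — is exactly \cref{th:37}. Once that identification is made, and once one notes that $\exp(t\tau)$ acts trivially on both $s^{\pm}$, the statement follows immediately from \cref{th:23}. A secondary point worth a sentence is the existence of the Darboux transform $s^{-}<f$ completing the given $s^{+}$ to an isotropic Darboux pair spanning $f$, which is guaranteed by \cref{th:4} together with \cref{th:53}.
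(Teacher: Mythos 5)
Your argument is correct and is essentially the paper's own: the paper likewise extends $T^{+}(t)$ to a trivialisation of $\Gamma^{s}(t)$ on the doubled domain by taking $T^{-}(t)=T^{+}(t)\exp(-t\tau)$ via \cref{th:37}, and then uses $\exp(-t\tau)s^{-}=s^{-}$ to conclude $f(t)=T^{+}(t)f$. You merely spell out two steps the paper leaves implicit (the verification that the extended $T(t)$ is a trivialisation, and the computation with \eqref{eq:14} showing $\tau$ annihilates $s^{-}$), both of which check out.
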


\subsection{Edge-labelling of an applicable Legendre
  map}
\label{sec:fact-funct-an}

The edge labelling of an isothermic sphere congruence $s<f$
is, in fact, an invariant of the applicable Legendre map
$f$ and can be computed from any $\eta\in[\eta]$.
\begin{proposition}
  \label{th:17}
  Let $(f,\eta):\dom\to\cZ=\cZ^{p,q}$ be applicable and, for
  each edge $ij$, choose $\sigma_i\in f_i$, $\sigma_j\in
  f_j$ such that $\eta_{ji}=\sigma_j\wedge\sigma_i$ and set
  $m_{ij}^{\eta}=1/(\sigma_i,\sigma_j)\in\R^{\times}\cup\set{\infty}$
  to get a well-defined function $m^{\eta}$ on edges.

  Then, for any $\tau\in\Gamma\Wedge^2f$,
  \begin{equation*}
    m^{\eta+\d\tau}=m^{\eta}.
  \end{equation*}
\end{proposition}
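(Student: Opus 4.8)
The plan is to argue edge by edge, using that the lines of a Legendre map, lying in $\QQ$, correspond to totally isotropic $2$-planes. So fix an edge $ij$ and let $e\in\R^{p+1,q+1}$ span the intersection line $s_{ij}=f_i\cap f_j$, which is $1$-dimensional by the first-order regularity of \cref{th:20}. The basic fact I will use repeatedly is that $e$ is orthogonal to all of $f_i$ and all of $f_j$: both are totally isotropic $2$-planes containing $e$, so $(e,e)=(e,\sigma_i)=(e,\sigma_j)=0$. The non-degeneracy condition defining applicability gives $\sigma_i,\sigma_j\notin s_{ij}$, so $\{e,\sigma_i\}$ is a basis of $f_i$ and $\{e,\sigma_j\}$ a basis of $f_j$ (and $\sigma_j\notin f_i$, whence $\{e,\sigma_i,\sigma_j\}$ is a basis of $f_{ij}$); in particular $\Wedge^2f_i=\R\,(e\wedge\sigma_i)$ and $\Wedge^2f_j=\R\,(e\wedge\sigma_j)$. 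Before touching the gauge term this already settles well-definedness of $m^\eta$: the $2$-plane $\abrack{\eta_{ji}}$ is intrinsic to $\eta_{ji}$ (a nonzero decomposable bivector in the $3$-space $\Wedge^2f_{ij}$), non-degeneracy forces $\sigma_i$ to span $\abrack{\eta_{ji}}\cap f_i$ and $\sigma_j$ to span $\abrack{\eta_{ji}}\cap f_j$, and the residual freedom $\sigma_i\mapsto\lambda\sigma_i$, $\sigma_j\mapsto\lambda^{-1}\sigma_j$ (forced by $\sigma_j\wedge\sigma_i=\eta_{ji}$) leaves $(\sigma_i,\sigma_j)$ fixed; symmetry $m^\eta_{ij}=m^\eta_{ji}$ follows from $\eta_{ij}=-\eta_{ji}=\sigma_i\wedge\sigma_j$.

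Now for the gauge term. On our edge write $\tau_i=p_i\,e\wedge\sigma_i$ and $\tau_j=p_j\,e\wedge\sigma_j$ with $p_i,p_j\in\R$, using the descriptions of $\Wedge^2f_i,\Wedge^2f_j$ above. Since $(\d\tau)_{ji}=\tau_j-\tau_i$ and $e\wedge e=0$, we obtain the identity
\[
  (\eta+\d\tau)_{ji}=\sigma_j\wedge\sigma_i+p_j\,e\wedge\sigma_j-p_i\,e\wedge\sigma_i=(\sigma_j-p_ie)\wedge(\sigma_i-p_je).
\]
Hence $\sigma_i':=\sigma_i-p_je\in f_i$ and $\sigma_j':=\sigma_j-p_ie\in f_j$ are representatives of $(\eta+\d\tau)_{ji}$ in the sense of the proposition — they are nonzero and lie off $s_{ij}$ since $\sigma_i,\sigma_j\notin\spn e$, and $\sigma_j'\wedge\sigma_i'=(\eta+\d\tau)_{ji}$ by the display. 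Using the orthogonality of $e$ again,
\[
  (\sigma_i',\sigma_j')=(\sigma_i,\sigma_j)-p_i(\sigma_i,e)-p_j(e,\sigma_j)+p_ip_j(e,e)=(\sigma_i,\sigma_j),
\]
so $m^{\eta+\d\tau}_{ij}=1/(\sigma_i',\sigma_j')=1/(\sigma_i,\sigma_j)=m^\eta_{ij}$, the two sides being $\infty$ simultaneously. As $ij$ was arbitrary, $m^{\eta+\d\tau}=m^\eta$.

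I do not expect a real obstacle: the content is simply that $\d\tau$ perturbs the representatives only along the isotropic direction $e\in s_{ij}$, which pairs trivially with $f_i$ and $f_j$, so the pairing $(\sigma_i,\sigma_j)$ — hence $m^\eta$ — cannot move; the one place to be careful is getting the cross-labelled substitution $\sigma_i'=\sigma_i-p_je$, $\sigma_j'=\sigma_j-p_ie$ right in the identity above. If one prefers to avoid choosing representatives at all, a short computation with \eqref{eq:14} and the isotropy of $\sigma_i,\sigma_j$ gives $\eta_{ji}^3=(\sigma_i,\sigma_j)^2\,\eta_{ji}$ as endomorphisms of $\R^{p+1,q+1}$, exhibiting $(m^\eta_{ij})^{-2}$ directly as an invariant of $\eta_{ji}$; together with the identity above this yields an alternative route to the proposition (up to the sign of $m^\eta$, which the representative computation pins down).
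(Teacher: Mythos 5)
Your proof is correct and follows essentially the same route as the paper: write $\tau_i,\tau_j$ as multiples of $\sigma_i,\sigma_j$ wedged with a spanning vector of $s_{ij}$, refactor $(\eta+\d\tau)_{ji}$ as a wedge of representatives shifted along that isotropic direction, and use that $s_{ij}$ is orthogonal to $f_i+f_j$ to see the pairing is unchanged. The extra checks you include (well-definedness of $m^\eta$ via $\abrack{\eta_{ji}}\cap\P(f_i)$, and the invariant identity $\eta_{ji}^3=(\sigma_i,\sigma_j)^2\eta_{ji}$) are correct but not needed for the paper's argument.
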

\begin{proof}
  On an edge $ij$, there are $\lambda_j,\lambda_j\in\R$ and
  $\sigma_{ij}\in s_{ij}$ such that
  \begin{equation*}
    \tau_i=\lambda_i\sigma_i\wedge\sigma_{ij},\qquad
    \tau_j=\lambda_j\sigma_j\wedge\sigma_{ij}.
  \end{equation*}
  Then a simple calculation gives
  \begin{equation*}
    \eta_{ji}+\d\tau_{ji}=(\sigma_j+\lambda_i\sigma_{ij})\wedge(\sigma_i+\lambda_j\sigma_{ij}).
  \end{equation*}
  Now $s_{ij}$ is orthogonal to $f_i+f_j$ so that
  \begin{equation*}
    (\sigma_i+\lambda_j\sigma_{ij},\sigma_j+\lambda_i\sigma_{ij})=(\sigma_i,\sigma_{j}),
  \end{equation*}
  whence the result.
\end{proof}
In particular, all isothermic sphere congruences $s<f$ share
the same edge-labelling.


\section{\texorpdfstring{$\Omega$}{Omega}-nets}
\label{sec:discr-texorpdfstr-su}

\subsection{Lie sphere geometry}
\label{sec:lie-sphere-geometry}

For the rest of the paper, we restrict attention to the case
$(p,q)=(3,1)$ which is the setting for Lie sphere geometry.
Here the Lie quadric $\QQ=\QQ^{3,1}$ parametrises oriented
$2$-spheres in $S^3=\QQ^{3,0}$ and two points of $\QQ$ are
orthogonal exactly when the corresponding $2$-spheres are in
oriented contact.  It follows at once that $\cZ=\cZ^{3,1}$
parametrises oriented contact elements in $S^3$.  All this
requires the choice of a \emph{point sphere complex}, that
is $\p\in\R^{4,2}$ with $(\p,\p)=-1$.  Then
$S^3=\P(\cL\cap\p^{\perp})$ and the $2$-sphere corresponding
to $q\in\QQ$ is $\P(\cL\cap\p^{\perp}\cap q^{\perp})$.

We get a more practical take on these matters via
stereoprojection onto $\R^{3,1}$: thus choose
$\fo,\q\in\cL\cap\p^{\perp}$ with $(\fo,\q)=-1$ and, as
usual, set $\R^{3,1}=\spn{\fo,\q}^{\perp}$.  Then
$\p\in\R^{3,1}$ and we have a further orthogonal
decomposition:
\begin{equation*}
  \R^{3,1}=\R^3\oplus\spn{\p}.
\end{equation*}
Now a point $z=c+r\p\in\R^{3,1}$, with $c\in\R^3$,
parametrises the oriented $2$-sphere in $\R^{3}$ with centre
$c$ and (signed) radius $r$.  This is the intersection of
$\R^3$ with the affine light-cone
$\set{v\in\R^{3,1}\st (v-z,v-z)=0}$ centred at $z$.  Again,
a contact element $(x,n)\in\R^3\times S^2$ of $\R^3$
corresponds to the affine null line through $x$ in the
direction $n+\p$.  This is the Laguerre picture of Lie
sphere geometry: see \cite{cecil_lie_2008} for a more
detailed discussion.

Now contemplate a Legendre map $f:\dom\to\cZ$ and
choose\footnote{This is a countable number of open conditions.}
$\p,\q$ so that:
\begin{compactenum}
\item\label{item:1} $f_i^{\perp}\cap\spn{\p,\q}=\set{0}$,
  for all $i\in\dom$;
\item\label{item:2} The curvature sphere
  $s_{ij}=f_i\cap f_j$ lies in neither $\p^{\perp}$ nor
  $\q^{\perp}$, for all edges $ij$.
\end{compactenum}
In view of condition \ref{item:1}, we have sections $y,\t$
of $f$ with
\begin{align*}
  (y,\q)&=-1&(y,\p)&=0\\(\t,\q)&=0&(\t,\p)&=-1
\end{align*}
Now condition \ref{item:2} tells us that $y_i,\t_{i}$ do not lie
in any $s_{ij}$ so that $y,\t$ both span $Q$-nets which
are regular by \cref{th:3}.

Stereoprojection onto $\R^{3,1}$ now yields
$x=\pi y:\dom\to\R^3$ and $n+\p=\pi\t$ with $n:\dom\to S^2$.
On each edge $ij$, we have a \emph{principal curvature}
$\kappa_{ij}\in\R^{\times}$ with
\begin{equation}
  \label{eq:31}
  \kappa_{ij}x_i+(n_{i}+\p)=\kappa_{ij}x_j+(n_j+\p),
\end{equation}
this common value being the stereoprojection of $s_{ij}$.
Otherwise said, $(x,n)$ comprise a \emph{principal
  (contact element) net}
(c.f.~\cite[Definition~3.24]{bobenko_discrete_2008}).  In
particular, $x$ and $n$ are edge-parallel.

Conversely, given a principal net $(x,n)$, we take $y$ to be
the Euclidean lift of $x$, set $\t=n+\p+(x,n)\q$ and
$f=\spn{y,\t}$ to recover a Legendre map $f:\dom\to\cZ$.
For further discussion, see
\cite[\S3.5]{bobenko_discrete_2008} or
\cite[\S2]{burstall_discrete_2018}.

\subsection{\texorpdfstring{$\Omega$}{Omega}-nets}
\label{sec:discr-texorpdfstr-ne}

With these preparations in hand, we recall from
\cite[Definition~3.1]{burstall_discrete_2018}:
\begin{definition}[$\Omega$-net]
  A Legendre map $f:\dom\to\cZ$ is an $\Omega$-net if is
  spanned by \K\ dual sections.
\end{definition}

In view of \cref{thm:applicability}, we have a number of
alternative characterisations:
\begin{theorem}\label{thm:Omega}
  Let $f:\dom\to\cZ$ be a regular Legendre map.  Then
  the following are equivalent:
  \begin{compactitem}
  \item $f$ is an $\Omega$-net;
  \item $f$ is applicable;
  \item $f$ is spanned by an isotropic Darboux pair of
    isothermic sphere congruences $s^{\pm}<f$.
  \end{compactitem}
\end{theorem}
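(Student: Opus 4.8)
The plan is to deduce \cref{thm:Omega} from \cref{thm:applicability} by specialising to line congruences with values in $\cZ$. A Legendre map is, by definition, a discrete line congruence $f:\dom\to\cZ$, and "regular'' means precisely that the hypotheses of \cref{th:20} hold, so \cref{thm:applicability} applies verbatim. Since an $\Omega$-net is by definition a Legendre map spanned by \K\ dual sections, the equivalence of the first two bullets --- "$f$ is an $\Omega$-net'' and "$f$ is applicable'' --- is nothing but the corresponding equivalence in \cref{thm:applicability}. The only real content is therefore to match the third bullet with applicability, and this amounts to recognising that the \KM\ pairs of \K\ nets furnished by \cref{th:4} are automatically isotropic Darboux pairs of isothermic nets precisely because $f$ takes values in $\cZ$.

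For the forward direction, suppose $f$ is applicable. By \cref{th:4} (equivalently \cref{thm:applicability}) we may write $f=s^+\oplus s^-$ for a \KM\ pair of \K\ nets $s^\pm<f$, regular in the sense of \cref{th:34} with the required distinctness and non-collinearity inherited from that of $f$ via \cref{th:3}. Now each $f_i\in\cZ$ is a null $2$-plane, so every line it contains is isotropic; hence $s^{\pm}_i\in\P(\cL)=\QQ$ and the $s^\pm$ are \K\ nets into the quadric, i.e.\ isothermic nets. Moreover a null $2$-plane is totally isotropic, so $s^+_i\perp s^-_i$ on every vertex; in terms of the Moutard lifts $\mu^\pm$ of $s^\pm$ this reads $(\mu^+_i,\mu^-_i)=0$, so the edge-labelling of $s=s^+\sqcup s^-$ in the vertical direction is $m=1/(\mu^+,\mu^-)\equiv\infty$. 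Thus $s^\pm$ are an isotropic Darboux pair in the sense of Section~\ref{sec:darb-km-transf}.

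For the converse, suppose $f$ is spanned by an isotropic Darboux pair $s^\pm<f$ of isothermic nets. Isothermic nets are in particular \K\ nets into $\P(\R^{4,2})$, and an isotropic Darboux pair is, by the extended definition of Darboux pair (the $m=\infty$ case), exactly a \KM\ pair of such \K\ nets; hence \cref{th:4} applies and $f=s^+\oplus s^-$ is a regular applicable line congruence. One checks for consistency that $f$ genuinely takes values in $\cZ$: each $f_i=s^+_i\oplus s^-_i$ is spanned by two mutually orthogonal null vectors and so is a null $2$-plane. Therefore $f$ is an applicable Legendre map, and by the first part it is an $\Omega$-net.

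There is no serious obstacle here: the substantive work has already been done in \cref{thm:applicability}, \cref{th:4} and the discussion of Darboux pairs in Section~\ref{sec:darb-km-transf}. The only points requiring any care are notational --- translating between "parameter $m=\infty$''/"isotropic'' and "$s^+\perp s^-$''/"$f_i$ null'' --- together with the routine verification that the regularity hypotheses of \cref{th:34} transfer along \cref{th:3} and \cref{th:20}.
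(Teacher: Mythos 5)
Your proposal is correct and follows essentially the same route as the paper: the paper derives \cref{thm:Omega} directly from \cref{thm:applicability}, having already observed at the start of Section~\ref{sec:appl-legendre-maps} that, by \cref{th:4} and the discussion of the $m=\infty$ case in Section~\ref{sec:darb-km-transf}, a line congruence into $\cZ$ is applicable exactly when it is spanned by an isotropic Darboux pair of isothermic nets. Your filling-in of the translation (null $2$-planes force the spanning \K\ nets into $\QQ$ and make them mutually orthogonal, hence vertical edge label $\infty$) is exactly the intended argument.
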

\begin{remark}
  The characterisation via isothermic sphere congruences is
  a direct discrete analogue of Demoulin's original
  formulation of $\Omega$-surfaces \cite{demoulin_sur_1911-2}.
\end{remark}

Here we add another characterisation to the list which is
also due to Demoulin in the smooth case
\cite{demoulin_sur_1911-1}:

\subsubsection{\texorpdfstring{$\Omega$}{Omega}-nets in
  $\R^3$ via associate nets}

Let $f:\dom\to\cZ$ be an $\Omega$-net with stereoprojection
$L:\dom\to\cZ_{\q}$ and corresponding principal net $(x,n)$
so that $L$ is the affine line congruence through $x$
pointing along $n+\p$.

\cref{th:14} provides a dual $\Omega$-net $\dual{L}$ which
cuts $\R^3$ in a net $\dual{x}:\dom\to\R^3$ so that
$(\dual{x},n)$ is also principal.  We call $\dual{x}$ an
\emph{associate net} of $x$ and seek to characterise it in
purely Euclidean terms.

We begin with a lemma:
\begin{lemma}
  \label{th:16}
  Let $(f,[\eta])$ be applicable with $f=\spn{y,\t}$.  Suppose
  that $(\eta\q,\p)=0$.  Then
  \begin{equation}
    \label{eq:32}
    \eta=\eta\q\curlywedge y + \eta\p\curlywedge\t
  \end{equation}
  with both $\eta\q$ and $\eta\p$ edge-wise parallel to $\d y$.
\end{lemma}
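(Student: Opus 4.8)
The plan is to reduce the lemma to the structure of the decomposition of $f_{ij}$ determined by the sections $y,\t$ and the vectors $\p,\q$. So first I would record, on an edge $ij$, the following. The normalisations $(y,\q)=-1$, $(y,\p)=0$, $(\t,\q)=0$, $(\t,\p)=-1$ together with condition \ref{item:1} say that the bilinear form pairs $f_i$ perfectly with $\spn{\p,\q}$; hence $y_i,\t_i$ span $f_i$, and the map $f_{ij}\to\mathbb{R}^2$, $v\mapsto((v,\p),(v,\q))$, is onto. As $\dim f_{ij}=3$, its kernel $f_{ij}\cap\spn{\p,\q}^{\perp}$ is therefore one-dimensional; since $\d y_{ji}=y_j-y_i$ and $\d\t_{ji}=\t_j-\t_i$ both lie in it and are non-zero (the $Q$-nets spanned by $y$ and by $\t$ are regular, by \cref{th:3} and condition \ref{item:2}), this line is $\spn{\d y_{ji}}=\spn{\d\t_{ji}}$.

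Next I would prove the edge-parallelism. On an edge, $\eta_{ji}\in\Wedge^2 f_{ij}$ is decomposable because $\dim f_{ij}=3$, so, viewed as a skew-symmetric endomorphism of $\mathbb{R}^{4,2}$ via \eqref{eq:14}, it has image inside $f_{ij}$; in particular $\eta_{ji}\q,\eta_{ji}\p\in f_{ij}$. Skew-symmetry gives $(\eta_{ji}\q,\q)=0$ and $(\eta_{ji}\p,\p)=0$ for free, and the hypothesis $(\eta\q,\p)=0$ — equivalent by skew-symmetry to $(\eta\p,\q)=0$ — gives $(\eta_{ji}\q,\p)=0$ and $(\eta_{ji}\p,\q)=0$. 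Hence $\eta_{ji}\q$ and $\eta_{ji}\p$ lie in $f_{ij}\cap\spn{\p,\q}^{\perp}=\spn{\d y_{ji}}$, so both $\eta\q$ and $\eta\p$ are edge-wise parallel to $\d y$ (and to $\d\t$).

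For the identity \eqref{eq:32}, I would set $\zeta:=\eta-\eta\q\curlywedge y-\eta\p\curlywedge\t$ and show $\zeta=0$. Using $(\alpha\curlywedge h)_{ji}=\tfrac12\,\alpha_{ji}\wedge(h_i+h_j)$ for a $1$-form $\alpha$ and a $0$-form $h$, together with $(u\wedge v)\,\q=(u,\q)v-(v,\q)u$ and the pairing values above, one computes on each edge that $\eta\q\curlywedge y$ applied to $\q$ equals $\eta_{ji}\q$ (because $(y_i+y_j,\q)=-2$ and $(\eta_{ji}\q,\q)=0$), while $\eta\p\curlywedge\t$ applied to $\q$ vanishes (because $(\t_i+\t_j,\q)=0$ and $(\eta_{ji}\p,\q)=0$ by hypothesis); hence $\zeta\q=0$. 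The mirror computation, now using $(\t_i+\t_j,\p)=-2$, $(y_i+y_j,\p)=0$ and $(\eta_{ji}\q,\p)=0$, gives $\zeta\p=0$. Finally each $\zeta_{ji}$ lies in $\Wedge^2 f_{ij}$, hence is decomposable; if it were non-zero, the $2$-plane it spans would be annihilated by $\p$ and $\q$ and so contained in the one-dimensional space $f_{ij}\cap\spn{\p,\q}^{\perp}$, a contradiction. Thus $\zeta=0$, which is exactly \eqref{eq:32}.

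I do not anticipate a real obstacle: the only delicate points are keeping track of the conventions for the exterior product of forms of mixed degree (the factor $\tfrac12$ and the order of the two arguments) when computing $\zeta\q$ and $\zeta\p$, and the verification that $f_{ij}\cap\spn{\p,\q}^{\perp}$ is genuinely one-dimensional, which is where conditions \ref{item:1}, \ref{item:2} and the first-order regularity of $f$ all come in.
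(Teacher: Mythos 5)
Your proof is correct and rests on exactly the same ingredients as the paper's: working on a single edge, the three-dimensionality of $f_{ij}$, the fact that $\p,\q$ pair non-degenerately with $\spn{y_i+y_j,\t_i+\t_j}$ while annihilating $\d y_{ji}$, and the vanishing of the coefficient detected by $(\eta\q,\p)$. The only difference is packaging: the paper expands $\eta_{ij}$ in the basis $\d y_{ij}\wedge y_{ij},\ \d y_{ij}\wedge\t_{ij},\ y_{ij}\wedge\t_{ij}$ of $\Wedge^2 f_{ij}$ and reads off the coefficients by contraction, whereas you test the difference $\zeta$ against $\q$ and $\p$ and invoke decomposability to conclude it vanishes --- the same linear algebra, correctly executed.
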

\begin{proof}
We work on an edge $ij$.  It is easy to see that a basis for
$f_{ij}$ is given by $y_{ij}:=\half(y_i+y_j)$,
$\t_{ij}:=\half(\t_i+\t_j)$ and $\d y_{ij}$ so that
\begin{equation*}
  \eta_{ij}=\alpha_{ij}\d y_{ij}\wedge y_{ij}+
  \beta_{ij}\d y_{ij}\wedge\t_{ij}+\gamma_{ij}y_{ij}\wedge\t_{ij}.
\end{equation*}
Now $\gamma_{ij}=(\eta\q,\p)=0$ and then
\begin{equation*}
  \eta_{ij}\q=\alpha_{ij}\d y_{ij}\qquad
  \eta_{ij}\p=\beta_{ij}\d y_{ij},
\end{equation*}
whence the result.
\end{proof}

Now we have $\d\dual{x}=\pi\eta\q$, for some
$\eta\in[\eta]$, so that $(\eta\q,\p)=(\d\dual{x},\p)=0$ and
\cref{th:16} applies. Since $\eta\p$ and so $\pi\eta\p$ is
closed, we have $\dual{n}:\dom\to\R^{3,1}$ with
$\d\dual{n}=\pi\eta\p$.  Moreover,
$(\d\dual{n},\p)=(\eta\p,\p)=0$ so that we may adjust the
constant of integration to ensure that
$\dual{n}:\dom\to\R^3$.  In view of \cref{th:16}, both
$\dual{x}$ and $\dual{n}$ are edge-parallel to $x$.
Finally, the exterior derivative of \eqref{eq:32} yields
\begin{equation*}
  0=-\d\eta=\eta\q \curlywedge \d y+\eta\p\curlywedge \d\t,
\end{equation*}
the $\Wedge^{2}\R^3$-component of which reads
\begin{equation}
  \label{eq:33}
  \d\dual{x}\curlywedge\d x+\d\dual{n}\curlywedge\d n=0.
\end{equation}

Conversely, let $f:\dom\to\cZ$ be Legendre with principal
net $(x,n)$ and suppose there are edge-parallel
$\dual{x},\dual{n}$ satisfying \eqref{eq:33}.  We define
$1$-forms
\begin{equation*}
  \alpha:=\d\dual{x}+(x\wedge\d\dual{x})\q\qquad
  \beta:=\d\dual{n}+(x\wedge\d\dual{n})\q,
\end{equation*}
both of which are edge-wise parallel to $\d y$.  Thus, on
any face $\ijkl$, $\alpha,\beta,\d y,\d\t$ all take values
in $W_{\ijkl}$.  Thus we apply \cref{th:6}, first to
conclude that $\alpha,\beta$ are closed since
$\d\dual{x},\d\dual{n}$ are and then, from \eqref{eq:33},
that
\begin{equation*}
  \alpha\curlywedge\d y+\beta\curlywedge \d\t=0.
\end{equation*}
Thus, setting $\eta=\alpha\curlywedge y+\beta\curlywedge\t$
gives a closed $1$-form with $\eta_{ij}\in \Wedge^{2}f_{ij}$.
Finally, $s_{ij}$ is spanned by $\kappa_{ij}y_{ij}+\t_{ij}$
so that $\eta_{ji}\wedge s_{ij}\neq0$ if and only if
\begin{equation*}
  \alpha_{ji}\wedge y_{ij}\wedge\t_{ij}+\kappa_{ij}\beta_{ji}\wedge
  \t_{ij}\wedge y_{ij}\neq 0,
\end{equation*}
or, equivalently,
\begin{equation}
  \label{eq:37}
  \d\dual{x}_{ji}\neq \kappa_{ij}\d\dual{n}_{ji}.
\end{equation}
In this case, $(f,\eta)$ is an $\Omega$-net.

To summarise, we have the discrete analogue of the
discussion in
\cite[Theorem~5.1]{pember_lie_nodate} (c.f.~\cite[\S2.5]{burstall_polynomial_2018}):
\begin{theorem}\label{thm:dualityR3}
  A principal net $(x,n):\dom\to\R^3\times S^2$ lifts to an
  $\Omega$-net if and only if there exist edge-parallel nets
  $\dual{x}, \dual{n}:\dom\to\R^{3}$ satisfying \eqref{eq:33},
  or, equivalently,
  \begin{equation*}
    A(\dual{x},x) + A(\dual{n},n)=0,
  \end{equation*}
  together with \eqref{eq:37}.
  
  In this case, $(\dual{x},n)$ is also a principal net whose
  Legendre lift is an $\Omega$-net when it is regular.
  
  We call $\dual{n}$ an \emph{associate Gauss map of $x$}.
\end{theorem}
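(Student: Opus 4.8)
The plan is to assemble both implications from the computations carried out in the paragraphs immediately preceding the statement. There is no serious obstacle here: everything reduces to results already available, and the only real work is bookkeeping --- keeping the $\q$- and $\p$-components of $\eta$ separate and matching up the two non-degeneracy conditions --- together with the observation (reflected in the final sentence of the statement) that the resulting Euclidean data need not always assemble into a \emph{regular} Legendre map.

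For the implication from $\Omega$-net to Euclidean data, I would start with the Legendre lift $f=\spn{y,\t}$ of $(x,n)$, which is applicable by \cref{thm:Omega}. Then \cref{th:14} gives the dual Legendre map $\dual{L}$, which meets $\R^3$ in a net $\dual{x}:\dom\to\R^3$; sliding $\dual{x}^{\eta}$ along $L$ into $\R^3$ via \eqref{eq:29}, I may choose $\eta\in[\eta]$ with $\d\dual{x}=\pi\eta\q$, whence $(\eta\q,\p)=(\d\dual{x},\p)=0$ and \cref{th:16} yields the splitting $\eta=\eta\q\curlywedge y+\eta\p\curlywedge\t$ with $\eta\q,\eta\p$ edge-parallel to $\d y$. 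Since $\eta\p$ is closed it equals $\d\dual{n}$ for some $\dual{n}$, and $(\d\dual{n},\p)=(\eta\p,\p)=0$ lets me normalise $\dual{n}$ into $\R^3$; then $\dual{x},\dual{n}$ are edge-parallel to $x$. Differentiating \eqref{eq:32} and using $\d\eta=0$ with the Leibniz rule (\cref{th:45}) gives $\eta\q\curlywedge\d y+\eta\p\curlywedge\d\t=0$, whose $\Wedge^{2}\R^3$-component is \eqref{eq:33}; by the mixed area formula (\cref{th:50}) this is the same as $A(\dual{x},x)+A(\dual{n},n)=0$. Finally, \eqref{eq:37} comes out of the non-degeneracy $\eta_{ji}\wedge s_{ij}\neq 0$ once one uses $s_{ij}=\spn{\kappa_{ij}y_{ij}+\t_{ij}}$.

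For the converse, given edge-parallel $\dual{x},\dual{n}$ satisfying \eqref{eq:33} and \eqref{eq:37}, I would set $\alpha=\d\dual{x}+(x\wedge\d\dual{x})\q$ and $\beta=\d\dual{n}+(x\wedge\d\dual{n})\q$; both are edge-parallel to $\d y$, and since the principal net relation \eqref{eq:31} forces $\d\t$ to be edge-parallel to $\d y$ as well, all of $\alpha,\beta,\d y,\d\t$ take values, on each face $\ijkl$, in the $2$-plane $W_{\ijkl}$ of \cref{th:6}. Projecting by $\pi$, that lemma forces $\d\alpha=\d\beta=0$ (their images are $\d^{2}\dual{x}=\d^{2}\dual{n}=0$) and turns $\alpha\curlywedge\d y+\beta\curlywedge\d\t=0$ into \eqref{eq:33}; hence $\eta:=\alpha\curlywedge y+\beta\curlywedge\t$ is closed with $\eta_{ij}\in\Wedge^{2}f_{ij}$, and \eqref{eq:37} is precisely its non-degeneracy, so $(f,\eta)$ is applicable, i.e.\ an $\Omega$-net. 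For the last assertion, edge-parallelism of $\dual{x}$ to $x$ produces on each edge a scalar $\dual{\kappa}_{ij}$ with $\dual{\kappa}_{ij}\,\d\dual{x}_{ij}=\kappa_{ij}\,\d x_{ij}=-\d n_{ij}$, which is \eqref{eq:31} for the pair $(\dual{x},n)$; so $(\dual{x},n)$ is principal, its Legendre lift is the dual Legendre map $\dual{L}$ of \cref{th:14}, and the latter is an $\Omega$-net wherever it is regular.
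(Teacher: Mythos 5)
Your proposal is correct and takes essentially the same route as the paper, which proves this theorem in the discussion immediately preceding its statement: the forward direction via the splitting $\eta=\eta\q\curlywedge y+\eta\p\curlywedge\t$ of \cref{th:16} applied to a representative $\eta\in[\eta]$ with $\d\dual{x}=\pi\eta\q$, and the converse via the forms $\alpha,\beta$ and \cref{th:6}, with \eqref{eq:37} matching the non-degeneracy of $\eta$ against $s_{ij}=\spn{\kappa_{ij}y_{ij}+\t_{ij}}$. The only difference is cosmetic: you make explicit the identification of the Legendre lift of $(\dual{x},n)$ with the dual Legendre map $\dual{L}$ of \cref{th:14}, which the paper leaves implicit.
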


\begin{remark}
  A parallel net $\tilde{x}:=\dual{x} + cn$, for $c\in\R$ constant, to an associate
  net is also an associate net: with $\tilde{n}=\dual{n}-cx$
  we still have
  \begin{equation*}
    A(x,\tilde{x}) + A(\tilde{n},n) =0.
  \end{equation*}
  This amounts to replacing $\dual{L}$ with $\dual{L}-c\p$.

  We therefore obtain a one-parameter family of associate nets with associate Gauss maps.
\end{remark}

We conclude our discussion of duality for $\Omega$-nets by
proving a generalisation of \cref{th:11} and so Bianchi's
formula \cref{th:13} to the present setting.  As we shall
see below in \cref{th:25}, this will also generalise a
discrete version of a formula of Eisenhart for Guichard
surfaces.

From \cref{th:17}, we know that the edge-labelling of an
isothermic sphere congruences $s<f$ is an invariant of the
applicable Legendre map $f$.  The geometry of this
edge-labelling is given by the following generalisation of
\eqref{eq:27}:
\begin{theorem}
  \label{th:18}
  Let $(x,n)$ be a principal net lifting to an $\Omega$-net
  with edge-labelling $m_{ij}$, associate net $\dual{x}$ and
  associate Gauss map $\dual{n}$.  Then, for each edge $ij$,
  \begin{equation}
    \label{eq:34}
    (\d x_{ij},\d\dual{x}_{ij})+(\d n_{ij},\d\dual{n}_{ij})=
    -\frac{2}{m_{ij}}.
  \end{equation}
\end{theorem}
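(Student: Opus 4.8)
The plan is to fix, on each edge $ij$, the distinguished representative $\eta\in[\eta]$ used in \cref{thm:dualityR3}, for which $\d\dual{x}=\pi\eta\q$ and $\d\dual{n}=\pi\eta\p$, and to read off both sides of \eqref{eq:34} from it. Since $(\eta\q,\p)=(\d\dual{x},\p)=0$, \cref{th:16} applies, and its proof produces scalars $\alpha_{ij},\beta_{ij}$ with $\eta_{ij}\q=\alpha_{ij}\d y_{ij}$ and $\eta_{ij}\p=\beta_{ij}\d y_{ij}$. Applying the orthoprojection $\pi$ (with $\pi y=x$, $\pi\t=n+\p$) gives $\d\dual{x}_{ij}=\alpha_{ij}\d x_{ij}$ and $\d\dual{n}_{ij}=\beta_{ij}\d x_{ij}$, while the principal net condition \eqref{eq:31} — equivalently, that the curvature sphere $e:=\kappa_{ij}y_{ij}+\t_{ij}\in s_{ij}$ satisfies $\d\t_{ij}=-\kappa_{ij}\d y_{ij}$ — gives $\d n_{ij}=-\kappa_{ij}\d x_{ij}$. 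Hence
\[
(\d x_{ij},\d\dual{x}_{ij})+(\d n_{ij},\d\dual{n}_{ij})
=\alpha_{ij}(\d x_{ij},\d x_{ij})+\beta_{ij}(\d n_{ij},\d x_{ij})
=\bigl(\alpha_{ij}-\kappa_{ij}\beta_{ij}\bigr)(\d x_{ij},\d x_{ij}).
\]

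It then remains to recognise $(\alpha_{ij}-\kappa_{ij}\beta_{ij})(\d x_{ij},\d x_{ij})$ as $-2/m_{ij}$, for which the tool is \cref{th:17}: $1/m_{ij}=(\sigma_i,\sigma_j)$ whenever $\eta_{ji}=\sigma_j\wedge\sigma_i$ with $\sigma_i\in f_i$, $\sigma_j\in f_j$. Working in the basis $\{y_i,y_j,e\}$ of $f_{ij}$ — so that $f_i=\spn{y_i,e}$, $f_j=\spn{y_j,e}$ — I substitute $\d y_{ij}\wedge y_{ij}=y_i\wedge y_j$, $\d y_{ij}\wedge\t_{ij}=\d y_{ij}\wedge\t_j$ and $\t_j=e-\kappa_{ij}y_j$ into the formula $\eta_{ij}=\alpha_{ij}\d y_{ij}\wedge y_{ij}+\beta_{ij}\d y_{ij}\wedge\t_{ij}$ of \cref{th:16} to obtain
\[
\eta_{ij}=(\alpha_{ij}-\kappa_{ij}\beta_{ij})\,y_i\wedge y_j+\beta_{ij}\,(y_i-y_j)\wedge e
=\bigl((\alpha_{ij}-\kappa_{ij}\beta_{ij})y_i+\beta_{ij}e\bigr)\wedge\Bigl(y_j+\tfrac{\beta_{ij}}{\alpha_{ij}-\kappa_{ij}\beta_{ij}}e\Bigr),
\]
where $\alpha_{ij}-\kappa_{ij}\beta_{ij}\neq0$ by non-degeneracy (otherwise $s_{ij}=\spn{e}$ would lie on $\abrack{\eta_{ij}}$). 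The two factors lie in $f_i$ and $f_j$ respectively, so \cref{th:17} computes $1/m_{ij}$ as their inner product; since $e$ is null and orthogonal to both $y_i\in f_i$ and $y_j\in f_j$, every $e$-term in this inner product vanishes, leaving
\[
\frac1{m_{ij}}=(\alpha_{ij}-\kappa_{ij}\beta_{ij})(y_i,y_j)=-\tfrac12(\alpha_{ij}-\kappa_{ij}\beta_{ij})(\d x_{ij},\d x_{ij})
\]
by \eqref{eq:24} (here $m_{ij}$ is automatically finite, as $y$ is a regular $Q$-net and so $x_i\neq x_j$). Multiplying by $-2$ and comparing with the previous display gives \eqref{eq:34}.

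The routine work is the wedge-product algebra and the inner-product evaluations $(y_i,y_j)$, $(y_i,e)$, $(e,e)$. The step that needs genuine care — and which I expect to be the main obstacle — is getting the decomposition $\eta_{ij}=\sigma_i\wedge\sigma_j$ into an $f_i$-factor and an $f_j$-factor exactly right, since \cref{th:17} is sensitive to the overall normalisation: one must carry the scalar $\alpha_{ij}-\kappa_{ij}\beta_{ij}$ and the $e$-corrections to $\sigma_i,\sigma_j$ correctly. Once these are in place, the nullity of $y_i,y_j,e$ together with the orthogonality of $e$ to $y_i$ and $y_j$ makes $(\sigma_i,\sigma_j)$ collapse to the clean expression above, and the theorem follows at once.
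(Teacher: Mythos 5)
Your proof is correct and follows essentially the same route as the paper's: both fix the representative $\eta$ with $\pi\eta\q=\d\dual{x}$, $\pi\eta\p=\d\dual{n}$ and $(\eta\q,\p)=0$, factorise $\eta_{ji}$ as $\sigma_j\wedge\sigma_i$ with $\sigma_i\in f_i$, $\sigma_j\in f_j$, and then use \cref{th:17} together with \eqref{eq:24}-type identities to identify $(\sigma_i,\sigma_j)$ with $-\half$ times the left-hand side of \eqref{eq:34}. The only difference is organisational: the paper takes $\sigma_i=a_iy_i+b_i\t_i$, $\sigma_j=a_jy_j+b_j\t_j$ symmetrically and never needs $\d n_{ij}=-\kappa_{ij}\d x_{ij}$, whereas you factorise through the curvature-sphere basis $\{y_i,y_j,e\}$ and use principality to collapse everything onto a multiple of $(\d x_{ij},\d x_{ij})$ -- an equally valid computation, with the division by $\alpha_{ij}-\kappa_{ij}\beta_{ij}$ correctly justified by the non-degeneracy condition.
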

\begin{proof}
  The $\Omega$-net $f=\spn{y,\t}$ has an $\eta$ with
  $\pi\eta\q=\d\dual{x}$, $\pi\eta\p=\d\dual{n}$ and
  $(\eta\q,\p)=0$.  On an edge $ij$, we write
  $\eta_{ji}=\sigma_j\wedge\sigma_i$ where
  \begin{equation*}
    \sigma_i=a_iy_i+b_i\t_i\qquad \sigma_j=a_jy_j+b_j\t_j,
  \end{equation*}
  for constants $a_i,a_j,b_i,b_j$.  Then
  $1/m_{ij}=(\sigma_i,\sigma_j)$ by \cref{th:17}.

  Now $(\eta\q,\p)=0$ yields
  \begin{equation}\label{eq:35}
    a_ib_j=a_jb_i
  \end{equation}
  and then contracting $\eta_{ji}$ with $\q,\p$ and
  projecting tells us that
  \begin{subequations}\label{eq:36}
    \begin{align}
      \d\dual{x}_{ji}&=a_{i}a_j\d x_{ji}+a_ib_j\d n_{ji}\\
      \d\dual{n}_{ji}&=a_jb_i\d x_{ji}+b_ib_j\d n_{ji}.
    \end{align}
  \end{subequations}

  On the other hand, in view of \eqref{eq:35},
  \begin{equation*}
    (\sigma_i,\sigma_j)=a_ia_j(y_i,y_j)+a_{i}b_j\bigl((y_i,\t_j)+(y_j,\t_i)\bigr)+b_ib_j(\t_i,\t_j)
  \end{equation*}
  while
  \begin{align*}
    (y_i,y_j)&=-\half(\d x_{ij},\d x_{ij}),\\
    (y_i,\t_j)+(y_j,\t_i)&=-(\d x_{ij},\d n_{ij}),\\
    (\t_i,\t_j)&=-\half(\d n_{ij},\d n_{ij}),
  \end{align*}
  where the first identity is \eqref{eq:24}, the second is
  proved similarly as is the last using $(n,n)\equiv
  1$.  Thus, using \eqref{eq:36}, we get
  \begin{align*}
    (\sigma_i,\sigma_j)&=-\half\bigl(
                         a_ia_j(\d x_{ij},\d x_{ij})
                         +2a_{i}b_j(\d x_{ij},\d n_{ij})+
                         b_ib_j(\d n_{ij},\d n_{ij})\bigr)\\
    &=-\half\bigl((\d x_{ij},\d\dual{x}_{ij})+(\d n_{ij},\d\dual{n}_{ij})\bigr),
  \end{align*}
  whence the result.
\end{proof}


\section{Guichard nets}

In this section, we define Guichard nets via a direct
analogue of Guichard's original formulation
\cite{guichard_sur_1900}.  We shall see in section
\ref{sec:o-systems} that our definition is
equivalent to that of Schief via discrete $O$-surfaces
\cite{Sch03} and to a formulation through special
$\Omega$-nets of type $1$ that is the discrete version of
the discussion in \cite{burstall_polynomial_2018}, see
Section~\ref{sec:spec-texorpdfstr-net}.

\subsection{Guichard nets via associate surfaces}
\label{sec:guichard-nets-via}

\begin{definition}[Guichard net]\label{th:30}
  A principal net $(x,n):\dom\to\R^3\times S^2$ is
  \emph{Guichard} if and only if there exists an
  edge-parallel net $\dual{x}:\dom\to\R^3$ such that
  \begin{equation}
    \label{eq:39}
    \d\dual{x}\curlywedge\d x+\d n\curlywedge\d n=0,
  \end{equation}
  or, equivalently,
  \begin{equation}
    \label{eq:48}
    A(\dual{x},x)+A(n,n)=0.
  \end{equation}
  In this case, $(\dual{x},n)$ is also Guichard.

  We call $\dual{x}$ an \emph{associate net of $x$}. 
\end{definition}

We see at once that Guichard nets are $\Omega$ (a result of
Demoulin \cite{demoulin_sur_1911} in the smooth case) and
are characterised among $\Omega$-nets by the requirement
that the associate Gauss map $\dual{n}$ may be taken to be
$n$.

In particular, \cref{th:18} immediately yields:
\begin{theorem}
  \label{th:24}
  Let $(x,n)$ be a Guichard net with edge-labelling $m_{ij}$
  and associate net $\dual{x}$.  Then, on each edge $ij$,
  \begin{equation}
    \label{eq:49}
    (\d x_{ij},\d \dual{x}_{ij})+(\d n_{ij},\d n_{ij})=-\frac{2}{m_{ij}}.
  \end{equation}
\end{theorem}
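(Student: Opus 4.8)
The plan is to deduce the formula directly from \cref{th:18} by specialising the associate Gauss map to be $n$. First I would unpack the definition: a Guichard net $(x,n)$ is, by \cref{th:30}, a principal net together with an edge-parallel associate net $\dual{x}$ satisfying $A(\dual{x},x)+A(n,n)=0$, equivalently \eqref{eq:39}. Setting $\dual{n}=n$, this is precisely condition \eqref{eq:33} of \cref{thm:dualityR3}, which (noting that \eqref{eq:39} is the $\Wedge^2\R^3$-component of the closedness of the $1$-form $\eta=\alpha\curlywedge y+\beta\curlywedge\t$ constructed there) says that $(x,n)$ lifts to an $\Omega$-net $f=\spn{y,\t}$ with associate net $\dual{x}$ and associate Gauss map $\dual{n}=n$; the regularity condition \eqref{eq:37}, namely $\d\dual{x}_{ji}\neq\kappa_{ij}\d n_{ji}$ on each edge, is subsumed in the standing hypothesis that the edge-labelling $m_{ij}$ of $f$ is defined (and it is an invariant of $f$ by \cref{th:17}).

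Next I would simply apply \cref{th:18} to this $\Omega$-net, with associate net $\dual{x}$ and associate Gauss map $n$. Its conclusion \eqref{eq:34} becomes, on each edge $ij$,
\begin{equation*}
  (\d x_{ij},\d\dual{x}_{ij})+(\d n_{ij},\d n_{ij})=-\frac{2}{m_{ij}},
\end{equation*}
which is exactly \eqref{eq:49}.

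Since \cref{th:18} is already established, there is no genuine obstacle here; the only step requiring attention is the bookkeeping that matches the data of a Guichard net to that of an $\Omega$-net whose associate Gauss map is $n$ — in particular recognising \eqref{eq:39} as the relevant component of $\d\eta=0$ exactly as in the derivation preceding \cref{thm:dualityR3} — after which the stated identity follows with no further computation.
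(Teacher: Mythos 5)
Your proposal is correct and coincides with the paper's own argument: the paper likewise observes that a Guichard net is exactly an $\Omega$-net whose associate Gauss map may be taken to be $n$, and then states that \cref{th:18} immediately yields \eqref{eq:49}. Your additional bookkeeping about \eqref{eq:37} and the invariance of the edge-labelling via \cref{th:17} is sound but not needed beyond what the paper already assumes.
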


This has a pretty reformulation due to Eisenhart
\cite[p.~210]{Eis14} in the smooth case.  For this, recall
that $r_{ij}:=1/\kappa_{ij}$ is the radius of the $2$-sphere
in oriented contact with both $(x_i,n_i)$ and $(x_j,n_j)$
so that
\begin{align*}
  \d x_{ji}+r_{ij}\d n_{ji}&=0\\
  \intertext{and, similarly,}
  \d\dual{x}_{ji}+\dual{r}_{ij}\d n_{ji}&=0.
\end{align*}
This, along with \eqref{eq:49}, yields:
\begin{corollary}[Eisenhart's formula]
  \label{th:25}
  Let $(x,n)$ be Guichard with associate net $\dual{x}$ and
  edge-labelling $m_{ij}$.  On each edge $ij$, the line
  segments from $x_i$ to $x_j$ and from $\dual{x}_i$ to
  $\dual{x}_j$ are parallel with directed lengths
  $d_{ij},\dual{d}_{ij}$.  Then
  \begin{equation}\label{eq:50}
    d_{ij}\dual{d}_{ij}(1+\frac1{r_{ij}\dual{r}_{ij}})=-\frac{2}{m_{ij}}.
  \end{equation}
\end{corollary}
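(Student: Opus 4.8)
The plan is to read \eqref{eq:50} off \cref{th:24}, that is, off the identity \eqref{eq:49} saying $(\d x_{ij},\d\dual{x}_{ij})+(\d n_{ij},\d n_{ij})=-2/m_{ij}$, by rewriting both inner products in terms of the directed lengths $d_{ij},\dual{d}_{ij}$ and the radii $r_{ij},\dual{r}_{ij}$. Everything happens on a single fixed edge $ij$, so no multidimensional input is needed.

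First I would handle the term $(\d x_{ij},\d\dual{x}_{ij})$. Since $\dual{x}$ is edge-parallel to $x$, the vectors $\d x_{ji}$ and $\d\dual{x}_{ji}$ are proportional, say $\d\dual{x}_{ji}=\lambda\,\d x_{ji}$ with $\lambda\in\R$; then $\dual{d}_{ij}=\lambda d_{ij}$ for the directed lengths measured against any fixed orientation of the segment. As $\R^3$ carries the Euclidean metric, $(\d x_{ji},\d x_{ji})=d_{ij}^{2}$, so
\[
  (\d x_{ij},\d\dual{x}_{ij})=(\d x_{ji},\lambda\,\d x_{ji})=\lambda d_{ij}^{2}=d_{ij}\dual{d}_{ij}.
\]

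Next I would use the curvature-sphere relations recorded in the statement: $\d x_{ji}+r_{ij}\d n_{ji}=0$ from \eqref{eq:31}, and, since $(\dual{x},n)$ is again a principal net by the Guichard hypothesis (\cref{th:30}; equivalently \cref{thm:dualityR3} with $\dual{n}=n$), also $\d\dual{x}_{ji}+\dual{r}_{ij}\d n_{ji}=0$. Writing $\d n_{ij}=-\tfrac{1}{r_{ij}}\d x_{ij}=-\tfrac{1}{\dual{r}_{ij}}\d\dual{x}_{ij}$ and pairing these two expressions,
\[
  (\d n_{ij},\d n_{ij})=\frac{1}{r_{ij}\dual{r}_{ij}}(\d x_{ij},\d\dual{x}_{ij})=\frac{d_{ij}\dual{d}_{ij}}{r_{ij}\dual{r}_{ij}}.
\]
Substituting the two displayed formulae into \eqref{eq:49} gives $d_{ij}\dual{d}_{ij}+d_{ij}\dual{d}_{ij}/(r_{ij}\dual{r}_{ij})=-2/m_{ij}$, which is \eqref{eq:50} on factoring out $d_{ij}\dual{d}_{ij}$.

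There is no real obstacle here: this is a one-edge bookkeeping computation resting entirely on \cref{th:24}. The only point worth a moment's care is to pair $\d n_{ij}$ with itself using \emph{one} factor of $\tfrac{1}{r_{ij}}\d x_{ij}$ and \emph{one} factor of $\tfrac{1}{\dual{r}_{ij}}\d\dual{x}_{ij}$, so that $(\d n_{ij},\d n_{ij})$ appears as the mixed quantity $d_{ij}\dual{d}_{ij}/(r_{ij}\dual{r}_{ij})$ rather than $d_{ij}^{2}/r_{ij}^{2}$; this is precisely what makes $d_{ij}\dual{d}_{ij}$ a common factor of the two terms coming from \eqref{eq:49}.
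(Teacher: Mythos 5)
Your proposal is correct and is exactly the computation the paper intends: the paper gives no explicit proof beyond stating the two relations $\d x_{ji}+r_{ij}\d n_{ji}=0$ and $\d\dual{x}_{ji}+\dual{r}_{ij}\d n_{ji}=0$ and asserting that these together with \eqref{eq:49} yield \eqref{eq:50}, and your substitutions (including the key point of pairing one factor $\tfrac1{r_{ij}}\d x_{ij}$ with one factor $\tfrac1{\dual{r}_{ij}}\d\dual{x}_{ij}$ to express $(\d n_{ij},\d n_{ij})$ as $d_{ij}\dual{d}_{ij}/(r_{ij}\dual{r}_{ij})$) supply precisely the omitted bookkeeping.
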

\begin{remark}
  Eisenhart's original formulation reads:
  \begin{equation*}
    d_{ij}\dual{d}_{ij}+\frac {d_{ij}^2}{r_{ij}^{2}}=-\frac{2}{m_{ij}},
  \end{equation*}
  which is equivalent to our more symmetric version thanks
  to the identity
  $d_{ij}/r_{ij}=\dual{d}_{ij}/\dual{r}_{ij}$.
\end{remark}

\begin{xmpl}\label{th:31}
  According to Bobenko--Pottmann--Wallner \cite{MR2657431},
  a principal net $(x,n)$ in $\R^3$ has mean curvature $H$ and Gauss
  curvature $K$ given by
  \begin{equation*}
    H=-A(x,n)/A(x,x),\qquad K=A(n,n)/A(x,x).
  \end{equation*}
  Thus a principal net is linear Weingarten 
  if there are constants
  $\alpha,\beta,\gamma$, not all zero, with
  \begin{equation*}
    \alpha K +2\beta H +\gamma=0,
  \end{equation*}
  or, equivalently,
  \begin{equation*}
    \alpha A(n,n)-2\beta A(x,n)+\gamma A(x,x)=0.
  \end{equation*}
  Linear Weingarten nets are $\Omega$
  \cite[Theorem~2.8]{burstall_discrete_2018} but more is
  true.  If $\alpha\neq 0$, then $(x,n)$ is Guichard with
  associate net $(\gamma x - 2\beta n)/\alpha$.  In
  particular, when $\beta=0$, that is, $K$ is constant, $x$
  is, up to scale, self-associate just as in the smooth case
  \cite[p.~229]{Eis14}.

  On the other hand, if $\alpha=0$, that is, $H$ is
  constant, $x$ is isothermic with Christoffel dual
  $\gamma x-2\beta n$ which is $n+Hx$ up to scale.  In
  particular, $(x,n)$ is minimal, $H=0$, if and only if
  \begin{equation*}
    \d x\curlywedge\d n=0,
  \end{equation*}
  (c.f. \cite{MR3279541}).
\end{xmpl}

Let $f=\spn{y,\t}:\dom\to\cZ$ be the Legendre lift of a
Guichard net $(x,n)$.  Thus $f$ is applicable with $1$-form
$\eta$ satisfying
\begin{subequations}\label{eq:51}
  \begin{align}
    \pi\eta\q&=\d\dual{x}\\
    \eta\p&=\d\t.
  \end{align}
\end{subequations}

Now let $s^{+}<f$ with $(s^+,\eta^+)$ isothermic and
$\eta^+=\eta+\d\tau^{+}$, for some
$\tau^+\in\Gamma\Wedge^2f$. Set $\sigma^-:=\t+\tau^+\p$.
We have:
\begin{align*}
  \d\sigma^-&=\eta^+\p\\
  (\sigma^-,\p)&=-1.
\end{align*}
Further let $\sigma^+\in\Gamma s^+$ be the affine
lift\footnote{We cheerfully assume that $s^+$ is never
  orthogonal to $\p$.} with
$(\sigma^+,\p)=-1$.  The argument of \cref{th:1} tells us
that $\sigma^-$ is \K\ dual to $\sigma^+$ so that
$\eta^+=\d\sigma^-\curlywedge\sigma^+$. This proves one half
of:
\begin{theorem}
  \label{th:26}
  An applicable Legendre map $f:\dom\to\cZ$ lifts a Guichard net
  if and only if it contains\footnote{In general, $s^{\pm}$
    will span $f$ but it is possible for $s^{\pm}$ to
    coincide: see
    \cite[Lemma~3.11]{burstall_discrete_2014}.} \K\ dual
  sections $\sigma^{\pm}$ with $(\sigma^{\pm},\p)\equiv -1$.
\end{theorem}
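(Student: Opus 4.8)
The forward implication is exactly the content of the discussion immediately preceding the statement, so the work lies in the converse. Suppose then that $\sigma^\pm\in\Gamma f$ are \K\ dual with $(\sigma^\pm,\p)\equiv-1$. Since $f$ is a Legendre map, each fibre $f_i$ is a null plane, so $\sigma^\pm$ take values in $\cL$ and $s^\pm:=\spn{\sigma^\pm}$ are isothermic sphere congruences inside $f$; by the discussion around \eqref{eq:15}, $\eta^+:=\d\sigma^-\curlywedge\sigma^+$ is a closed $\Wedge^2\R^{4,2}$-valued $1$-form making $(f,[\eta^+])$ applicable (if $\sigma^+=\sigma^-$ this still holds, by \cref{h:appl-line-congr}, with the single \K\ net $s^+<f$, so the degenerate case of the footnote needs no separate treatment). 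The plan is to locate inside $[\eta^+]$ the representative adapted to the point sphere complex and then invoke \cref{th:16} together with the derivation of \eqref{eq:33} from the preceding $\Omega$-net discussion, specialised so that the associate Gauss map equals $n$.

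First I would compute $\eta^+\p$. Writing $\eta^+_{ji}=\tfrac12\,\d\sigma^-_{ji}\wedge(\sigma^+_i+\sigma^+_j)$ and applying \eqref{eq:14} with $z=\p$, the hypothesis $(\sigma^\pm,\p)\equiv-1$ gives $(\d\sigma^-_{ji},\p)=0$ and $(\sigma^+_i+\sigma^+_j,\p)=-2$, whence $\eta^+\p=\d\sigma^-$. Next, with $f=\spn{y,\t}$ as set up in \cref{sec:lie-sphere-geometry}, note that $\{v\in f_i\st(v,\p)=0\}=\spn{y_i}$, so $(\sigma^-,\p)\equiv-1=(\t,\p)$ forces $\sigma^-=\t+a^-y$ for some function $a^-:\dom\to\R$. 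Since $\Wedge^2 f$ is spanned by $y\wedge\t$ and $(y\wedge\t)(\p)=y$ by \eqref{eq:14}, I would set $\tau:=-a^-\,y\wedge\t\in\Gamma\Wedge^2f$ and $\eta:=\eta^++\d\tau$; then $(\d\tau)\p=\d(\tau\p)=-\d(a^-y)$, so $\eta\p=\d\sigma^--\d(a^-y)=\d\t$.

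Having arranged $\eta\p=\d\t$, we get $(\eta\p,\q)=(\d\t,\q)=0$ because $(\t,\q)\equiv0$, and hence $(\eta\q,\p)=-(\eta\p,\q)=0$ by skew-adjointness of $\eta$ as an endomorphism of $\R^{4,2}$. Thus \cref{th:16} applies and $\eta=\eta\q\curlywedge y+\d\t\curlywedge\t$ with $\eta\q$ and $\eta\p=\d\t$ edge-wise parallel to $\d y$. Now I would follow the $\Omega$-net discussion verbatim: let $\dual{x}:\dom\to\R^3$ and $\dual{n}:\dom\to\R^3$ be antiderivatives of $\pi\eta\q$ and $\pi\eta\p=\d n$ respectively, so that both are edge-parallel to $x=\pi y$ and, after fixing the constant of integration, $\dual{n}=n$; then taking the exterior derivative of $\eta$ and passing to the $\Wedge^2\R^3$-component yields $\d\dual{x}\curlywedge\d x+\d n\curlywedge\d n=0$, which is exactly \eqref{eq:39}. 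Therefore $(x,n)$ is Guichard with associate net $\dual{x}$, and $f$ is its Legendre lift. (Here one uses that $\p$ and $\q$ have been chosen generically so that conditions \ref{item:1}--\ref{item:2} hold and $(x,n)$ is a genuine principal net; the non-degeneracy of $\eta^+$ needed for \cref{th:16} is the standing regularity $\sigma^\pm_i\neq s_{ij}$.)

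The only step requiring any ingenuity is the choice of gauge $\tau$ that moves $\eta^+$ to the representative with $\eta\p=\d\t$; once that normalisation is in place the argument is purely the machinery of \cref{th:16} and \eqref{eq:33} already developed for $\Omega$-nets, run with $\dual{n}=n$. Everything else is bookkeeping with the identity \eqref{eq:14}.
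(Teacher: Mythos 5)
Your proposal is correct and follows essentially the same route as the paper: the forward direction is indeed the preceding discussion, and for the converse you write $\sigma^-=\t+a^-y$ (equivalently $\sigma^-=\t+\tau^+\p$ with $\tau^+=a^-\,y\wedge\t$), gauge $\eta^+$ by $\d\tau$ so that $\eta\p=\d\t$, deduce $(\eta\q,\p)=0$, and integrate $\pi\eta\q$ to obtain the associate net with associate Gauss map $n$. The only cosmetic difference is that you route the last step explicitly through \cref{th:16} and \eqref{eq:33}, whereas the paper cites the $\Omega$-net duality discussion wholesale; the content is identical.
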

\begin{proof}
  Let $f$ lift $(x,n)$.  We need only prove the reverse implication.  For this,
  write $\sigma^-=\t+\tau^+\p$, for some
  $\tau^+\in\Gamma\Wedge^2f$, and consider
  $\eta:=\eta^+-\d\tau^+$.  Then
  \begin{equation*}
    \eta\p=(\d\sigma^-\curlywedge\sigma^+)\p-\d\tau^{+}\p=\d\sigma^--\d\tau^+\p=\d\t.
  \end{equation*}
  In particular, $(\eta\p,\q)=0$ and so we have $\dual{x}:\dom\to\R^3$,
  edge-parallel to $x$, with $\d\dual{x}=\pi\eta\q$.  Thus
  $\dual{x}$ is associate to $x$ with associate Gauss map
  $n$.  Otherwise said, $(x,n)$ is Guichard.
\end{proof}

This analysis yields more.  Let $(x,n)$ be Guichard with
Legendre lift $(f,\eta)$ satisfying \eqref{eq:51}.  We have
seen that $f$ contains isothermic sphere congruences
$s^{\pm}$ with \K\ dual lifts $\sigma^{\pm}$ and that there
is $\tau^{+}\in\Gamma\Wedge^{2}f$ with
$\eta^+=\eta+\d\tau^+$ and $\sigma^-=\t+\tau^+\p$.  Now set
$\tau^-:=\tau^++\sigma^+\wedge\sigma^-$.  An easy
computation shows that the situation is completely
symmetric:
\begin{align*}
  \eta^{\pm}&=\eta+\d\tau^{\pm}\\
  \sigma^{\pm}&=\t+\tau^{\mp}\p,
\end{align*}
where we have used \eqref{eq:15} for the first identity.  In
particular, by \eqref{eq:28}, $x^{\pm}$ have Christoffel
duals $\dual{x}^{\pm}=\dual{x}+\pi\tau^{\pm}\q$.  The
corresponding $2$-spheres in $\R^{3}$ have signed radii
$r^{\pm}=-1/(\sigma^{\pm},\q)$ and
$\dual{r}^{\pm}=-(\dual{x}^{\pm},\p)$.  However,
\begin{equation*}
  (\sigma^{\pm},\q)=(\t+\tau^{\mp}\p,\q)=-(\p,\tau^{\mp}\q)=-(\p,\dual{x}+\tau^{\mp}\q)
  =-(\p,\dual{x}^{\mp}).
\end{equation*}
We therefore conclude, as does Demoulin
\cite{demoulin_sur_1911-1} in the smooth case:
\begin{theorem}
  \label{th:27}
  Let $(x,n)$ be Guichard with associate net $\dual{x}$.
  Then $(x,n)$ and $(\dual{x},n)$ are enveloped by
  Christoffel dual isothermic sphere congruences
  $x^{\pm},\dual{x}^{\pm}$ with radii satisfying
  \begin{equation*}
    r^{\pm}\dual{r}^{\mp}=-1.
  \end{equation*}
\end{theorem}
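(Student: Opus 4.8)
The plan is to assemble the ingredients already laid out in the paragraph preceding the statement. From \cref{th:26} and the ensuing discussion we know the applicable Legendre map $f=\spn{y,\t}$ lifting $(x,n)$ carries isothermic sphere congruences $s^{\pm}<f$ with \K\ dual lifts written symmetrically as $\sigma^{\pm}=\t+\tau^{\mp}\p$, normalised so that $(\sigma^{\pm},\p)\equiv-1$, and that by \eqref{eq:28} their Christoffel duals are $\dual{x}^{\pm}=\dual{x}+\pi\tau^{\pm}\q$. By \cref{th:14} the latter are the isothermic sphere congruences enveloping the dual Legendre map, which lifts the associate net $(\dual{x},n)$; so $x^{\pm}$ and $\dual{x}^{\pm}$ are the asserted Christoffel dual pairs enveloping $(x,n)$ and $(\dual{x},n)$. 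What remains is to compute the radii and combine.

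The key step is to read off the signed radii in the Laguerre model. Since a point $z=c+r\p\in\R^{3,1}$ satisfies $(z,\p)=-r$, the affine lift $\dual{x}^{\pm}\in\R^{3,1}$ of the dual sphere congruence has radius $\dual{r}^{\pm}=-(\dual{x}^{\pm},\p)$. For $s^{\pm}<f$ one passes through the Euclidean lift $y^{\pm}$ (with $(y^{\pm},\q)=-1$), whose projection to $\R^{3,1}$ has its radius as $\p$-component; rescaling $y^{\pm}$ to the lift $\sigma^{\pm}$ with $(\sigma^{\pm},\p)=-1$ and chasing the scalars gives $r^{\pm}=-1/(\sigma^{\pm},\q)$.

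Now the theorem follows at once from the symmetry $\sigma^{\pm}=\t+\tau^{\mp}\p$. Using $(\t,\q)=0$, $(\p,\q)=-1$ and $\d\dual{x}=\pi\eta\q$ one computes, as in the text,
\[
  (\sigma^{\pm},\q)=(\t+\tau^{\mp}\p,\q)=-(\p,\tau^{\mp}\q)=-(\p,\dual{x}+\tau^{\mp}\q)=-(\dual{x}^{\mp},\p),
\]
and therefore
\[
  r^{\pm}\dual{r}^{\mp}=\Bigl(-\frac{1}{(\sigma^{\pm},\q)}\Bigr)\bigl(-(\dual{x}^{\mp},\p)\bigr)
  =\frac{(\dual{x}^{\mp},\p)}{(\sigma^{\pm},\q)}=\frac{(\dual{x}^{\mp},\p)}{-(\dual{x}^{\mp},\p)}=-1.
\]

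I expect the only real pitfall to be bookkeeping with the $\pm/\mp$ indices — the Christoffel dual of $x^{+}$ is $\dual{x}^{+}$, yet it is $\dual{x}^{-}$ that pairs with $\sigma^{+}$ under $(\cdot,\q)$ — together with keeping the sign conventions in the radius formulas straight, since a slip there would invert the product. No genuinely new geometric input beyond \cref{th:14}, \cref{th:26}, and the symmetric normalisation $\sigma^{\pm}=\t+\tau^{\mp}\p$ is needed.
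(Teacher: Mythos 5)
Your proposal is correct and follows the paper's own argument essentially verbatim: the symmetric normalisation $\sigma^{\pm}=\t+\tau^{\mp}\p$, the Christoffel duals $\dual{x}^{\pm}=\dual{x}+\pi\tau^{\pm}\q$ from \eqref{eq:28}, the radius formulas $r^{\pm}=-1/(\sigma^{\pm},\q)$ and $\dual{r}^{\pm}=-(\dual{x}^{\pm},\p)$, and the key computation $(\sigma^{\pm},\q)=-(\dual{x}^{\mp},\p)$ are exactly the steps the paper uses. The sign and index bookkeeping in your final display checks out.
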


\subsection{Special \texorpdfstring{$\Omega$}{Omega}-nets}
\label{sec:spec-texorpdfstr-net}
There is another approach to Guichard nets and other
reductions of $\Omega$-nets which arises by applying the
theory of special isothermic nets, due to Bianchi
\cite{bianchi_ricerche_1905} in the classical smooth case
and developed in
\cite{burstall_discrete_2014,burstall_discrete_2015}.  The
virtue of this viewpoint is that it plays well with the
transformation theory.

Recall:
\begin{definition}[Special isothermic net, special $\Omega$-net]
  Let $s:\dom\to\QQ^{p,q}$ be isothermic with family of flat
  connections $\Gamma^s(t)$.  A \emph{polynomial conserved
    quantity of degree $d\in\N$} is a family
  $p(t)=\sum_{n=0}^dp^{(n)}t^n$ of sections of the trivial
  bundle $\dom\times\R^{p+1,q+1}$, with $p^{(d)}$ not
  identically zero, such that $p(t)$ is
  $\Gamma^s(t)$-parallel:
  \begin{equation}\label{eq:58}
    \Gamma^s(t)_{ji}p(t)_i=p(t)_j.
  \end{equation}

  If $s$ admits such a polynomial conserved quantity, we say
  that $s$ is a \emph{special isothermic net of type $d$}.

  If $f$ is an $\Omega$-net, we say that $f$ is a
  \emph{special $\Omega$-net of type $d$} if it is spanned by
  isothermic sphere congruences $s^{\pm}$ for which $s=s^+\sqcup
  s^-$ is special isothermic of type $d$.
\end{definition}

The condition that $f$ be special $\Omega$ is independent of
the choice of isothermic sphere congruence:
\begin{lemma}
  \label{th:40}
  $f:\dom\to\cZ$ is special $\Omega$ of type $d$ if and
  only if there is $s^+<f$ which has a degree $d$ polynomial conserved
  quantity $p^+(t)$ with $(p^+)^{(d)}\perp f$.
\end{lemma}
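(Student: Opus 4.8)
The plan is to transport the special-isothermic structure across the stacking construction. Fix an isotropic Darboux pair $s^{\pm}<f$ spanning $f$, put $s=s^{+}\sqcup s^{-}$ on $\set{0,1}\times\dom$, and let $\Gamma^{s}(t)$ be its family of flat connections (\cref{th:21}). Along horizontal edges $\Gamma^{s}(t)$ restricts to the families $\Gamma^{\pm}(t)$ of $s^{\pm}$; and, since an isotropic Darboux pair has vertical edge-label $\infty$, \eqref{eq:41} shows that the $\Gamma^{s}(t)$-transport along the vertical edge at $i$ is $\exp(t\tau_{i})$, where $\tau\in\Gamma(s^{+}\wedge s^{-})$ satisfies $\eta^{-}=\eta^{+}+\d\tau$. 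Two elementary facts drive the argument. First, $s^{+}$ and $s^{-}$ are mutually orthogonal null lines, so writing $\tau_{i}=\sigma^{+}_{i}\wedge\sigma^{-}_{i}$ and using \eqref{eq:14} gives $\tau_{i}^{2}=0$; hence $\exp(t\tau_{i})=\Id+t\tau_{i}$ is affine in $t$, and $\tau_{i}v=0$ exactly when $v\perp f_{i}$. Second, by \cref{th:37} the gauge transformation $\exp(t\tau)$ conjugates $\Gamma^{+}(t)$ into $\Gamma^{-}(t)$.

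For the direction $(\Leftarrow)$, suppose $s^{+}<f$ is isothermic with a degree-$d$ polynomial conserved quantity $p^{+}(t)$ satisfying $p^{+(d)}\perp f$. By \cref{th:4} and \cref{th:53}, complete $s^{+}$ to an isotropic Darboux pair $s^{+},s^{-}$ spanning $f$ (this is where the hypothesis that $f$ is an $\Omega$-net, not merely a line congruence containing an isothermic subbundle, is used). Set $p^{-}(t):=(\Id+t\tau)p^{+}(t)$; a one-line computation from \cref{th:37} shows $p^{-}(t)$ is $\Gamma^{-}(t)$-parallel, and its coefficient of $t^{d+1}$ is $\tau p^{+(d)}=0$, so $\deg p^{-}(t)\le d$. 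Then $p(t):=p^{+}(t)\sqcup p^{-}(t)$ is $\Gamma^{s}(t)$-parallel: along horizontal edges this is the parallelity of $p^{\pm}(t)$, and along the vertical edge at $i$ it is the identity $p^{-}_{i}(t)=\exp(t\tau_{i})p^{+}_{i}(t)$. Since $p(t)$ restricts to $p^{+}(t)$ on one copy of $\dom$, its top coefficient is not identically zero, so $\deg p(t)=d$ and $s$ is special isothermic of type $d$. Hence $f$ is special $\Omega$ of type $d$.

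For $(\Rightarrow)$, let $s^{\pm}<f$ span $f$ with $s=s^{+}\sqcup s^{-}$ special isothermic of type $d$, witnessed by $p(t)$, and let $p^{\pm}(t)$ be its restrictions to the two copies of $\dom$; thus $p^{+}(t)$ is $\Gamma^{+}(t)$-parallel, $p^{-}(t)$ is $\Gamma^{-}(t)$-parallel, and $p^{-}(t)=(\Id+t\tau)p^{+}(t)$, equivalently $p^{+}(t)=(\Id-t\tau)p^{-}(t)$. Write $p^{\pm}(t)=\sum_{n}a^{\pm}_{n}t^{n}$, where $a^{\pm}_{n}$ is the restriction of the $t^{n}$-coefficient of $p(t)$ to the corresponding slice; both $p^{\pm}(t)$ have degree $\le d$. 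Reading off the $t^{d+1}$-coefficients of the identities $(\Id+t\tau)p^{+}(t)=p^{-}(t)$ and $(\Id-t\tau)p^{-}(t)=p^{+}(t)$ gives $\tau a^{+}_{d}=\tau a^{-}_{d}=0$, i.e.\ $a^{\pm}_{d}\perp f$. Since the top coefficient of $p(t)$ is not identically zero, at least one of $a^{+}_{d},a^{-}_{d}$ is not identically zero, and the corresponding congruence $s^{+}$ or $s^{-}$ — each an isothermic sphere congruence contained in $f$ — carries a degree-$d$ polynomial conserved quantity with top coefficient orthogonal to $f$, as required.

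Everything beyond the two directions is mechanical given \cref{th:21} and \cref{th:37}; the point demanding most care is the degree bookkeeping. One must rule out that restricting a genuine degree-$d$ conserved quantity of $s$ lowers its degree on \emph{both} slices: this cannot happen, because $a^{-}_{d}=a^{+}_{d}+\tau a^{+}_{d-1}$ together with $\tau^{2}=0$ forces whichever slice retains degree $d$ to have its top coefficient annihilated by $\tau$, hence orthogonal to $f$. The remaining, milder, subtlety is the appeal to \cref{th:4} and \cref{th:53} to realise a prescribed isothermic $s^{+}<f$ inside an isotropic Darboux pair spanning $f$.
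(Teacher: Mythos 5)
Your proof is correct and follows essentially the same route as the paper's: transport the conserved quantity across the vertical edges via $\exp(t\tau)=\Id+t\tau$, use the gauge relation $(\exp t\tau)\cdot\Gamma^+(t)=\Gamma^-(t)$, and read off the vanishing of the $t^{d+1}$-coefficient as the condition $(p^+)^{(d)}\perp f$. Your extra care in the forward direction --- observing that $\tau$ annihilates the top coefficients on \emph{both} slices and relabelling $s^{\pm}$ so that the slice retaining degree $d$ serves as the witness --- tidies up a degree-bookkeeping point that the paper's proof passes over silently.
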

\begin{proof}
  Let $f=s^+\oplus s^-$ be special $\Omega$ so that
  $s=s^+\sqcup s^-$ has a degree $d$ conserved quantity
  $p(t)=p^{+}(t)\sqcup p^-(t)$.  If $\eta^{-}=\eta^++\d\tau$
  then $p(t)$ being $\Gamma^s(t)$-parallel on vertical edges
  amounts to:
  \begin{equation}
    \label{eq:60}
    \exp t\tau p^{+}(t)=p^-(t).
  \end{equation}
  Certainly $p^+(t)$ is a polynomial conserved quantity for
  $s^{+}$.  Moreover, the left side of \eqref{eq:60} has a term of degree
  $d+1$ with coefficient $\tau(p^{+})^{(d)}$ which must
  vanish since $\deg p^{-}(t)=d$.  This is equivalent to the
  demand that $(p^{+})^{(d)}\perp f$.

  For the converse, given $s^+,p^{+}(t)$, choose an
  $m=\infty$ Darboux transform $s^-$ so that $f=s^{+}\oplus
  s^-$.  With $\tau\in \Gamma s^+\wedge s^-$ such that
  $\eta^-=\eta^++\d\tau$, define $p^-(t)$ by \eqref{eq:60}
  which is of degree $d$ since $(p^{+})^{(d)}\perp f$.  Now
  set $p(t)=p^+(t)\sqcup p^-(t)$ to get a degree $d$
  conserved quantity of $s^+\sqcup s^-$.
\end{proof}

In this context, there is a useful reformulation when $d=1$:
\begin{lemma}
  \label{th:32}
  Let $(s^+,\eta^+):\dom\to\QQ$ be isothermic with flat
  connections $\Gamma^+(t)$ and edge-labelling
  $m_{ij}\neq\infty$.  Then $p^+(t)=\c+t\xi$ is a
  polynomial conserved quantity if and only if
  \begin{subequations}
    \label{eq:53}
    \begin{align}
      \d \c&=0\label{eq:55}\\
      \d \xi&=\eta^+ \c\label{eq:56}\\
      \xi&\perp s^+.\label{eq:57}
    \end{align}
  \end{subequations}
\end{lemma}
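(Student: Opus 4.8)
The plan is to compute the connection $\Gamma^+(t)_{ji}$ explicitly and then extract the three conditions by comparing powers of $t$. Write $\mu\in\Gamma s^+$ for the Moutard lift, so that $m_{ij}=1/(\mu_i,\mu_j)$ and $\eta^+_{ji}=\mu_j\wedge\mu_i$ by \cref{thm:isothermicMoutard} and the discussion in \cref{sec:isoth-nets-mout}. Since $m_{ij}\neq\infty$ we have $(\mu_i,\mu_j)\neq0$, so the bilinear form restricts non-degenerately to $s_i\oplus s_j$ and $\R^{p+1,q+1}=(s_i\oplus s_j)\oplus(s_i\oplus s_j)^{\perp}$. Feeding this splitting into the definition \eqref{eq:40} --- which here reads $\Gamma^+(t)_{ji}=\Gamma^{s_j}_{s_i}(1-t/m_{ij})$ --- a short computation gives, for every $v\in\R^{p+1,q+1}$,
\[
  \Gamma^+(t)_{ji}(v)=v+\frac{t\,m_{ij}}{m_{ij}-t}(v,\mu_j)\,\mu_i-t\,(v,\mu_i)\,\mu_j .
\]

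For the ``only if'' direction I would substitute $v=\c_i+t\xi_i$ into the parallelism condition $\Gamma^+(t)_{ji}(\c_i+t\xi_i)=\c_j+t\xi_j$, clear the denominator by multiplying through by $m_{ij}-t$, and compare coefficients of $t^k$ in the resulting polynomial identity. The $t^0$ term gives $\c_j=\c_i$, i.e.\ \eqref{eq:55}. Granted this, the $t^1$ term gives $\xi_j-\xi_i=(\mu_j,\c_i)\mu_i-(\mu_i,\c_i)\mu_j=\eta^+_{ji}\c_i$; as $\c_i+\c_j=2\c_i$ this is precisely \eqref{eq:56}. Using the first two, the $t^1$ relation shows $\xi_j-\xi_i$ lies in $\spn{\mu_i,\mu_j}$, so the $t^2$ term is an identity in that plane; since $\mu_i,\mu_j$ are independent it splits into $(\xi_i,\mu_i)=0$ and $(\xi_i,\mu_j)=-(\c_i,\mu_j)/m_{ij}$. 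Pairing \eqref{eq:56} with $\mu_j$ and using the latter then also gives $(\xi_j,\mu_j)=0$, so \eqref{eq:57} holds; the $t^3$ term is automatic.

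For the ``if'' direction I would reverse this. Note first that \eqref{eq:56} and \eqref{eq:57} already force $(\xi_i,\mu_j)=-(\c_i,\mu_j)/m_{ij}$: pair $\xi_j-\xi_i=(\mu_j,\c_i)\mu_i-(\mu_i,\c_i)\mu_j$ with $\mu_j$ and use $(\xi_j,\mu_j)=0$. Hence for $v=\c_i+t\xi_i$ one has $(v,\mu_i)=(\c_i,\mu_i)$ by \eqref{eq:57} and $(v,\mu_j)=(\c_i,\mu_j)(m_{ij}-t)/m_{ij}$, so the displayed formula collapses to
\[
  \Gamma^+(t)_{ji}(\c_i+t\xi_i)=\c_i+t\xi_i+t(\c_i,\mu_j)\mu_i-t(\c_i,\mu_i)\mu_j=\c_i+t\bigl(\xi_i+\eta^+_{ji}\c_i\bigr),
\]
which equals $\c_j+t\xi_j$ by \eqref{eq:55} and \eqref{eq:56}. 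Thus $p^+(t)=\c+t\xi$ is $\Gamma^+(t)$-parallel, i.e.\ a polynomial conserved quantity.

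The only genuine work is deriving the displayed formula for $\Gamma^+(t)_{ji}$ and keeping the coefficient bookkeeping straight. The non-degeneracy of $s_i\oplus s_j$ --- exactly where the hypothesis $m_{ij}\neq\infty$ enters --- is used both to obtain that splitting and to legitimately multiply by $m_{ij}-t$.
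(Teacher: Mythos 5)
Your proof is correct and follows essentially the same route as the paper: the paper extracts \eqref{eq:55} and \eqref{eq:56} from the $0$-jet and $1$-jet at $t=0$, obtains the orthogonality conditions by demanding that the quadratic term and the pole at $t=m_{ij}$ of $\Gamma^+(t)_{ji}p^+(t)_i$ vanish, and shows these are equivalent to \eqref{eq:57} given \eqref{eq:56} — exactly the content of your $t^2$ and $t^3$ coefficients after clearing the denominator. The converse is likewise the same reversal, so the only difference is that you make the connection formula and the bookkeeping fully explicit.
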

\begin{proof}
  If $p^+(t)$ is $\Gamma^+(t)$-parallel then evaluating
  \eqref{eq:58} at $t=0$ yields \eqref{eq:55} while
  differentiating it at $t=0$ gives \eqref{eq:56}.  Finally
  $\Gamma^+(t)_{ji}p^+(t)_{i}$ has a quadratic term along
  the $s^+_j$-component of $\xi_i$ and a simple pole along
  the $s^+_{i}$-component of $p^+(m_{ij})_i$.  These terms
  must both vanish, or, equivalently,
  \begin{equation}
    \label{eq:59}
    \xi_{i}\perp s^+_i\qquad p^+(m_{ij})_{i}\perp s^+_j.
  \end{equation}
  However, with $\mu\in\Gamma s^+$ the Moutard lift, we use
  $\eta^+_{ji}=\mu_j\wedge\mu_i$ and
  $m_{ij}=1/(\mu_i,\mu_j)$ along with \eqref{eq:56} to get
  \begin{equation*}
    (\xi_j,\mu_j)=\frac{1}{m_{ij}}(p^+(m_{ij})_i,\mu_j).
  \end{equation*}
  Thus, given \eqref{eq:56}, \eqref{eq:57} is equivalent to
  \eqref{eq:59}.

  For the converse, if \eqref{eq:53} holds then
  $\Gamma^+(t)_{ji}p^+(t)_i-p^+(t)_{j}$ is a degree $1$
  polynomial in $t$ with vanishing $1$-jet at $t=0$ and so
  vanishes identically.
\end{proof}

Since the $\Gamma(t)$ are metric connections, when $p(t)$ is
a degree $d$ conserved quantity, $(p(t),p(t))$ is a constant
coefficient polynomial in $t$ of degree no more than $2d$
which encodes much of the geometry of the situation.  For
example, let us take $d=1$ and suppose that $(p(t),p(t))$ is
affine linear.  We prove:
\begin{theorem}
  \label{th:33}
  Let $f:\dom\to\cZ$ be an $\Omega$-net.  Then $f$ lifts a
  Guichard net if and only if it is special $\Omega$ with
  linear conserved quantity $p(t)$ satisfying
  $(p(t),p(t))=-1-2ct$, for $c$ a non-zero constant.
\end{theorem}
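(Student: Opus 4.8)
The plan is to play off the characterisation of \cref{th:26} — that $f$ lifts a Guichard net exactly when it contains \K\ dual sections $\sigma^{\pm}$ with $(\sigma^{\pm},\p)\equiv-1$ — against the unpacked description of ``special $\Omega$ of type $1$'' furnished by \cref{th:40,th:32}. The first move is bookkeeping: writing $p(t)=\c+t\xi$, the requirement $(p(t),p(t))=-1-2ct$ says precisely that $(\c,\c)=-1$, $(\c,\xi)=-c$ and $(\xi,\xi)=0$, so that $\c$ is a unit timelike vector; it will serve as the point sphere complex, and in the forward direction it will emerge that $\c$ \emph{is} the point sphere complex relative to which $f$ is Guichard, so the two readings of the statement coincide.

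For ``Guichard $\Rightarrow$ special $\Omega$'' I would start with \K\ dual sections $\sigma^{\pm}<f$ with $(\sigma^{\pm},\p)\equiv-1$; since $f$ is a null-plane bundle these span an isotropic Darboux pair $s^{\pm}=\spn{\sigma^{\pm}}$ of isothermic sphere congruences with $s^{+}\perp s^{-}$ and both $s^{\pm}$ valued in $\QQ$. Setting $\c:=\p$ and $\xi:=\sigma^{-}$, the identity $\eta^{+}=\d\sigma^{-}\curlywedge\sigma^{+}$ of \eqref{eq:15} together with $(\sigma^{+},\p)\equiv-1$ gives $\eta^{+}\c=\d\sigma^{-}=\d\xi$; orthogonality gives $\xi\perp s^{+}$; and $\sigma^{-}\in s^{-}<f$ is null and orthogonal to $s^{+}$, hence $\xi\perp f$. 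By \cref{th:32,th:40} applied to $s^{+}$, $p^{+}(t)=\c+t\xi$ is then a degree-$1$ conserved quantity of $s^{+}$ with $(p^{+})^{(1)}\perp f$, so $f$ is special $\Omega$ of type $1$ with conserved quantity $p(t)=p^{+}(t)\sqcup p^{-}(t)$ on $s^{+}\sqcup s^{-}$; and $(p(t),p(t))=(\p+t\sigma^{-},\p+t\sigma^{-})=-1-2t$, which is of the required form with $c=1$.

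For the converse I would take $\p:=\c$ as the point sphere complex. From \cref{th:40,th:32} the data is: $f=s^{+}\oplus s^{-}$ with $s^{\pm}$ an isotropic Darboux pair (so $s^{+}\perp s^{-}$, both in $\QQ$), $\d\c=0$, $\d\xi=\eta^{+}\c$, $\xi\perp s^{+}$, $\xi\perp f$, $(\xi,\xi)=0$ and $(\c,\xi)=-c\neq0$. The step I expect to be the crux is to promote ``$\xi\perp f$'' to ``$\xi\in\Gamma f$'' — a priori $\xi$ only lies in the larger bundle $f^{\perp}$. I would argue this pointwise: fixing $i$ and a non-zero $\sigma^{-}_{i}\in s^{-}_{i}$, both $\xi_{i}$ and $\sigma^{-}_{i}$ are null and orthogonal to $s^{+}_{i}$, hence descend to null vectors of the Lorentzian quotient $(s^{+}_{i})^{\perp}/s^{+}_{i}\cong\R^{3,1}$; since $(\xi_{i},\sigma^{-}_{i})=0$ these descended vectors are orthogonal null vectors in $\R^{3,1}$, therefore parallel, and as $s^{-}_{i}\neq s^{+}_{i}$ by regularity this forces $\xi_{i}\in s^{+}_{i}+s^{-}_{i}=f_{i}$.

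With this in hand the rest is quick. I would set $\sigma^{-}:=\xi/c\in\Gamma f$ (so $(\sigma^{-},\p)\equiv-1$) and, under the mild genericity that $s^{+}$ is nowhere orthogonal to $\c$, take $\sigma^{+}\in\Gamma s^{+}$ to be the affine lift of $s^{+}$ determined by $\alpha:=(\cdot,\c)\in V^{*}$, so that $(\sigma^{+},\c)\equiv-1$. Since, under the identification \eqref{eq:14}, $\eta^{+}\c$ is exactly the interior product $i_{\alpha}\eta^{+}$, \cref{th:1} identifies $\xi$ — up to an additive constant — with the \K\ dual of $\sigma^{+}$; because \K\ duals are determined only up to translation and a constant scale (\cref{th:8}), the rescaled $\sigma^{-}=\xi/c$ is again \K\ dual to $\sigma^{+}$. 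Thus $f$ contains the \K\ dual pair $\sigma^{\pm}$ with $(\sigma^{\pm},\p)\equiv-1$, and \cref{th:26} shows $f$ lifts a Guichard net (relative to $\p=\c$). Two minor points to attend to along the way: \cref{th:32} is stated for finite edge-labelling, so on edges where $m_{ij}=\infty$ one checks directly that $\c+t\xi$ is $\Gamma^{s^{+}}(t)$-parallel using the $\exp$-form of the connection; and the genericity $s^{+}\not\perp\c$ holds off a set of measure zero.
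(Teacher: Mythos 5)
Your proposal is correct and follows essentially the same route as the paper: the forward direction builds $p^{+}(t)=\p+t\sigma^{-}$ from the \K\ dual lifts of \cref{th:26} and invokes \cref{th:32,th:40}, while the converse recognises $\xi$ as a null section of $f^{\perp}$, hence a section of $f$, that is \K\ dual to the affine lift $\sigma^{+}$, and closes with \cref{th:26}. The only differences are cosmetic --- you normalise by replacing $\xi$ with $\xi/c$ where the paper rescales $t$ (cf.\ \cref{th:43}), and you argue $\xi\in\Gamma f$ via the Lorentzian quotient $(s^{+})^{\perp}/s^{+}$ where the paper implicitly uses that $f^{\perp}/f$ is definite --- and you usefully flag the $m_{ij}=\infty$ caveat in \cref{th:32} that the paper passes over in silence.
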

\begin{proof}
  If $f$ lifts a Guichard net, let $\sigma^{\pm}$ be the \K\
  dual sections provided by \cref{th:26} and set
  $s^+=\spn{\sigma^+}$.  Then
  $\eta^+=\d\sigma^-\curlywedge\sigma^+$ so that
  $\d\sigma^-=\eta^+\p$.  Now $p^+(t):=\p+t\sigma^-$ is a
  polynomial conserved quantity for $(s^+,\eta^+)$ by
  \cref{th:32} and $(p^{+}(t),p^{+}(t))=-1-2t$.  Since
  $\sigma^-\perp f$, \cref{th:40} tells us that $f$ is
  special $\Omega$ with a conserved quantity of the desired
  kind.
 
  For the converse, let $s=s^+\sqcup s^-$ have linear
  conserved quantity $p(t)=p^+(t)\sqcup p^-(t)$ with
  $(p(t),p(t))=-1-2ct$.  By scaling $t$ (and so $\eta$,
  $m_{ij}$, see \cref{th:43}), if necessary, we may take
  $c=1$ and, without loss of generality, set the constant
  $p^{(0)}=\p$.  Write $p^+(t)=\p+t\xi$ and note that
  $\xi\in\Gamma f$ being a null section of $f^{\perp}$.
  Moreover, let $\sigma^+\in\Gamma s^+$ with
  $(\sigma^+,\p)=-1$.  Since $(\xi,\p)=-1$ also,
  \eqref{eq:56} tells us that $\xi$ is \K\ dual to
  $\sigma^+$.  Thus $f$ lifts a Guichard net by
  \cref{th:26}.
\end{proof}

\subsubsection{Variations on the theme}
\label{sec:variations-theme}

We have just seen that Guichard surfaces are special
$\Omega$ nets of type $1$ with
affine linear $(p(t),p(t))$.  Other specialisations of
$\Omega$-nets are obtained by varying
the possibilities for the linear function
$(p(t),p(t))=a+bt$.  We may argue as in \cref{th:33} to
conclude that such $\Omega$-nets contain \K\
dual lifts $\sigma^{\pm}$ with
\begin{equation*}
  (\sigma^+,p^{(0)})=-1,\qquad (\sigma^-,p^{(0)})=b/2.
\end{equation*}
Moreover, we can scale $p(t)$ by a constant
to ensure that $a^2=1$ or $0$ and then a constant rescaling
of $t$ allows us to assume that $b=-2$ or $0$.  This gives
four possibilities:
\begin{compactenum}
\item $a^{2}=1$ and $b=-2$.  For $a=-1$, this is the
  Guichard case we have already studied while $a=1$ gives
  the entirely analogous theory of Guichard nets in
  $\R^{2,1}$.
\item $a^2=1$ and $b=0$.  Here we have
  \begin{equation*}
    (\sigma^+,\p)=-1,\qquad (\sigma^{-},\p)=0
  \end{equation*}
  where $\p=p^{(0)}$ with $(\p,\p)=a$.

  Thus $\sigma^{-}$ spans an isothermic net with
  stereoprojection $x$ in either $\R^{3}$ or $\R^{2,1}$.  We
  therefore have a principal net $(x,n)$ with $x$ isothermic
  or equivalently, $\dual{n}$ is constant.  Conversely, any
  such principal net arises this way.
\item $a^2=0$ and $b=-2$.  Here we may take $p^{(0)}=\q$ and then
  \begin{equation*}
    (\sigma^{\pm},\q)=-1.
  \end{equation*}
  Equivalently, the corresponding principal net $(x,n)$ has
  $\dual{x}=x$ so that
  \begin{equation*}
    \d x\curlywedge\d x+\d\dual{n}\curlywedge\d n=0.
  \end{equation*}
  We call such nets \emph{$L$-Guichard} and remark that
  these nets are additionally characterised by having
  self-dual Legendre maps: $L=\dual{L}$.
\item $a^2=b=0$.  Here we have
  \begin{equation*}
    (\sigma^+,\q)=-1,\qquad(\sigma^-,\q)=0
  \end{equation*}
  so that the tangent sphere congruence $\spn{\t}$ is
  isothermic.  Equivalently, $\dual{x}$ is constant so that
  \begin{equation*}
    \d n\curlywedge\d\dual{n}=0.
  \end{equation*}
  With Bobenko--Suris \cite[\S5]{bobenko_isothermic_2007} we
  call such nets \emph{$L$-isothermic}\footnote{For
    Bobenko--Suris, the $L$-isothermic net is the conical
    net in the space of affine $2$-planes in $\R^3$ given
    by the planes through $x$ normal to $n$.} and remark
  that they are characterised by the requirement that
  $(\dual{n},n)$ is a minimal principal net, c.f.\
  \cref{th:31}, or equivalently, that $n$ is isothermic in
  $S^2$ with Christoffel dual $\dual{n}$, c.f.\
  \cite[Theorem~5.3]{bobenko_isothermic_2007}.  See
  \cite{Eis08,MusNic00} for the smooth theory of
  $L$-isothermic surfaces.
\end{compactenum}

\subsection{Transformations}

The transformation theory of special $\Omega$-nets
$f=s^+\oplus s^-$ now proceeds by applying the results of
\cite{burstall_discrete_2014,burstall_discrete_2015,burstall_polynomial_2018}
to the special isothermic net $s=s^+\sqcup s^-$.

\subsubsection{Darboux transformations}

According to \cref{th:35}, a Darboux transformation
$\hat{f}$ of $f$ amounts to a Darboux
transformation $\hat{s}$ of $s$ with parameter $m$.  When
$s$ has a polynomial quantity $p(t)$ of degree $d$ then $\hat{p}(t)$ given
by
\begin{equation*}
  \hat{p}(t):=\Gamma_s^{\hat{s}}(1-t/m)p(t)
\end{equation*}
is also a polynomial conserved quantity of degree $d$ so
long as $\hat{s}\perp p(m)$ \cite[Lemma
4]{burstall_discrete_2015}.  Since $\hat{s}$ and $p(m)$ are
both $\Gamma^{s}(m)$-parallel, this last holds as soon as it
holds at a single point.  In this case, $p(t)$ and
$\hat{p}(t)$ have the same constant term and the constant
polynomials $(p(t),p(t))$ and $(\hat{p}(t),\hat{p}(t))$
coincide.  Thus, so long as $p(m)\perp \hat{s}$, $\hat{f}$
lifts a Guichard, $L$-Guichard, isothermic or $L$-isothermic
net if $f$ does.  In this way, we obtain discrete versions
of the Eisenhart transformations of Guichard surfaces
\cite{Eis14} and the Bianchi--Darboux transformations of
$L$-isothermic surfaces \cite{B1915,Eis08,MusNic00}.

\subsubsection{Calapso transformations}
Again, Calapso transforms $f(t)$ of $f$ amount to Calapso
transforms $s(t)$ of $s$ thanks to \cref{th:38}.  If $s$
has polynomial conserved quantity $p(u)$ then it follows
from \cref{th:23} that $p(t)(u):=T(t)p(u+t)$ is a polynomial
conserved quantity of the same degree \cite[Theorem
3]{burstall_discrete_2015}.  Here, however, the constant
coefficient polynomials differ by a translation:
$(p(t)(u),p(t)(u))=(p(u+t),p(u+t))$.  Thus, for example, we
have:
\begin{proposition}
  \label{th:41}
  Let $f$ lift a Guichard net in $\R^3$ with linear
  conserved quantity $p(t)$.  Then $f(t)$ lifts a Guichard
  net in $\R^3$ or $\R^{2,1}$ or an $L$-Guichard net
  according to whether $(p(t),p(t))$ is negative, positive
  or zero, respectively. 
\end{proposition}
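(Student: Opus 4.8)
The plan is to push the linear conserved quantity of $f$ through the Calapso transformation and then read off the type of $f(t)$ from the sign of the constant term of the resulting squared-norm polynomial, using the classification of Section~\ref{sec:variations-theme}.

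First I would use \cref{th:33} to record that, since $f$ lifts a Guichard net in $\R^3$, it is special $\Omega$ of type $1$: it is spanned by isothermic sphere congruences $s^{\pm}<f$ for which $s=s^+\sqcup s^-$ is special isothermic of type $1$ with a linear conserved quantity $p(u)=p^{(0)}+u\,p^{(1)}$ normalised so that $(p(u),p(u))=-1-2cu$ for some $c\neq 0$; in particular $p^{(1)}$ is not identically zero since $\deg p=1$.

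Next I would transport $p$ through the Calapso transform. By \cref{th:38}, $f(t)=s^+(t)\oplus s^-(t)$ with $s(t)=s^+(t)\sqcup s^-(t)$ the Calapso transform of $s$, trivialised by $T(t)$ as in \cref{th:23}. Since $\Gamma^{s(t)}(u)=T(t)\cdot\Gamma^s(t+u)$ by \eqref{eq:45} and $p(u+t)$ is $\Gamma^s(t+u)$-parallel, the family $p(t)(u):=T(t)\,p(u+t)$ is $\Gamma^{s(t)}(u)$-parallel; it is linear in $u$ with leading coefficient $T(t)\,p^{(1)}$, which is not identically zero because $T(t)$ is invertible. Hence $p(t)(u)$ is a genuine degree-$1$ conserved quantity of $s(t)$, so $s(t)$ is special isothermic of type $1$ and therefore $f(t)$ is a special $\Omega$-net of type $1$ by definition. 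Because $T(t)\in\Ortho(4,2)$ is an isometry we moreover get $(p(t)(u),p(t)(u))=(p(u+t),p(u+t))=-1-2c(u+t)$, a linear polynomial in $u$ with non-zero coefficient $-2c$ of $u$ and constant term $-1-2ct=(p(t),p(t))$.

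Finally I would invoke the classification of Section~\ref{sec:variations-theme}: a special $\Omega$-net of type $1$ whose linear conserved quantity has squared-norm polynomial $a'+b'u$ with $b'\neq 0$ lifts a Guichard net in $\R^3$, a Guichard net in $\R^{2,1}$, or an $L$-Guichard net according as $a'$ is negative, positive, or zero --- one normalises $|a'|=1$ (or leaves $a'=0$) by a constant rescaling of the conserved quantity and then rescales the spectral parameter to make $b'=-2$, arriving at case (1) with $a=-1$, at case (1) with $a=1$, or at case (3) of that discussion, respectively. Applying this to $f(t)$, whose linear conserved quantity $p(t)(u)$ has $a'=(p(t),p(t))$ and $b'=-2c\neq 0$, gives exactly the asserted trichotomy. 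I do not expect any serious obstacle: the one point requiring attention is that the coefficient $-2c$ of $u$ in $(p(t)(u),p(t)(u))$ is independent of $t$, so $f(t)$ never degenerates to an $\Omega$-net of type $0$ nor to the isothermic case (2) of Section~\ref{sec:variations-theme}, and this is already settled by the displayed computation above.
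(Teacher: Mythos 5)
Your proposal is correct and follows exactly the paper's route: the paper's (implicit) proof is the paragraph preceding the proposition, which transports the conserved quantity via $p(t)(u)=T(t)p(u+t)$, observes $(p(t)(u),p(t)(u))=(p(u+t),p(u+t))$, and reads off the type from the sign of the constant term using the classification of Section~\ref{sec:variations-theme}. Your additional checks (nonvanishing of the leading coefficient, isometry of $T(t)$, exclusion of the isothermic cases) merely make explicit what the paper leaves implicit.
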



\section{\texorpdfstring{$O$}{O}-systems and \texorpdfstring{$\Omega$}{Omega}-nets}
\label{sec:o-systems-omega}

The theory of $O$-surfaces developed by
Konopelchenko--Schief \cite{SchKon03} and discretised by
Schief \cite{Sch03} offers a uniform approach to integrable surface
geometry in $\R^3$ that fits well with our approach.  We
give a brief account of this theory in our framework and
prove that $\Omega$-surfaces are indeed $O$-surfaces.

\subsection{Combescure transformations}
\label{sec:comb-transf}

\begin{definition}[Combescure transformation]
  Let $x:\dom\to\R^{p,q}$.  A \emph{Combescure
    transformation} of $x$ is a map $x^{*}:\dom\to\R^{p,q}$
  such that
  \begin{compactenum}
  \item $x^{*}$ is edge-parallel to $x$.
  \item $(\d x\wedge \d x^{*})=0$, where $(\,,\,)$ is the
    inner product on $\R^{p,q}$.
  \end{compactenum}
  In this case, we say that $x,x^{*}$ are a \emph{Combescure
  pair}.
\end{definition}

Generically, $x,x^{*}$ are a Combescure pair exactly when they are
edge-parallel circular nets:
\begin{lemma}
  \label{th:28}
  Let $x,x^{*}:\dom\to\R^{p,q}$ be edge-parallel nets with
  non-collinear quadrilaterals.  Define
  $\lambda_{ij}\in\R^{\times}$ by
  $\d x^{*}_{ij}=\lambda_{ij}\d x_{ij}$ on each edge and let
  $\ijkl$ be a quadrilateral on which
  $\lambda_{ij},\lambda_{jk},\lambda_{k\ell},\lambda_{\ell
    i}$ do not all coincide.

  Then the following are equivalent:
  \begin{compactenum}
  \item $x$ is circular on $\ijkl$.
  \item $x^{*}$ is circular on $\ijkl$.
  \item $(\d x\wedge \d x^{*})_{\ijkl}=0$.
  \end{compactenum}
\end{lemma}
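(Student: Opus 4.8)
The plan is to use the symmetry of the hypotheses under $x\leftrightarrow x^{*}$ to reduce to proving the equivalence of (1) and (3). Indeed $x$ is edge-parallel to $x^{*}$ with edge-ratios $1/\lambda_{ij}$, which fail to all coincide precisely when the $\lambda_{ij}$ do; both nets have non-collinear quadrilaterals by hypothesis; and $(\d x\wedge\d x^{*})_{\ijkl}=-(\d x^{*}\wedge\d x)_{\ijkl}$ by \cref{th:48}. Hence (2)$\Leftrightarrow$(3) is just (1)$\Leftrightarrow$(3) with the roles of $x$ and $x^{*}$ exchanged, and it suffices to treat the latter.

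For the core equivalence I would first record an algebraic identity. Set $g_{m}:=\half(x_{m},x_{m})$ for $m\in\{i,j,k,\ell\}$. The Leibniz rule for the $(\cdot,\cdot)$-valued product (\cref{th:45}), together with $\d\circ\d=0$ (\cref{th:44}), gives $\d x\wedge\d x^{*}=\d\bigl(x\wedge\d x^{*}\bigr)$; and since $(x\wedge\d x^{*})_{ji}=\half(x_{i}+x_{j},\d x^{*}_{ji})=\lambda_{ij}(g_{j}-g_{i})$ by edge-parallelism, evaluating on $\ijkl$ yields
\[
  (\d x\wedge\d x^{*})_{\ijkl}=a_{i}g_{i}+a_{j}g_{j}+a_{k}g_{k}+a_{\ell}g_{\ell},
\]
where $a_{i}:=\lambda_{\ell i}-\lambda_{ij}$, $a_{j}:=\lambda_{ij}-\lambda_{jk}$, $a_{k}:=\lambda_{jk}-\lambda_{k\ell}$, $a_{\ell}:=\lambda_{k\ell}-\lambda_{\ell i}$. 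These satisfy $a_{i}+a_{j}+a_{k}+a_{\ell}=0$, while expanding $\lambda_{ij}\d x_{ji}+\lambda_{jk}\d x_{kj}+\lambda_{k\ell}\d x_{\ell k}+\lambda_{\ell i}\d x_{i\ell}=\d x^{*}_{ji}+\d x^{*}_{kj}+\d x^{*}_{\ell k}+\d x^{*}_{i\ell}=0$ in terms of the $x_{m}$ gives $a_{i}x_{i}+a_{j}x_{j}+a_{k}x_{k}+a_{\ell}x_{\ell}=0$; moreover $(a_{i},a_{j},a_{k},a_{\ell})\neq 0$ exactly when not all $\lambda$'s coincide. Since the two relations $\d x_{ji}+\d x_{kj}+\d x_{\ell k}+\d x_{i\ell}=0$ and $\lambda_{ij}\d x_{ji}+\lambda_{jk}\d x_{kj}+\lambda_{k\ell}\d x_{\ell k}+\lambda_{\ell i}\d x_{i\ell}=0$ are then linearly independent relations among the four edge vectors, those vectors span a space of dimension $\le 2$; combined with non-collinearity this shows the quadrilateral of $x$ is planar and spans a $2$-dimensional affine plane, on which the affine dependences of $x_{i},x_{j},x_{k},x_{\ell}$ form a $1$-dimensional space spanned by $(a_{m})$.

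Now I would pass to the Euclidean lifts $y_{m}=\fo+x_{m}+g_{m}\q$ of $x$: by definition $x$ is circular on $\ijkl$ iff $\dim\spn{y_{i},y_{j},y_{k},y_{\ell}}=3$, and this dimension is always $\ge 3$ since the $x_{m}$ affinely span a $2$-plane. The key computation is
\[
  a_{i}y_{i}+a_{j}y_{j}+a_{k}y_{k}+a_{\ell}y_{\ell}=\Bigl(\textstyle\sum_{m}a_{m}\Bigr)\fo+\sum_{m}a_{m}x_{m}+\Bigl(\textstyle\sum_{m}a_{m}g_{m}\Bigr)\q=(\d x\wedge\d x^{*})_{\ijkl}\,\q .
\]
If (3) holds, the right side vanishes, so the $y_{m}$ satisfy a nontrivial linear relation, $\dim\spn{y_{m}}\le 3$, hence $=3$, i.e.\ $x$ is circular. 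Conversely, if $x$ is circular there is a nontrivial relation $\sum_{m}c_{m}y_{m}=0$; comparing the $\fo$-, $\R^{p,q}$- and $\q$-components (linearly independent since $(\fo,\q)=-1$) gives $\sum_{m}c_{m}=0$ and $\sum_{m}c_{m}x_{m}=0$, so $(c_{m})$ is an affine dependence of the $x_{m}$, hence a scalar multiple of $(a_{m})$, and its $\q$-component then forces $(\d x\wedge\d x^{*})_{\ijkl}=0$, i.e.\ (3).

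The steps needing the most care are the bookkeeping in the identity for $(\d x\wedge\d x^{*})_{\ijkl}$ and the derivation of $\sum_{m}a_{m}x_{m}=0$; the other potential obstacle — that in indefinite signature one should not argue via an explicit centre of the circle — is precisely what working with the lifts $y_{m}$ and the span condition $\dim\spn{y_{m}}=3$ is designed to circumvent, since degenerate (e.g.\ null) affine planes through the four points are then handled uniformly.
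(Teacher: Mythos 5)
Your proposal is correct and follows essentially the same route as the paper's proof: both extract the affine dependence $(a_m)$ (with $\sum_m a_m=0$, $\sum_m a_m x_m=0$) from $\d^2x^{*}=0$, use non-collinearity to see it is unique up to scale, identify $\sum_m a_m(x_m,x_m)$ with $2(\d x\wedge\d x^{*})_{\ijkl}$ by a telescoping Leibniz computation, and conclude via the Euclidean lifts, with (2)$\Leftrightarrow$(3) following from antisymmetry of the product. The only cosmetic difference is that you make the planarity of the quadrilateral and the direct-sum decomposition of $y_m$ explicit where the paper leaves them implicit.
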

\begin{proof}
  Begin by observing that $\d^2x^{*}_{\ijkl}=0$ reads
  \begin{equation*}
    0 = \lambda_{ij}\d x_{ij}+\lambda_{jk}\d
    x_{jk}+\lambda_{k\ell}\d x_{k\ell}+\lambda_{\ell i}\d
    x_{\ell i}=
    \sum_{i}a_{i}x_{i},
  \end{equation*}
  for $a_i=\lambda_{ij}-\lambda_{\ell i}$ and so on.  Since
  $\sum_ia_i=0$, we deduce that
  \begin{equation}\label{eq:52}
    \sum_ia_{i}(x_i+\fo)=0
  \end{equation}
  with not all $a_i$ zero.  Moreover, since our
  quadrilateral is non-collinear, the $a_i$ are uniquely
  determined up to scale by \eqref{eq:52}.

  Now $x$ is circular if and only if the Euclidean lifts
  also satisfy $\sum_{i}a_iy_i=0$ or, equivalently, we have
  $\sum_ia_i(x_i,x_i)=0$ in addition.

  However,
  \begin{align*}
    \sum_ia_i(x_i,x_i)&=\lambda_{ij}\d (x,x)_{ij}+\lambda_{jk}\d
                        (x,x)_{jk}+\lambda_{k\ell}\d (x,x)_{k\ell}+\lambda_{\ell i}\d
                        (x,x)_{\ell i}\\
                      &=2\bigl(\lambda_{ij}(x\wedge\d x)_{ij}+
                        \lambda_{jk}(x\wedge\d x)_{jk}
                        +\lambda_{k\ell}(x\wedge\d x)_{k\ell}+\lambda_{\ell i}
                        (x\wedge\d x)_{\ell i}\bigr)\\
                      &=2\d(x\wedge\d x^{*})_{ijk\ell}=2(\d x\wedge\d x^{*})_{ijk\ell}.
  \end{align*}
  This settles the equivalence of items 1 and 3 and the
  proposition follows by symmetry since
  $(\d x^{*}\wedge\d x)=-(\d x\wedge \d x^{*})$.
\end{proof}
On the other hand, if
$\lambda_{ij}=\lambda_{jk}=\lambda_{k\ell}=\lambda_{\ell
  i}=\lambda$, then
\begin{equation*}
  (\d x\wedge\d x^{*})_{\ijkl}=\lambda(\d
  x\wedge\d x)_{\ijkl}=0
\end{equation*}
automatically so that we have:
\begin{proposition}
  \label{th:29} Let $x,x^{*}:\dom\to\R^{p,q}$ be
edge-parallel nets with non-collinear quadrilaterals.  If
$x$ is circular then $x,x^{*}$ are a Combescure pair.

Conversely, if $x,x^{*}$ are a Combescure pair and there
is no quadrilateral on which
$\lambda_{ij}=\lambda_{jk}=\lambda_{k\ell}=\lambda_{\ell
i}$, then $x,x^{*}$ are both circular.
\end{proposition}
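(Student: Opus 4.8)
The plan is to obtain both implications directly from \cref{th:28} together with the elementary observation recorded just before the statement, working one quadrilateral at a time and splitting according to whether or not the edge-ratios $\lambda_{ij},\lambda_{jk},\lambda_{k\ell},\lambda_{\ell i}$ all coincide on the quadrilateral in question.

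For the first implication, I would assume $x$ is circular and fix a quadrilateral $\ijkl$. If the four ratios share a common value $\lambda$, then $(\d x\wedge\d x^{*})_{\ijkl}=\lambda(\d x\wedge\d x)_{\ijkl}=0$; otherwise the ratios do not all coincide, so \cref{th:28} applies and the equivalence of its items (1) and (3) yields $(\d x\wedge\d x^{*})_{\ijkl}=0$ from the circularity of $x$ on $\ijkl$. As this holds on every quadrilateral we obtain $(\d x\wedge\d x^{*})=0$; since $x^{*}$ is edge-parallel to $x$ by hypothesis, $x,x^{*}$ form a Combescure pair.

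For the converse, I would assume $x,x^{*}$ are a Combescure pair, so that $(\d x\wedge\d x^{*})=0$, and that the four edge-ratios never all agree on a single quadrilateral. Fixing a quadrilateral $\ijkl$, the hypothesis guarantees that its ratios do not all coincide, so \cref{th:28} applies; the equivalence of its items (1), (2) and (3) then forces both $x$ and $x^{*}$ to be circular on $\ijkl$. Since $\ijkl$ was arbitrary, both nets are circular.

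I do not anticipate a genuine obstacle here: the substance has already been isolated in \cref{th:28} and in the identity $(\d x\wedge\d x^{*})_{\ijkl}=\lambda(\d x\wedge\d x)_{\ijkl}$ valid when the edge-ratio is constant along the quadrilateral. The only point that needs care is the case split in the first implication, where the ``constant-ratio'' quadrilaterals --- which fall outside the scope of \cref{th:28} --- must be handled by the trivial argument rather than by the lemma.
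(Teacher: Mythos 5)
Your proposal is correct and follows essentially the same route as the paper: the substance is entirely contained in \cref{th:28}, and the only additional ingredient is the observation that on a quadrilateral where all four edge-ratios equal a common $\lambda$ one has $(\d x\wedge\d x^{*})_{\ijkl}=\lambda(\d x\wedge\d x)_{\ijkl}=0$ automatically (the wedge of a $1$-form with itself vanishing since the inner product is symmetric). The case split you describe is exactly how the paper assembles the proposition from the lemma and this remark.
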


\subsection{\texorpdfstring{$O$}{O}-systems}
\label{sec:o-systems}

Let $x^{\alpha}:\dom\to\R^{p,q}$, $\alpha=1,\dots,N$ be a
family of mutually edge-parallel nets.  Being edge-parallel
is a linear condition so we may view the $x^{\alpha}$ as a
single linear map $\Phi=\sum_{\alpha}x^{\alpha}\tens
w_{\alpha}:\dom\to\R^{p,q}\tens W$ where $W$ is
some\footnote{Intrinsically, $W$ is the dual of the
finite-dimensional subspace of
$\operatorname{Map}(\dom,\R^{p,q})$ spanned by the
$x^{\alpha}$ and $\Phi$ is evaluation.} $N$-dimensional
vector space with basis $w_1,\dots,w_N$.  We recover the
$x^{\alpha}$ from $\Phi$ by contracting $\Phi$ against the
dual basis $w^1,\dots,w^N$.  That the $x^{\alpha}$ are
edge-parallel is equivalent to the demand that each
$\d\Phi_{ji}$ be decomposable:
\begin{equation*}
  \d\Phi_{ji}=X_{ji}\tens Y_{ji},
\end{equation*}
for $X_{ji}\in\R^{p,q}$ and $Y_{ji}\in W$.

Now choose a basis $e_1,\dots,e_{p+q}$ of $\R^{p,q}$ and
write $\Phi=\sum_{m}e_{m}\tens y^{m}$ to define
$y^{m}:\dom\to W$ and observe that the $y^{m}$ are
also edge-parallel: each $\d y^{m}_{ji}\prl Y_{ji}$.  We
call the $y^{m}$ the \emph{dual edge-parallel family}.

With this in hand, we make the following:
\begin{definition}[$O$-system]
  A family $x^{\alpha}:\dom\to\R^{p,q}$ of mutually
  Combescure nets is an \emph{$O$-system} if the dual family
  $y^m$ are also mutually Combescure with respect to a
  non-degenerate inner product on $W$.
\end{definition}
\begin{remark}
  In view of \cref{th:29}, when $\Sigma=\Z^2$, an $O$-system
  generically comprises a family
  of $O$-surfaces in the sense of Schief
  \cite[Definition~5.1]{Sch03}.
\end{remark}

We now give two characterisations of $O$-systems:
\begin{theorem}
  \label{th:7}
  Let $x^{\alpha}:\dom\to\R^{p,q}$ be a family of mutually
  Combescure nets with $\Phi=\sum_{\alpha}x^{\alpha}\tens
  w_{\alpha}:\dom\to \R^{p,q}\tens W$ as above.

  Equip $W$ with a non-degenerate inner product $g$
  and set $g_{\alpha\beta}:=g(w_{\alpha},w_{\beta})$.

  Then the following are equivalent:
  \begin{compactenum}
  \item $x^{\alpha}$ is an $O$-system with respect to
    $g$.
  \item $\sum_{\alpha,\beta}g_{\alpha\beta}\d
    x^{\alpha}\curlywedge\d x^{\beta}=0$.
  \item $[\d\Phi\wedge\d\Phi]=0$, where we identify
    $\R^{p,q}\tens W$ with $\R^{p,q}\wedge
    W\leq\Wedge^2(\R^{p,q}\oplus W)\cong\fo(\R^{p,q}\oplus
    W)$ via \eqref{eq:14}.
  \end{compactenum}
\end{theorem}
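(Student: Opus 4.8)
The plan is to reduce all three conditions to one algebraic identity for the bracket on $\fo(\R^{p,q}\oplus W)$, where $\R^{p,q}\oplus W$ carries the orthogonal direct sum of the form on $\R^{p,q}$ and $g$; write $V:=\R^{p,q}$. Under the identification \eqref{eq:14}, a short computation gives, for $a,c\in V$ and $b,d\in W$,
\[
  [a\wedge b,\,c\wedge d]=g(b,d)\,a\wedge c+(a,c)\,b\wedge d\ \in\ \Wedge^2V\oplus\Wedge^2W,
\]
the two cross terms dropping out by $V\perp W$; in particular the bracket of two elements of $V\wedge W$ has no mixed $V\wedge W$ component. Expanding $\Psi_1=\sum_\alpha x_1^\alpha\tens w_\alpha$ and $\Psi_2=\sum_\beta x_2^\beta\tens w_\beta$ bilinearly yields $[\Psi_1,\Psi_2]=\sum_{\alpha,\beta}g_{\alpha\beta}\,x_1^\alpha\wedge x_2^\beta+\sum_{\alpha,\beta}(x_1^\alpha,x_2^\beta)\,w_\alpha\wedge w_\beta$, the first sum lying in $\Wedge^2V$ and the second in $\Wedge^2W$; expanding instead as $\Psi_s=\sum_m e_m\tens u_s^m$ and using the same identity shows that the $\Wedge^2V$-component also equals $\sum_{m,n}g(u_1^m,u_2^n)\,e_m\wedge e_n$.

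Next I would push this through the discrete exterior calculus. Contracting $\d\Phi$ against a dual basis vector of $W$ (resp.\ of $V$) returns $\d x^\alpha$ (resp.\ $\d y^m$), commutes with $\d$, and commutes with the exterior-product formulae of \cref{th:47}; hence, applying the two bilinear maps above to $\d\Phi\wedge\d\Phi$,
\[
  [\d\Phi\wedge\d\Phi]=\underbrace{\textstyle\sum_{\alpha,\beta}g_{\alpha\beta}\,\d x^\alpha\curlywedge\d x^\beta}_{\in\ \Omega^2_{\dom}(\Wedge^2V)}\ +\ \underbrace{\textstyle\sum_{\alpha,\beta}(\d x^\alpha\wedge\d x^\beta)\,w_\alpha\wedge w_\beta}_{\in\ \Omega^2_{\dom}(\Wedge^2W)},
\]
and the first summand also equals $\sum_{m,n}g(\d y^m\wedge\d y^n)\,e_m\wedge e_n$. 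Here $(\d x^\alpha\wedge\d x^\beta)$ and $g(\d y^m\wedge\d y^n)$ are the scalar $2$-forms built from $(\cdot,\cdot)$ and from $g$, and $\curlywedge$ is as in \cref{th:49}. By \cref{th:48}(a) these scalar $2$-forms are skew in their indices, so each of the two displayed sums over a $\Wedge^2$-basis vanishes if and only if every coefficient $2$-form does.

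The theorem then follows at once. Since $\Wedge^2V$ and $\Wedge^2W$ lie in complementary summands of $\fo(V\oplus W)$, condition (3), $[\d\Phi\wedge\d\Phi]=0$, is equivalent to the vanishing of both summands above. The $\Wedge^2W$-summand vanishes automatically because the $x^\alpha$ are mutually Combescure, so (3) is equivalent to $\sum_{\alpha,\beta}g_{\alpha\beta}\,\d x^\alpha\curlywedge\d x^\beta=0$, which is (2). On the other hand, this $\Wedge^2V$-summand equals $\sum_{m,n}g(\d y^m\wedge\d y^n)\,e_m\wedge e_n$, which vanishes iff $g(\d y^m\wedge\d y^n)=0$ for all $m,n$; as the dual family $y^m$ is automatically edge-parallel (each $\d\Phi_{ji}$ being decomposable), this is precisely the assertion that the $y^m$ are mutually Combescure with respect to $g$, i.e.\ condition (1). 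Hence (1) $\Leftrightarrow$ (2) $\Leftrightarrow$ (3).

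The only real work is the bracket bookkeeping: keeping the two halves of $[\,\cdot\,,\cdot\,]$ straight and noticing that the $\Wedge^2W$-half is killed by the standing hypothesis, so it plays no role in the equivalence. There is nothing analytic to do; the discrete calculus enters only through the naturality of $B(\alpha\wedge\beta)$ under linear maps on values and the graded commutativity of \cref{th:48}(a).
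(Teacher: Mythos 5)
Your proposal is correct and follows essentially the same route as the paper: compute the bracket of two decomposables in $V\wedge W$, split $[\d\Phi\wedge\d\Phi]$ into its $\Wedge^2V$- and $\Wedge^2W$-components, observe that the $\Wedge^2W$-component vanishes by the standing Combescure hypothesis (giving (2)$\Leftrightarrow$(3)), and re-expand the $\Wedge^2V$-component in the dual basis to get (1)$\Leftrightarrow$(3). The only difference is cosmetic: you spell out the skew-symmetry bookkeeping from \cref{th:48}(a) that the paper leaves implicit.
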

\begin{proof}
  For $a\tens v,b\tens w\in\R^{p,q}\tens W$ we have
  \begin{equation*}
    [a\tens v,b\tens w]=(a,b)v\wedge w+g(v,w)a\wedge b\in\Wedge^2W\oplus\Wedge^{2}\R^{p,q}.
  \end{equation*}
  Thus, since $\Phi=\sum_{\alpha}x^{\alpha}\tens
  w_{\alpha}$,
  \begin{equation*}
    [\d\Phi\wedge\d\Phi]=\sum_{\alpha,\beta}\bigl(
    (\d x^{\alpha}\wedge\d x^{\beta})w_{\alpha}\wedge
    w_{\beta}+
    g_{\alpha\beta}\d x^{\alpha}\curlywedge\d x^{\beta}\bigr).
  \end{equation*}
  Since the $x^{\alpha}$ are mutually Combescure, the
  $\Wedge^2W$-component vanishes and we have established the
  equivalence of items 2 and 3.  On the other hand, writing
  $\Phi=\sum_me_m\tens y^m$, the same argument tells us that
  the $\Wedge^{2}\R^{p,q}$-component of
  $[\d\Phi\wedge\d\Phi]$ can also be written as
  \begin{equation*}
    \sum_{m,n}g(\d y^{m}\wedge \d y^n)e_m\wedge e_n,
  \end{equation*}
  which vanishes exactly when the $y^m$ are mutually
  Combescure.  Thus items 1 and 3 are equivalent.
\end{proof}

As a consequence, all the nets we have considered in this
paper come from $O$-systems:
\begin{xmpls}
\item[]
  \begin{compactenum}
  \item Recall from \cref{th:31} that a principal net
    $(x,n)$ is linear Weingarten if there are constants
    $\alpha,\beta,\gamma$, not all zero, such that
    \begin{equation*}
      \alpha\d n\curlywedge\d n -2\beta \d n\curlywedge \d x
      + \gamma\d x\curlywedge\d x=0.
    \end{equation*}
    \cref{th:7} now tells us that this happens exactly when
    $x,n$ comprise an $O$-system with
    \begin{equation*}
      (g_{\alpha\beta})=
      \begin{pmatrix}
        \gamma&-\beta\\-\beta&\alpha
      \end{pmatrix}.
    \end{equation*}
    In particular, our notion of linear Weingarten net
    coincides with that of Schief \cite[\S5(b)(iv)]{Sch03}.
  \item (c.f.~\cite[\S5(b)(ii)]{Sch03}) An isothermic net
    $x$ with Christoffel dual $\dual{x}$ comprise an
    $O$-system with $W=\R^{1,1}$ and
    \begin{equation*}
      (g_{\alpha\beta })=
      \begin{pmatrix}
        0&1\\1&0
      \end{pmatrix},
    \end{equation*}
    thanks to \cref{th:8}.

    The same is true for $n,\dual{n}$ when $x$ is
    $L$-isothermic as in Section~\ref{sec:variations-theme}.
    We conclude that $x$ is $L$-isothermic if and only if it
    is a Combescure transform of a minimal net
    $(\dual{n},n)$ as was observed in the smooth case by
    Schief--Szereszewski--Rogers \cite[\S6]{Schief2009-ly}.
  \item (c.f.~\cite[\S5(b)(v)]{Sch03}) Let $(x,n)$ be a
    Guichard net with associate net $\dual{x}$.  Then
    $x,\dual{x},n$ comprise an $O$-system with $W=\R^{2,1}$
    and
    \begin{equation*}
      (g_{\alpha\beta })=
      \begin{pmatrix}
        0&\half&0\\\half&0&0\\0&0&1
      \end{pmatrix},
    \end{equation*}
    thanks to \cref{th:30}.

    The same is true for $n,\dual{n},x$ when
    $(x,n)$ is $L$-Guichard with associate Gauss map
    $\dual n$.
  \item Finally, let $(x,n)$ be the stereoprojection of an
    $\Omega$-net with associate net $\dual{x}$ and associate
    Gauss map $\dual{n}$.  Then $x,\dual{x},n,\dual{n}$
    comprise an $O$-system with $W=\R^{2,2}$ and
    \begin{equation*}
      (g_{\alpha\beta })=
      \begin{pmatrix}
        0&1&0&0\\1&0&0&0\\0&0&0&1\\0&0&1&0
      \end{pmatrix},
    \end{equation*}
    by \cref{thm:dualityR3}.
  \end{compactenum}
\end{xmpls}

\begin{remark}
  The equation $[\d\Phi\wedge\d\Phi]=0$ is a straightforward
  discretisation of the \emph{curved flat}
  system \cite{FerPed96a} in a version studied by Burstall
  \cite[\S3.1]{burstall_isothermic_2006} (as $\p$-flat maps)
  and Br\"uck--Du--Park--Terng
  \cite[Definition~6.6]{BruDuParTer02} (as $n$-tuples in
  $\R^m$ of type $O(n)$).  We may return to discrete curved
  flats elsewhere.
\end{remark}

\begin{bibdiv}
\begin{biblist}
\bib{bianchi_ricerche_1905}{article}{
      author={Bianchi, L.},
       title={Ricerche sulle superficie isoterme e sulla deformazione delle
  quadriche},
        date={1905},
     journal={Ann. Mat. Pura Appl.},
      volume={11},
      number={1},
      pages={93\ndash 157},
      doi={10.1007/BF02419963}
}

\bib{Bia05a}{article}{
      author={Bianchi, L.},
       title={Complementi alle ricerche sulle superficie isoterme},
        date={1905},
     journal={Ann. di Mat.},
      volume={12},
       pages={19\ndash 54},
         doi={10.1007/BF02419495},
}

\bib{B1915}{article}{
      author={Bianchi, L.},
       title={Sulle superficie a rappresentazione isoterma delle linee di
  curvatura come inviluppi di rotolamento},
        date={1915},
     journal={Rend. Acc. Naz. Lincei},
      volume={24},
      pages={367\ndash 377},
      url={http://villafarnesina.it/pubblicazioni/rendicontiFMN/rol/visabs.php?fileId=4691},
}

\bib{MR3279541}{article}{
      author={Bobenko, Alexander~I.},
      author={Hertrich-Jeromin, Udo},
      author={Lukyanenko, Inna},
       title={Discrete constant mean curvature nets in space forms: {S}teiner's
  formula and {C}hristoffel duality},
        date={2014},
     journal={Discrete Comput. Geom.},
      volume={52},
      number={4},
       pages={612\ndash 629},
         doi={10.1007/s00454-014-9622-5},
      review={\MR{3279541}},
}

\bib{MR2657431}{article}{
      author={Bobenko, Alexander~I.},
      author={Pottmann, Helmut},
      author={Wallner, Johannes},
       title={A curvature theory for discrete surfaces based on mesh
  parallelity},
        date={2010},
     journal={Math. Ann.},
      volume={348},
      number={1},
       pages={1\ndash 24},
       review={\MR{2657431}},
       doi = {10.1007/s00208-009-0467-9},
}

\bib{bobenko_isothermic_2007}{article}{
      author={Bobenko, Alexander~I.},
      author={Suris, Yuri~B.},
       title={Isothermic surfaces in sphere geometries as {M}outard nets},
        date={2007},
     journal={Proc. R. Soc. Lond. Ser. A Math. Phys. Eng. Sci.},
      volume={463},
      number={2088},
       pages={3171\ndash 3193},
         doi={10.1098/rspa.2007.1902},
      review={\MR{2386657}},
}

\bib{bobenko_discrete_2008}{book}{
      author={Bobenko, Alexander~I.},
      author={Suris, Yuri~B.},
       title={Discrete differential geometry},
      series={Graduate Studies in Mathematics},
   publisher={American Mathematical Society, Providence, RI},
     address={Providence, RI},
        date={2008},
      volume={98},
      number={98},
        ISBN={978-0-8218-4700-8},
        note={Integrable structure},
        review={\MR{2467378}},
}

\bib{bobenko_discrete_2009}{article}{
      author={Bobenko, Alexander~I.},
      author={Suris, Yuri~B.},
       title={Discrete {K}oenigs nets and discrete isothermic surfaces},
        date={2009},
     journal={Int. Math. Res. Not. IMRN},
      number={11},
       pages={1976\ndash 2012},
         doi={10.1093/imrn/rnp008},
      review={\MR{2507107}},
}

\bib{bour_theorie_1862}{article}{
      author={Bour, Edmond},
       title={Th\'eorie de la d\'eformation des surfaces},
        date={1862},
     journal={J. \'Ec. Imp\'eriale Polytech.},
      volume={39},
      pages={1\ndash 148},
      url={https://gallica.bnf.fr/ark:/12148/bpt6k433694t/f5.item},
}

\bib{BruDuParTer02}{article}{
      author={Br\"{u}ck, Martina},
      author={Du, Xi},
      author={Park, Joonsang},
      author={Terng, Chuu-Lian},
       title={The submanifold geometries associated to {G}rassmannian systems},
        date={2002},
     journal={Mem. Amer. Math. Soc.},
      volume={155},
      number={735},
       pages={viii+95},
         doi={10.1090/memo/0735},
      review={\MR{1875645}},
}

\bib{burstall_isothermic_2006}{article}{
      author={Burstall, F.~E.},
       title={Isothermic surfaces: conformal geometry, {C}lifford algebras and
  integrable systems},
date={2006},
       pages={1\ndash 82},
       review={\MR{2222512}},
        doi={10.1090/amsip/036/01},
        book={                  
          title={Integrable systems, geometry, and topology},
      editor={Terng, Chuu-Lian},
      series={AMS/IP Stud. Adv. Math.},
      volume={36},
   publisher={Amer. Math. Soc.},
     address={Providence, RI},},
}

\bib{burstall_isothermic_2011}{article}{
      author={Burstall, Francis~E.},
      author={Donaldson, Neil~M.},
      author={Pedit, Franz},
      author={Pinkall, Ulrich},
       title={Isothermic submanifolds of symmetric {$R$}-spaces},
        date={2011},
     journal={J. Reine Angew. Math.},
      volume={660},
       pages={191\ndash 243},
         doi={10.1515/crelle.2011.075},
      review={\MR{2855825}},
}

\bib{burstall_polynomial_2018}{article}{
      author={Burstall, Francis~E.},
      author={Hertrich-Jeromin, Udo},
      author={Pember, Mason},
      author={Rossman, Wayne},
       title={Polynomial conserved quantities of {L}ie applicable surfaces},
        date={2019},
     journal={Manuscripta Math.},
      volume={158},
      number={3-4},
       pages={505\ndash 546},
         doi={10.1007/s00229-018-1033-0},
      review={\MR{3914961}},
}

\bib{burstall_discrete_2018}{article}{
      author={Burstall, F.E.},
      author={Hertrich-Jeromin, U.},
      author={Rossman, W.},
       title={Discrete linear {W}eingarten surfaces},
        date={2018},
     journal={Nagoya Math. J.},
      volume={231},
       pages={55\ndash 88},
         doi={10.1017/nmj.2017.11},
      review={\MR{3845588}},
}

\bib{burstall_discrete_2014}{article}{
      author={Burstall, F.E.},
      author={Hertrich-Jeromin, U.},
      author={Rossman, W.},
      author={Santos, S.},
       title={Discrete surfaces of constant mean curvature},
        date={2014},
   book={title={Developments in differential geometry of submanifolds},
      editor={Kobayashi, Shim-Pei},
      series={RIMS K\^oky\^uroku},
      volume={1880},
   publisher={{Res. Inst. Math. Sci. (RIMS)}},
     address={Kyoto},},
   pages={133\ndash 179},
   eprint={arXiv:0804.2707},
   url={http://arxiv.org/abs/0804.2707},
}

\bib{burstall_discrete_2015}{article}{
      author={Burstall, F.E.},
      author={Hertrich-Jeromin, U.},
      author={Rossman, W.},
      author={Santos, S.},
       title={Discrete special isothermic surfaces},
        date={2015},
     journal={Geom. Dedicata},
      volume={174},
       pages={1\ndash 11},
         doi={10.1007/s10711-014-0001-4},
      review={\MR{3303037}},
}

\bib{Cal03}{article}{
      author={Calapso, P.},
       title={Sulle superficie a linee di curvatura isoterme},
        date={1903},
     journal={Rendiconti Circolo Matematico di Palermo},
      volume={17},
       pages={275\ndash 286},
         doi={10.1007/BF03012748},
}

\bib{cecil_lie_2008}{book}{
      author={Cecil, Thomas~E.},
       title={Lie sphere geometry},
      series={Universitext},
   publisher={Springer},
     address={New York},
        date={2008},
        ISBN={978-0-387-74655-5},
        subtitle={with applications to submanifolds},
        review={\MR{2361414}},
        doi={10.1007/978-1-4757-4096-7},
}

\bib{christoffel_ueber_1867}{article}{
      author={Christoffel, E.~B.},
       title={Ueber einige allgemeine {E}igenschaften der
  {M}inimumsfl\"{a}chen},
        date={1867},
     journal={J. Reine Angew. Math.},
      volume={67},
       pages={218\ndash 228},
         doi={10.1515/crll.1867.67.218},
      review={\MR{1579370}},
}

\bib{Cla12}{thesis}{
      author={Clarke, Daniel},
       title={Integrability in {S}ubmanifold {G}eometry},
        type={Ph.D. Thesis},
   publisher={ProQuest LLC, Ann Arbor, MI},
        date={2012},
  url={http://gateway.proquest.com/openurl?url_ver=Z39.88-2004&rft_val_fmt=info:ofi/fmt:kev:mtx:dissertation&res_dat=xri:pqm&rft_dat=xri:pqdiss:U636471},
      review={\MR{3389373}},
}

\bib{Dar99e}{article}{
      author={Darboux, Gaston},
       title={Sur les surfaces isothermiques},
        date={1899},
     journal={Ann. Sci. \'{E}cole Norm. Sup. (3)},
      volume={16},
       pages={491\ndash 508},
         doi={10.24033/asens.473},
      review={\MR{1508975}},
}

\bib{demoulin_sur_1911-2}{article}{
      author={Demoulin, Alphonse},
       title={Sur les surfaces $R$ et les surfaces $\Omega$},
        date={1911},
     journal={C. R. Acad. Sci. Paris},
      volume={153},
      pages={590–593},
      url={https://gallica.bnf.fr/ark:/12148/bpt6k3106p/f590.item},
}

\bib{demoulin_sur_1911}{article}{
      author={Demoulin, Alphonse},
       title={Sur les surfaces $R$ et les surfaces $\Omega$},
        date={1911},
     journal={C. R. Acad. Sci. Paris},
      volume={153},
      pages={705–707},
      url={https://gallica.bnf.fr/ark:/12148/bpt6k3106p/f705.item},
}

\bib{demoulin_sur_1911-1}{article}{
      author={Demoulin, Alphonse},
       title={Sur les surfaces $\Omega$},
        date={1911},
     journal={C. R. Acad. Sci. Paris},
      volume={153},
      pages={927–929},
      url={https://gallica.bnf.fr/ark:/12148/bpt6k3106p/f927.item},
}

\bib{DesHirLeoMar05}{article}{
      author={Desbrun, Mathieu},
      author={Hirani, Anil~N.},
      author={Leok, Melvin},
      author={Marsden, Jerrold~E.},
       title={Discrete exterior calculus},
       date={2005},
       eprint = {arXiv:math/0508341},
       url={http://arxiv.org/abs/math/0508341}
}

\bib{MR1737004}{article}{
   author={Doliwa, Adam},
   author={Santini, Paolo Maria},
   author={Ma\~{n}as, Manuel},
   title={Transformations of quadrilateral lattices},
   journal={J. Math. Phys.},
   volume={41},
   date={2000},
   number={2},
   pages={944--990},
   issn={0022-2488},
   review={\MR{1737004}},
   doi={10.1063/1.533175},
 }

\bib{MR2488210}{article}{
      author={Dolotin, V.~V.},
      author={Morozov, A.~Yu.},
      author={Shakirov, Sh.~R.},
       title={The {$A_\infty$}-structure on simplicial complexes},
        date={2008},
     journal={Teoret. Mat. Fiz.},
      volume={156},
      number={1},
       pages={3\ndash 37},
       review={\MR{2488210}},
       doi = {10.1007/s11232-008-0093-9},
}

\bib{Eis08}{article}{
      author={Eisenhart, L.P.},
       title={Surfaces with isothermal representation of their lines of
  curvature and their transformations},
        date={1908},
        ISSN={0002-9947},
     journal={Trans. Amer. Math. Soc.},
      volume={9},
      number={2},
       pages={149\ndash 177},
        doi = {10.2307/1988648},
      review={\MR{MR1500806}},
}

\bib{Eis14}{article}{
      author={Eisenhart, L.P.},
       title={Transformations of surfaces of {G}uichard and surfaces applicable
  to quadrics},
        date={1914},
     journal={Ann. di Mat. (3)},
      volume={22},
       pages={191\ndash 247},
         doi={10.1007/BF02419557},
}

\bib{Eisenhart1915-fc}{article}{
      author={Eisenhart, L.P.},
       title={Surfaces {$\Omega$} and their transformations},
        date={1915},
     journal={Trans. Amer. Math. Soc.},
      volume={16},
      number={3},
       pages={275\ndash 310},
         doi={10.2307/1988993},
      review={\MR{1501013}},
}

\bib{Eisenhart1916-qq}{article}{
      author={Eisenhart, L.P.},
       title={Transformations of surfaces {$\Omega$}. {II}},
        date={1916},
     journal={Trans. Amer. Math. Soc.},
      volume={17},
      number={1},
       pages={53\ndash 99},
         doi={10.2307/1988826},
      review={\MR{1501030}},
}

\bib{FerPed96a}{article}{
      author={Ferus, D.},
      author={Pedit, F.},
       title={Curved flats in symmetric spaces},
        date={1996},
     journal={Manuscripta Math.},
      volume={91},
      number={4},
       pages={445\ndash 454},
         doi={10.1007/BF02567965},
      review={\MR{MR1421284 (97k:53074)}},
}

\bib{guichard_sur_1900}{article}{
      author={Guichard, C.},
       title={Sur les surfaces isothermiques},
        date={1900},
     journal={C. R. Acad. Sci. Paris},
      volume={130},
      pages={159\ndash 162},
      url={https://gallica.bnf.fr/ark:/12148/bpt6k3086n/f159.item},
}

\bib{Her03}{book}{
      author={Hertrich-Jeromin, Udo},
       title={Introduction to {M}\"{o}bius differential geometry},
      series={London Mathematical Society Lecture Note Series},
   publisher={Cambridge University Press, Cambridge},
     address={Cambridge},
        date={2003},
      volume={300},
        ISBN={0-521-53569-7},
         doi={10.1017/CBO9780511546693},
      review={\MR{2004958}},
}

\bib{Hir03}{thesis}{
      author={Hirani, Anil~Nirmal},
      title={Discrete exterior calculus},
      type={Ph. D. Thesis},
   publisher={ProQuest LLC, Ann Arbor, MI},
        date={2003},
        ISBN={978-0496-34653-0},
        review={\MR{2704508}},
        url={https://resolver.caltech.edu/CaltechETD:etd-05202003-095403},
}

\bib{HydMan04}{article}{
      author={Hydon, Peter~E.},
      author={Mansfield, Elizabeth~L.},
       title={A variational complex for difference equations},
        date={2004},
        ISSN={1615-3375},
     journal={Found. Comput. Math.},
      volume={4},
      number={2},
       pages={187\ndash 217},
         doi={10.1007/s10208-002-0071-9},
      review={\MR{2049870}},
}

\bib{MusNic00}{article}{
      author={Musso, Emilio},
      author={Nicolodi, Lorenzo},
       title={The {B}ianchi-{D}arboux transform of {$L$}-isothermic surfaces},
        date={2000},
        ISSN={0129-167X},
     journal={Internat. J. Math.},
      volume={11},
      number={7},
       pages={911\ndash 924},
         doi={10.1142/S0129167X00000465},
      review={\MR{MR1792958 (2002d:53012)}},
}

\bib{Musso_2006}{article}{
      author={Musso, Emilio},
      author={Nicolodi, Lorenzo},
       title={Deformation and applicability of surfaces in {L}ie sphere
  geometry},
        date={2006},
     journal={Tohoku Math. J. (2)},
      volume={58},
      number={2},
       pages={161\ndash 187},
       url={http://projecteuclid.org/euclid.tmj/1156256399},
      review={\MR{2248428}},
}

\bib{pember_lie_nodate}{article}{
      author={Pember, Mason},
       title={Lie applicable surfaces},
        date={2020},
     journal={Comm. Anal. Geom.},
      volume={28},
      number={6},
       pages={1407\ndash 1450},
       review={\MR{4184823}},
       doi = {10.4310/CAG.2020.v28.n6.a5},
}

\bib{Sch03}{article}{
      author={Schief, W.~K.},
       title={On the unification of classical and novel integrable surfaces.
  {II}. {D}ifference geometry},
        date={2003},
     journal={R. Soc. Lond. Proc. Ser. A Math. Phys. Eng. Sci.},
      volume={459},
      number={2030},
       pages={373\ndash 391},
       review={\MR{1997461}},
       doi = {10.1098/rspa.2002.1008},
}

\bib{SchKon03}{article}{
      author={Schief, W.~K.},
      author={Konopelchenko, B.~G.},
       title={On the unification of classical and novel integrable surfaces.
  {I}. {D}ifferential geometry},
        date={2003},
     journal={R. Soc. Lond. Proc. Ser. A Math. Phys. Eng. Sci.},
      volume={459},
      number={2029},
       pages={67\ndash 84},
         doi={10.1098/rspa.2002.1007},
      review={\MR{1993345}},
}

\bib{Schief2009-ly}{article}{
      author={Schief, W.~K.},
      author={Szereszewski, A.},
      author={Rogers, C.},
       title={On shell membranes of {E}nneper type: generalized {D}upin
  cyclides},
        date={2009},
     journal={J. Phys. A},
      volume={42},
      number={40},
       pages={404016, 17},
         doi={10.1088/1751-8113/42/40/404016},
      review={\MR{2544280}},
}

\end{biblist}
\end{bibdiv}

\end{document}